\numberwithin{equation}{section}
\newtheorem{theorem}{Theorem}
\newtheorem{proposition}[theorem]{Proposition}
\newtheorem{lemma}[theorem]{Lemma}
\newtheorem{corollary}[theorem]{Corollary}
\numberwithin{theorem}{section}
\theoremstyle{definition}
\theoremstyle{definition}\newtheorem{definition}[theorem]{Definition}
\theoremstyle{definition}\newtheorem{remark}[theorem]{Remark}
\theoremstyle{definition}
\theoremstyle{definition}
\theoremstyle{definition}
\theoremstyle{definition}
\def\proofof [#1] {\noindent {\bf Proof of #1. } }
\def\al #1.{{\mathcal{#1}}}
\renewcommand{\AA}{\mathfrak{A}}
\newcommand{\A}{\mathcal{A}}
\newcommand{\B}{\mathcal{B}}
\newcommand{\mC}{\mathcal{C}}
\newcommand{\D}{\mathcal{D}}
\newcommand{\I}{\mathcal{I}}
\newcommand{\K}{\mathcal{K}}
\renewcommand{\H}{\mathcal{H}}
\newcommand{\N}{\mathbb{N}}
\newcommand{\NN}{\mathbb{N}_0}
\newcommand{\R}{\mathbb{R}}
\newcommand{\C}{\mathbb{C}}
\renewcommand{\S}{{S^1}}
\newcommand{\Z}{\mathbb{Z}}
\newcommand{\unit}{\mathbf{1}}
\newcommand{\bp}{\begin{proof}}
\newcommand{\ep}{\end{proof}}
\newcommand{\bdp}{\begin{dproof}}
\newcommand{\edp}{\end{dproof}}
\newcommand{\ra}{\rightarrow}
\newcommand{\Uone}{\operatorname{U}(1)}
\newcommand{\SUtwo}{\operatorname{SU}(2)}
\newcommand{\Diff}{\operatorname{Diff}(\S)}
\newcommand{\CAR}{\operatorname{CAR}}
\newcommand{\PSL}{\operatorname{PSL(2,\R)}}
\newcommand{\PSI}{\operatorname{PSL(2,\R)}^{(\infty)}}
\newcommand{\SVir}{\operatorname{SVir}}
\newcommand{\tr}{\operatorname{tr }}
\newcommand{\Ad}{\operatorname{Ad }}
\newcommand{\const}{\operatorname{const }}
\newcommand{\rmd}{\operatorname{d}}
\newcommand{\rmi}{\operatorname{i}}
\newcommand{\rme}{\operatorname{e}}
\newcommand{\de}{\delta}
\newcommand{\sgn}{\operatorname{sgn }}
\newcommand{\supp}{\operatorname{supp }}
\newcommand{\dom}{\operatorname{dom }}
\renewcommand{\ker}{\operatorname{ker }}
\newcommand{\Cci}{C^{\infty}}
\newcommand{\boxy}{\hfill $\Box$\vspace{5mm}}
\newcommand{\ie}{{i.e.,\/}\ }
\newcommand{\eg}{{e.g.\/}\ }
\newcommand{\cf}{{cf.\/}\ }
\title{
{\Huge $N=2$ Superconformal Nets}\footnotetext{Work supported in part by the ERC Advanced Grant 227458  OACFT ``Operator Algebras and Conformal Field Theory"}
}
\author{
{\sc Sebastiano Carpi}\footnote{Supported in part by PRIN-MIUR and GNAMPA-INDAM}
\\
{\small Dipartimento di Economia, Universit\`a di Chieti-Pescara ``G. d'Annunzio''}\\
{\small Viale Pindaro, 42, I-65127 Pescara, Italy} \\
{\small E-mail: {\tt s.carpi@unich.it}}
\\[0,3cm]
{\sc Robin Hillier}
\\
{\small Department of Mathematics and Statistics,
Lancaster University}\\
{\small  Lancaster LA1 4YF, UK}\\
{\small E-mail: {\tt r.hillier@lancaster.ac.uk}}
\\[0,3cm]
{\sc Yasuyuki Kawahigashi}\footnote{Supported in part by Global COE Program ``The research and training center for new development in mathematics'', the Mitsubishi Foundation Research Grants and the Grants-in-Aid for Scientific Research, JSPS.}\\
{\small Department of Mathematical Sciences}\\
{\small The University of Tokyo, Komaba, Tokyo, 153-8914, Japan}
\\[0,05cm]
{\small and}
\\[0,05cm]
{\small Kavli IPMU (WPI), the University of Tokyo}\\
{\small 5-1-5 Kashiwanoha, Kashiwa, 277-8583, Japan}\\
{\small E-mail: {\tt yasuyuki@ms.u-tokyo.ac.jp}}
\\[0,3cm]
{\sc Roberto Longo}$^{*}$
\\
{\small Dipartimento di Matematica,
Universit\`a di Roma ``Tor Vergata''}\\
{\small Via della Ricerca Scientifica, 1, I-00133 Roma, Italy}\\
{\small E-mail: {\tt longo@mat.uniroma2.it}}
\\[0,3cm]
{\sc Feng Xu}\footnote{Supported in part by NSF grant and an academic senate grant from UCR.}\\
{\small Department of Mathematics}\\
{\small University of California at Riverside, Riverside, CA 92521}\\
{\small E-mail: {\tt xufeng@math.ucr.edu}}
}
\date{}
\begin{document}
\maketitle

\begin{abstract}
We provide an Operator Algebraic approach to $N=2$ chiral Conformal Field Theory and set up the Noncommutative Geometric framework. Compared to the $N=1$ case, the structure here is much richer. There are naturally associated nets of spectral triples and the JLO cocycles separate the Ramond sectors. We construct the $N=2$ superconformal nets of von Neumann algebras in general, classify them in the discrete series $c<3$, and we define and study an operator algebraic version of the $N=2$ spectral flow. We prove the coset identification for the $N=2$ super-Virasoro nets with $c<3$, a key result whose equivalent in the vertex algebra context has seemingly not been completely proved so far. Finally, the chiral ring is discussed in terms of net representations.
\end{abstract}

\tableofcontents

\section{Introduction}
Quantum Field Theory (QFT) describes a quantum system with infinitely many degrees of freedom and, from a geometrical viewpoint, can be regarded as an infinite-dimensional noncommutative manifold. It thus becomes a natural place for merging the classical infinite-dimensional calculus with the noncommutative quantum calculus. As explained in \cite{L(QFT index)}, a QFT index theorem should manifest itself in this setting and noncommutative geometry should provide a natural framework.

Within this program, localized representations with finite Jones index should play a role analogous to the one of elliptic operators in the classical framework. One example of this structure was suggested in the black hole entropy context, the Hamiltonian was regarded in analogy with the (infinite dimensional promotion of the) Laplacian, and spectral analysis coefficients were indeed identified with index invariants for the net and its representations \cite{KLbh}. 
According to Connes \cite{Co1}, the notion of spectral triple abstracts and generalizes the notion of Dirac operator on a spin manifold to the noncommutative context and this naturally leads to exploring the supersymmetric context where the supercharge operator plays the role of the Dirac operator.

A particularly interesting context where to look for this setting is provided by chiral Conformal Field Theory in two spacetime dimensions (CFT), a building block for general 2D CFT. There are several reasons why CFT is suitable for our purposes. On the one hand, the Operator Algebraic approach to QFT has been particularly successful within the CFT frame leading to a deep, model independent description and understanding of the underlying structure. On the other hand, there are different, geometrically based approaches to CFT suggesting a noncommutative geometric interpretation ought to exist, and in which fields represent the noncommutative variables,
see \eg \cite{FroGad}. Since the root of Connes' noncommutative geometry is operator algebraic, one is naturally led to explore its appearance within local conformal nets of von Neumann algebras. In particular, this approach connects subfactor theory and noncommutative geometry.

A first step in this direction was taken in \cite{CKL} with the construction and structure analysis of the $N=1$ superconformal nets of von Neumann algebras, the prime class of nets combining conformal invariance and supersymmetry. Indeed according to the above comments,
the natural QFT models where spectral triples are found are the supersymmetric ones where the supercharge operator is an odd square root of the Hamiltonian. 

Indeed, nets of spectral triples have been later constructed in \cite{CHKL}, associated with Ramond representations of  the $N=1$ super-Virasoro net, the most elementary superconformal net of von Neumann algebras. In particular the irreducible, unitary positive energy representation of the Ramond $N=1$ super Virasoro algebra with central charge $c$ and minimal lowest weight $h = c/24$ is graded and gives rise to a net of even $\theta$-summable spectral triples with non-zero Fredholm index. 
More recently, three of us started in \cite{CHL} a more systematic analysis of the noncommutative geometric aspects of the superselection structure of ($N=1$) superconformal nets. In particular, they defined spectral triples and corresponding entire cyclic cocycles  associated to representations of the underlying nets in various relevant $N=1$ superconformal field theory models and proved that the cohomology classes of these cocycles encode relevant information about the corresponding sectors. On the other hand, a related K-theoretical analysis of the representation theory of conformal nets has been initiated in \cite{CCHW,CCH}.

The $N=1$ super-Virasoro algebras  (Neveu-Schwarz or Ramond) are infinite-dimensional Lie superalgebras generated by the Virasoro algebra and the Fourier modes of one Fermi field of conformal dimension $3/2$. There are higher level super-Virasoro algebras: the $N=2$ ones are generated by the Virasoro algebra and the Fourier modes of two Fermi fields and a $U(1)$-current that generates rotations associated with the symmetry of the two Fermi fields.  $N=2$ superconformal nets will be extensions of a net associated with the $N=2$ super-Virasoro nets.
One may continue the procedure even to $N=4$, where four Fermi fields are present, acted upon by $SU(2)$-currents \cite{KR-N4}. The various supersymmetries play crucial roles in several physical contexts, in particular in phase transitions of solid state physics and on the worldsheets of string theory.

This paper is devoted to the construction and analysis of the $N=2$ superconformal nets. As is known, the passage from the $N=1$ to the $N=2$ case is not a matter of generalizing and extending results because a new and more interesting structure does appear by considering $N=2$ superconformal models, although the definition of the respective nets is similar.

After summing up basic general preliminaries, we begin our analysis in Sect. \ref{sec:SVir-net}, of course, constructing the $N=2$ super-Virasoro nets of von Neumann algebras by ``integrating'' the corresponding infinite-dimensional Lie superalgebra unitary (vacuum) representations and proving the necessary local energy bounds. 
For any given value of the central charge $c$ corresponding to some unitary representation the $N=2$ super-Virasoro algebras we can define the corresponding super-Virasoro net with central charge $c$. Different values of $c$ give rise to nonisomorphic nets.
The representations of the net will correspond to the representations of the $N=2$ super-Virasoro algebras (Neveu-Schwarz or Ramond)\cite{BFK,DVYZ}, where, in the Ramond case, the representations are actually solitonic. This goes all in complete analogy to the $N=1$ case \cite{CKL}.

At this point, however, there appears a remarkable new feature of the $N=2$ super-Virasoro algebra: the appearance of the spectral flow,  a ``homotopy'' equivalence between the Neveu-Schwarz and the Ramond algebra in the sense that there exists a deformation of one into the other. 
In Sect. \ref{sec:spfl} we set up an operator algebraic version of the $N=2$ spectral flow. We find that for any value of the flow parameter it gives rise to covariant solitons of the $N=2$ super-Virasoro nets. Moreover, it has a natural interpretation in terms of $\alpha$-induction 
\cite{BE,LR}. As a consequence solitonic Ramond representations of the nets are thus in correspondence with true (DHR) representations, an important fact of later use to us.

Before proceeding further, Sect. \ref{sec:coset} is devoted to clarifying a key point of our paper: the  identification for the 
even (Bose) subnet of the
$N=2$ super-Virasoro nets with $c<3$ as a coset for the inclusion 
$\A_{\Uone_{2n+4}} \subset \A_{\SUtwo_n} \otimes \A_{\Uone_{ 4}}$. This identification is equivalent to the corresponding coset identification at the Lie algebra (or vertex algebra) level and it is moreover equivalent to the identification of the corresponding characters, cf. \cite{KL1,CKL} for the analogous statements in the $N=0,1$ cases. Accordingly, it is equivalent to the correctness of the known $N=2$ character formulae for the discrete series representations, see e.g. \cite{EG,Do98}. 

The $N=2$ character formulae for the unitary representations with $c<3$ were first derived (independently) by Dobrev \cite{Dobrev87}, 
Kiritsis \cite{Kiritsis} and Matsuo \cite{Matsuo}. Although these formulae appear to be universally accepted, a closer look at the literature 
seems to indicate that the mathematical validity of the proofs which have been proposed so far and of related issues of the representation theory of the $N=2$ superconformal algebras is rather controversial, see \cite{Do96,EG,Do98,Dobrev07,Gr} and 
\cite{FSST,FST,Gato-Rivera99,Gato-Rivera08}. For this reason, we think that it is useful to give in this paper an independent complete mathematical proof of the $N=2$ coset identification (and consequently of the $N=2$ character formulae). Our proof, which we believe in any case to be of independent interest, is obtained largely through operator algebraic methods, a point that is certainly emblematic of the effectiveness and power of operator algebras. 
Surprisingly, our operator algebraic version of the $N=2$ spectral flow plays a crucial role in the proof. As a consequence of the coset identification the Bose subnets of the super-Virasoro nets with $c<3$ turn out to be completely rational in the sense of \cite{KLM}, and the fusion rules of the corresponding sectors agree with the CFT ones. In particular, the irreducible representations of these nets give rise to finite index subfactors.

We can then proceed with the classification of the $N=2$ superconformal (chiral) minimal models in Sect. \ref{sec:classification}, \ie  
the irreducible graded-local extensions of the $N=2$ super-Virasoro net. As in the local ($N=0$) case \cite{KL1}, there are simple current series (simple current extensions, \ie to crossed products by cyclic groups) and exceptional nets (mirror extensions \cite{X-m}). The proof is again based on combinatorics, modular invariants, and subfactor methods. Compared to the cases $N=0$ \cite{KL1} and $N=1$
\cite{CKL}, however, a new phenomenon appears, namely, simple current extensions with cyclic groups of arbitrary finite order.

The noncommutative geometric analysis starts in Sect. \ref{sec:STs}, where we construct the nets of spectral triples associated with representations of the Ramond $N=2$ super-Virasoro algebra. 

The main results in noncommutative geometry then are collected in Sect. \ref{sec:JLO}, where we consider the JLO cocycles for suitable ``global'' spectral triples and pair them with K-theory. 
This pairing is nondegenerate and allows us to separate, by means of certain characteristic projections, all Ramond sectors of the $N=2$ super-Virasoro nets. An essential point here is that, in contrast with the $N=1$ case, all the irreducible representations of the Ramond $N=2$ super-Virasoro algebra are graded. 
In the $N=1$ case there was only one graded Ramond irreducible sector for every value of the central charge, \ie the one corresponding to the minimal lowest conformal energy $h=c/24$, and the index pairing provides no insight there if one follows the strategy that we take here for the $N=2$ super-Virasoro nets. The situation changes if one considers the different, but related, recent constructions in \cite{CHL} which also allow us to separate certain sectors of $N=1$ superconformal nets.
Hence, in this paper and in \cite{CHL}, noncommutative geometry is used for the first time to separate representations of conformal nets.
In contrast to \cite{CHL} however, the analysis given here is deeply related to and crucially relies on the rich structure of the $N=2$ superconformal context.

Our last Sect. \ref{sec:chiral-fusion} is dedicated to the study of the chiral ring (for the minimal models) from an operator algebraic point of view. 
The chiral ring associated to the $N=2$ super-Virasoro net with central charge $c$ (here we assume $c<3$) is defined and generated here by the chiral sectors, a certain subset of Neveu-Schwarz sectors, and the monoidal product by means of truncated fusion rules, hence without direct reference to pointlike localized fields. However, the algebraic structure of the chiral ring coincides with the one provided by the operator product expansion of chiral primary fields. The spectral flow (at a specific value) is known to connect the chiral sectors with the Ramond vacuum sectors, \ie those with $h=c/24$, and we illustrate and exploit this in our setting, including some comments and hints for future work. Moreover, we show that if one restricts to the family of Ramond vacuum sectors (always assuming $c<3$) then the results in Sect. \ref{sec:JLO} can be interpreted in terms of the noncommutative geometry of a finite-dimensional abelian $*$-algebra.

\section{Preliminaries on Graded-Local Conformal Nets}\label{sec:prelim}

We provide here a brief summary on graded-local conformal nets, just as much as we need to understand the general construction in the subsequent sections. The concept is a generalization of the notion of local conformal nets and has been 
explicitly introduced and worked out in \cite[Sect. 2,3,4]{CKL} under the name of Fermi conformal nets (in the case of nontrivial grading); we refer to that paper for more details, cf. also \cite{CHL}.

Let 
$\S=\{z\in\C: |z| =1\}$ be the unit circle, let $\Diff$ be the infinite-dimensional (real) Lie group of orientation-preserving smooth diffeomorphisms of $\S$ and denote by $\Diff^{(n)}$, $n\in \N \cup \{ \infty \}$, the corresponding $n$-cover. In particular $\Diff^{(\infty)}$ is the universal covering group of $\Diff$.  For $g\in \Diff$ and $z\in \S$ we will often write $gz$ instead of $g(z)$. 
By identifying the group $\PSL$ with the group of M\"{o}bius transformations on $\S$ we can consider it as a three-dimensional Lie subgroup of $\Diff$. We denote by 
$\PSL^{(n)} \subset \Diff^{(n)}$, $n\in \N \cup \{ \infty \}$, the corresponding $n$-cover so that $\PSI$ is the universal covering group of $\PSL$. We denote by  $\dot{g} \in \Diff$ the image of $g \in \Diff^{(\infty)}$ under the covering map. Since the latter restricts to the covering map of $\PSI$ onto $\PSL$ we have $\dot{g} \in \PSL$ for all $g \in \PSI$.

Let $\I$ denote the set of nonempty and non-dense open intervals of $\S$. For any $I\in \I$, $I'$ denotes 
the interior of $\S \setminus I$. Given $I \in \I$, the subgroup $\Diff_I$ of diffeomorphisms localized in $I$ is defined 
as the stabilizer of $I'$ in $\Diff$ namely the subgroup of $\Diff$ whose elements are the diffeomorphisms acting trivially 
on $I'$. Then, for any $n \in \N \cup \{ \infty \}$, $\Diff^{(n)}_I$ denotes the connected component of the identity 
of the pre-image of $\Diff_I$ in $\Diff^{(n)}$ under the covering map. We denote by $\I^{(n)}$ the set of intervals in the $n$-cover 
$\S^{(n)}$ of $\S$ which map to an element in $\I$ under the covering map. Moreover, we often identify $\R$ with $\S\setminus\{-1\}$ by means of the Cayley transform, and we write $\I_\R$ (or $\bar{\I}_\R$) for the set of bounded open intervals (or bounded open intervals and open half-lines, respectively) in $\R$.

\begin{definition}\label{def:CFT-net}
 A \emph{graded-local conformal net $\A$ on $\S$} is a map $I \mapsto \A(I)$ from the set of intervals $\I$ to the set of von Neumann algebras acting on a common infinite-dimensional separable Hilbert space $\H$ which satisfy the 
following properties:
\begin{itemize}
 \item[(A)] \emph{Isotony.} $\A(I_1)\subset \A(I_2)$ if $I_1,I_2\in\I$ and $I_1\subset I_2$.
\item[(B)] \emph{M\"{o}bius covariance.} There is a strongly continuous unitary representation $U$ of $\PSI$ such that
\[
 U(g)\A(I)U(g)^* = \A(\dot{g}I), \quad g\in \PSI , I\in\I .
\]
\item[(C)] \emph{Positive energy.} The {\it conformal Hamiltonian} $L_0$ (\ie the selfadjoint generator of the
restriction of the $U$ to the lift to $\PSI$ of the one-parameter anti-clockwise rotation subgroup of $\PSL$) is positive.
 \item[(D)] \emph{Existence and uniqueness of the vacuum.} There exists a $U$-invariant vector $\Omega\in\H$ which is unique up to a phase and cyclic for $\bigvee_{I\in\I} \A(I)$, the von Neumann algebra generated by the algebras $\A(I)$, $I\in \I$.
 \item[(E)] \emph{Graded locality.} There exists a selfadjoint unitary $\Gamma$ (the grading unitary) on $\H$ satisfying 
 $\Gamma \A(I) \Gamma = \A(I)$ for all $I\in \I$ and $\Gamma \Omega =\Omega$ and such that
 \[
 \A(I')\subset Z \A(I)'Z^*,\quad I\in\I ,
\]
where
\[
 Z:=\frac{\unit - \rmi\Gamma}{1-\rmi}.
\]
\item[(F)] \emph{Diffeomorphism covariance.} There is a strongly continuous projective unitary representation of 
$\Diff^{(\infty)}$, denoted again by $U$, extending the unitary representation of $\PSI$ and such that
\[
 U(g)\A(I)U(g)^* = \A(\dot{g}I), \quad g\in \Diff^{(\infty)} , I\in\I,
\]
and
\[
 U(g) x U(g)^* = x,\quad x\in \A(I'), g\in \Diff^{(\infty)}_I, I\in\I.
\]
\end{itemize}
A \emph{local conformal net} is a graded-local conformal net with trivial grading $\Gamma=\unit$. The \emph{even subnet}\index{even subnet} of a graded-local conformal net $\A$ is defined as the fixed point subnet $\A^\gamma$, with grading gauge automorphism $\gamma=\Ad\Gamma$. It can be shown that the projective representation $U$ of $\Diff^{\infty}$ commutes with $\Gamma$, cf. \cite[Lemma 10]{CKL}. Accordingly the restriction of  $\A^\gamma$ to the even subspace of $\H$ is a local conformal net with respect to the restriction to this subspace of the projective representation $U$ of $\Diff^{\infty}$. 
\end{definition}

Some of the consequences  \cite{CKL,FG,GL,CW05} of the preceding definition are:

\begin{itemize}
\item[$(1)$] \emph{Reeh-Schlieder Property.} $\Omega$ is cyclic and separating for every $\A(I)$, $I\in\I$.
\item[$(2)$] \emph{Bisognano-Wichmann Property.} Let $I\in \I$ and let $\Delta_I$, $J_I$ be the modular operator and the modular conjugation of $\left(\A(I), \Omega \right)$.  Then we have 
\[
 U(\delta_I(-2\pi t)) = \Delta_I^{\rmi t}, \quad t\in \R .
\]

Moreover the unitary representation $U:\PSI \mapsto B(\H)$ extends to an (anti-) unitary representation of  
$\PSL \rtimes \Z/2$  determined by 
\[
U(r_I)=Z J_I 
\]
and acting covariantly on $\A$.
Here $(\delta_I(t))_{t\in\R}$ is (the lift to $\PSI$ of) the one-parameter dilation subgroup of $\PSL$ with respect to $I$ and $r_I$ the reflection of the interval $I$ onto the complement $I'$.
\item[$(3)$] \emph{Graded Haag Duality.} $\A(I')=Z\A(I)'Z^*$, for $I\in\I$.
\item[$(4)$] \emph{Outer regularity.} 
\[
 \A(I_0) = \bigcap_{I\in\I,I\supset \bar{I_0}} \A(I),\quad I_0\in\I.
\]
\item[$(5)$] \emph{Additivity.} If $I = \bigcup_{\alpha} I_\alpha$ with $I, I_\alpha \in \I$, then 
$\A(I) = \bigvee_\alpha \A(I_\alpha)$.
 \item[$(6)$] \emph{Factoriality.} $\A(I)$ is a type $III_1$-factor, for $I\in\I$.
 \item[$(7)$] \emph{Irreducibility.} $\bigvee_{I\in\I} \A(I)= B(\H)$.
 \item[$(8)$] \emph{Vacuum Spin-Statistics theorem.} $\rme^{\rmi 2\pi L_0} =\Gamma$, in particular $\rme^{\rmi 2\pi L_0}=\unit$ for local nets, where $L_0$ is the infinitesimal generator from above corresponding to rotations. Hence the representation $U$ of $\PSI$ factors through a representation of $\PSL^{(2)}$ ($\PSL$ in the local case) and consequently its extension $\Diff^{(\infty)}$ factors through a projective representation of $\Diff^{(2)}$  ($\Diff$ in the local case). 
 \item[$(9)$] \emph{Uniqueness of Covariance.} For fixed $\Omega$, the strongly continuous projective representation $U$ of $\Diff^{(\infty)}$ making the net covariant is unique.
\end{itemize}

In the sequel, $G$ stands for either one of the two groups $\PSL$ or $\Diff$. From time to time we shall need \emph{covering nets} of a given (graded-)local conformal net. By this we mean the following:  a \emph{$G$-covariant net over $\S^{(n)}$} is a family $(\A_n(I))_{I\in\I^{(n)}}$ such that
\begin{itemize}
 \item[-]  $\A_n(I_1)\subset \A_n(I_2)$ if $I_1,I_2\in\I^{(n)}$ and $I_1\subset I_2$;
 \item[-] there is a strongly continuous unitary representation $U$ of $G^{(\infty)}$ on $\H$ such that
\[
 U(g)\A_n(I) U(g)^* = \A_n(\dot{g} I), \quad g\in\PSI, I\in\I^{(\infty)}.
\]
\end{itemize}

A representation of $\A$ is a family $\pi=(\pi_I)_{I\in\I}$ of representations
$\pi_I : \A(I) \ra B(\H_\pi)$, $I\in\I$, on a common Hilbert space $\H_\pi$ which is compatible with isotony, \ie such that $\pi_{I_2}|_{\A(I_1)} = \pi_{I_1}$ whenever $I_1\subset I_2$. $\pi$ is called \emph{locally normal} if every $\pi_I$ is normal. $\pi$ is called \emph{$G$-covariant} if there exists a projective unitary representation $U_\pi$ of $G^{\infty}$ on $\H_\pi$ satisfying
\[
U_\pi(g) \pi_I(x) U_\pi(g)^* = \pi_{\dot{g}I}(U(g)xU(g)^*),\quad g\in G^{\infty},\; x\in\A(I),\; I\in\I.
\]
$\pi$ has \emph{positive energy}  if it is $G$-covariant and the infinitesimal generator of the lift of the rotation subgroup in $U_\pi(G^{(\infty)})$ is positive. The unitary equivalence classes of irreducible locally normal representations are called the \emph{sectors} of $\A$. 
The identity representation $\pi_0$ of $\A$ on $\H$ is called the \emph{vacuum representation}, and it is automatically locally normal and
$\Diff^{(\infty)}$-covariant. 

The above notion of representation of a graded-local conformal net $\A$ agrees with the usual one for local conformal nets. 
In the graded-local case however it turns out to be very natural and useful to consider a larger class of (solitonic) representations.

\begin{definition}\label{def:CFT-covering-reps}
\begin{itemize}
\item[$(1)$] A \emph{$G$-covariant soliton} of a graded-local conformal net $\A$ on $\S$ is a family 
$ \pi = (\pi_I)_{I\in\bar{\I}_\R}$ of normal representations $\pi_I : \A(I) \ra B(\H_\pi)$, $I\in\bar{\I}_\R$, on a common Hilbert space $\H_\pi$, which is compatible with isotony in the sense that
 $\pi_{\tilde{I}}|_{\A(I)}= \pi_I$ if $I\subset \tilde{I}$, together with a projective unitary representation $U_\pi$ of $G^{(\infty)}$ on $B(\H_\pi)$ such that, for every $I\in\I_\R$:

\[
U_\pi(g) \pi_I(x) U_\pi(g)^*=\pi_{\dot{g}I}(U(g)xU(g)^*),\quad g\in \mathcal{U}_I , x\in\A(I), 
\]
where $\mathcal{U}_I$ is the connected component of the identity in $G^{(\infty)}$ of the open set \linebreak
$\{g \in G^{(\infty)}: \dot{g}I \in \I_\R \}$. If $U_\pi$ is a positive energy representation, namely the selfadjoint generator $L^\pi_0$
corresponding to the one parameter group of rotations has nonnegative spectrum, we say that $\pi$ has positive energy.

 If $\pi$ is a $G$-covariant soliton and the family $ \pi = (\pi_I)_{I\in\bar{\I}_\R}$ can be extended 
to $\I$ so that the extension is still compatible with isotony and satisfies
\[
U_\pi(g) \pi_I(x) U_\pi(g)^*=\pi_{\dot{g}I}(U(g)xU(g)^*),\quad g\in G^{\infty}, x\in\A(I), 
\]
for all $I \in \I$, then we say that $\pi$ is a \emph{DHR representation} of $\A$.

\item[$(2)$] A \emph{$G$-covariant general soliton} 
of $\A$ is a G-covariant soliton such that its restriction to the even subnet $\A^\gamma$
is a DHR representation.  In case $G=\Diff$, we shall simply say \emph{general soliton}. 

\item[$(3)$] We say that a $G$-covariant general soliton $\pi$ of $\A$ is \emph{graded} if there exists a selfadjoint unitary $\Gamma_\pi\in B(\H)$, commuting with the representation $U_\pi$ and such that 
\[
\Gamma_\pi \pi_I(x) \Gamma_\pi = \pi_I(\gamma(x)) , \quad x\in\A(I),\; I\in\I_\R.
\]

\item[$(4)$] A $G$-covariant graded general soliton $\pi$ of a superconformal net $\A$ is \emph{supersymmetric} if $L_0^\pi -\lambda \unit$ admits an odd square-root for some $\lambda \in \R$.

\end{itemize}
\end{definition}

\begin{remark}\label{rem:CFT-IR}
It can be shown (using a straightforward reasoning based on covariance relations) that a family $(\pi_I)_{I\in\I_\R}$ of normal representations of $\A$ which is covariant with respect to a given projective unitary representation of $G^{(\infty)}$ extends automatically from $\I_\R$ to $\bar{\I}_\R$, thus defines a $G$-covariant soliton. We shall make use of this (simplifying) fact when considering explicit $N=2$ super-Virasoro nets. Note also that the $G$-covariant solitons of $\A$ which are DHR representations in the sense of the above definition corresponds to the $G$-covariant locally normal representations of $\A$. In particular, the restriction of a $G$-covariant general soliton $\pi$ of $\A$ to the even subnet $\A^\gamma$ gives rise to a locally normal $G$-covariant representation of the latter net on 
$\H_\pi$.
\end{remark}

In various cases, as a consequence of the results in \cite{Wei06}, the positive energy condition is automatic for $G$-covariant general solitons, see \cite[Prop. 12 \& Prop. 21]{CKL}. In particular an irreducible $G$-covariant general soliton is always of positive energy.

For the more common case of a \emph{local} net $\B$ over $\S$ (like the even subnet $\B=\A^\gamma$) we recall the following associated ``global algebras'' \cite{Fred,FRS2,GL1}, see also \cite{CCHW,CHL}:

\begin{definition}\label{def:CFT-univC}
The \emph{universal C*-algebra} $C^*(\B)$ of $\B$ is determined by the following properties:
\begin{itemize}
\item[-] for every $I\in\I$, there are unital embeddings $\iota_I:\B(I)\ra C^*(\B)$, such that $\iota_{I_1|\B(I_2)} = \iota_{I_2}$ whenever $I_1\subset I_2$, and all $\iota_I(\B(I))$ together generate $C^*(\B)$ as a C*-algebra;
\item[-] for every representation $\pi$ of $\B$ on some Hilbert space $\H_\pi$, there is a unique $*$-representation $\tilde{\pi}:C^*(\B)\ra B(\H_\pi)$ such that
\[
\pi_I = \tilde{\pi}\circ \iota_I,\quad I\in \I.
\]
\end{itemize}
It can be shown to be unique up to isomorphism. Let $(\tilde{\pi}_u,\H_u)$ be the \emph{universal representation} of $C^*(\B)$: the direct sum of all GNS representations $\tilde{\pi}$ of $C^*(\B)$. Since it is faithful, $C^*(\B)$  can be identified with $\tilde{\pi}_u(C^*(\B))$. We call the weak closure $W^*(\B) = \tilde{\pi}_u(C^*(\B))''$ the \emph{universal von Neumann algebra} of $\B$. We shall drop the $\tilde{\cdot}$ sign henceforth.
\end{definition}

When no confusion can arise we will identify $C^*(\B)$ with $\tilde{\pi}_u(C^*(\B)) \subset W^*(\B)$. Moreover, we will 
write $\B(I)$ instead of $\iota_I(\B(I))$. With these conventions, for every $I \in \I$ we have the inclusions 
$\B(I) \subset C^*(\B) \subset W^*(\B)$.

Coming back to the general case of graded-local nets, a fundamental first consequence of Definition \ref{def:CFT-covering-reps} is

\begin{proposition}[{\cite[Sect. 4.3]{CKL}}]\label{prop:CKL22}
Let $\A$ be  a graded-local conformal net and $\pi$ an irreducible $G$-covariant general soliton of $\A$. Then the following three conditions are equivalent:
\begin{itemize}
 \item[-] $\pi$ is graded,
 \item[-] $\pi_{|\A ^\gamma}$ is reducible,
 \item[-] $\pi_{|\A ^\gamma} =: \pi_+\oplus \pi_- \simeq  \pi_+ \oplus \pi_+ \circ  \hat{\gamma}$, 
\end{itemize}
with $\pi_+$ and $\pi_-$ inequivalent irreducible  DHR representations of $\A^\gamma$ on the eigenspaces $\H_{\pi, \pm}$ of 
$\Gamma_\pi$ corresponding to the eigenvalues $\pm 1$ and $\hat{\gamma}$ a localized DHR automorphism of $\A^\gamma$ dual to the grading. Moreover, if $\tilde{\pi}$ is another irreducible $G$-covariant general soliton of $\A$ then $\tilde{\pi}$ is unitarily equivalent to $\pi$
if and only if $\tilde{\pi}_+$ is unitarily equivalent to $\pi_+$ or to $\pi_-$. 

\end{proposition}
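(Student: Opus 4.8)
The plan is to reduce the three-fold equivalence to the single dichotomy of whether $\pi$ and $\pi\circ\gamma$ are equivalent as $G$-covariant solitons, and then to read off the finer structure from the index-$2$ (Clifford-type) theory of the inclusion $\A^\gamma\subset\A$. First I would dispose of the easy implications. If $\pi$ is graded, the eigenprojections $E_\pm=(\unit\pm\Gamma_\pi)/2$ of the grading unitary lie in the commutant of $\bigvee_{I}\pi_I(\A^\gamma(I))$, because $\Gamma_\pi$ commutes with $\pi_I(x)$ for $x\in\A^\gamma(I)$ (as $\gamma(x)=x$) and with $U_\pi$; hence $\pi_{|\A^\gamma}$ splits on $\H_{\pi,\pm}=E_\pm\H_\pi$ and is reducible, giving ``graded $\Rightarrow$ reducible''. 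The implication from the explicit decomposition to reducibility is trivial. I would also record that $\Gamma_\pi$ is precisely a unitary intertwiner $\pi\to\pi\circ\gamma$ commuting with $U_\pi$, so that ``$\pi$ graded'' is equivalent to ``$\pi\simeq\pi\circ\gamma$'': given such an intertwiner $W$, irreducibility of $\pi$ forces $W^2\in\pi(\A)'=\C\unit$, and after rescaling by a phase one obtains an involutive (hence self-adjoint) unitary $\Gamma_\pi$ implementing $\gamma$ and commuting with $U_\pi$.

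For the hard direction I would exploit the $\Z/2$-structure of the extension $\A^\gamma(I)\subset\A(I)$. Fixing an interval $I_0$ and an odd element $v$ of $\A(I_0)$ generating the index-$2$ extension, conjugation $\Ad(v)$ restricts to a localized automorphism $\hat\gamma$ of $\A^\gamma$ (the automorphism dual to the grading), and $\pi_{I_0}(v)$ normalizes $M_0:=\bigvee_I\pi_I(\A^\gamma(I))$, hence acts on its commutant. The key point is that this realizes, inside $\pi$, exactly the Mackey--Clifford machine for the order-two extension: since $\pi$ is irreducible over $\A$ and $\A$ is generated by $\A^\gamma$ together with the single odd bimodule, the commutant $M_0'$ is abelian of dimension at most two (multiplicity-freeness being the Clifford-theoretic feature of the abelian group $\Z/2$). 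Thus reducibility of $\pi_{|\A^\gamma}$ forces $\dim M_0'=2$, with two minimal projections $E_+,E_-$; I would then set $\Gamma_\pi:=E_+-E_-$ and verify that it is the sought grading unitary.

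The verification splits into checking that $\Gamma_\pi$ implements $\gamma$ and commutes with $U_\pi$. For even $x$ this is clear; for the odd generator I would argue that $\Ad(\pi_{I_0}(v))$ cannot fix both $E_\pm$ --- otherwise they would lie in $\pi(\A)'=\C\unit$ --- so it swaps them, whence $\Gamma_\pi$ anticommutes with $\pi_{I_0}(v)$ and $\Gamma_\pi\pi_I(x)\Gamma_\pi=\pi_I(\gamma(x))$ on all of $\A(I)$. Commutation with $U_\pi$ follows since $U_\pi$ preserves $M_0'$ (because $\gamma$ commutes with the $G$-action) and, being connected to the identity, fixes each $E_\pm$. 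This produces (a) from (b). The finer statement (c) then drops out: the subrepresentations $\pi_\pm:=\pi_{|\A^\gamma}$ compressed to $\H_{\pi,\pm}$ are irreducible (minimal projections of $M_0'$) and inequivalent ($M_0'$ abelian rules out off-diagonal intertwiners), while $\pi_{I_0}(v)$ intertwines them and implements $\hat\gamma$ on the even algebra, so that $\pi_-\simeq\pi_+\circ\hat\gamma$, each being a DHR representation of $\A^\gamma$ by the general-soliton hypothesis.

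Finally, for the classification clause, if $\tilde\pi\simeq\pi$ then the equivalence restricts to $\A^\gamma$ and matches $\tilde\pi_+\oplus\tilde\pi_-$ with $\pi_+\oplus\pi_-$, giving $\tilde\pi_+\simeq\pi_+$ or $\pi_-$; conversely, if $\tilde\pi_+$ is equivalent to $\pi_+$ or $\pi_-$, then using $\pi_-\simeq\pi_+\circ\hat\gamma$ and $\tilde\pi_-\simeq\tilde\pi_+\circ\hat\gamma$ one gets $\tilde\pi_{|\A^\gamma}\simeq\pi_{|\A^\gamma}$, and the uniqueness built into the induction picture --- an irreducible graded soliton is the \emph{unique} one restricting to a given $\hat\gamma$-orbit $\{\pi_+,\pi_-\}$ --- upgrades this to $\tilde\pi\simeq\pi$. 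I expect the main obstacle to be the hard implication (b)$\Rightarrow$(a): namely, establishing rigorously that irreducibility of $\pi$ over $\A$ confines $M_0'$ to dimension at most two in the \emph{soliton} setting, where one must work with the odd localized fields and the graded-locality ($Z$-)twist rather than with a genuine representation of $\A$, and controlling the interplay of the candidate $\Gamma_\pi$ with the covariance representation $U_\pi$.
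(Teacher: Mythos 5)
Your proof of the three-fold equivalence is essentially sound --- and note that for this part the paper writes out no proof at all, simply citing \cite[Prop. 22]{CKL}; its own proof is devoted exclusively to the final classification statement. Your Clifford-type mechanism is the right one: with an odd unitary $v\in\A(I_0)$, the algebra $M:=\bigvee_{I}\pi_I(\A(I))$ is generated by $M_0:=\bigvee_{I}\pi_I(\A^\gamma(I))$ together with the single unitary $V=\pi_{I_0}(v)$, which normalizes $M_0$ and satisfies $V^2\in M_0$; hence $M_0'$ carries an order-two action whose fixed algebra is $M'=\C$, forcing $M_0'$ to be abelian of dimension at most two, and the construction $\Gamma_\pi=E_+-E_-$ with the connectedness argument for commutation with $U_\pi$ goes through. (You should still justify the existence of an odd \emph{unitary} in $\A(I_0)$ --- polar decomposition of a nonzero odd element plus type III-ness of the even subalgebra --- and the factorization of any odd $a\in\A(I)$ as $(av^*)v$ with $av^*$ even in a larger interval, which is what makes $M=(M_0\cup\{V\})''$ valid in the soliton setting.)

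The genuine gap is in the classification clause, i.e.\ precisely the part the paper actually proves. In the ``if'' direction you correctly deduce $\tilde\pi|_{\A^\gamma}\simeq\pi|_{\A^\gamma}$ from $\tilde\pi_+\simeq\pi_\pm$ via $\pi_-\simeq\pi_+\circ\hat\gamma$ and $\tilde\pi_-\simeq\tilde\pi_+\circ\hat\gamma$, but you then appeal to ``the uniqueness built into the induction picture --- an irreducible graded soliton is the unique one restricting to a given $\hat\gamma$-orbit''. That uniqueness is exactly what has to be proved: you never set up any induction or reconstruction of $\pi$ from $\pi_+$ from which it could be read off, so the argument is circular at its decisive step. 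The paper closes this step by a dimension count on a direct sum: since $(\pi\oplus\tilde\pi)|_{\A^\gamma}\simeq\pi_+\oplus\pi_-\oplus\pi_+\oplus\pi_-$ with $\pi_\pm$ inequivalent irreducibles, the commutant of $\bigcup_{I\in\I_\R}\bigl((\pi_I\oplus\tilde\pi_I)(\A^\gamma(I))\bigr)$ is eight-dimensional, whereas if $\pi\not\simeq\tilde\pi$ it would have to be four-dimensional: an even intertwiner between the two restrictions (one invariant under $\Ad\bigl((\pi_{I_0}\oplus\tilde\pi_{I_0})(v)\bigr)$) lies in $(\pi\oplus\tilde\pi)(\A)'$, which is then diagonal, and an odd one becomes even after multiplication by $\Gamma_\pi$, so all off-diagonal intertwiners vanish --- a contradiction, whence $\pi\simeq\tilde\pi$. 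This is nothing but your own Clifford machine applied to $\pi\oplus\tilde\pi$, so the repair is within reach of your methods, but as written the step is missing. Your closing self-assessment is also misplaced: the implication from reducibility to gradedness, which you flag as the main obstacle, is the part you handled correctly, while the unproved uniqueness is where the actual content of the paper's proof lies.
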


\begin{proof} We prove only the last statement. For the proof of the remaining statements we refer the reader to \cite[Prop. 22]{CKL}. If 
 $\tilde{\pi}$ is unitarily equivalent to $\pi$ and $u: \H_{\pi} \to \H_{\tilde{\pi}}$ is a corresponding unitary intertwiner then 
 $u \Gamma_\pi u^* = \pm \Gamma_{\tilde{\pi}}$ so that $\tilde{\pi}_+$ is unitarily equivalent to $\pi_\pm$. Conversely, assume that 
 $\tilde{\pi}_+$ is unitarily equivalent to $\pi_\pm$. Then  $\tilde{\pi}_- \simeq \pi_+\circ\hat{\gamma}$ is unitarily equivalent to 
 $\pi_\mp \simeq \pi_\pm \circ \hat{\gamma}$ and hence $\pi_+ \oplus \pi_- \simeq \tilde{\pi}_+ \oplus \tilde{\pi}_-$. It follows that the commutant of $\bigcup_{I \in \I_\R} \left( (\pi_I \oplus \tilde{\pi}_I)(\A^\gamma(I) )    \right)$ is an eight-dimensional algebra and hence 
 $\pi$ cannot be inequivalent to $\tilde{\pi}$, otherwise this commutant would be a four-dimensional algebra.
 \end{proof}

It has been shown in \cite[Prop. 22]{CKL} that for irreducible graded $\pi$ and under the assumption of finite statistical dimension on 
$\pi_+$, we have, up to unitary equivalence, the two possibilities
\[
 \rme^{\rmi 2\pi L_0^\pi} =  \rme^{\rmi 2\pi L_0^{\pi_+}}\oplus \pm \rme^{\rmi 2\pi L_0^{\pi_+}},
\]
so, $\rme^{\rmi 4\pi L_0^\pi}$ is a scalar, because, $\pi_+$ being irreducible, $\rme^{\rmi 2\pi L_0^{\pi_+}}$ is a scalar. 

Here `` $+$ " will correspond to $(R)$ in the following theorem, `` $-$ " to $(NS)$. Thus every irreducible general soliton of finite statistical dimension factorizes through a representation of a net over $\S^{(2)}$.  
Since the statistical dimension in general may be infinite we shall assume this factorization property from the beginning in the definition of Neveu-Schwarz and Ramond representations of a graded-local conformal net, \cf Theorem \ref{th:CFT-R-NS} here below.

\begin{lemma}\label{lem:CFT-DHR}
If a $G$-covariant soliton $\pi$ on a graded-local conformal net $\A$ is such that $\rme^{2\pi\rmi L_0^\pi}$ is either a scalar or implements the grading, then $\pi_{|\A^\gamma}$ is a DHR representation, \ie $\pi$ is a $G$-covariant \emph{general} soliton of $\A$.
\end{lemma}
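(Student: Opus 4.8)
The plan is to show that the restriction $\pi_{|\A^\gamma}$, which a priori is only indexed by the intervals $\bar{\I}_\R\subset\I$, extends to a family indexed by \emph{all} of $\I$ that remains compatible with isotony and covariant under the full group $G^{(\infty)}$; by Definition \ref{def:CFT-covering-reps} this is exactly the assertion that $\pi_{|\A^\gamma}$ is a DHR representation, hence that $\pi$ is a general soliton. The natural candidate for the extension transports $\pi$ by the covariance unitaries: for $I\in\I$ I would choose $g\in G^{(\infty)}$ with $\dot g I\in\I_\R$ and set
\[
\pi_I(x):=U_\pi(g)^*\,\pi_{\dot g I}\!\left(U(g)xU(g)^*\right)U_\pi(g),\qquad x\in\A^\gamma(I).
\]
Recall that $U$ commutes with $\Gamma$ (\cf \cite[Lemma 10]{CKL}), so $U(g)$ preserves the even net and $U(g)xU(g)^*\in\A^\gamma(\dot g I)$; thus the right-hand side is defined and normal.

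The heart of the matter is that this is independent of the admissible $g$. Given two choices $g_1,g_2$, put $J_i=\dot{g_i}I\in\I_\R$ and $h:=g_2 g_1^{-1}$, so that $\dot h J_1=J_2$ with $J_1,J_2\in\I_\R$. I would first treat the case $h\in\mathcal{U}_{J_1}$: then the soliton covariance of Definition \ref{def:CFT-covering-reps}(1) applies to $h$, and writing $y=U(g_1)xU(g_1)^*$ and using the (projective) multiplicativity $U_\pi(g_2)=U_\pi(h)U_\pi(g_1)$ — with all phases cancelling in the conjugations — a direct substitution shows the two expressions for $\pi_I(x)$ coincide. In general $h$ need not lie in $\mathcal{U}_{J_1}$: the admissible components of $\{g\in G^{(\infty)}:\dot g J_1\in\I_\R\}$ are the translates $z^n\mathcal{U}_{J_1}$, $n\in\Z$, where $z=\tilde r(2\pi)$ generates the kernel of the covering $G^{(\infty)}\to G$ (the lift of the full $2\pi$ rotation); the point is that carrying one interval of $\I_\R$ to another while staying admissible can only introduce integer multiples of a full rotation. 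Hence $h=z^n h_0$ with $h_0\in\mathcal{U}_{J_1}$, and since $U_\pi(z)=\rme^{2\pi\rmi L_0^\pi}$ the same computation now shows that the two expressions for $\pi_I(x)$ differ precisely by $\Ad\!\big((\rme^{2\pi\rmi L_0^\pi})^n\big)$ applied to $\pi_{J_1}(y)$.

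It is exactly here that the hypothesis is used. Since $y\in\A^\gamma(J_1)$, we have $\pi_{J_1}(y)\in\pi_{J_1}(\A^\gamma(J_1))$, and on this algebra conjugation by $\rme^{2\pi\rmi L_0^\pi}$ is trivial in both admitted cases: if $\rme^{2\pi\rmi L_0^\pi}=\unit$ this is clear, while if $\rme^{2\pi\rmi L_0^\pi}$ implements the grading then $\Ad(\rme^{2\pi\rmi L_0^\pi})\circ\pi_{J_1}=\pi_{J_1}\circ\gamma$ on $\A(J_1)$, and $\gamma$ restricts to the identity on the even subalgebra $\A^\gamma(J_1)$. Therefore the monodromy factor disappears and $\pi_I$ is well defined.

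Finally I would check the two remaining, routine properties of the extended family: isotony, by rotating a nested pair of intervals simultaneously into $\I_\R$ with a single $g$; and covariance $U_\pi(g)\pi_I(x)U_\pi(g)^*=\pi_{\dot g I}(U(g)xU(g)^*)$ for arbitrary $g\in G^{(\infty)}$ and all $I\in\I$, which is immediate from the defining formula together with the cocycle identity for $U_\pi$ and the well-definedness just established. This produces a DHR extension of $\pi_{|\A^\gamma}$, completing the proof. The main obstacle is the well-definedness step, and within it the covering-space bookkeeping that identifies the admissible connected components with the central winding $z^n$, thereby pinning the whole ambiguity down to a power of $\rme^{2\pi\rmi L_0^\pi}$; everything else is formal manipulation with the covariance cocycle.
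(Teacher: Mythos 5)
Your proof is correct and follows essentially the same route as the paper's: extend the representation by transporting with the covariance unitaries, and observe that the only obstruction --- the $2\pi$-rotation monodromy $U_\pi(2\pi)=\rme^{2\pi\rmi L_0^\pi}$ --- acts trivially on the even subnet under either hypothesis. The paper packages this by lifting $\pi$ to the universal cover net, noting it factors through the double cover, and citing \cite[Prop.19]{CKL}, whereas you carry out the well-definedness and covering-component bookkeeping by hand; the underlying mechanism is identical.
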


\begin{proof}
Let $U_\pi$ be the covariance unitary representation of $\pi$. We can extend $\pi$ to a representation of the promotion $\A^{(\infty)}$ to the universal cover $\S^{(\infty)}$ by setting $\pi_{gI}:= {\rm Ad}U_\pi(g)\cdot\pi_I$ for every $I\in\I^{(\infty)}$.
As $U_\pi(4\pi)=\rme^{4\pi\rmi L_0^\pi}$ commutes with the image of $\pi$,
$\pi$ defines actually a $G$-covariant representation of the double cover net $\A^{(2)}$ over $\S^{(2)}$.
By assumption,  $U_\pi(2\pi)$ commutes with the image of the restriction of $\pi$ to the even subnet $\A^\gamma$ of $\A$, so $\pi$ is a DHR representation of $\A^\gamma$, \cf
 \cite[Prop. 19]{CKL}.
\end{proof}

\begin{theorem}[{\cf \cite[Sect. 4.3]{CKL}}]\label{th:CFT-R-NS}
Let $\A$ be a graded-local conformal net over $\S$
and let $\pi$ be an irreducible $G$-covariant general soliton of $\A$  such that $\rme^{\rmi 4\pi L_0^\pi}$ is a scalar, and denote $\pi|_{\A^\gamma}=:\pi_+ \oplus \pi_+\hat{\gamma}$ or $\pi|_{\A^\gamma}=:\pi_+$ with an irreducible representation $\pi_+$ of $\A^\gamma$ (for graded or ungraded $\pi$, respectively). Then $\pi$ is of either of the subsequent two types:
\begin{itemize}
\item[$(NS)$] $\pi$ is actually a DHR representation of $\A$; equivalently,\\
$\rme^{\rmi 2\pi L_0^\pi}$ implements the grading.
\item[$(R)$]   $\pi$ is not a DHR representation but only a general soliton of $\A$; equivalently,\\
$\rme^{\rmi 2\pi L_0^\pi}$ is a scalar, hence does not implement the grading.
\end{itemize}
In case $(NS)$, $\pi$ is called a \emph{Neveu-Schwarz representation} of $\A$, and in case $(R)$, a \emph{Ramond representation} of 
$\A$, the latter being however actually only a general soliton, \ie a representation of $\A^{(2)}$ over $\S^{(2)}$, and not a proper representation of $\A$. A direct sum of irreducible Neveu-Schwarz (Ramond) representations is again called a Neveu-Schwarz (Ramond) representation. 
\end{theorem}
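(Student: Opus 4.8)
The plan is to exploit the dichotomy already secured by the spin–statistics discussion preceding the theorem: for an irreducible $G$-covariant general soliton $\pi$ with $\rme^{\rmi 4\pi L_0^\pi}$ scalar, the operator $\rme^{\rmi 2\pi L_0^\pi}$ is itself a scalar (just a square root of the former), and the content of the theorem is to analyze the two cases according to whether this scalar implements the grading or not, and to identify each case with the DHR/solitonic distinction. Since $\pi$ is irreducible, Schur's lemma forces $\rme^{\rmi 2\pi L_0^\pi}$ to be a scalar multiple of the identity; the only question is which scalar. First I would observe that by the very hypothesis $\rme^{\rmi 4\pi L_0^\pi}=\unit$ up to scalar, so after rescaling we may assume $\rme^{\rmi 4\pi L_0^\pi}=\unit$, whence $\rme^{\rmi 2\pi L_0^\pi}$ squares to $\unit$ and is therefore a selfadjoint unitary scalar, i.e.\ $\pm\unit$ on each irreducible piece.

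Next I would invoke Lemma \ref{lem:CFT-DHR} directly: it states that whenever $\rme^{2\pi\rmi L_0^\pi}$ either equals the identity or implements the grading, then $\pi_{|\A^\gamma}$ is a DHR representation. The two exhaustive possibilities for the scalar $\rme^{\rmi 2\pi L_0^\pi}$ correspond precisely to the two cases $(NS)$ and $(R)$. For $(NS)$, the condition is that $\rme^{\rmi 2\pi L_0^\pi}=\Gamma_\pi$ implements the grading; by Lemma \ref{lem:CFT-DHR} this yields that $\pi$ itself extends to a genuine DHR representation of $\A$ (not merely of $\A^\gamma$), using the extension-to-$\S^{(2)}$ construction in the lemma's proof together with the fact that the grading being implemented by $\rme^{\rmi 2\pi L_0^\pi}$ is exactly the obstruction that collapses the double-cover representation down to a representation of $\A$ on $\S$. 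For $(R)$, the complementary scalar value means $\rme^{\rmi 2\pi L_0^\pi}$ does \emph{not} implement the grading (it is $\pm\unit$ in a way incompatible with $\gamma$), so $\pi$ fails to descend to a DHR representation of the full net $\A$ but, via Lemma \ref{lem:CFT-DHR}, still restricts to a DHR representation of $\A^\gamma$ and factors through $\A^{(2)}$ over $\S^{(2)}$, i.e.\ it is a genuine general soliton and no more.

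The equivalences asserted in each bullet — ``$\pi$ is a DHR representation of $\A$'' iff ``$\rme^{\rmi 2\pi L_0^\pi}=\Gamma_\pi$ implements the grading'', and dually for $(R)$ — I would establish by noting that a DHR representation of $\A$ is one extending compatibly to all of $\I$ (Definition \ref{def:CFT-covering-reps}(1)), and the rotation-by-$2\pi$ unitary $U_\pi(2\pi)=\rme^{\rmi 2\pi L_0^\pi}$ is exactly the monodromy measuring whether the soliton closes up over $\S$ or only over $\S^{(2)}$. That the grading unitary $\Gamma_\pi$ agrees with $\rme^{\rmi 2\pi L_0^\pi}$ in the $(NS)$ case is the representation-level shadow of the vacuum spin–statistics relation $\rme^{\rmi 2\pi L_0}=\Gamma$ listed as consequence $(8)$ in the preliminaries, transported to $\H_\pi$ via covariance.

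\textbf{The main obstacle} I anticipate is not the case split itself, which is essentially bookkeeping once Lemma \ref{lem:CFT-DHR} is in hand, but the careful justification that in case $(NS)$ the representation extends from $\bar{\I}_\R$ to all of $\I$ as a genuine DHR representation of the \emph{full graded-local} net $\A$ (and not just of its even part $\A^\gamma$). The lemma as stated only delivers the DHR property for the restriction to $\A^\gamma$; promoting this to the full net requires that the implemented grading $\rme^{\rmi 2\pi L_0^\pi}=\Gamma_\pi$ be genuinely compatible with the $Z$-twisted graded locality of Definition \ref{def:CFT-net}$(E)$ across the full interval structure, so that the covariance formula $U_\pi(g)\pi_I(x)U_\pi(g)^*=\pi_{\dot{g}I}(U(g)xU(g)^*)$ holds for all $g\in G^{(\infty)}$ and all $I\in\I$ rather than only for $g\in\mathcal{U}_I$. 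This is where the finite-statistical-dimension analysis in \cite[Prop.\ 22]{CKL} and the $\S^{(2)}$-factorization argument must be threaded together, and I would lean on the cited reference to close this gap rather than reprove it from scratch.
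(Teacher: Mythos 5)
Your opening step is wrong, and the error propagates through the whole argument: you claim that, since $\pi$ is irreducible, Schur's lemma forces $\rme^{\rmi 2\pi L_0^\pi}$ to be a scalar. Schur's lemma would require $\rme^{\rmi 2\pi L_0^\pi}=U_\pi(2\pi)$ to commute with \emph{all} of $\pi(\A)$, but for a soliton the covariance identity is only available for $g\in\mathcal{U}_I$, and the lift of the full $2\pi$-rotation never lies in $\mathcal{U}_I$: any path in $G^{(\infty)}$ from the identity to it projects to a loop winding once around $\S$, hence at some stage sweeps $I$ across the point $-1$. What covariance does give (using that $\pi|_{\A^\gamma}$ is a genuine DHR representation, by the definition of general soliton, together with vacuum spin--statistics $U(2\pi)=\Gamma$, which acts trivially on even elements) is that $U_\pi(2\pi)$ commutes with $\pi(\A^\gamma)$ only. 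The vacuum representation of a nontrivially graded net is already a counterexample to your claim: it is an irreducible graded general soliton with $\rme^{\rmi 2\pi L_0}=\Gamma$, not a scalar. As a result your text is internally inconsistent --- you assert both that $\rme^{\rmi 2\pi L_0^\pi}$ is always a scalar and that in case $(NS)$ it implements the grading; for graded irreducible $\pi$, where $\pi_+\not\simeq\pi_-$ by Proposition \ref{prop:CKL22}, these are incompatible, and with your premise case $(NS)$ could never occur at all.

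The repair is the argument recorded in the paper's discussion immediately preceding the theorem (quoting \cite[Prop. 22]{CKL}; note the paper itself defers the detailed proof to \cite[Sect. 4.3]{CKL}): apply Schur's lemma not to $\pi$ but to the irreducible summands of $\pi|_{\A^\gamma}$. Since $U_\pi(2\pi)$ commutes with $\pi(\A^\gamma)$, it lies in the commutant of $\pi_+\oplus\pi_-$, which is $\C P_+\oplus\C P_-$ (with $P_\pm$ the projections onto $\H_{\pi,\pm}$; in the ungraded case the commutant is just $\C$ and one lands directly in case $(R)$). Thus $\rme^{\rmi 2\pi L_0^\pi}=aP_++bP_-$ with $a,b$ scalars, and the hypothesis that $\rme^{\rmi 4\pi L_0^\pi}$ is a scalar forces $a^2=b^2$: either $b=a$, so $\rme^{\rmi 2\pi L_0^\pi}$ is a scalar (case $(R)$), or $b=-a$, so $\rme^{\rmi 2\pi L_0^\pi}$ is a multiple of $\Gamma_\pi$ (case $(NS)$). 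The translation into the DHR/soliton alternative then runs roughly as you sketch: if $\pi$ were DHR on all of $\I$, covariance at the $2\pi$-rotation and $U(2\pi)=\Gamma$ would force $U_\pi(2\pi)\pi_I(x)U_\pi(2\pi)^*=\pi_I(\gamma(x))$, impossible when $U_\pi(2\pi)$ is a scalar because each $\pi_I$ is faithful and $\gamma$ is nontrivial, so case $(R)$ representations are not DHR; conversely, in case $(NS)$ the double-cover representation furnished by Lemma \ref{lem:CFT-DHR} closes up over $\S$ --- the extension step you correctly flag as the nontrivial point and defer, as does the paper, to \cite{CKL}. So your overall architecture (Lemma \ref{lem:CFT-DHR} plus the monodromy dichotomy) is the intended one, but the Schur step must be replaced as above before any of it goes through.
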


\section{$N=2$ Super-Virasoro Nets and Their Representations}\label{sec:SVir-net}

\begin{definition}\label{def:SVir2-alg}\index{super-Virasoro algebra $\SVir^N$}
For any $t\in\R$, the {\em $N=2$ super-Virasoro algebra} $\SVir^{N=2,t}$ is the infinite-dimensional Lie superalgebra generated by linearly independent even elements $L_n,J_n$ and odd elements $G_r^\pm$, where $n\in\Z$, $r\in\frac12 \mp t + \Z$, together with an even central element $\hat{c}$ and with (anti-) commutation relations
\begin{align*}
[L_m , L_n] &= (m-n)L_{m+n} + \frac{\hat{c}}{12}(m^3 - m)\de_{m+n, 0},\\
[L_m, G_r^\pm] &= \Big(\frac{m}{2} - r \Big)G_{m+r}^\pm,\\
[G_r^+, G_s^-] &= 2L_{r+s} + (r-s) J_{r+s} + \frac{\hat{c}}{3} \Big(r^2 - \frac14 \Big)\de_{r+s,0},\\
[G_r^+, G_s^+] &= [G_r^-, G_s^-] = 0,\\
[L_m , J_n] &= -n J_{m+n},\\
[G_r^\pm, J_n] &= \mp G_{r+n}^\pm,\\
[J_m,J_n] &= \frac{\hat{c}}{3} m \delta_{m+n,0}.
\end{align*}
The \emph{Neveu-Schwarz (NS) $N=2$ super-Virasoro algebra} is the super-Virasoro algebra with $t=0$, while the \emph{Ramond (R) $N=2$ super-Virasoro algebra} is the one with $t=1/2$. Sometimes we shall write simply $\SVir^{N=2}$ for the Neveu-Schwarz $N=2$ super-Virasoro algebra $\SVir^{N=2,0}$.
\end{definition}

For $t\in \frac12 \Z$, we have  $\frac12 + t + \Z = \frac12 - t + \Z$, hence,  instead of $G^\pm_r$, one can consider the modes
\[
G^1_r:=\frac{G^+_r + G^-_{r}}{\sqrt{2}},\quad G^2_{r}:=-\rmi \frac{(G^+_{r} - G^-_{r})}{\sqrt{2}},
\]
and we shall use  them frequently in the following. 

Moreover for all $t\in \R$, the Lie superalgebra $\SVir^{N=2,t}$ is equipped with a natural anti-linear involution ($*$-structure), such that the adjoints of $L_n$, $J_n$, $G^{\pm}_r$ are respectively $L_{-n}$, $J_{-n}$ $G^{\mp}_{-r}$, and $\hat c$ is selfadjoint.

If $t-s \in \Z$ we have  $\frac12 \mp t + \Z = \frac12 \mp s + \Z$ and the Lie superalgebra $\SVir^{N=2,t}$ is trivially isomorphic (i.e. equal) to $\SVir^{N=2,s}$ through the linear map defined by $L_n \mapsto L_n$, $J_n \mapsto J_n$, 
$G^{\pm}_r \mapsto G^{\pm}_r$, $n \in \Z$, $r \in \frac12 \mp t + \Z$. As we will see in Sect. \ref{sec:spfl}, cf. Proposition  
\ref{prop:spfl-alg-iso}, 
the algebras  $\SVir^{N=2,t}$, $t\in \R$, are all ismorphic trough the spectral flow although the isomorphisms considered there $\eta'_s \circ \eta_t : \SVir^{N=2,t} \to \SVir^{N=2,s}$, $t,s \in \R$, do not preserve the generators unless $t=s$ in contrast to the trivial isomorphism considered above in the case $t-s \in \Z$. It should be clear from the above discussion that the reason to consider all the isomorphic Lie superalgebras $\SVir^{N=2,t}$  is not motivated by the corresponding Lie superalgebra structure but by the special choice of the generators.

We are interested in linear vector space representations. We restrict here to the case $t\in \frac12 \Z$, i.e. to the NS and R case. These representations should satisfy the usual conditions explained in \cite[Sect. 4]{CHKL}, in short, they should be unitary (i.e. $*$-preserving) with respect to a suitable scalar product turning the vector space into a pre-Hilbert space, $\hat{c}$ should be represented by a real scalar $c$ (the {\it central charge}), and $L_0$ should be diagonalizable with every eigenspace finite-dimensional and only nonnegative eigenvalues. 

Note that in the NS case the positivity of $L_0$ follows automatically from the commutation relations $2L_0  = [G^i_{1/2}, G^i_{-1/2}]$, $i=1,2$. In the R case we have $2L_0 - \frac{c}{12}\unit = [G^+_0, G^-_0] \geq 0$ and hence $L_0$ is bounded from below. It then follows by unitarity that $c\geq 0$ \footnote{If  $\psi$ is an eigenvector of $L_0$ then
$0 \leq (L_{-n}\psi,L_{-n}\psi)= (\psi,[L_n,L_{-n}]\psi)=2n(\psi,L_0\psi) + \frac{c}{12}(n^3-n)$ for all sufficiently large positive integers  $n$. Hence $c$ must be a non-negative real number. }
and $L_0\geq \frac{c}{24}\unit \geq 0$.
Accordingly an irreducible unitary representation is completely determined by the corresponding irreducible
unitary representation of the zero modes on the lowest energy subspace (the subspace of highest weight vectors).
In the NS case the algebra of zero modes is abelian and irreducibility implies that the lowest energy subspace is
one-dimensional and spanned by a single vector $\Omega_{c,h,q}$ of norm one such that
$L_0  \Omega_{c,h,q}  = h \Omega_{c,h,q}$ and $J_0  \Omega_{c,h,q}  = q\Omega_{c,h,q}$. The real numbers
$c,h,q$ completely determine the representation (up to unitary equivalence). In the R case the algebra of zero modes is non-abelian and there are two possibilities. If the lowest energy $h$ is equal to $c/24$, the lowest energy subspace must be one dimensional again, spanned by a normalized common eigenvector  $\Omega_{c,h,q}$ of $L_0$ and $J_0$ with eigenvalues $h$ and $q$, respectively, and satisfying $G_0^+ \Omega_{c,h,q} = G_0^- \Omega_{c,h,q} =0$. In contrast if $h > c/24$, the lowest energy subspace must be two-dimensional. Then one can choose a common normalized eigenvector $\Omega^-_{c,h,q}$ for $L_0$ and $J_0$, with eigenvalues $h$ and $q$ respectively by imposing the supplementary condition $G_0^+  \Omega^-_{c,h,q}=0$. The lowest energy subspace is spanned by $\Omega^-_{c,h,q}$ and $\Omega^+_{c,h,q-1}$ where $\Omega^+_{c,h,q-1}= (2h-c/12)^{-\frac12}G_0^-\Omega^-_{c,h,q}$ is normalized and satisfies $L_0\Omega^+_{c,h,q-1}=h\Omega^+_{c,h,q-1}$ and $J_0\Omega^+_{c,h,q-1}=(q-1)\Omega^+_{c,h,q-1}$. With the above convention the numbers $c,h,q$ completely determine the representation up to unitary equivalence also in the R case. 
Both in the NS and the R case we shall sometime use the more explicit notation $c_\pi,h_\pi,q_\pi$ instead of $c, h, q$ for the numbers characterizing the unitary representation $\pi$.  
As in the cases $N=0, 1$, unitarity gives restrictions on the possible values of $c,h,q$.
The situation is described in

\begin{theorem}[\cite{BFK}, \cite{DVYZ}, \cite{Iohara}]\label{th:SVir2-reps}
For any irreducible unitary representation of the Neveu-Schwarz $N=2$ super-Virasoro algebra $\SVir^{N=2,0}$
the corresponding values of $c,h,q$ satisfy one of the following conditions:
\begin{itemize}
\item[NS1] $c\geq 3$ and $2h-2nq+(\frac{c}{3}-1)(n^2- \frac14) \geq 0$ for all $n\in \frac12 +\Z$.
\item[NS2] $c\geq 3$ and $2h-2nq+(\frac{c}{3}-1)(n^2- \frac14) = 0$, \\
$2h-2(n +\sgn(n))q + (\frac{c}{3}-1)\left[(n+\sgn(n))^2- \frac14\right] < 0$
for some $n\in \frac12 +\Z$ and 
$2(\frac{c}{3}-1)h - q^2 +\frac{c}{3} \geq 0$.
\item[NS3]  $c= \frac{3n}{n+2},\quad h=\frac{l(l+2)-m^2}{4(n+2)},\quad q=-\frac{m}{n+2}$,
where $n, l, m \in\Z$ satisfy $n\geq 0$, $0\le l\le n$, $l+m\in 2\Z$ and $|m| \leq l$.
\end{itemize}
For any irreducible unitary representation of the Ramond $N=2$ super-Virasoro algebra $\SVir^{N=2,\frac12}$
the corresponding values $c,h,q$ satisfy one of the following conditions:
\begin{itemize}
\item[R1] $c\geq 3$ and $2h-2n(q-\frac12)+(\frac{c}{3}-1)(n^2- \frac14) - \frac14 \geq 0$ for all $n\in \Z$
\item[R2] $c\geq 3$ and $2h-2n(q-\frac12)+(\frac{c}{3}-1)(n^2- \frac14) -\frac14 = 0$, \\
$2h-2(n +\sgn(n-\frac12))(q-\frac12) + (\frac{c}{3}-1)\left[(n+\sgn(n-\frac12))^2- \frac14\right] - \frac14 < 0$
for some $n\in \Z$ and 
$2(\frac{c}{3}-1)(h-\frac{c}{24}) - (q-\frac12)^2 + \frac{c}{3}\geq 0$ .
\item[R3] $c= \frac{3n}{n+2},\quad h=\frac{l(l+2)-m^2}{4(n+2)} +\frac{1}{8},\quad q=-\frac{m}{n+2} +\frac{1}{2}$,
where $n, l, m \in\Z$ satisfy $n\geq 0$, $0\le l\le n$, $l+m+1\in 2\Z$ and $|m-1| \leq l$.
\end{itemize}
Conditions NS1, NS3, R1, R3, are also sufficient, namely if values $c,h,q$ satisfy one of them then there exists a corresponding irreducible unitary representation. In particular all the values in the discrete series of representations
(conditions NS3 and R3) with $c=3n/(n+2)$ are realized by the coset construction for the inclusion
$\Uone_{2n+4}\subset \SUtwo_n \otimes \CAR^{\otimes 2}$ for every nonnegative integer $n$. Here $\CAR^{\otimes 2}$ denotes the theory generated by two real chiral free Fermi fields.
\end{theorem}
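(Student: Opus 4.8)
The plan is to treat the two directions separately. The necessity of one of the numerical conditions (NS1--NS3, resp. R1--R3) is the content of the cited determinant analysis: one forms the generalized Verma (lowest-weight) module of $\SVir^{N=2,0}$ (resp. $\SVir^{N=2,1/2}$) with data $(c,h,q)$, equips it with the contravariant Hermitian form determined by the $*$-structure recorded after Definition \ref{def:SVir2-alg}, and computes the analogue of the Kac determinant level by level. Unitarity forces this form to be positive semidefinite, which at generic points yields the inequalities of NS1/R1, on the degenerate locus (where null vectors appear) isolates the rational values of NS3/R3, and in between is described by the transitional equalities of NS2/R2. I would take this part from \cite{BFK,DVYZ,Iohara} and concentrate on the converse, namely on exhibiting unitary representations for the data in NS1, NS3, R1, R3.

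For the generic (continuous) series NS1 and R1 with $c \ge 3$ I would construct the representations directly from a free-field (Fock-space) realization of $\SVir^{N=2,t}$ and verify positivity of the contravariant form in the stated range. Where the determinant is strictly positive the induced module is already irreducible and unitary; on the boundary, where the form is only semidefinite, one passes to the quotient by the maximal submodule on which it degenerates and checks that positivity is inherited, so that the irreducible quotient carries a genuine pre-Hilbert structure. This reduces the continuous-series sufficiency to a sign analysis parallel to the necessity computation.

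The heart of the statement is the realization of the whole discrete series NS3/R3 by the coset construction, and this is where I would spend the effort. Acting on the tensor product of the level-$n$ vacuum module of $\SUtwo_n$ with the Fock space of $\CAR^{\otimes 2}$, I complexify the two real fermions into charged modes $\psi^\pm$ and form the total current $J^{\mathrm{tot}}$ as the appropriate linear combination of the Cartan current of $\SUtwo_n$ and the fermion bilinear; in the normalization making charges integral its level works out to $2n+4$, matching $\Uone_{2n+4}$. I would then define the coset fields inside the commutant of $J^{\mathrm{tot}}$: the coset Virasoro $L_m = L_m^{\SUtwo_n} + L_m^{\mathrm{ferm}} - L_m^{\Uone}$, the $N=2$ current $J_m$ as the combination of the fermion current and the Cartan current orthogonal to $J^{\mathrm{tot}}$, and the supercurrents $G^\pm_r$ as suitable normal-ordered products of the charged currents $J^\pm$ of $\SUtwo_n$ with the charged fermions $\psi^\mp$, arranged so as to carry $J$-charge $\pm 1$. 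The main computational task is to check that these fields satisfy exactly the (anti)commutation relations of Definition \ref{def:SVir2-alg}; the central charge then emerges from the coset formula as $c = \frac{3n}{n+2} + 1 - 1 = \frac{3n}{n+2}$.

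Finally I would decompose the tensor-product module under the commuting pair consisting of the coset $N=2$ algebra and $\Uone_{2n+4}$, and read off the lowest-weight data $(h,q)$ of each coset component from the branching functions, confirming that they coincide with the formulas of NS3 (resp. R3) and that the admissibility constraints $0 \le l \le n$, $l+m \in 2\Z$, $|m| \le l$ (resp. their R-shifted versions) match the selection rules of the branching modulo the standard coset field identification $(l,m) \sim (n-l, m+n+2)$. Unitarity of each coset module is automatic, since it embeds in the manifestly positive-energy unitary tensor-product representation; irreducibility follows from the usual double-commutant argument for coset pairs, once one knows the pair acts with multiplicity one on each isotypic component. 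I expect the main obstacle to be precisely this completeness-and-identification bookkeeping: showing that \emph{every} admissible triple $(n,l,m)$ actually occurs, with none missed and none overcounted, and that the NS and R branchings are correctly paired --- a pairing which, in the net framework of the later sections, is implemented by the spectral flow (cf. Section \ref{sec:spfl} and Proposition \ref{prop:spfl-alg-iso}).
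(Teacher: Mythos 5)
The paper gives no proof of this theorem at all: it is quoted directly from \cite{BFK}, \cite{DVYZ} and \cite{Iohara}, which supply respectively the determinant-formula/unitarity analysis, the coset realization of the discrete series, and the classification of unitarizable modules. Your outline (determinant analysis for necessity, free-field positivity for the continuous series $c\geq 3$, and the coset construction for the inclusion $\Uone_{2n+4}\subset \SUtwo_n\otimes\CAR^{\otimes 2}$ for NS3/R3, with the branching/completeness bookkeeping you rightly flag as the delicate point) reconstructs exactly the strategy of those cited references, so it is essentially the same approach as the proof the paper relies on.
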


For every allowed value of $c$, there is a corresponding unique representation of the  Neveu-Schwarz  $N=2$ super-Virasoro algebra 
$\SVir^{N=2,0}$ with $h=q=0$, the \textbf{vacuum representation} with central charge $c$. Actually it can be shown, \eg using 
Theorem \ref{th:SVir2-reps}, that, for an irreducible unitary representation of the Neveu-Schwarz  $N=2$ super-Virasoro algebra, 
$h=0$ implies $q=0$ so that the vacuum representation with central charge $c$ is the unique irreducible unitary representation with lowest energy $h=0$.

Moreover, for every allowed value of $c$ there are irreducible representations of the Ramond $N=2$ super-Virasoro algebra with $h=c/24$ (and the corresponding allowed values for $q$) and we call them \textbf{Ramond vacuum representations} with central charge $c$.
Notice from Definition \ref{def:SVir2-alg} that in contrast to the case $N=1$, every irreducible representation $\pi$ is automatically graded by $\Gamma_\pi: = \rme^{-\rmi \pi q}\rme^{\rmi \pi J_0^\pi}$ as follows easily from the $N=2$ super-Virasoro algebra commutation relations. Moreover, as for $N=1$ the irreducible representations of the Neveu-Schwarz  $N=2$ super-Virasoro algebra are graded by 
$\Gamma_\pi: = \rme^{-\rmi 2\pi h}\rme^{\rmi 2\pi L_0^\pi}$. In the following we shall use this standard choice so that there is always an even 
lowest energy vector. Sometimes, however, we shall say that $\rme^{\rmi \pi J_0^\pi}$, or  $\rme^{\rmi 2\pi L_0^\pi}$, implements the grading. Note that the latter operators are not selfadjoint and hence do not define grading operators in the precise sense of 
Definition \ref{def:CFT-covering-reps}.

Our next goal in this section is to define a net associated to the vacuum representation with central charge $c$ of the Neveu-Schwarz $N=2$ super-Virasoro algebra for every allowed value of $c$. This will be done in the standard way by using certain unbounded field operators (in the vacuum representation). Moreover, we will give a description, based on the results in  \cite{CW}, of the representation theory of these nets in terms of the unitary representations with central charge $c$ of the Neveu-Schwarz and Ramond Virasoro algebras super-Virasoro algebra. For these reasons we will need to consider analogous field operators in the latter representations.
 
Let $\pi$ be an irreducible unitary (and hence positive energy) representation of the Neveu-Schwarz or Ramond $N=2$ super-Virasoro algebra as above with central charge $c_\pi=c$. 
If $\H_\pi$ denotes the Hilbert space completion of the corresponding representation space, we shall freely say, with some abuse of language, that $\pi$ is a representation on $\H_\pi$. Then we denote the generators in that representation by $L_n^\pi, G_r^{i,\pi},J_n^\pi$, $n\in \Z$, $r\in \frac12 + \Z$ in the NS case or $r\in \Z$ in the R case. These generators can be considered as densely defined operators on the separable Hilbert space $\H_\pi$ which are closable as a consequence of unitarity. We shall  often denote their closure by the same symbols. In particular we consider $L_0^\pi$ as a selfadjoint operator on $\H_\pi$. It has pure point spectrum and in fact the dense subspace $\H_\pi^f$ spanned by its eigenvectors coincides with the original representation space. Moreover, it follows from \cite{BFK} that $\rme^{-\beta L_0}$ is a trace class operator for all $\beta >0$. 

 If $\pi=\pi_0$ is the (NS) vacuum representation with central charge $c$, we shall drop the superscript $\pi$.

We shall need the following estimates ({\it energy bounds}), cf. \cite{BSM,CKL,CHKL}. Let $c$ be any of the allowed values of the central charge for the irreducible unitary representations of $\SVir^{N=2,0}$ (or equivalently of $\SVir^{N=2,\frac12}$).
Then, there is a constant $M_c>0$ such that, for any irreducible unitary representation $\pi$ with central charge $c$ of the Neveu-Schwarz (resp. Ramond) $N=2$ super-Virasoro algebra we have

 \begin{align}\label{eq:LEB1}
 \|L_m^\pi \psi\| \leq& M_c (1+|m|^{\frac{3}{2}})\|(\unit+L_0^\pi)\psi\|,\nonumber\\
 \|G_r^{i,\pi} \psi\|\leq& (2+ \frac{c}{3}r^2)^{\frac12}\|(\unit+L_0^\pi)^{\frac12}\psi\|,\\
 \|J_m^\pi \psi\|\leq& (1+ c|m|)^{1/2}\|(\unit+L_0^\pi)^{\frac12}\psi\|, \nonumber
 \end{align}
 for all $\psi\in \H_\pi^f$, $m\in\Z$, $r\in \frac12 + \Z$ (resp. $r\in \Z$). 

As a consequence of the above estimates, together with the commutation relations in Definition \ref{sec:SVir-net}, the subspace $\Cci(L_0)$ of $\Cci$ vectors of $L_0$ is a common invariant core for the closed operators $L_n^\pi, G_r^{i,\pi},J_n^\pi$, $i=1,2$, $n\in \Z$, $r\in \frac12 + \Z$ (resp. $r\in \Z$).

We denote by $\Cci(\S):=\Cci(\S,\C)$ the space of complex-valued smooth functions on $\S$ and, for any open interval $I\subset \S$, 
we denote by $\Cci_c(I) \subset \Cci(\S)$ the subspace of complex-valued functions with support contained in $I$. We will also denote
by $\Cci(\S,\R)$ and by $\Cci_c(I,\R)$ the corresponding real subspaces of real-valued functions. Moreover, for $f \in \Cci(\S)$ and $s\in \R$ we write $f_s:= \frac{1}{2\pi}\int_{-\pi}^{\pi} \rme^{-\rmi s\theta} f(\rme^{\rmi \theta})\rmd \theta$.

We first assume that $\pi$ is an irreducible unitary representation with central charge $c$ of the Ramond $N=2$ super-Virasoro algebra. 

For any function $f\in\Cci(\S)$ the Fourier coefficients $f_n$, $n\in \Z$, are rapidly decreasing, and by standard arguments based on the above energy bounds , the series
\[
 \sum_{n\in\Z} f_n L_n^\pi,\quad \sum_{r\in \Z} f_r G^{i,\pi}_r, \quad \sum_{r\in \Z} f_r G^{\pm,\pi}_r, 
 \quad \sum_{n\in\Z} f_n J_n^\pi, \quad i=1,2,
\]
give rise to densely defined closable operators with common invariant core $\Cci(L^\pi_0)$.  We denote their closures, the so-called smeared fields, by $L^\pi(f)$, $G^{i,\pi}(f)$, $i=1,2$,  $G^{\pm, \pi}(f)$ and $J^\pi(f)$ respectively. Moreover, as a further consequence of the energy bounds it can be shown that 

\[
L^\pi(f)^*= L^\pi(\overline{f}) ,\quad G^{i,\pi}(f)^* = G^{i,\pi}(\overline{f}),\; i=1,2,\quad  G^{+, \pi}(f)^* = G^{-, \pi}(\overline{f}),\quad 
J^\pi(f)^*= J^\pi(\overline{f}),
\]
so that, if $f \in \Cci(\S,\R)$ is real, then $L^\pi(f)$, $G^{i,\pi}(f)$, $i=1,2$ and  $J^\pi(f)$ are essentially selfadjoint on $\Cci(L^\pi_0)$ (in fact on any core for $L^\pi_0$), cf. \cite[Sect. 2]{BSM}, \cite[Sect. 2 ]{Carpi}, \cite[Sect. 4]{CW05} and \cite[Sect. 4]{CHKL}.

Now, if $\pi$ is an irreducible unitary representation of the Neveu-Schwarz $N=2$ super-Virasoro algebra, the smeared fields 
$L^\pi(f)$, $J^\pi(f)$, $f\in \Cci(\S)$ can be defined in the same way and have the properties discussed in the Ramond case. 
In particular, for $f\in \Cci(\S,\R)$ they are essentially selfadjoint on any core for $L^\pi_0$. We would also like to define 
$G^{i,\pi}(f)$, $i=1,2$ and $G^{\pm,\pi}(f)$ through the series
\[
\sum_{r\in \frac12+\Z} f_r G^{i,\pi}_r, \quad \sum_{r\in \frac12+\Z} f_r G^{\pm,\pi}_r,  \quad i=1,2,
\]
but, for $f\in \Cci(\S)$, the Fourier coefficients $f_r$, $r\in \frac12+\Z$ are, in general, not rapidly decreasing. This problem, however, can be overcome by restricting to functions $f\in \Cci_c(\S\setminus \{-1\})$. Accordingly, for the latter functions, closed operators $G^{i,\pi}(f)$, $i=1,2$ and $G^{\pm,\pi}(f)$ with the properties discussed in the Ramond case can also be defined in the Neveu-Schwarz case. In particular, for any real function $f\in \Cci_c(I,\R)$, the closed operators $G^{i,\pi}(f)$, $i=1,2$ are essentially selfadjoint on 
$\Cci(L^\pi_0)$.

In order to define the $N=2$ super-Virasoro nets we consider the special case when $\pi=\pi_0$ is the (NS) vacuum representation with central charge $c$. As said before, in this case we shall drop the superscript $\pi$.   We can define, similarly to 
\cite[Sect. 6.3]{CKL},  an isotonous net of von Neumann algebras over $\R$ on the Hilbert space $\H = \H_{\pi_0}$ by

\begin{equation}\label{def:SVir2-net}
\A_c(I) = \{ \rme ^{\rmi J(f)},\rme ^{\rmi L(f)},\rme ^{\rmi G^{i}(f)}:\; f\in \Cci_c(I,\R),\; i=1,2 \}'',\; I \in \I_\R.
\end{equation}

The extension of the net $\A_c$ to $S^1$, whose existence is guaranteed by the following theorem, will be denoted again by $\A_c$ and called the {\it $N=2$ super-Virasoro net} with central charge $c$.

\begin{theorem}\label{th:SVir2-diff-cov}
The family $(\A_c(I))_{I\in\I_\R}$ extends to a graded-local conformal net $\A_c=(\A_c(I))_{I\in\I}$ over $\S$.
 
\end{theorem}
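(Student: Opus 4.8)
The plan is to follow the strategy used for the analogous $N=1$ statement in \cite[Sect.~6.3]{CKL}, verifying the axioms $(A)$--$(F)$ of Definition \ref{def:CFT-net} for the family defined in \eqref{def:SVir2-net} and only afterwards extending from $\I_\R$ to all of $\I$ by covariance. Isotony $(A)$ is immediate, since $\Cci_c(I_1,\R) \subset \Cci_c(I_2,\R)$ whenever $I_1 \subset I_2$, and positive energy $(C)$ was already noted after Definition \ref{def:SVir2-alg}. For $(D)$, the vacuum vector $\Omega = \Omega_{c,0,0}$ is the lowest energy vector; it is unique up to a phase because the vacuum representation is irreducible with one-dimensional lowest energy subspace, and it is cyclic for $\bigvee_{I} \A_c(I)$ because the polynomial algebra generated by the modes $L_n, J_n, G_r^i$ acting on $\Omega$ spans a dense subspace of $\H$, while the bounded generators $\rme^{\rmi X(f)}$ approximate the action of the (unbounded) smeared fields on $\Omega$.

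The first substantial step is to construct the covariance representation. Using the linear energy bounds \eqref{eq:LEB1} for the Virasoro modes $L_n$, one integrates the Virasoro subalgebra to a strongly continuous positive-energy projective unitary representation $U$ of $\Diff^{(\infty)}$ extending a genuine Möbius representation of $\PSI$, as in \cite{CW05} (see also \cite[Sect.~6]{CKL}); the universal cover is forced by the half-integer weight $3/2$ of the odd fields. One then has to prove the field transformation law, namely that $U(g)$ conjugates $L(f)$, $J(f)$ and $G^i(f)$ into the same smeared field with $f$ replaced by its transform under the natural action of $\Diff^{(\infty)}$ on densities of conformal weight $2$, $1$ and $3/2$ respectively. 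This is obtained by first checking the infinitesimal version, encoded in the commutation relations $[L_m, L_n]$, $[L_m, J_n]$ and $[L_m, G_r^\pm]$ of Definition \ref{def:SVir2-alg}, and then integrating with the help of the energy bounds. Covariance of the generated algebras, $U(g)\A_c(I)U(g)^* = \A_c(\dot{g}I)$, is a direct consequence, giving the covariance parts of $(B)$ and $(F)$.

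The heart of the proof is graded locality $(E)$. I would take the grading unitary $\Gamma := \rme^{\rmi \pi J_0}$, a selfadjoint unitary (since $J_0$ has integer spectrum in the vacuum sector) fixing $\Omega$, commuting with $U$, and inducing the decomposition under which $L(f)$, $J(f)$ are even and $G^i(f)$ are odd; with $Z = (\unit - \rmi\Gamma)/(1-\rmi)$ one must establish $\A_c(I') \subset Z\A_c(I)'Z^*$. The strategy is to show that, for real $f\in\Cci_c(I,\R)$ and real $h\in\Cci_c(I',\R)$, the even fields $L(f),J(f)$ commute with all fields smeared with $h$, while the odd fields $G^i(f)$ and $G^j(h)$ anticommute. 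At the level of the unbounded operators this is read off from the (anti-)commutation relations of Definition \ref{def:SVir2-alg}: each (anti-)commutator of two smeared fields is again a smeared field plus a central term whose integral kernel is supported on the diagonal (a finite sum of derivatives of the delta function), hence vanishes for disjoint supports. The delicate point is to pass from these relations on the common invariant core $\Cci(L_0)$ to genuine (anti-)commutation of the bounded generators $\rme^{\rmi X(f)}$; this is achieved via the commutator theorems for operators obeying linear energy bounds, following \cite[Sect.~6]{CKL} and the techniques of \cite{BSM,CW05}, which is precisely where \eqref{eq:LEB1} is indispensable. Twisted locality then follows from the anticommutation of the odd generators together with the $Z$-twist exactly as in the $N=1$ case.

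Finally, the family $(\A_c(I))_{I\in\I_\R}$ is extended to all of $\I$ by setting $\A_c(\dot{g}I):= U(g)\A_c(I)U(g)^*$ for $g\in\PSI$; well-definedness and compatibility with isotony are verified from Möbius covariance (cf.\ Remark \ref{rem:CFT-IR}), and the covariance law of the second paragraph upgrades to full diffeomorphism covariance of the extended net. The remaining requirement in $(F)$, that $U(g)$ act trivially on $\A_c(I')$ for $g\in\Diff^{(\infty)}_I$, follows because such $U(g)$ are generated by the $L(f)$ with $\supp f \subset I$, which commute with $\A_c(I')$ by the locality just established. I expect the main obstacle to be this graded-locality step, specifically the transition from the formal (anti-)commutation of the smeared fields on a core to the (anti-)commutation of their bounded functions, which requires the careful quantitative use of the energy bounds; by contrast the integration of the Virasoro algebra to $\Diff^{(\infty)}$ and the verification of the transformation law, though technical, are by now standard.
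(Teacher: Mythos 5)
Your proposal is correct and follows essentially the same route as the paper, which itself proceeds "in complete analogy" to \cite[Thm.33]{CKL}: graded locality from the vanishing of (anti-)commutators of smeared fields with disjoint supports combined with the energy bounds \eqref{eq:LEB1} and the Driessler--Buchholz--Schulz-Mirbach technique, covariance by integrating the Virasoro subalgebra to a projective representation of $\Diff^{(\infty)}$ (via \cite{Tol,GW2}) and verifying the transformation law of $L$, $J$, $G^i$ infinitesimally, then cyclicity of $\Omega$ and extension from $\I_\R$ to $\I$ by M\"obius covariance. Your identification of the weight-$1$ transformation law for the current $J$ as the new ingredient to check matches the one adaptation the paper singles out in the $N=2$ case.
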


\begin{proof}
The proof goes in complete analogy to the one of \cite[Thm. 33]{CKL} for the $N=1$ super-Virasoro nets. Here, for the reader's convenience, we only recall the main points and discuss some of the adaptations which are needed in the $N=2$ case.
First of all, graded locality is consequence of the fact that
\[
[X(f),Y(g)]= 0, \quad \supp(f) \cap \sup (g) =\emptyset,
\]
for $X$ and $Y$ any of the fields $L,G^i,J$ and of the energy bounds \eqref{eq:LEB1} together with the adaptation of the arguments in 
\cite[Sect. 2]{BSM} based on \cite[Thm. 3.2]{Driessler}. Concerning covariance, we first define a projective unitary representation of 
$\Diff^{(\infty)}$ (in fact of $\Diff^{(2)}$) by integrating the projective representation of the corresponding Lie algebra of smooth vector fields on $\S$ associated to the representation of the Virasoro algebra on $\H_\pi$ defined by the operators $L_n$, $n\in \Z$, see \cite{Tol} and \cite{GW2}. Then we have to show that all $J,G^i,L$ transform covariantly with respect to the restriction of that representation to $\PSI$.  In fact, going through all the steps in the proof of \cite[Thm. 33]{CKL}, we just have to notice that (in the notation used there)
\[
- {\rmi} \frac{\rmd}{\rmd t}(J(\beta_{1}(\exp{(tf_1)})f_2)\psi_0)|_{t=0} = [L(f_1),J(f_2)]\psi_0,
\]
where $\psi_0\in\Cci(L_0)$, $f_1,f_2\in\Cci(\S,\R)$,

\[
\left(\beta_{ 1}(g)f_i \right)(z) := f_i(g^{-1}(z)), \quad g\in \PSL, z\in \S, i=1,2
\]
and $\exp{(tf_1)}$ is the one-parameter group of diffeomorphisms generated by the vector field $f_1 \frac{\rmd}{\rmd \theta}$. 
Once this is done, we show cyclicity of the vacuum vector $\Omega$ for the net, and finally extension of $\PSL$ to diffeomorphism covariance, literally as in \cite[Thm.33]{CKL}.
\end{proof}

Now that the net $\A_c$ is defined, we would like to study its representations. We shall use the following theorem from \cite{CW}
(cf. also \cite[Prop. 2.14]{CHL}).

\begin{theorem}[\cite{CW}] \label{th:SVir-pi-exp-pi}
Let $\pi=(\pi_I)_{I\in \bar{\I}_\R}$ be an irreducible Neveu-Schwarz (resp. Ramond) representation of $\A_c$. 
Then, there is a unitary irreducible representation, also denoted $\pi$, of the Neveu-Schwarz (resp. Ramond) $N=2$ super-Virasoro algebra with central charge $c$ such that
\[
 \pi_I(\rme^{\rmi L(f)}) = \rme^{\rmi L^\pi(f)}, \quad
\pi_I(\rme^{\rmi G^i(f)}) = \rme^{\rmi G^{i,\pi}(f)}, \quad
\pi_I(\rme^{\rmi J(f)}) = \rme^{\rmi J^\pi(f)}, \quad
\]
for $f\in\Cci_c(I, \R)$, $i=1,2$, $I\in\I_\R$.  In particular every irreducible Ramond representation $\pi$ of $\A_c$ is graded by 
$\Gamma_\pi :=\rme^{-i \pi q_\pi}  \rme^{i\pi J_0} $.

\end{theorem}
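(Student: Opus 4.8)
The plan is to recover the smeared field operators in the representation $\pi$ directly as the Stone generators of the one-parameter unitary groups into which $\pi$ maps the corresponding vacuum exponentials, and then to verify that the resulting operators assemble into an irreducible representation of $\SVir^{N=2}$ with central charge $c$. Being an irreducible general soliton (Theorem \ref{th:CFT-R-NS}), $\pi$ is covariant and of positive energy, so it carries a self-adjoint generator $L_0^\pi\geq 0$. Fix $I\in\I_\R$ and a real $f\in\Cci_c(I,\R)$. By the construction \eqref{def:SVir2-net} of $\A_c$, the unitary $\rme^{\rmi t L(f)}$ lies in $\A_c(I)$ for every $t\in\R$, and $t\mapsto\rme^{\rmi t L(f)}$ is a strongly continuous one-parameter group. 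Since $\pi_I$ is a normal representation of the von Neumann algebra $\A_c(I)$, the image $t\mapsto\pi_I(\rme^{\rmi t L(f)})$ is again a strongly continuous one-parameter unitary group on $\H_\pi$, and by Stone's theorem it has a self-adjoint generator, which I define to be $L^\pi(f)$. The same argument applied to $\rme^{\rmi t J(f)}$ and $\rme^{\rmi t G^i(f)}$ yields $J^\pi(f)$ and $G^{i,\pi}(f)$, so that the exponentiation identities $\pi_I(\rme^{\rmi X(f)})=\rme^{\rmi X^\pi(f)}$ hold by construction for real $f$; complex $f$, and the fields $G^{\pm,\pi}$, follow by linearity and the adjoint relations recalled before \eqref{def:SVir2-net}.

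Next I would show that the family $\{L^\pi(f),J^\pi(f),G^{i,\pi}(f)\}$ comes from modes $L_n^\pi,J_n^\pi,G_r^{i,\pi}$ obeying the $\SVir^{N=2}$ relations with central charge $c$, the mode indexing ($r\in\tfrac12+\Z$ versus $r\in\Z$) being dictated by whether $\pi$ is of type $(NS)$ or $(R)$. The essential analytic input is the set of energy bounds \eqref{eq:LEB1}, which yield a common invariant core of smooth vectors for $L_0^\pi$ on which all smeared fields are defined; one then recovers the individual modes by testing $f\mapsto X^\pi(f)$ against (approximations of) the Fourier basis and verifies the (anti-)commutation relations on this core. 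The relations on the net side, namely strong commutation of $X(f)$ and $Y(g)$ for disjointly supported $f,g$ together with the covariance computations from the proof of Theorem \ref{th:SVir2-diff-cov}, translate under the $*$-homomorphism $\pi_I$ into the algebraic relations, and the fact that $\pi_I$ is faithful on $\A_c(I)$ forces the central charge to be preserved. Irreducibility of the super-Virasoro representation follows from that of $\pi$, since the two generate the same von Neumann algebra.

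The hardest part is this second step: upgrading the merely strongly continuous generators furnished by Stone's theorem to operators smooth enough, that is, admitting the common invariant core $\Cci(L_0^\pi)$ and satisfying \eqref{eq:LEB1}, for the modes and their commutators to make sense. This is exactly what the linear-energy-bounds technique of \cite{CW} handles (compare \cite{BSM,Driessler}), and it is what makes the passage from the net representation back to the algebra representation genuinely nontrivial rather than formal. Once the super-Virasoro representation is in hand, the grading assertion is immediate: the algebra-level observation recorded after Theorem \ref{th:SVir2-reps} shows that $\Gamma_\pi=\rme^{-\rmi\pi q_\pi}\rme^{\rmi\pi J_0}$ is a self-adjoint unitary, since all eigenvalues of $J_0^\pi$ lie in $q_\pi+\Z$, whence $\rme^{2\rmi\pi J_0}=\rme^{2\rmi\pi q_\pi}\unit$ and $\Gamma_\pi^2=\unit$. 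It implements the gauge automorphism $\gamma$ through the mode relations $[G_r^\pm,J_n]=\mp G^\pm_{r+n}$ (odd generators flip, even generators preserve the $J_0$-parity), and it commutes with $U_\pi$, so $\pi$ is graded in the sense of Definition \ref{def:CFT-covering-reps}.
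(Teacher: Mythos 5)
There is nothing in the paper to compare your argument against: Theorem \ref{th:SVir-pi-exp-pi} is not proved in this paper at all, but imported wholesale from \cite{CW} (Carpi--Weiner, at the time in preparation). Judged as a blind proof, your proposal must therefore stand on its own, and it does not: at the decisive moment you write that upgrading the Stone generators to a genuine representation of the Lie superalgebra ``is exactly what the linear-energy-bounds technique of \cite{CW} handles.'' Since the statement to be proved \emph{is} the theorem of \cite{CW}, delegating the hard step to that reference reduces the theorem to itself; the remainder of your text is a correct identification of the strategy and of where the difficulty sits, but not a proof.

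To see that the gap is real and not merely rhetorical, note the following. Stone's theorem gives you a self-adjoint $X^\pi(f)$ for each real local $f$, but these operators come with no common invariant core and, crucially, no additivity in $f$: for non-commuting unbounded generators, $X^\pi(f+g)=\overline{X^\pi(f)+X^\pi(g)}$ is not automatic (normality of $\pi_I$ lets you transfer Trotter product formulas, but identifying the generator of the limiting group with the closure of the sum requires exactly the domain control you do not yet possess). Without additivity you cannot assemble the global objects needed to ``test against the Fourier basis'': the modes $L^\pi_n$, $J^\pi_n$, $G^{i,\pi}_r$ require smearing functions supported on all of $\S$, whereas $\pi$ — in the Ramond case only a soliton — is defined just on algebras of intervals in $\bar{\I}_\R$, so the modes must be built from partitions of unity, which presupposes a well-defined additive field $f\mapsto X^\pi(f)$. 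Nor can you invoke the energy bounds \eqref{eq:LEB1} for these Stone generators: those are bounds relative to $L_0^\pi$ of the algebra representation you are trying to construct, and relating the Stone generators to the covariance generator of $U_\pi$ is itself part of what must be proven. Finally, ``$\pi_I$ is faithful, hence the central charge is preserved'' is not an argument: the central term is not an element of any local von Neumann algebra; it is read off from commutators of unbounded smeared fields on a common core — again the missing ingredient. (By contrast, your last step is sound: once the algebra representation exists, $\spec J_0^\pi\subset q_\pi+\Z$ makes $\Gamma_\pi=\rme^{-\rmi\pi q_\pi}\rme^{\rmi\pi J_0^\pi}$ a self-adjoint unitary implementing the grading and commuting with $U_\pi$.)
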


 From the above theorem we see that to every irreducible Neveu-Schwarz (resp. Ramond) representation of $\A_c$ there corresponds an irreducible representation of the Neveu-Schwarz (resp. Ramond) $N=2$ super-Virasoro algebra. The converse is not known in general but it can be shown to hold e.g.  when $c<3$ as a consequence of the coset construction, cf. Theorem \ref{th:SVir2-reps} and Sect. \ref{sec:coset}.  Cf. also \cite[Sect. 6.4.]{KL1} and \cite[Sect. 3]{CKL} for related results in the $N=0,1$ cases. Actually, the analogue of this problem appears to be still open for certain values of the central charge $c$ and of the lowest conformal energy $h$ also in the case of Virasoro nets ($N=0$ case), see \cite[page 268]{Carpi} and similarly for the $N=1$ super-Virasoro nets. 
 
We end this section with the definition of $N=2$ superconformal net. This is the analogue of the definition of ($N=1$) superconformal net
in \cite[Sect. 7]{CKL} and \cite[Sect. 2]{CHL}. 

\begin{definition} A {\it $N=2$ superconformal net} is a graded-local conformal net $\mathcal{A}$ with central charge $c$ containing 
the $N=2$ super-Virasoro net $\A_c$ as a covariant subnet and such that the corresponding representations of  $\Diff^{(\infty)}$ agree.
\end{definition}

Note that, since the NS $N=2$ super-Virasoro algebra contains copies of the NS ($N=1$) super-Virasoro algebra every $N=2$ 
superconformal net $\A$ is also an ($N=1$) superconformal net in the sense of \cite[Def. 2.11]{CHL}.

\section{Spectral Flow for the $N=2$ Super-Virasoro Nets}\label{sec:spfl}

A remarkable property of the $N=2$ super-Virasoro algebra is the ``homotopic'' equivalence of the Neveu-Schwarz and the Ramond algebra in the sense that there exists a deformation of one into the other, first discussed in \cite{SS}:

\begin{definition}\label{def:spfl-alg}
The \emph{spectral flow} of the Lie algebra $\SVir^{N=2}$ is the family of linear maps
$\eta_t: \SVir^{N=2, t} \ra \SVir^{N=2,0}$, with $t\in\R$, defined on the generators by
\begin{align*}
\eta_{t}(L_n) & := L_n + t J_n + \frac{{\hat c}}{6} t^2 \delta_{n,0},\quad n\in\Z \\
\eta_{t}(J_n) & := J_n + \frac{{\hat c}}{3} t \delta_{n,0}, \quad n\in\Z\\
\eta_{t}(G^\pm_r) & := G_{r\pm t}^{\pm}, \quad  r\in \mp t + \frac12 +\Z \\
\eta_{t}(\hat c) & := \hat c .
\end{align*}
In other words, the map $\eta_t$ embeds $\SVir^{N=2,t}$ into $\SVir^{N=2}$.
\end{definition}

\begin{proposition}\label{prop:spfl-alg-iso}
The linear maps $\eta_t$ are Lie superalgebra isomorphisms, so the Lie superalgebras $\SVir^{N=2,t},\ t\in \R$, are all isomorphic. In particular, the Neveu-Schwarz $N=2$ super-Virasoro algebra and the Ramond $N=2$ super-Virasoro algebra are isomorphic.
\end{proposition}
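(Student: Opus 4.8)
The plan is to check directly that, for every $t\in\R$, the linear map $\eta_t$ is a bijective homomorphism of Lie superalgebras; the stated conclusions then follow at once, since $\eta_t$ yields an isomorphism $\SVir^{N=2,t}\to\SVir^{N=2,0}$, whence $\SVir^{N=2,t}\cong\SVir^{N=2,s}$ via $\eta_s^{-1}\circ\eta_t$, and the Neveu--Schwarz and Ramond algebras are the cases $t=0$ and $t=1/2$ (so that $\eta_{1/2}$ is an explicit isomorphism between them). By construction $\eta_t$ is linear and parity-preserving: it sends the even generators $L_n,J_n,\hat c$ to even elements and the odd generators $G^\pm_r$ to odd elements, hence is a morphism of the underlying $\Z_2$-graded vector spaces. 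It therefore remains to verify that $\eta_t$ respects the seven (anti-)commutation relations of Definition \ref{def:SVir2-alg} and that it is invertible.

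First I would dispose of the relations $[L_m,G^\pm_r]$, $[G^\pm_r,J_n]$, $[G^\pm_r,G^\pm_s]=0$ and $[J_m,J_n]$, whose central content (only the term $\tfrac{\hat c}{3}m\delta_{m+n,0}$, in the last one) is preserved trivially since $\eta_t(\hat c)=\hat c$. These reduce to pure mode-bookkeeping: the shift $r\mapsto r\pm t$ hidden in $\eta_t(G^\pm_r)$ is precisely compensated by the admixture $tJ_m$ inside $\eta_t(L_m)$. For instance, in $[\eta_t(L_m),\eta_t(G^\pm_r)]$ the extra term $t[J_m,G^\pm_{r\pm t}]=\pm t\,G^\pm_{m+r\pm t}$ cancels the $\mp t$ coming from the shifted mode in $\big(\tfrac m2-(r\pm t)\big)G^\pm_{m+r\pm t}$, leaving $\big(\tfrac m2-r\big)G^\pm_{m+r\pm t}=\eta_t([L_m,G^\pm_r])$. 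The relations $[L_m,L_n]$ and $[L_m,J_n]$ are analogous but additionally call on the corrections $\tfrac{\hat c}{6}t^2\delta_{n,0}$ and $\tfrac{\hat c}{3}t\delta_{n,0}$ to regenerate the Virasoro and $U(1)$ cocycles after one sets $n=-m$.

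The crux, and the step I expect to be the main obstacle, is the mixed bracket $[G^+_r,G^-_s]$ with $r\in-t+\tfrac12+\Z$ and $s\in t+\tfrac12+\Z$. Here $\eta_t(G^+_r)=G^+_{r+t}$ and $\eta_t(G^-_s)=G^-_{s-t}$, so one evaluates $[G^+_{r+t},G^-_{s-t}]$ in $\SVir^{N=2,0}$: its linear part is $2L_{r+s}+\big((r-s)+2t\big)J_{r+s}$ and, when $r+s=0$, its central part is $\tfrac{\hat c}{3}\big((r+t)^2-\tfrac14\big)$. Applying instead $\eta_t$ to $2L_{r+s}+(r-s)J_{r+s}+\tfrac{\hat c}{3}(r^2-\tfrac14)\delta_{r+s,0}$ reproduces the same linear part (the missing $2t\,J_{r+s}$ being supplied by $\eta_t(2L_{r+s})$), while for $r+s=0$, i.e. $s=-r$, the central coefficients agree precisely by the completion of the square
\[
\frac{\hat c}{3}\,t^2+(r-s)\frac{\hat c}{3}\,t+\frac{\hat c}{3}\Big(r^2-\frac14\Big)=\frac{\hat c}{3}\Big((r+t)^2-\frac14\Big).
\]
This identity is exactly what fixes the coefficients of the quadratic term $\tfrac{\hat c}{6}t^2\delta_{n,0}$ in $\eta_t(L_n)$ and of the linear term $\tfrac{\hat c}{3}t\delta_{n,0}$ in $\eta_t(J_n)$; any other normalization would violate this relation.

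Finally I would establish bijectivity. On the odd part $\eta_t$ is the reindexing $G^\pm_r\mapsto G^\pm_{r\pm t}$, which maps the index set $\mp t+\tfrac12+\Z$ bijectively onto $\tfrac12+\Z$. On the even part, in the ordered basis $\hat c,(J_n)_n,(L_n)_n$ the map is unitriangular ($\hat c\mapsto\hat c$; $J_n\mapsto J_n$ plus a multiple of $\hat c$; $L_n\mapsto L_n$ plus multiples of $J_n$ and $\hat c$) and hence invertible; equivalently one checks that $L_n\mapsto L_n-tJ_n+\tfrac{\hat c}{6}t^2\delta_{n,0}$, $J_n\mapsto J_n-\tfrac{\hat c}{3}t\delta_{n,0}$, $G^\pm_s\mapsto G^\pm_{s\mp t}$, $\hat c\mapsto\hat c$ furnishes a two-sided inverse by the same routine computation. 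Being a bijective homomorphism of Lie superalgebras, each $\eta_t$ is an isomorphism, which proves the proposition.
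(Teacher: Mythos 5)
Your proof is correct and follows essentially the same route as the paper: the paper likewise verifies the homomorphism property directly from Definition \ref{def:SVir2-alg} and establishes bijectivity by exhibiting the explicit inverse $\eta_t'$ (given by $L_n \mapsto L_n - tJ_n + \tfrac{\hat c}{6}t^2\delta_{n,0}$, $J_n \mapsto J_n - \tfrac{\hat c}{3}t\delta_{n,0}$, $G^\pm_r \mapsto G^\pm_{r\mp t}$, $\hat c \mapsto \hat c$), which is exactly the inverse you write down at the end. The only difference is that you carry out in detail the bracket computations (in particular the completion of the square in the central term of $[G^+_r,G^-_s]$) that the paper dismisses as straightforward.
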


\begin{proof}
Define a linear map $\eta_t':\SVir^{N=2,0}\ra \SVir^{N=2,t}$, given on the generators by
\begin{align*}
\eta_{t}'(L_n) &:= L_n - t J_n + \frac{\hat{c}}{6} t^2 \delta_{n,0},\quad n\in\Z \\
\eta_{t}'(J_n) &:= J_n - \frac{\hat{c}}{3} t \delta_{n,0}, \quad n\in\Z\\
\eta_{t}'(G^\pm_r) &:= G_{r\mp t}^{\pm}, \quad  r\in \frac12 +\Z \\
\eta_{t}'(\hat c) & := \hat c .
\end{align*}
It is straightforward to check that it is an inverse for $\eta_t$, so we have bijectivity. Moreover, using Definition \ref{def:SVir2-alg}, one finds that $\eta_t$ is a Lie algebra homomorphism, so we are done.
\end{proof}

The main purpose of this section is to set up an operator algebraic version of the $N=2$ spectral flow. 
In order to treat the local algebras, we need to study the action of the spectral flow on smeared fields. 

Consider now $\SVir^{N=2,t}$ and denote by $\pi_0$ the vacuum representation with central charge $c$ of the Neveu-Schwarz 
$N=2$ super-Virasoro algebra $\SVir^{N=2}$, i.e. the unique irreducible unitary representation with central charge $c$ and lowest energy $h=0$ (and consequently $q=0$). We denote by  
$L_n^t := \pi_0(\eta_t(L_n))$, $J_n^t := \pi_0(\eta_t(J_n))$, $G_r^{\pm,t} := \pi_0(\eta_t(G_r^\pm))$,  the generators in the unitary 
representation $\pi_0\circ \eta_t$ of $\SVir^{N=2,t}$ (we suppress the superscript $\cdot^t$ when $t=0$). 
Similarly to the case $t=0$ considered in Sect. \ref{sec:SVir-net}, for any $t\in\R$ we may consider the series
\begin{equation}\label{eq:spfl-smeared2}
\sum_{n\in\Z} f_n J^t_n,  \sum_{r\in \mp t+\frac12+\Z} f_r G^{\pm,t}_r, \quad
\sum_{n\in\Z} f_n L^t_n.
\end{equation}
For any $f\in \Cci_c(\S \setminus \{-1\})$ the Fourier coefficients 
$f_r= \frac{1}{2\pi}\int_{-\pi}^{\pi} \rme^{-\rmi r\theta} f(\rme^{\rmi \theta})\rmd \theta$, with $r\in \mp t+\frac12+\Z$ are rapidly decreasing for each fixed $t\in \R$. Hence, as in Sect. \ref{sec:SVir-net}, it follows from the energy bounds in Eq. \eqref{eq:LEB1} that the series define
closable operators on $\Cci(L_0)$, and we denote their closures by $J^t(f), G^{\pm,t}(f),L^t(f)$. In the light of the analogous definition with $t=0$, we also define the selfadjoint fields
\begin{equation}\label{eq:spfl-fieldsum}
G^{i,t}(f) := \frac{ \Big( ( G^{+,t}(f) - (-1)^{i}G^{-,t}(f))|_{\Cci(L_0)}\Big)^-}{\sqrt{2}} , \quad i=1,2 .
\end{equation}
As in the case $t=0$ it can be shown that $J^t(f), L^t(f), G^{i,t}(f)$, $i=1,2$ are selfadjoint for all $f\in \Cci_c(\S \setminus \{-1\}, \R)$.

We shall see that the action of $\eta_t$ on the even generators corresponds, through $\alpha$-induction, to a $\Uone$-automorphism $\rho_q$ \cite[Sect. 2]{BMT} with charge $q=tc/3$ of the subnet  $\A_{\Uone}  \subset  \A_c$ generated by the current 
$J(z)= \sum_{n \in \Z} J_n z^{-n-1} $.

\begin{theorem}\label{th:spfl1}
For every $t\in\R$, there is a 
$\PSL$-covariant soliton $\bar{\eta}_t$ of $\A_c$ such that 
$t\mapsto \bar{\eta}_{t,I}$, $I\in\I_\R$, is a one-parameter group of automorphisms of $\A_c(I)$ satisfying 
\[
\bar{\eta}_{t,I}(\rme^{\rmi X (f)}) = \rme^{\rmi X^t(f)}, 
\quad f\in \Cci_c(I,\R),
\; X=J,\, G^1,\, G^2, \, L.
\]
 $\bar{\eta}_t$ is  unitarily equivalent 
to the $\alpha^\pm$-induction of a localized $\Uone$-current automorphism
$\rho_q$ with charge $q=\frac{c}{3}t$. For $t\in \Z$,  $\bar{\eta}_t$ is
a Neveu-Schwarz representation of the graded-local net $\A_c$, while for $t\in\frac12 +\Z$, $\bar{\eta}_t$ it is a Ramond representation of $\A_c$.
\end{theorem}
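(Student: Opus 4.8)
The plan is to construct $\bar{\eta}_t$ directly from the smeared flowed fields of \eqref{eq:spfl-smeared2}--\eqref{eq:spfl-fieldsum}, verify the soliton and covariance properties, and only then identify it with $\alpha^\pm$-induction and read off the Neveu-Schwarz/Ramond alternative. First I would put the flowed fields in closed form. Writing $f_0=\frac{1}{2\pi}\int_{-\pi}^{\pi}f(\rme^{\rmi\theta})\rmd\theta$ and using Definition \ref{def:spfl-alg}, one finds for $f\in\Cci_c(\S\setminus\{-1\},\R)$
\[
J^t(f)=J(f)+\tfrac{c}{3}t\,f_0\unit,\qquad L^t(f)=L(f)+t\,J(f)+\tfrac{c}{6}t^2 f_0\unit,\qquad G^{\pm,t}(f)=G^\pm(u^\pm_t f),
\]
where $u^\pm_t(\rme^{\rmi\theta}):=\rme^{\pm\rmi t\theta}$, $\theta\in(-\pi,\pi)$, is a smooth function on $\S\setminus\{-1\}$ (the reindexing computation behind $\eta_t(G^\pm_r)=G^\pm_{r\pm t}$), whence the selfadjoint $G^{i,t}(f)$ of \eqref{eq:spfl-fieldsum} is likewise expressed through the basic fields. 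Since $u^\pm_t f\in\Cci_c(I)$ whenever $f\in\Cci_c(I,\R)$ with $I\in\I_\R$, every flowed field $X^t(f)$ is affiliated with $\A_c(I)$, so $\rme^{\rmi X^t(f)}\in\A_c(I)$ for $X=J,G^1,G^2,L$. This is the key point that makes $\bar{\eta}_{t,I}$ send $\A_c(I)$ into itself rather than into a larger algebra.

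Next I would define $\bar{\eta}_{t,I}$ on the generators by the stated formula and check the algebraic structure. Because $\eta_t$ is a Lie superalgebra isomorphism (Proposition \ref{prop:spfl-alg-iso}) and the energy bounds \eqref{eq:LEB1} hold uniformly with $\Cci(L_0)$ a common core, the flowed fields $\{X^t(f)\}$ satisfy on $\Cci(L_0)$ exactly the same (anti)commutation relations as $\{X(f)\}$; hence $\rme^{\rmi X(f)}\mapsto\rme^{\rmi X^t(f)}$ respects all the defining relations. The multiplicativity $u^\pm_s u^\pm_t=u^\pm_{s+t}$ together with the additivity of the $J$- and $L$-shifts above (equivalently the composition law $\eta'_s\circ\eta_t$ of Section \ref{sec:spfl}) gives, on generators, $\bar{\eta}_{s,I}\circ\bar{\eta}_{t,I}=\bar{\eta}_{s+t,I}$; with $\bar{\eta}_{0,I}=\id$ this exhibits the one-parameter group and forces each $\bar{\eta}_{t,I}$ to be an automorphism with inverse $\bar{\eta}_{-t,I}$. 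Compatibility with isotony is immediate from the local formula, so by Remark \ref{rem:CFT-IR} the family $(\bar{\eta}_{t,I})_{I\in\I_\R}$ assembles into a soliton. For $\PSL$-covariance I would integrate the flowed M\"obius generators $L^t_{-1},L^t_0,L^t_1$: as images under the Lie-algebra homomorphism $\eta_t$ of the $\Sl$-triple $L_{-1},L_0,L_1$ they again satisfy the $\Sl$ relations (the central term drops out on this subalgebra), so they exponentiate to a projective unitary representation $U_{\bar{\eta}_t}$ of $\PSI$ with generator $L_0^{\bar{\eta}_t}=L_0+tJ_0+\tfrac{c}{6}t^2\unit$, with respect to which the flowed fields transform covariantly, yielding the required covariance intertwiner.

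For the identification I would restrict to the $\Uone$-current subnet $\A_{\Uone}\subset\A_c$ generated by $J$. There the formulas above show $\bar{\eta}_t$ acts as the localized $\Uone$-current (BMT) automorphism $\rho_q$, $q=\tfrac{c}{3}t$, of \cite{BMT}, while on the charged fields $G^\pm$ (which carry $\Uone$-charge $\pm1$) it acts by the mode shift $G^\pm_r\mapsto G^\pm_{r\pm t}$; this is precisely the defining feature of the braided ($\alpha^\pm$-)extension of $\rho_q$ to $\A_c$ in the sense of \cite{BE,LR}. Transporting $\rho_q$ to a bounded interval and comparing charge transporters then gives the unitary equivalence $\bar{\eta}_t\simeq\alpha^\pm_{\rho_q}$, the sign recording the two braidings (equivalently, left/right localization). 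Finally the dichotomy follows from $L_0^{\bar{\eta}_t}$: using $\rme^{2\pi\rmi L_0}=\Gamma=\rme^{\rmi\pi J_0}$ in the vacuum representation, a direct computation gives $\Ad(\rme^{2\pi\rmi L_0^{\bar{\eta}_t}})\,G^\pm(g)=\rme^{\mp 2\pi\rmi t}\gamma(G^\pm(g))$, while the even fields are fixed. For $t\in\Z$ the phase is $1$, so $\rme^{2\pi\rmi L_0^{\bar{\eta}_t}}$ implements the grading and, by Lemma \ref{lem:CFT-DHR} and Theorem \ref{th:CFT-R-NS}, $\bar{\eta}_t$ is a Neveu-Schwarz (DHR) representation; for $t\in\tfrac12+\Z$ the phase is $-1$, so $\Ad(\rme^{2\pi\rmi L_0^{\bar{\eta}_t}})$ is trivial on all generators and, by irreducibility, $\rme^{2\pi\rmi L_0^{\bar{\eta}_t}}$ is a scalar, giving a Ramond representation --- matching the half-integer versus integer moding of the flowed $G$-fields.

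I expect the main obstacle to be the rigorous well-definedness in the second step: promoting the formal preservation of relations among the unbounded flowed fields to a bona fide normal automorphism of the von Neumann algebra $\A_c(I)$, with full control of the odd fields $G^{\pm,t}$ under the mode shift. The cleanest way to secure this is exactly the $\alpha$-induction identification of the third step, where well-definedness and normality come for free from the general theory of localized automorphisms; alternatively one exhibits the (necessarily non-local) unitary implementer built from the $\Uone$-current, which is the operator-algebraic incarnation of the spectral flow and realizes $\bar{\eta}_{t,I}$ as the restriction of an inner perturbation.
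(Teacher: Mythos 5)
Your overall architecture tracks the paper's proof closely: the closed formulas for the flowed fields, the observation that $u^\pm_t f\in\Cci_c(I)$ keeps $X^t(f)$ attached to the local algebra $\A_c(I)$, the identification with $\alpha^\pm$-induction of the BMT automorphism of charge $q=\tfrac{c}{3}t$, and the univalence computation on the modes of $G^{\pm,t}$ feeding into Lemma \ref{lem:CFT-DHR} and Theorem \ref{th:CFT-R-NS} for the Neveu-Schwarz/Ramond dichotomy all appear in the paper in essentially the form you give them. But the central step is missing, and it is exactly the one you flag at the end. Your second step defines $\bar{\eta}_{t,I}$ on the generators $\rme^{\rmi X(f)}$ and argues that, since the flowed fields satisfy the same (anti)commutation relations, the assignment ``respects all the defining relations'' and hence yields an automorphism. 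A von Neumann algebra is not presented by generators and relations: preservation of commutation relations among unbounded generators does not produce a well-defined map on $\A_c(I)$, let alone a normal one, and the same objection undermines your group-law argument $\bar{\eta}_{s,I}\circ\bar{\eta}_{t,I}=\bar{\eta}_{s+t,I}$. Without a concrete implementation, the formula $\bar{\eta}_{t,I}(\rme^{\rmi X(f)})=\rme^{\rmi X^t(f)}$ --- the first assertion of the theorem --- is not established.

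The paper closes this hole by the device you mention only as an ``alternative'': it \emph{defines} $\bar{\eta}_{t,I}:=\Ad(\rme^{\rmi t J(\phi_I)})$, where $\phi_I\in\Cci_c(\S\setminus\{-1\},\R)$ equals $-\rmi\log$ on $I$, and then proves (Lemma \ref{lem:spfl3}) the operator identity $\rme^{\rmi tJ(\phi_I)}X(f)\rme^{-\rmi tJ(\phi_I)}=X^t(f)$ by an abstract Schr\"{o}dinger-equation uniqueness argument on $\Cci(L_0)$, using the energy bounds \eqref{eq:LEB1} and the core preservation $\rme^{\rmi tJ(\phi_I)}\Cci(L_0)=\Cci(L_0)$ from \cite{CLTW,Tol}; normality, independence of the choice of $\phi_I$, the equality $\bar{\eta}_{t,I}(\A_c(I))=\A_c(I)$, and isotony are all consequences of this identity. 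Note also that your proposed first fix --- getting well-definedness ``for free'' from the $\alpha$-induction identification --- is circular: the general theory of \cite{LR,BE} does produce a soliton $\alpha^\pm_{\rho_q}$, but it says nothing about its action on the unbounded fields $G^{\pm}$; to match it with the spectral flow (the paper's Lemma \ref{lem:spfl4}, via explicit cocycles $z(\rho_q^{I_0},g_\pm)=\rme^{t\rmi J(h_{g_\pm I_0}-h_{I_0})}$) one must again compute how Weyl operators of the current conjugate the smeared fermionic fields, which is the same analytic lemma. So the gap cannot be outsourced to the subfactor machinery: the conjugation identity for unbounded smeared fields under $\rme^{\rmi tJ(\cdot)}$ is the irreducible analytic core of the proof, and your proposal does not carry it out.
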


The proof proceeds in several steps. First, let us define representations for each local algebra. 
Let $I\in\I_\R$ and let $\phi_{I}\in\Cci_c(\S\setminus\{-1\})$ be a real valued function such that $\phi_{I}|_I = -{\rmi} \log $, where $\log$ is determined by $\log(1)=0$. Set
\begin{equation}\label{eq:spfl1}
 \bar{\eta}_{t,I}(x) := \Ad(\rme^{\rmi t J(\phi_{I})})(x), \quad x\in\A_c(I).
\end{equation}

\begin{lemma}\label{lem:spfl3}
Let $I\in \I_\R$. Then, for all $t\in \R$ and all $f\in\Cci_c(I,\R)$ we have $$ \rme^{\rmi t J(\phi_{I})} X(f) \rme^{-\rmi t J(\phi_{I})}= X^t(f),$$ 
$X=J,\, G^1,\, G^2, \, L$. Moreover, 
$\bar{\eta}_{t,I}(\A_c(I))=\A_c(I)$ and hence the map $t \mapsto \bar{\eta}_{t,I}$ is a one-paramter group of automorphisms of the 
von Neumann algebra $\A_c(I)$. If $I_1 \in \I_\R$ contains $I$ then $\bar{\eta}_{t,I_1}|_{\A_c(I)}= \bar{\eta}_{t,I}$.
\end{lemma}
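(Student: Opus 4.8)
The plan is to compute the conjugation $\rme^{\rmi tJ(\phi_I)}X(f)\rme^{-\rmi tJ(\phi_I)}$ one field at a time and then lift the resulting identities to the generating unitaries of $\A_c(I)$. Since $\phi_I$ is real-valued, $J(\phi_I)$ is essentially selfadjoint on $\Cci(L_0)$ by the energy bounds \eqref{eq:LEB1}, so $\rme^{\rmi tJ(\phi_I)}$ is a strongly continuous one-parameter unitary group; the same bounds, via the commutator estimates of \cite[Sect.~2]{BSM} based on \cite[Thm.~3.2]{Driessler} already invoked in Theorem \ref{th:SVir2-diff-cov}, show that this group leaves the common core $\Cci(L_0)$ invariant. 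I regard this domain invariance --- together with the passage from identities on $\Cci(L_0)$ to identities of closed operators and to commutation of the associated bounded functions --- as the one genuinely technical ingredient; everything else is algebra.

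First I would record the three basic commutators on $\Cci(L_0)$, using the relations of Definition \ref{def:SVir2-alg} and that $\phi_I$ agrees with $-\rmi\log$ (i.e.\ with the angle $\theta$, with $\phi_I'=1$) on $\supp f\subset I$:
\[
[J(\phi_I),J(f)]=-\rmi\tfrac{c}{3}f_0,\qquad [J(\phi_I),L(f)]=-\rmi J(f),\qquad [J(\phi_I),G^\pm(f)]=\pm G^\pm(\phi_I f).
\]
The first is a $c$-number coming from $[J_m,J_n]=\tfrac{c}{3}m\delta_{m+n,0}$ and $\tfrac{1}{2\pi}\int\phi_I f'\,\rmd\theta=-f_0$ (integration by parts); the second from $[L_m,J_n]=-nJ_{m+n}$; the third from $[G^\pm_r,J_n]=\mp G^\pm_{r+n}$ and the convolution rule $(\phi_I f)_s=\sum_n(\phi_I)_n f_{s-n}$.

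Next I would integrate these relations, comparing with the explicit generators $L_n^t,J_n^t,G_r^{\pm,t}$ of Definition \ref{def:spfl-alg}. For $X=J$ the commutator is central, so the conjugate is $J(f)+\tfrac{c}{3}tf_0=J^t(f)$. For $X=L$ the double commutator $[J(\phi_I),[J(\phi_I),L(f)]]=-\tfrac{c}{3}f_0$ is again central, so the Baker--Campbell--Hausdorff series terminates at second order and gives $L(f)+tJ(f)+\tfrac{c}{6}t^2f_0=L^t(f)$. For $X=G^\pm$ the iterated commutators are $(\ad J(\phi_I))^nG^\pm(f)=(\pm1)^nG^\pm(\phi_I^n f)$, and the weak differential equation $\tfrac{\rmd}{\rmd t}\langle\eta,\,\cdot\,\psi\rangle=\rmi\langle\eta,[J(\phi_I),\,\cdot\,]\psi\rangle$ for $\eta,\psi\in\Cci(L_0)$, together with uniqueness of solutions, identifies the conjugate with $G^\pm(\rme^{\pm\rmi t\phi_I}f)=G^{\pm,t}(f)$; the case $X=G^i$ then follows from the linearity in \eqref{eq:spfl-fieldsum}. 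Each identity holds first on $\Cci(L_0)$ and hence, by closedness, as an operator identity $\rme^{\rmi tJ(\phi_I)}X(f)\rme^{-\rmi tJ(\phi_I)}=X^t(f)$.

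It remains to show $\bar{\eta}_{t,I}(\A_c(I))=\A_c(I)$ and the compatibility. For the former I would check that every spectral-flowed generator lies in $\A_c(I)$: the unitary $\rme^{\rmi J^t(f)}$ is a phase times $\rme^{\rmi J(f)}$; the operator $L^t(f)=L(f)+tJ(f)+\mathrm{const}$ is essentially selfadjoint on $\Cci(L_0)$, so by the Trotter product formula $\rme^{\rmi L^t(f)}$ is a strong limit of products of $\rme^{\rmi L(f)/n}$ and $\rme^{\rmi tJ(f)/n}\in\A_c(I)$; and writing $G^{1,t}(f)=G^1(\cos(t\theta)f)-G^2(\sin(t\theta)f)$ (and symmetrically for $i=2$) with real functions supported in $I$, Trotter again places $\rme^{\rmi G^{i,t}(f)}$ in $\A_c(I)$. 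As $\bar{\eta}_{t,I}=\Ad(\rme^{\rmi tJ(\phi_I)})$ is a normal automorphism carrying a generating set of $\A_c(I)$ into $\A_c(I)$, it maps $\A_c(I)$ into itself; applying this to $\bar{\eta}_{-t,I}=\bar{\eta}_{t,I}^{-1}$ yields equality, and the group law is inherited from $t\mapsto\rme^{\rmi tJ(\phi_I)}$. Finally, for $I\subset I_1$ write $\phi_{I_1}=\phi_I+\chi$ with $\chi$ vanishing on $I$, so $\supp\chi\cap\supp f=\emptyset$ for all $f\in\Cci_c(I,\R)$. Since $[J(\phi_I),J(\chi)]$ is a scalar, the Weyl relation gives $\Ad(\rme^{\rmi tJ(\phi_{I_1})})=\Ad(\rme^{\rmi tJ(\chi)})\circ\Ad(\rme^{\rmi tJ(\phi_I)})$; disjointness of supports makes $J(\chi)$ commute with the generators of $\A_c(I)$ on $\Cci(L_0)$, hence (via the energy bounds) $\rme^{\rmi tJ(\chi)}$ commutes with $\A_c(I)$ and acts trivially on $\bar{\eta}_{t,I}(\A_c(I))\subset\A_c(I)$. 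Therefore $\bar{\eta}_{t,I_1}|_{\A_c(I)}=\bar{\eta}_{t,I}$.
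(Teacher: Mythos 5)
Your proposal is correct, and for the central conjugation identity it runs on the same engine as the paper: commutation relations evaluated on the invariant core $\Cci(L_0)$, then integrated by a differential-equation/uniqueness argument. The paper does this uniformly for all four fields by introducing the Banach space $\H_1=\left(\dom L_0,\ \|(\unit+L_0)\,\cdot\,\|\right)$, viewing $X^t(f)$ as a norm-differentiable family $B_X(t)\in B(\H_1,\H)$, and showing that $\psi_X(t)=B_X(t)\rme^{\rmi tJ(\phi_I)}\psi_0$ solves the abstract Schr\"odinger equation $\psi_X'=\rmi J(\phi_I)\psi_X$ with initial value $X(f)\psi_0$, whose unique solution is $\rme^{\rmi tJ(\phi_I)}X(f)\psi_0$. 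Your split treatment (terminating BCH for $J,L$, weak ODE for $G^\pm$) is equivalent in substance, but note that BCH for unbounded operators is itself only justified by that same ODE argument, so nothing is actually saved; also, your citation for the core invariance $\rme^{\rmi tJ(\phi_I)}\Cci(L_0)=\Cci(L_0)$ should be \cite[Lemma 4.6]{CLTW} and \cite[Prop. 2.1]{Tol} (which the paper uses), not \cite{BSM}/\cite{Driessler}, whose commutator estimates establish locality of bounded functions of the fields, not domain invariance. Where you genuinely diverge is in the remaining two claims. For $\bar{\eta}_{t,I}(\A_c(I))=\A_c(I)$ the paper observes that the conjugation identity makes $\bar{\eta}_{t,I}$ independent of the choice of $\phi_I$, localizes the implementing unitary in an arbitrary $\tilde{I}\supset\bar{I}$, and invokes outer regularity $\A_c(I)=\bigcap_{\tilde{I}\supset\bar{I}}\A_c(\tilde{I})$; you instead verify directly that each flowed generator $\rme^{\rmi X^t(f)}$ lies in $\A_c(I)$ by writing $J^t(f)$, $L^t(f)$, $G^{i,t}(f)$ as scalars plus linear combinations of fields smeared with real functions supported in $I$ and applying the Trotter product formula. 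Your route is more constructive and dispenses with outer regularity, at the cost of needing essential selfadjointness of sums and Trotter (tools the paper itself deploys right after Theorem \ref{th:spfl1}); the paper's route is softer and shorter. Finally, for $\bar{\eta}_{t,I_1}|_{\A_c(I)}=\bar{\eta}_{t,I}$, which the paper asserts follows from the definition and graded locality, your decomposition $\phi_{I_1}=\phi_I+\chi$ with Weyl relations plus locality is a correct implementation; an even shorter one is to note that $\phi_{I_1}|_I=-\rmi\log$, so $\phi_{I_1}$ is itself an admissible choice of $\phi_I$, and the choice-independence you already established finishes the job.
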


\begin{proof}
Let $I\in \I_R$ and let $f\in\Cci_c(I,\R)$. Then by (``smearing'') the relations in Definition \ref{def:spfl-alg}, we have
\[
J^t(f) \psi = \left(J(f)  +  \frac{c}{3}t \int_\S  f \right) \psi,
\]
\[
L^t(f) \psi= \left( L(f) + \left( t\int_\S f \right) J(f) + (t\int_\S f)^2 \right) \psi,
\]
and
\[
G^{t,\pm}(f) \psi = \sum_{r\in\frac12\mp t+\Z} f_r G_r^{t,\pm} \psi =
\sum_{r\in\frac12 + \Z} f_{r\mp t} G_r^\pm \psi= G^\pm ( \rme^{\pm \rmi t \phi_I}  f) \psi,
\]
for all $\psi \in \Cci(L_0)$.

Now, let $\H_1$ denote the Banach space obtained by endowing the domain of $L_0$ with the norm $\|\psi \|_1:= \|(L_0 + 1) \psi\|$. 
It contains $\Cci(L_0)$ as a dense subspace. 

As a consequence of the energy bounds in Eq. \eqref{eq:LEB1}, the selfadjoint operators 
$X^t(f)$,  $X=J,\, G^1,\, G^2, \, L$, give rise to bounded linear map $B_X(t) : \H_1 \to \H$ for all $t\in \R$. Moreover, from the energy bounds, the NS $N=2$ super-Virasoro (anti-) commutation relations and the equalities above it is straightforward to see that 
$t\mapsto B_X(t)\in B(\H_1, \H)$ is norm-differentiable and that the derivative $\frac{\rmd}{\rmd t}B_X(t)$ satisfies 
$$ \left( \frac{\rmd}{\rmd t}B_X(t) \right) \psi = i[J(\phi_I), X^t(f)]\psi$$
for all $\psi \in \Cci(L_0)$.

By \cite[Lemma 4.6]{CLTW}, cf. also (the proof of) \cite[Prop. 2.1]{Tol}, we have $\rme^{\rmi t J(\phi_I)}\Cci(L_0) = \Cci(L_0)$ for all 
$t\in \R$. Let $\psi_0\in\Cci(L_0)$. Then $\psi(t) := \rme^{\rmi t J(\phi_I)} \psi_0 \in \Cci(L_0)$, for all $t\in \R$ and it follows from the proof of 
\cite[Cor. 2.2]{Tol} (cf. also the proof of \cite[Lemma 4.6]{CLTW}) that the map 
$\R \to \H_1$ given by $t \mapsto \psi(t)$ is norm differentiable with derivative $\frac{\rmd}{\rmd t}\psi(t) = \rmi J(\phi_I)\psi(t)$.  

Now, let $\psi_X(t):= X^t(f) \rme^{\rmi t J(\phi_I)} \psi_0 = B_X(t)\psi(t)$, $t\in\R$. By the above discussion we have $\psi_X(t) \in \Cci(L_0)$ for all $t\in \R$. Moreover, the map $t \mapsto \psi_X(t) \in \H$ is norm differentiable with derivative 

\begin{eqnarray*}
\frac{\rmd}{\rmd t}\psi_X(t) &=& \left(\frac{\rmd}{\rmd t} B_X(t) \right) \psi(t) + B_X(t) \frac{\rmd}{\rmd t}\psi(t)\\
&=& \rmi[J(\phi_I), X^t(f)]\psi(t) + \rmi X^t(f)J(\phi_I)\psi(t) \\
&= & \rmi J(\phi_I)X^t(f)\psi(t) \\
&= & \rmi J(\phi_I)\psi_X(t).
\end{eqnarray*}

The unique solution $\psi_X(t)$ of the above abstract Schr\"{o}dinger equation with initial value $\psi_X(0)= X(f)\psi_0$ is given by 
$\psi_X(t) =  \rme^{\rmi t J(\phi_I)} X(f)\psi_0$. Therefore, since $\psi_0 \in \Cci(L_0)$ was arbitrary we find  
$ X^t(f) \rme^{\rmi t J(\phi_I)} \psi   = \rme^{\rmi t J(\phi_I)} X(f) \psi $ for all $\psi \in \Cci(L_0)$ and hence, recalling that 
$\rme^{\rmi t J(\phi_I)}\Cci(L_0) = \Cci(L_0)$, we can conclude that  $ X^t(f) \psi   = \rme^{\rmi t J(\phi_I)} X(f) \rme^{- \rmi t J(\phi_I)} \psi $
for all $\psi \in \Cci(L_0)$. Then, the desired equality of selfadjoint operators  $ X^t(f)  = \rme^{\rmi t J(\phi_I)} X(f) \rme^{- \rmi t J(\phi_I)}$
follows from the fact that $\Cci(L_0)$ is a core for $X(f)$ and $X^t(f)$. 

As a consequence we have $\bar{\eta}_{t,I}(\rme^{\rmi X(f)})=  \rme^{\rmi X^t(f)}$, $X=J,\, G^1,\, G^2, \, L$, for all $f \in \Cci_c(I,\R)$. 
It follows that $\bar{\eta}_{t,I}$ does not depend on the choice of $\phi_I$. In particular for any given interval 
$\tilde{I} \in \I_\R$ containing the closure of $I$ we can choose $\phi_I$ with support contained in $\tilde{I}$ so that 
$\bar{\eta}_{t,I}(\A_c(I))= \rme^{\rmi t J(\phi_I)} \A_c(I) \rme^{-\rmi t J(\phi_I)} \subset \A_c(\tilde{I})$ and since 
$\tilde{I} \supset \bar{I}$ was arbitrary we can conclude that $\bar{\eta}_{t,I}(\A_c(I)) \subset \A_c(I)$ and hence that 
$\bar{\eta}_{t,I}(\A_c(I)) = \A_c(I)$. 
Now, if $I_1 \in \I_\R$ contains $I$ then the equality $\bar{\eta}_{t,I_1}|_{\A_c(I)}= \bar{\eta}_{t,I}$ easily follows from the definition and from graded-locality of the net $\A_c$.
\end{proof}
\begin{lemma}\label{lem:spfl1}
The family $(\bar{\eta}_{t,I})_{I\in\I_\R}$ forms a (locally normal) $\PSL$-covariant soliton of the graded-local conformal net $\A_c$.
\end{lemma}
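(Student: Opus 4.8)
The plan is to verify, with common Hilbert space $\H=\H_{\pi_0}$, the three defining requirements of a $\PSL$-covariant soliton in Definition \ref{def:CFT-covering-reps}$(1)$: local normality, compatibility with isotony, and covariance with respect to a suitable projective unitary representation $U_t$ of $\PSL^{(\infty)}$; the passage from $\I_\R$ to $\bar{\I}_\R$ is then automatic by Remark \ref{rem:CFT-IR}. The first two requirements are already established in Lemma \ref{lem:spfl3}: each $\bar{\eta}_{t,I}$ is an automorphism of the von Neumann algebra $\A_c(I)$, hence a normal representation on $\H$, and $\bar{\eta}_{t,I_1}|_{\A_c(I)}=\bar{\eta}_{t,I}$ whenever $I\subset I_1$. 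So the entire content lies in constructing $U_t$ and checking covariance.

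For the covariance unitaries I would take $U_t$ to be the projective unitary representation of $\PSL^{(\infty)}$ obtained by integrating the $\mathfrak{sl}(2,\R)$-representation generated by the spectral-flowed M\"obius generators $L^t_{-1},L^t_0,L^t_1$. These operators share the common invariant core $\Cci(L_0)$ by the energy bounds \eqref{eq:LEB1}, they are self-adjoint (resp.\ mutually adjoint) in the correct pattern because $\eta_t$ is $*$-preserving for real $t$, and they satisfy the $\mathfrak{sl}(2,\R)$ relations since $\eta_t$ is a Lie superalgebra isomorphism (Proposition \ref{prop:spfl-alg-iso}); integrability then follows from the same machinery used for $U$ in Theorem \ref{th:SVir2-diff-cov}, cf.\ \cite{GW2,Tol,CW05}. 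Note that only the M\"obius part is integrated here, which is exactly why the soliton is $\PSL$-covariant and not $\Diff$-covariant: under M\"obius transformations the stress tensor $L$ transforms as a quasi-primary (the Schwarzian term vanishes), alongside the genuine primaries $J,G^1,G^2$.

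The central computation is that the flowed fields transform under $U_t$ just as the original fields transform under $U$:
\[
U_t(g)\,X^t(f)\,U_t(g)^*=X^t(\beta(g)f),\qquad X=J,\,G^1,\,G^2,\,L,
\]
for $g$ in a neighbourhood of the identity, where $\beta(g)$ is the geometric action of $\PSL$ on test functions determined by the conformal weights of $J,G^1,G^2,L$, namely $1,\tfrac32,\tfrac32,2$. Infinitesimally this reduces to matching $[L^t_m,X^t(f)]$ with $[L_m,X(f)]$ for $m=-1,0,1$: since $\eta_t$ is a homomorphism carrying $L_{\pm1}\mapsto L_{\pm1}+tJ_{\pm1}$, $L_0\mapsto L_0+tJ_0+\tfrac{c}{6}t^2$ and $G^{\pm}_r\mapsto G^{\pm}_{r\pm t}$, the structure constants governing the M\"obius action on $J,G^\pm,L$ are left unchanged, so the two commutators have identical form and exponentiate to the same $\beta(g)$. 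I would carry out this integration by the same norm-differentiability plus Schr\"odinger-equation uniqueness argument already used in the proof of Lemma \ref{lem:spfl3}, keeping $g$ within the connected neighbourhood so as never to cross the point $-1$.

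Finally I would assemble the covariance of $\bar{\eta}_t$. For $x=\rme^{\rmi X(f)}$ with $f\in\Cci_c(I,\R)$ and $g\in\mathcal{U}_I$, Lemma \ref{lem:spfl3} gives $\bar{\eta}_{t,I}(\rme^{\rmi X(f)})=\rme^{\rmi X^t(f)}$; the display above yields $U_t(g)\rme^{\rmi X^t(f)}U_t(g)^*=\rme^{\rmi X^t(\beta(g)f)}$; while $U(g)\rme^{\rmi X(f)}U(g)^*=\rme^{\rmi X(\beta(g)f)}$ with $\beta(g)f$ supported in $\dot{g}I\in\I_\R$, so that $\bar{\eta}_{t,\dot{g}I}(\rme^{\rmi X(\beta(g)f)})=\rme^{\rmi X^t(\beta(g)f)}$ again by Lemma \ref{lem:spfl3}. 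The two sides coincide, and since these exponentials generate $\A_c(I)$ and both maps are normal, the relation
\[
U_t(g)\,\bar{\eta}_{t,I}(x)\,U_t(g)^*=\bar{\eta}_{t,\dot{g}I}(U(g)xU(g)^*),\qquad x\in\A_c(I),\ g\in\mathcal{U}_I,
\]
extends to all of $\A_c(I)$, which is precisely the soliton covariance condition. The main obstacle is the analytic step of the third paragraph: showing that $L^t_{-1},L^t_0,L^t_1$ genuinely integrate to $U_t$ and that the infinitesimal covariance exponentiates to the finite relation on $\mathcal{U}_I$; everything else is bookkeeping resting on Lemma \ref{lem:spfl3} and Remark \ref{rem:CFT-IR}.
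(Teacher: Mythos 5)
Your proposal is correct and follows essentially the same route as the paper's proof: normality and isotony from Lemma \ref{lem:spfl3}, covariance unitaries obtained by integrating the $\mathfrak{sl}(2,\R)$-triple $L^t_{-1},L^t_0,L^t_1$ (the paper calls them $U_q$ with $q=\tfrac{c}{3}t$), the key identity $U_q(g)X^t(f)U_q(g)^*=X^t(\beta_{d(X)}(\dot g)f)$ established by the same argument as in Theorem \ref{th:SVir2-diff-cov}, and the final assembly via Lemma \ref{lem:spfl3} applied at $I$ and at $\dot g I$, extended to all of $\A_c(I)$ by normality and generation, with Remark \ref{rem:CFT-IR} handling the passage to $\bar\I_\R$.
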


\begin{proof}
The normality of $\bar{\eta}_{t,I}$, $I\in \I_\R$, is obvious from the definition and the compatibility with isotony of the family 
$(\bar{\eta}_{t,I})_{I\in\I_\R}$ follows from Lemma \ref{lem:spfl3}.
Here we have to establish the covariance.  Let $U_q$ be the unitary representation obtained by integrating to $\PSI$ the representation of the skew-adjoint part of the complex Lie algebra generated by $L^t_{-1},L^t_0,L^t_1$. We have to show that, for all $I \in \I_\R$, 
with $q=\frac{c}{3}t$ as before, 

\begin{equation}\label{etacov}
 U_q(g) \bar{\eta}_{t,I}(x) U_q(g)^* = \bar{\eta}_{t,\dot{g}I}(U(g)xU(g)^*), \quad x\in\A_c(I),
\end{equation}
for $g\in \mathcal{U}_I$, where $\mathcal{U}_I$ is the connected component of the identity in $\PSI$ of the open set 
$\{g \in \PSI: \dot{g}I \in \I_\R \}$.

Now, let $I \in \I_\R$ and let $f\in \Cci_c(I,\R)$. Arguing then as in the proof of Theorem \ref{th:SVir2-diff-cov} we obtain 

\[
U_q(g) X^t(f) U_q(g)^* = X^t(\beta_{d(X)}(\dot{g})f), \quad g\in \mathcal{U}_I. 
\]

Here, for $d \geq 0$ and $g\in \PSL$, $\beta_d(g)f$ is the function on $\S$ defined by   
$$ \left( \beta_d(g)f \right)(\rme^{\rmi\theta}) :=  \left( -\rmi \frac{\rmd}{\rmd \theta}\log (g^{-1} \rme^{\rmi\theta})   \right)^{1-d} 
f(g^{-1} \rme^{\rmi\theta})
$$
and the subscript $d(X)$ is determined by $d(J)=1$, $d(G^i)=3/2$, $d(L)=2$. 

Appealing to Lemma \ref{lem:spfl3}, we find
\begin{align*}
U_q(g) \bar{\eta}_{t,I} \left( \rme^{\rmi X(f)} \right) U_q(g)^*
=& U_q(g) \Ad(\rme^{\rmi J(\phi_{t,I})})(\rme^{\rmi X(f)})) U_q(g)^*\\
=& U_q(g) \rme^{\rmi X^t(f)} U_q(g)^* \\
=& \rme^{\rmi (X^t(\beta_{d(X)}(g).f)}\\
=& \Ad(\rme^{\rmi J(\phi_{t,\dot{g}I})})(\rme^{\rmi X(\beta_{d(X)}(g).f)}))\\
=& \bar{\eta}_{t,\dot{g}I}\left( \rme^{\rmi X(\beta_{d(X)}(g).f)}) \right)\\
=& \bar{\eta}_{t,\dot{g}I} \left( U(g) \rme^{\rmi X(f)} U(g)^* \right), \quad g \in \mathcal{U}_I.
\end{align*}
Thus, since the unitaries $ \rme^{\rmi X(f)}$, with $f \in \Cci_c(I,\R)$ generate  $\A_c(I)$, Eq. \eqref{etacov} now follows from the normality 
of $\bar{\eta}_{t,\dot{g}I}$ for all $g \in \mathcal{U}_I$ and since $I \in \I_\R$ was arbitrary, we can conclude that $\bar{\eta}_t$ is a 
$\PSL$-covariant soliton in the sense of Definition \ref{def:CFT-covering-reps}, \cf Remark \ref{rem:CFT-IR}.
\end{proof}

\begin{lemma}\label{lem:spfl4}
Given $t\in\R$, for every $I_0\in\I_\R$, there is a unitary $u_{t,I_0}\in B(\H)$ such that
\[
\bar{\eta}_{t,I} = \Ad(u_{t,I_0})\circ \alpha_{\rho^{I_0}_q,I}^+
=\Ad(\rme^{\rmi 2\pi t J_0} u_{t,I_0})\circ \alpha_{\rho^{I_0}_q,I}^-, \quad I\in\I_\R,
\]
where $\rho_q^{I_0}$ is an endomorphism of the $\Uone$-subnet of $\A_c$, with charge $q=\frac{c}{3}t$ and localized in $I_0$. 
\end{lemma}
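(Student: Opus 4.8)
The plan is to show that $\bar{\eta}_t$ is nothing but the $\alpha$-induction of the charge-$q$ current automorphism and then to pin down the intertwining unitaries. First I would identify the restriction of $\bar{\eta}_t$ to the $\Uone$-current subnet $\A_{\Uone}\subset\A_c$. By Lemma \ref{lem:spfl3}, for $f\in\Cci_c(I,\R)$ one has $\bar{\eta}_{t,I}(\rme^{\rmi J(f)})=\rme^{\rmi J^t(f)}=\rme^{\rmi(J(f)+\frac{c}{3}t\int_\S f)}$, so that on $\A_{\Uone}$ the soliton $\bar{\eta}_t$ acts as a current automorphism of charge $q=\frac{c}{3}t$ in the sense of \cite[Sect.2]{BMT}. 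I would then fix a representative $\rho_q^{I_0}$ which is genuinely localized in $I_0$ (the automorphism implemented by a Weyl operator with prescribed jump across $I_0$) and record that $\bar{\eta}_t|_{\A_{\Uone}}$ and $\rho_q^{I_0}$ are unitarily equivalent through a charge-transport unitary; this charge-transport unitary, suitably normalized, furnishes the $u_{t,I_0}$ of the statement, the point being that the discrepancy between the smearing function $\phi_I$ (normalized to $-\rmi\log$ on $I$) and the $I_0$-localized generator of $\rho_q^{I_0}$ is, by the Weyl relations, a fixed object.

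Next I would recall the definition of $\alpha^\pm$-induction for the inclusion $\A_{\Uone}\subset\A_c$ and the fact that $\alpha^\pm_{\rho_q^{I_0}}$ is the canonical $\PSL$-covariant (solitonic) extension of $\rho_q^{I_0}$ from $\A_{\Uone}$ to $\A_c$, acting trivially to one side of $I_0$. Since $\bar{\eta}_t$ is, by Lemma \ref{lem:spfl1}, a $\PSL$-covariant soliton of $\A_c$ whose restriction to $\A_{\Uone}$ is (after transport) $\rho_q^{I_0}$, the heart of the matter is that two $\PSL$-covariant solitonic extensions of the same $I_0$-localized subnet automorphism differ by conjugation by a single unitary of $B(\H)$. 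I would establish this by comparing the two extensions on the generating unitaries $\rme^{\rmi X(f)}$, $X=J,G^1,G^2,L$, $f\in\Cci_c(I,\R)$: the explicit conjugation formulas of Lemma \ref{lem:spfl3}, together with the explicit form of $\alpha^+_{\rho_q^{I_0}}$ on these same generators, show that $\Ad(u_{t,I_0})^{-1}\circ\bar{\eta}_{t,I}$ and $\alpha^+_{\rho_q^{I_0},I}$ coincide on a generating set of $\A_c(I)$ for every $I\in\I_\R$, hence agree. The delicate point to verify here is that the interval-dependence carried by $\phi_I$ in $\rme^{\rmi tJ(\phi_I)}$ is matched exactly by the interval-dependence built into $\alpha^+$-induction, so that the residual unitary $u_{t,I_0}$ is genuinely independent of $I$.

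Finally, to obtain the second equality I would compute the monodromy relating $\alpha^+$ and $\alpha^-$. For current sectors the DHR braiding is abelian and governed by the charge operator, so that $\alpha^-_{\rho_q^{I_0}}$ and $\alpha^+_{\rho_q^{I_0}}$ differ by conjugation by a unitary which, transported by $u_{t,I_0}$, is $\rme^{\rmi 2\pi tJ_0}$; substituting into the first equality yields $\bar{\eta}_{t,I}=\Ad(\rme^{\rmi 2\pi tJ_0}u_{t,I_0})\circ\alpha^-_{\rho_q^{I_0},I}$. This is consistent with the gradings: $\rme^{\rmi 2\pi tJ_0}$ acts trivially on $L$ and $J$ and by $\rme^{\pm\rmi 2\pi t}$ on $G^{\pm}$, so it is trivial for $t\in\Z$ (whence $\alpha^+=\alpha^-$ and $\bar{\eta}_t$ is Neveu-Schwarz) and equals the grading for $t\in\frac12+\Z$ (whence $\bar{\eta}_t$ is Ramond), matching the last assertion of Theorem \ref{th:spfl1}.

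I expect the main obstacle to be precisely the $I$-independence of $u_{t,I_0}$ together with the explicit identification of the $\alpha^+/\alpha^-$ monodromy with $\rme^{\rmi 2\pi tJ_0}$: both require passing carefully from the compactly supported smearing functions $\phi_I$ to the globally defined charge-transport data of the $\Uone$-current net and controlling the resulting Weyl-operator cocycles, rather than any conceptual difficulty.
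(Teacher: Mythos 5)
Your proposal matches the paper's proof in all essentials: the paper likewise identifies $\bar{\eta}_t|_{\A_{\Uone}}$ with the BMT charge-$q$ automorphism, localizes it as $\rho_q^{I_0}=\Ad(\rme^{-\rmi t J(h_{I_0})})\circ\rho_q$ with $h_{I_0}$ a smooth $2\pi$-periodic function jumping by $2\pi$ across $I_0$, and sets $u_{t,I_0}=\rme^{\rmi t J(h_{I_0})}$. The two delicate points you flag are resolved exactly along the lines you anticipate, via the Longo--Rehren cocycle formula $\alpha^\pm_{\rho_q^{I_0},I}=\Ad(z(\rho_q^{I_0},g_\pm))=\Ad(\rme^{\rmi t J(h_{g_\pm I_0}-h_{I_0})})$ with $g_-I_0<I<g_+I_0$: since $h_{g_+I_0}$ equals the angle function on $I$ while $h_{g_-I_0}$ equals it minus $2\pi$, the $I$-dependence cancels against that of $\phi_I$ and the $\alpha^+/\alpha^-$ discrepancy is precisely $\Ad(\rme^{\rmi 2\pi t J_0})$.
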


\begin{proof}
The global endomorphism of $\rho_q$ of the $\Uone$-subnet $\A_{\Uone}\subset\A_c$ is defined by
\[
\rho_q(\rme^{\rmi J(f)}) = \rme^{\rmi (J(f)+q\int f)}.
\]
Given $I_0\in\I_\R$, fix a smooth $2\pi$-periodic function $h_{I_0}:\R\ra\R$ which, restricted to $(-\pi,\pi)$, satisfies
\[
h_{I_0}( \theta)= \left\lbrace \begin{array}{l@{\; :\; }l}
\theta & \theta < I_0\\
\textrm{arbitrary} & \theta\in I_0\\
\theta - 2\pi & \theta >I_0.
\end{array}
\right.
\]
Then $\rho^{I_0}_q:= \Ad(\rme^{-t \rmi J(h_{I_0})})\circ\rho_q$ is an endomorphism of $\A_{\Uone}$ localized in ${I_0}$ and equivalent to $\rho_q$. Recall \cite{LR} that the $\alpha$-induced sectors of $\rho_q^{I_0}$ may be expressed as
\[
\alpha^\pm_{\rho_q^{I_0},I} = \Ad(z(\rho_q^{I_0},g_\pm)) = \Ad(\rme^{t \rmi J(h_{g_\pm I_0}-h_{I_0})}),
\]
where $z(\rho_q^{I_0}, g )$ is the cocycle associated to he covariant representation $\rho_q^{I_0}$, see \eg \cite{CHL,GL}, and 
$g_\pm \in G$ are such that $g_- I_0 < I < g_+ I_0$.

For $x\in\A_c(I)$ we now obtain
\[
\Ad(\rme^{t\rmi J(h_{I_0})})\circ \alpha^\pm_{\rho_q^{I_0},I}(x)
=\Ad(\rme^{t\rmi J(h_{I_0})} \rme^{t\rmi J(h_{g_\pm I_0}-h_{I_0})})(x)
=\Ad(\rme^{t\rmi J(h_{g_\pm I_0})})(x) .
\]
Since $g_-I_0 < I < g_+ I_0$, we see that $h_{g_+ I_0}|_{I}=\iota|_{I}$, while $h_{g_- I_0}|_{I}=\iota|_{I} -2\pi$. Thus the definition of $\phi_{t,I}$ and $\bar{\eta}_t$ finally implies
\[
\Ad(\rme^{t\rmi J(h_{I_0})})\circ \alpha^\pm_{\rho_q^{I_0},I} = \left\lbrace \begin{array}{l@{\; :\; }l}
\bar{\eta}_{t,I} & ``+ "\\
\Ad(\rme^{-\rmi 2\pi t J_0})\circ \bar{\eta}_{t,I} & ``- ",
\end{array}
\right.
\]
so we are done, setting $u_{t,I_0}:= \Ad(\rme^{t\rmi J(h_{I_0})})$, independent of $I$. 
In particular, the $\alpha^\pm$-induced solitons differ by the gauge automorphism $\Ad(\rme^{\rmi 2\pi t J_0})$ of the net $\A_c$, which is trivial if $t\in \Z$. 
\end{proof}

\noindent\textbf{Proof of Theorem \ref{th:spfl1}.} 
By Lemma \ref{lem:spfl3}, $\bar{\eta}_t$ has the desired action on the generators of the $N=2$ super-Virasoro net $\A_c$ and acts locally as an automorphism group (a ``flow''). 
Lemma \ref{lem:spfl1} tells us that $\bar{\eta}_t$ is a $\PSL$-covariant soliton of $\A_c$ which, by Lemma 
\ref{lem:spfl4}, is unitarily equivalent to the $\alpha^\pm$-induction of a localized $\Uone$-current automorphism $\rho_q$ with charge 
$q=\frac{c}{3}t$.

For all $r$ in the allowed set (depending on $t$ and on the field $X=J,G^\pm,L$) we have $[L^t_0, X^t_r]=-rX^t_r$ on 
$\Cci(L_0)$. It follows that
\begin{equation}\label{eq:spfl2pi}
\rme^{\rmi s L^t_0} X^t_r \rme^{-\rmi s L^t_0} =  \rme^{-\rmi r s} X^t_r.
\end{equation}
For $s=2\pi$ and for $X^t=J^t,L^t$, we have $r\in\Z$, so the phase factor is $\rme^{-2\pi\rmi r}=1$. For $X^t=G^{t,\pm}$ instead, we have 
$r\in\mp t + \frac12+\Z$, so  $\rme^{-2\pi\rmi r}=-\rme^{\pm 2\pi\rmi t }$. So $\rme^{2\pi\rmi L^t_0} $ implements the grading (is a scalar) precisely when $t\in\Z$ ($t\in\frac12+\Z$,\; resp.~).

Thus Lemma \ref{lem:CFT-DHR} implies that $\bar{\eta}_t$ is a $\PSL$-covariant general soliton iff $t\in\frac12\Z$, and it is either a Ramond or a Neveu-Schwarz $\PSL$-covariant general soliton in the sense of Definition \ref{def:CFT-covering-reps} and Theorem \ref{th:CFT-R-NS}, depending on whether $t\in \frac12+\Z$ or $t\in\Z$, resp.~. 
\boxy

Now let $\pi$ be an irreducible Neveu-Schwarz representation of $\A_c$ and let $\pi$ denote also the corresponding irreducible unitary representation of the Neveu-Schwarz $N=2$ super-Virasoro algebra given by Theorem \ref{th:SVir-pi-exp-pi}. Then, for $t\in \Z$ 
(resp.~ $t\in \frac12 +\Z$), $\pi\circ \eta_t$, is an irreducible unitary  representation of the Neveu-Schwarz (resp.~ Ramond) $N=2$ 
super-Virasoro algebra on $\H_\pi$. It follows from Theorems \ref{th:SVir-pi-exp-pi} and \ref{th:spfl1} and the Trotter product 
formula that for all $I \in \I_\R$ we have
\[
\pi_I\circ \bar{\eta}_{t,I}(\rme^{\rmi X (f)}) = \rme^{\rmi X^{\pi\circ \eta_t}(f)}, 
\quad f\in
\Cci_c(I,\R),
\; X=J,\, G^1,\, G^2, \, L.
\]

It follows that the family $\pi\circ \bar{\eta}_{t} := (\pi_I\circ \bar{\eta}_{t,I})_{I \in \I_\R}$ defines an irreducible Neveu-Schwarz (resp. Ramond) representation of $\A_c$. If $\pi$ is a Ramond representation we have a similar situation. We record these facts in the following proposition. 

\begin{proposition}\label{spflowrep} If $\pi$ is an irreducible Neveu-Schwarz representation of $\A_c$ then $\pi\circ \bar{\eta}_{t}$
is an irreducible Neveu-Schwarz (resp. Ramond) representation of $\A_c$ for all $t \in \Z$ (resp.~ $t \in \frac12 + \Z$). 
If $\pi$ is an irreducible Ramond representation of $\A_c$ then $\pi\circ \bar{\eta}_{t}$ is an irreducible Neveu-Schwarz (resp.~ Ramond) representation of $\A_c$ for all $t \in \frac12 + \Z$ (resp.~ $t \in \Z$). In particular $\bar{\eta}_{1/2}$ gives rise to a one-to one correspondence between the irreducible Neveu-Schwarz and Ramond representations of $\A_c$
\end{proposition}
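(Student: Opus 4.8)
The plan is to reduce everything to the smeared-field identity recorded just above the statement, namely $\pi_I\circ\bar{\eta}_{t,I}(\rme^{\rmi X(f)})=\rme^{\rmi X^{\pi\circ\eta_t}(f)}$ for $X=J,G^1,G^2,L$ and $f\in\Cci_c(I,\R)$, which says that the soliton $\pi\circ\bar{\eta}_t$ integrates precisely the algebra representation $\pi\circ\eta_t$. First I would invoke Proposition \ref{prop:spfl-alg-iso}: since $\eta_t$ is a Lie superalgebra isomorphism $\SVir^{N=2,t}\to\SVir^{N=2,0}$, the composite $\pi\circ\eta_t$ is again an irreducible unitary positive-energy representation, now of $\SVir^{N=2,t}$. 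By Definition \ref{def:SVir2-alg} the odd generators of $\SVir^{N=2,t}$ are moded in $\frac12\mp t+\Z$, so $\SVir^{N=2,t}$ is literally the Neveu-Schwarz algebra for $t\in\Z$ and the Ramond algebra for $t\in\frac12+\Z$; this already fixes which of the two types of representation $\pi\circ\eta_t$ is.

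Next I would feed this algebra representation back into Theorem \ref{th:SVir-pi-exp-pi}: an irreducible unitary representation of the Neveu-Schwarz (resp. Ramond) $N=2$ super-Virasoro algebra integrates to an irreducible Neveu-Schwarz (resp. Ramond) representation of $\A_c$, and the displayed identity identifies that integrated representation with $\pi\circ\bar{\eta}_t$. Irreducibility passes through because the algebra representation is irreducible and the exponentials $\rme^{\rmi X(f)}$ with $f\in\Cci_c(I,\R)$ generate the local algebras $\A_c(I)$. The case of an irreducible Ramond $\pi$ is completely symmetric, with the parity of $t$ interchanged, since the same mode count now starts from $r\in\Z$.

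For the final bijection statement I would use the local one-parameter-group property from Lemma \ref{lem:spfl3}, $\bar{\eta}_{s,I}\circ\bar{\eta}_{t,I}=\bar{\eta}_{s+t,I}$, together with $\bar{\eta}_{t,I}(\A_c(I))=\A_c(I)$, so that $\bar{\eta}_{1/2,I}\circ\bar{\eta}_{-1/2,I}=\bar{\eta}_{0,I}=\id$ on each $\A_c(I)$. Hence $\pi\mapsto\pi\circ\bar{\eta}_{1/2}$ and $\pi\mapsto\pi\circ\bar{\eta}_{-1/2}$ are mutually inverse operations on representations; by the first part each of them interchanges the Neveu-Schwarz and Ramond classes, and therefore $\bar{\eta}_{1/2}$ induces a one-to-one correspondence between the irreducible Neveu-Schwarz and irreducible Ramond representations of $\A_c$, descending in particular to unitary equivalence classes.

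The only genuinely substantive point, everything else being bookkeeping, is the determination of the Neveu-Schwarz versus Ramond alternative, i.e. checking that $\rme^{2\pi\rmi L_0^{\pi\circ\bar{\eta}_t}}$ implements the grading exactly for $t\in\Z$ and is scalar exactly for $t\in\frac12+\Z$. I expect to obtain this from the mode computation \eqref{eq:spfl2pi} in the proof of Theorem \ref{th:spfl1}, which gives $\rme^{\rmi s L_0^t}X_r^t\rme^{-\rmi s L_0^t}=\rme^{-\rmi rs}X_r^t$ and hence the phase $-\rme^{\pm2\pi\rmi t}$ on the odd generators, combined with the classification in Theorem \ref{th:CFT-R-NS}; this is exactly the ingredient that assigns the correct type to $\pi\circ\bar{\eta}_t$ and thereby makes the correspondence type-reversing.
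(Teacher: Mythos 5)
Your proposal follows essentially the same route as the paper's own proof, which is exactly the paragraph preceding the proposition: pass from the net representation $\pi$ to the algebra representation via Theorem \ref{th:SVir-pi-exp-pi}, observe that $\pi\circ\eta_t$ is an irreducible unitary representation of $\SVir^{N=2,t}$ (NS for $t\in\Z$, Ramond for $t\in\frac12+\Z$), and use the displayed smeared-field identity (which the paper derives from Theorems \ref{th:SVir-pi-exp-pi} and \ref{th:spfl1} together with the Trotter product formula) to conclude. Your two additions --- the explicit type determination via \eqref{eq:spfl2pi} and Theorem \ref{th:CFT-R-NS}, and the bijection argument from the local one-parameter group property $\bar{\eta}_{1/2,I}\circ\bar{\eta}_{-1/2,I}=\id$ of Lemma \ref{lem:spfl3} --- simply spell out what the paper leaves implicit, and they are correct.

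One citation, however, is wrong as stated, though harmlessly so: you invoke Theorem \ref{th:SVir-pi-exp-pi} as saying that an irreducible unitary representation of the NS (resp.\ Ramond) $N=2$ super-Virasoro algebra \emph{integrates} to an irreducible NS (resp.\ Ramond) representation of $\A_c$. The theorem goes only in the opposite direction, and the paper explicitly remarks right after it that the converse ``is not known in general'' (it holds for $c<3$ only via the coset identification of Section \ref{sec:coset}, which is proved later and in fact uses the present proposition). Fortunately your proof does not need this converse: the net-level object $\pi\circ\bar{\eta}_t$ already exists, being the composition of the normal representations $\pi_I$ with the local automorphisms $\bar{\eta}_{t,I}$ of $\A_c(I)$, so nothing has to be integrated. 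The identity then tells you which algebra representation its smeared fields implement; irreducibility is immediate since $\bar{\eta}_{t,I}(\A_c(I))=\A_c(I)$ gives $\pi_I\circ\bar{\eta}_{t,I}(\A_c(I))=\pi_I(\A_c(I))$ (even more directly than your Schur-type argument via the generating exponentials); and the NS/Ramond alternative is settled exactly by your final step. With that one appeal removed, your argument is complete and coincides with the paper's.
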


\section{The Coset Identification for the $N=2$ Super-Virasoro Nets with $c<3$}\label{sec:coset}

In this section we prove a crucial result for our analysis 
(Theorem \ref{th:CosetId}).
As explained in the introduction, claims of this result have appeared in the literature in the vertex algebraic context, but we could not find any satisfactory and complete proof. The proof is operator algebraic in nature but it can be shown that it covers the original vertex algebraic statement as a consequence of the close relationship between the two approaches.

As usual, we denote by $\A_{\SUtwo_n}$ the completely rational local conformal net associated to the level $n$ positive energy representations of the loop group $L\SUtwo$, \cf \cite{FG,Was}. Similarly we denote by $\A_{\Uone_{2n}}$ the  local conformal net corresponding to 
the positive energy representations of the loop group $L\Uone$ at level $2n$. For every positive integer $n$, $\A_{\Uone_{2n}}$ is completely rational with $2n$ sectors, all with statistical dimension one, \cf \cite{X-3mf,Xu-Str-Add}. In fact the nets $\A_{\Uone_{2n}}$
coincide with the local extensions of the $c=1$ net $\A_{\Uone}$ generated by a chiral $\Uone$ current first classified in \cite{BMT}, \cf also \cite{Xu-Str-Add}. The inclusion $\Uone \subset \SUtwo$ gives rise to the diagonal inclusion $\Uone \subset \SUtwo \times \Uone$ which corresponds to the inclusions of local conformal nets  $\A_{\Uone_{2n+4}} \subset \A_{\SUtwo_n} \otimes \A_{\Uone_{ 4}}$ and one can consider the corresponding coset nets, see e.g. \cite{X-coset,X-3mf}. We begin our analysis of the $N=2$ coset identification with the following theorem.

\begin{theorem}\label{th:bosonic-coset}
The coset 
net corresponding to the inclusion
 $\A_{\Uone_{2n+4}} \subset \A_{\SUtwo_n} \otimes \A_{\Uone_{ 4}}$ is completely rational
 for every positive integer $n$.
 Its list of irreducible representations is numbered by the following $(l,m,s)$ satisfying
$l=0,1,2,\dots,n$,
$m=0,1,2,\dots,2n+3 \in{\mathbb{Z}}/(2n+4){\mathbb{Z}}$,
$s=0,1,2,3\in{\mathbb{Z}}/4{\mathbb{Z}}$
with $l-m+s\in 2{\mathbb{Z}}$
with the identification $(l,m,s)=(n-l,m+n+2,s+2)$.
                                             
The fusion rules are given as follows, treating the three components $l,m,s$ in the label $(l,m,s)$ separately: For the first component $l$, we use the usual
${\mathrm{SU}}(2)_n$ fusion rules.  For the second component $m$,
we use the group multiplication in
${\mathbb{Z}}/(2n+4){\mathbb{Z}}$. For the third component
$s$, we use the group multiplication in
${\mathbb{Z}}/4{\mathbb{Z}}$.
All these products are with the identification $(l,m,s)=(n-l,m+n+2,s+2)$.

The univalence (statistics phase) $\rme^{\rmi 2\pi L_0}$
and dimension of the irreducible DHR sector $(l,m,s)$ are given by
$$\exp\left(\left(\frac{l(l+2)-m^2}{4(n+2)}+\frac{s^2}{8}\right)
2\pi i\right), \quad \sin((l+1)\pi/(n+2))/\sin(\pi/(n+2)).$$
Accordingly the statistical dimension is $1$ (i.e. we have automorphisms)
iff either $l=0$ or $l=n$.
\end{theorem}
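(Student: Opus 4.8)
The plan is to realize the coset net $\mC(I)=\A_{\Uone_{2n+4}}(I)'\cap\big(\A_{\SUtwo_n}\otimes\A_{\Uone_4}\big)(I)$ and to analyze it through the theory of completely rational nets together with $\alpha$-induction applied to the diagonal subnet $\A_{\Uone_{2n+4}}$. The first step is to establish complete rationality. Both $\A_{\SUtwo_n}$ \cite{FG,Was} and the lattice nets $\A_{\Uone_{2k}}$ \cite{X-3mf,Xu-Str-Add} are completely rational, hence so is the tensor product $\A_{\SUtwo_n}\otimes\A_{\Uone_4}$; the diagonal current embedding realizes $\A_{\Uone_{2n+4}}$ as a completely rational subnet. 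By the coset rationality results of \cite{X-coset,Xu-Str-Add} (using that the subnet is completely rational), $\mC$ is then completely rational and the generated net $\A_{\Uone_{2n+4}}\vee\mC\subset\A_{\SUtwo_n}\otimes\A_{\Uone_4}$ has finite index. This provides a finite sector set and licenses the Verlinde formula.

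Next I would classify the sectors by branching. Decomposing an irreducible sector $(l,s)$ of the ambient net, with $l\in\{0,\dots,n\}$ labelling $\A_{\SUtwo_n}$ and $s\in\Z/4\Z$ labelling $\A_{\Uone_4}$, under $\A_{\Uone_{2n+4}}\vee\mC$ gives
\[
\H_{(l,s)}=\bigoplus_{m}\H^{\Uone_{2n+4}}_{m}\otimes\H^{\mC}_{(l,m,s)} .
\]
Since the sector $l$ of $\A_{\SUtwo_n}$ carries Cartan $\Uone$-charges congruent to $l$ modulo $2$, and the diagonal $\Uone_{2n+4}$-charge also sees the $\Uone_4$-charge $s$, one gets the selection rule $m\equiv l+s\pmod 2$, i.e. $l-m+s\in 2\Z$. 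The branching spaces $\H^{\mC}_{(l,m,s)}$ carry the irreducible coset sectors. The identification $(l,m,s)=(n-l,m+n+2,s+2)$ reflects the order-two simple current of the inclusion (the $\SUtwo_n$ fusion automorphism $l\mapsto n-l$ together with the corresponding shifts of the abelian charges): the two labels index the same branching space and must be identified. A count of labels modulo the selection rule and this identification, matched against the $\mu$-index of $\mC$, confirms that the list is exactly complete.

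The remaining data then factor through the constituent nets. With conformal weights $h^{\SUtwo_n}_l=l(l+2)/4(n+2)$, $h^{\Uone_4}_s=s^2/8$ and $h^{\Uone_{2n+4}}_m=m^2/4(n+2)$, the coset weight is the signed combination $\tfrac{l(l+2)-m^2}{4(n+2)}+\tfrac{s^2}{8}$ modulo $\Z$, yielding the stated univalence $\rme^{\rmi2\pi L_0}$. As all $\Uone$-sectors have dimension one, the statistical dimension of $(l,m,s)$ equals the $\SUtwo_n$ quantum dimension $\sin((l+1)\pi/(n+2))/\sin(\pi/(n+2))$. The component-wise fusion rules follow because the $S$-matrix of $\mC$ is, up to the identification, the product of the $\SUtwo_n$, the conjugate $\Uone_{2n+4}$ and the $\Uone_4$ matrices — a factorization I would extract from $\alpha$-induction \cite{BE,LR} on the three tensor factors — so Verlinde separates into the $\SUtwo_n$ product, the group product in $\Z/(2n+4)\Z$, and the group product in $\Z/4\Z$. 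Finally the dimension equals $1$ exactly when $\sin((l+1)\pi/(n+2))=\sin(\pi/(n+2))$, i.e. $l+1\in\{1,\,n+1\}$, that is $l=0$ or $l=n$.

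The main obstacle I anticipate is the sector-classification step: showing that the branching produces \emph{precisely} the listed labels, with no missing or spurious sectors, and that the simple-current identification is exactly of order two. This is where complete rationality is indispensable, since it is the finiteness of the sector set and the global-index bookkeeping — rather than an ad hoc character count — that closes the argument. The fusion-rule factorization, though conceptually transparent from $\alpha$-induction, likewise rests on controlling the full finite-index inclusion $\A_{\Uone_{2n+4}}\vee\mC\subset\A_{\SUtwo_n}\otimes\A_{\Uone_4}$.
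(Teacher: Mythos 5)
Your proposal follows essentially the same route as the paper: the paper's entire proof consists in recognizing this coset as $\A(G(1,1,n))$ in the notation of \cite{X-coset} and then citing Thm.~2.4(1) there for complete rationality, Thm.~4.4 for the vacuum pairs (the order-two group generated by $(n,n+2,2)$), and Thm.~4.7 together with the remark following it for the classification, fusion rules, univalence and dimensions. Your branching decomposition, selection rule $l-m+s\in 2\Z$, field identification, and $\mu$-index bookkeeping are precisely the content of those cited results, so the two arguments coincide in substance.

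Two points are worth flagging. First, the one verification the paper makes explicitly and that your sketch silently assumes: the identification group generated by $(n,n+2,2)$ must act \emph{without fixed points} on the labels. This is what licenses your assertion that the branching spaces $\H^{\mC}_{(l,m,s)}$ are irreducible; at a fixed point the branching space would split (fixed-point resolution), and the list, the dimension formula, and the componentwise fusion rules would all fail — a real phenomenon for simple-current identifications in cosets. Here freeness is a one-line check from the middle component alone: $m\equiv m+n+2 \pmod{2n+4}$ is impossible since $0<n+2<2n+4$. Second, your phrase ``using that the subnet is completely rational'' misstates what coset rationality requires: complete rationality of $\A_{\Uone_{2n+4}}$ does not by itself yield it. The substance of Xu's Thm.~2.4 is cofiniteness, i.e.\ finiteness of the index of $\A_{\Uone_{2n+4}}\vee\mC\subset\A_{\SUtwo_n}\otimes\A_{\Uone_4}$, from which complete rationality of $\mC$ follows; your citation covers this, but the parenthetical reasoning does not.
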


\begin{proof}
This coset net is a special case of coset net studied in \cite{X-coset}. In the notation of \cite{X-coset}, this coset net is $\A(G(1,1,n)).$ By (1) of Theorem 2.4 in \cite{X-coset}, $\A(G(1,1,n))$ is completely rational. By Theorem 4.4 in \cite{X-coset}, the Vacuum Pairs in this case is an order two abelian group  generated $(n,n+2,2).$ 

Note that this group acts without fixed points on the $(l,m,s)$ as given above. By Theorem 4.7 in \cite{X-coset}, $(l,m,s)$ are irreducible representations of $\A(G(1,1,n)).$
The rest of the statement in the theorem follows by the remark after Theorem 4.7 in \cite{X-coset}.
\end{proof}

As already mentioned in Theorem \ref{th:SVir2-reps} the unitary representations of the super-Virasoro algebra with central charge $c_n=3n/(n+2)$ have been explicitly realized by Di Vecchia, Petersen, Yu and Zheng, using the coset construction for the inclusion
$\Uone_{2n+4}\subset \SUtwo_n \otimes \CAR^{\otimes 2}$ \cite{DVYZ}.
Now let $\A_{\Uone_{2n+4}} \subset \A_{\SUtwo_n} \otimes \A_{\CAR^{\otimes 2}}$ be the corresponding inclusion
of conformal nets, where $\A_{\CAR^{\otimes 2}}$ is the net generated by two real chiral free Fermi fields (equivalently one complex chiral free Fermi field), see e.g. \cite{Boc,Boc2}, and let $\mC_n$ be the corresponding coset net defined by
\begin{equation}
\mC_n(I) = \A_{\Uone_{2n+4}}(\S)' \cap \big( \A_{\SUtwo_n}(I) \otimes \A_{\CAR^{\otimes 2}}(I) \big), \quad I\in \I ,
\end{equation}
where $\A_{\Uone_{2n+4}}(\S) := \bigvee_{I\in \I} \A_{\Uone_{2n+4}}(I)$.

Using the fact that the even part of the NS representation space of $\CAR^{\otimes 2}$ carries the vacuum representation of $\Uone_4$ 
(see e.g. \cite[Sect. 5B]{BMT}) one can conclude that the even part $\mC_n^\gamma$ of the Fermi conformal net
$\mC_n$ is given by the coset
\begin{equation}
\mC_n^\gamma(I) = \A_{\Uone_{2n+4}}(\S)' \cap  \big(  \A_{\SUtwo_n}(I) \otimes \A_{\Uone_4}(I) \big) , \quad I\in \I.
\end{equation}
In analogy with the cases $N=0$ \cite{KL1}, and $N=1$ \cite{CKL} one can show that the results in \cite{DVYZ}
imply that the $N=2$ super Virasoro net $\A_{c_n}$ is a covariant irreducible subnet of the coset net $\mC_n$.
The aim of this section is to prove that these nets actually coincide, \ie that  $\A_{c_n}=\mC_n$ (the $N=2$
coset identification).

Let us denote by $\pi_m$, $m \in \Z/(2n+4)\Z$ the irreducible representations of the net  $\A_{\Uone_{2n+4}}$
by $\pi_l$, $l=0, 1, \dots, n$ the irreducible representations of   $\A_{\SUtwo_n}$ and by $\pi_{NS}$ the vacuum
(Neveu-Schwarz) representation of  $\A_{\CAR^{\otimes 2}}$.
Then the inclusion $\A_{\Uone_{2n+4}}\otimes \mC_n \subset \A_{\SUtwo_n} \otimes \A_{\CAR^{\otimes 2}}$
gives decompositions
\begin{equation}
\pi_l \otimes \pi_{NS}|_{\A_{\Uone_{2n+4}} \otimes \; \mC_n}=\bigoplus_{m \in \Z/(2n+4)\Z} \pi_m \otimes \pi_{(l,m)},
\end{equation}
where $\pi_{(l,m)}$ is the (possibly zero) NS representation of $\mC_n$ on the multiplicity space of $\pi_m$. The corresponding
Hilbert spaces $\H_{l,m}$ carries unitary representations of the NS $N=2$ super-Virasoro algebra with central charge
$c_n$. Now let \begin{equation}
\chi_{(l,m)}(t) = \tr_{\H_{(l,m)}} t^{L^{\pi_{(l,m)}}_0}
\end{equation}
be the character of $\pi_{(l,m)}$ (branching function). Although not explicitly stated there the following proposition follows directly from the construction in \cite{DVYZ}.

\begin{proposition}\label{prop:charcoset}
If $|m| \leq l $ and $l + m \in 2\Z$ then the unitary representation of the NS $N=2$ super-Virasoro algebra on $\H_{l,m}$ with central charge $c_n$ contains a subrepresentation with $h= h_{l,m}:= \frac{l(l+2)-m^2}{4(n+2)}$ and 
$q=q_{n,m}:= -\frac{m}{n+2}$. Moreover, $\chi_{(l,m)}(t) = t^{h_{l,m}} + o(t^{h_{l,m}})$ as $t \to 0^+$, namely $h_{l,m}$ is the lowest conformal energy eigenvalue on $\H_{l,m}$ and the corresponding multiplicity is one.
\end{proposition}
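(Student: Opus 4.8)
The plan is to locate the lowest weight vector of the $N=2$ super-Virasoro representation carried by $\H_{(l,m)}$ explicitly inside $\H_{\pi_l}\otimes\H_{NS}$, using the coset realization of \cite{DVYZ}, and to read off $h$, $q$ and the multiplicity from it. In that realization the generators $L_n,J_n,G^\pm_r$ act as coset fields built from the $\A_{\SUtwo_n}$ currents and the two free Fermi fields and commuting with the diagonal $\A_{\Uone_{2n+4}}$; in particular the coset Virasoro operator is $L_0^{\pi_{(l,m)}}=L_0^{\mathrm{tot}}-L_0^{\Uone}$, where $L_0^{\mathrm{tot}}$ is the restriction to $\H_{\pi_l}\otimes\H_{NS}$ of the sum of the Sugawara generators of $\A_{\SUtwo_n}$ and of the Fermi net, and $L_0^{\Uone}$ is the Sugawara generator of $\A_{\Uone_{2n+4}}$. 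Since in the decomposition $\H_{\pi_l}\otimes\H_{NS}=\bigoplus_m \H_{\pi_m}\otimes\H_{(l,m)}$ the commuting operators $L_0^{\Uone}$ and $L_0^{\pi_{(l,m)}}$ act on the two tensor factors respectively, I would use that the minimum of $L_0^{\mathrm{tot}}$ on the charge-$m$ subspace is $\min_{\H_{\pi_m}}L_0^{\Uone}+\min_{\H_{(l,m)}}L_0^{\pi_{(l,m)}}=\frac{m^2}{4(n+2)}+\min_{\H_{(l,m)}}L_0^{\pi_{(l,m)}}$, with the minimal eigenspace factorizing correspondingly.

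Next I would analyze the ground states of $\H_{\pi_l}\otimes\H_{NS}$: they form the spin-$l/2$ multiplet of $\A_{\SUtwo_n}$, at conformal energy $\frac{l(l+2)}{4(n+2)}$, tensored with the NS Fermi vacuum of energy $0$ and zero fermion charge. With the normalization of \cite{DVYZ} the $\Uone_{2n+4}$ charge of such a state equals the $\SUtwo$ weight $w\in\{-l,-l+2,\dots,l\}$, so the hypotheses $|m|\le l$ and $l+m\in2\Z$ are exactly the condition that a weight-$m$ vector exist in this multiplet, in which case it is unique. Hence the charge-$m$ minimum of $L_0^{\mathrm{tot}}$ equals $\frac{l(l+2)}{4(n+2)}$ and its eigenspace is one-dimensional; combined with the factorization above and $\min_{\H_{\pi_m}}L_0^{\Uone}=\frac{m^2}{4(n+2)}$ this gives $\min_{\H_{(l,m)}}L_0^{\pi_{(l,m)}}=\frac{l(l+2)-m^2}{4(n+2)}=h_{l,m}$ with multiplicity one, that is $\chi_{(l,m)}(t)=t^{h_{l,m}}+o(t^{h_{l,m}})$.

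The minimizing vector $\xi_{l,m}$, namely the weight-$m$ vector of the spin-$l/2$ multiplet tensored with the Fermi vacuum, is then the unique lowest $L_0^{\pi_{(l,m)}}$-eigenvector; being minimal it is annihilated by all positive modes $L_k,J_k,G^\pm_r$ ($k,r>0$) and hence is a genuine NS lowest weight vector generating an irreducible $N=2$ subrepresentation with lowest energy $h_{l,m}$. To obtain $q$ I would evaluate $J_0$ on $\xi_{l,m}$ from the explicit coset expression for the $N=2$ current $J$ in \cite{DVYZ}: $J_0$ is a fixed multiple of the component of the $\SUtwo$ Cartan current orthogonal to the gauged diagonal current, so on $\xi_{l,m}$ (which has zero fermion charge) it reduces to that multiple times the weight $w=m$; the normalization fixed by $[J_m,J_n]=\frac{c_n}{3}m\delta_{m+n,0}$ then yields $J_0\xi_{l,m}=-\frac{m}{n+2}\xi_{l,m}=q_{n,m}\,\xi_{l,m}$.

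The main obstacle is bookkeeping rather than conceptual: one must extract from \cite{DVYZ} the precise normalizations identifying the sector label $m$ with the $\SUtwo$ weight $w$ and the exact coefficients in the coset formula for $J$, so that both the energy and the charge come out with the correct constants and the range/parity conditions match those of Theorem \ref{th:bosonic-coset} with $s=0$. Once these are fixed, the multiplicity-one assertion follows cleanly from the tensor factorization of $L_0^{\mathrm{tot}}$ on the charge-$m$ subspace, which is the essential operator-algebraic input and is independent of the finer structure of the branching functions.
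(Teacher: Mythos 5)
Your proposal is correct and is essentially the paper's own approach: the paper gives no argument beyond stating that the proposition ``follows directly from the construction in \cite{DVYZ}'', and what you have written is precisely the detailed working-out of that construction --- the GKO factorization $L_0^{\mathrm{tot}}=L_0^{\Uone}\otimes 1 + 1\otimes L_0^{\pi_{(l,m)}}$ across the decomposition, the identification of the ground multiplet of $\H_{\pi_l}\otimes\H_{NS}$ (spin-$l/2$ multiplet tensored with the Fermi vacuum, whose diagonal charges are exactly the $\SUtwo$ weights), and the normalization of the $N=2$ current by its commutation relations. The residual sign/normalization bookkeeping you defer to \cite{DVYZ} is pure convention and is consistent with condition NS3 of Theorem \ref{th:SVir2-reps}, so nothing essential is missing.
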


The following lemma will play a crucial role in the proof of the $N=2$ coset identification.
\begin{lemma}\label{lemma:irred-restr}
If $n$ is even (resp. odd) then the restriction of $\pi_{(n,0)}$ (resp. $\pi_{(n, \pm1)}$) to $\A_{c_n}$ is irreducible.
\end{lemma}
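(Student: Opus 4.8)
The plan is to establish the equivalent statement that $\H_{n,0}$ (resp. $\H_{n,\pm1}$), regarded as a module for the Neveu--Schwarz $N=2$ super-Virasoro algebra with central charge $c_n$, is irreducible. Indeed, by Theorem \ref{th:SVir-pi-exp-pi} the representation $\pi_{(n,0)}$ of $\A_{c_n}$ is generated by the smeared fields $\rme^{\rmi L^\pi(f)},\rme^{\rmi J^\pi(f)},\rme^{\rmi G^{i,\pi}(f)}$; an operator commuting with $\pi_{(n,0)}(\A_{c_n})$ therefore commutes with the selfadjoint generators $L^\pi(f),J^\pi(f),G^{i,\pi}(f)$, hence with all their modes on the core $\Cci(L_0)$, so by Schur's lemma $\pi_{(n,0)}|_{\A_{c_n}}$ is irreducible exactly when the module is. I argue the case $n$ even; the case $n$ odd with $\pi_{(n,\pm1)}$ is entirely parallel.

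First I would fix the structural picture and locate the lowest-energy vector. By Theorem \ref{th:bosonic-coset}, together with the fact that the even part of the NS space of $\A_{\CAR^{\otimes2}}$ carries the vacuum of $\A_{\Uone_4}$, the restriction of $\pi_{(n,0)}$ to $\mC_n^\gamma$ splits as the two sectors $(n,0,0)$ and $(n,0,2)$; since $l=n$ these have statistical dimension one, so (via Proposition \ref{prop:CKL22}) $\pi_{(n,0)}$ is an irreducible graded NS representation of $\mC_n$ whose even part has a unique lowest-energy vector at $h=n/4$ by Proposition \ref{prop:charcoset}, with $G^{i,\pi}_{-1/2}$ applied to it spanning the odd lowest-energy space at $h=n/4+\tfrac12$. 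In the realization of \cite{DVYZ} this ground vector is the weight-zero state $|n/2,0\rangle\otimes|0\rangle$ of the spin-$n/2$ multiplet of $\pi_n$ tensored with the fermionic vacuum, singled out by $\A_{\Uone_{2n+4}}$-neutrality. Such a neutral state exists precisely when $n$ is even; for $n$ odd the minimal-charge states $|n/2,\pm\tfrac12\rangle\otimes|0\rangle$ take its place and produce $\pi_{(n,\pm1)}$, which is the origin of the even/odd dichotomy in the statement.

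The heart of the matter is to show that the super-Virasoro fields generate all of $\H_{n,0}$ from this ground vector, i.e.\ that $\H_{n,0}$ carries no further highest-weight vector. Writing $\H_{n,0}$ as a sum of irreducible unitary NS modules, Theorem \ref{th:SVir2-reps} forces every summand into the discrete series NS3: the summand through the ground vector is the one with $(h,q)=(n/4,0)$, while any extra summand would have strictly larger lowest energy and a discrete-series charge. I would eliminate such a summand by matching the $J_0^\pi$-charge grading of $\H_{n,0}$ against the charges available in the NS3 modules, using the unique-lowest-weight data of both graded pieces above; and since $\eta_t$ is a Lie superalgebra isomorphism, the spectral flow of Theorem \ref{th:spfl1} and Proposition \ref{spflowrep} can be used to reorganize the generation question on the companion Ramond sector, where the ground-state structure is most rigid and where the simple-current (automorphism) nature of the top sector $l=n$ is what keeps the restriction irreducible.

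This last step is the main obstacle, and it genuinely requires the top-sector hypothesis rather than soft sector theory. For $l=0$ the analogous representation is the vacuum of $\mC_n$, whose restriction to $\A_{c_n}$ contains the $N=2$ vacuum module as a proper subspace as soon as the inclusion $\A_{c_n}\subseteq\mC_n$ is proper; thus the irreducibility claimed here cannot follow from any argument valid uniformly for all sectors, and the elimination of extra highest-weight vectors must exploit the explicit branching of the level-$n$ top sector. I expect the real work to consist in showing, via Theorem \ref{th:bosonic-coset}, Proposition \ref{prop:charcoset} and the spectral flow combined, that the discrete-series charge and energy bookkeeping leaves no room for a second highest-weight vector in $\H_{n,0}$.
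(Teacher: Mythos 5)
Your reduction to module irreducibility and your first structural step are sound: by Proposition \ref{prop:charcoset} the lowest energy eigenvalue $h_{n,0}=n/4$ of $\H_{(n,0)}$ has multiplicity one, so $\H_{(n,0)}=\K_1\oplus\K_2$ with $\K_1$ the irreducible submodule through the ground vector and any complementary summand $\K_2$ having lowest energy strictly greater than $n/4$; and by Theorem \ref{th:SVir2-reps}, since $c_n<3$, every summand must lie in the discrete series NS3. But at exactly this point your proof stops being a proof: you propose to eliminate $\K_2$ by ``matching the $J_0^\pi$-charge grading against the charges available in the NS3 modules'' and by transporting the question through the spectral flow to the Ramond side, and you explicitly defer this as ``the real work.'' That step is never carried out, and it is the entire content of the lemma. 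The DVYZ ground-state description and the remark about $l=0$ are correct context but do not substitute for it.

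The missing idea is a one-line maximality observation which makes all of your proposed bookkeeping unnecessary. In NS3 at central charge $c_n=3n/(n+2)$ the admissible lowest energies are $h=\frac{l(l+2)-m^2}{4(n+2)}$ with $0\le l\le n$, $|m|\le l$, $l+m\in 2\Z$, and hence
\[
h \;\le\; \frac{n(n+2)}{4(n+2)} \;=\; \frac{n}{4},
\]
so $n/4$ is the \emph{largest} lowest energy that any unitary module with this central charge can have. Since $\K_2$ would need lowest energy strictly greater than $n/4$, it must vanish, and $\H_{(n,0)}=\K_1$ is irreducible. This is precisely the paper's proof. For $n$ odd the same argument applies to $(n,\pm1)$: the parity constraint $l+m\in2\Z$ forces $m$ odd when $l=n$, so $h\le\frac{n(n+2)-1}{4(n+2)}=h_{n,\pm1}$, which is again the maximum, and the complement must vanish. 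No charge-grading analysis, no spectral flow, and no passage to the Ramond sector is needed; the energy bound alone closes the argument.
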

\begin{proof}
Let $n$ be even. By Proposition \ref{prop:charcoset} the Hilbert space $\H_{(n,0)}$  is a direct sum
$\K_1\oplus \K_2$ where $\K_1$ carries an irreducible representation of the NS $N=2$ super-Virasoro
algebra $\SVir^{N=2}$ with central charge $c_n$ and lowest energy $h_{(n,0)} = n/4$ and $\K_2$ is either zero or carries a unitary representation of $\SVir^{N=2,0}$ with central charge $c_n$ and lowest energy $h > n/4$.
But $n/4$ is the maximal possible value for the lowest energy and hence $\K_2={0}$. The case $n$ odd is similar.
\end{proof}

We are interested in the (NS) DHR sectors of $\mC_n$. They are labeled with $(l,m)$ satisfying
$l=0,1,2,\dots,n$,
$m=0,1,2,\dots,2n+3 \in{\mathbb{Z}}/(2n+4){\mathbb{Z}}$,
with $l-m \in 2{\mathbb{Z}}$ with the identification $(l,m)=(n-l,m+n+2)$. The restriction
of $(l,m)$ to $\mC_n^\gamma$ is given by $(l,m,0) \oplus (l,m,2)$. Moreover $(l,m)=\alpha_{(l,m,0)}$
where $\alpha_{(l,m,0)}$ denotes the $\alpha$-induction of $(l,m,0)$ from $\mC_n^\gamma$ to $\mC_n$.
In view of Theorem \ref{th:bosonic-coset} then we have the following fermionic fusion rules:
\begin{equation}\label{eq:FermiFusions}
(l_1,m_1)(l_2,m_2)= \bigoplus_{ \begin{array}{c} | l_1-l_2 | \leq l \leq \min \{l_1+l_2, 2n-l_1-l_2\} \\ l+l_1+l_2 \in 2\Z \end{array}} (l, m_1+m_2).
\end{equation}
Automorphisms correspond to $l=0, n$. It follows from Theorem \ref{th:bosonic-coset} and its proof that   
$[\pi_{(l,m)}] = (l,m)$ for $l-m \in 2{\mathbb Z}$. 
In particular $[\pi_{(n,0)}] = (n, 0)$ for $n$ even and $[\pi_{(n,\pm 1)}] = (n, \pm 1)$ for $n$ odd where 
$\pi_{(n,0)}$ and $\pi_{(n,\pm 1)}$ are the representations in Lemma \ref{lemma:irred-restr}. Accordingly, for $n$ even, $(n, 0)$ remains irreducible when restricted to $\A_{c_n}$ and similarly,  for $n$ odd, $(n, \pm 1)$ remain irreducible when restricted to $\A_{c_n}$.

Now let $\A_c$ be the $N=2$ super-Virasoro net with central charge $c$ (not necessarily $c<3$) and let
$J(z)=\sum_{n\in \Z}J_nz^{-n-1}$ be the corresponding current with Fourier coefficients satisfying the commutation relations
\begin{equation}\label{eq:Jn}
\left[J_n,J_m \right] = \frac{c}{3}n \delta_{n+m,0}.
\end{equation}
The current $J(z)$ generates a subnet $\A_{\Uone} \subset \A_c$ isomorphic to the $\Uone$ net in  \cite{BMT}. We can label the sectors of $\A_{\Uone}$ by $(q)$, $q\in \R$, corresponding to $J(z) \mapsto J(z) + q z^{-1}$. They satisfy the DHR fusions $(q_1)(q_2)=(q_1+ q_2)$ see \cite{BMT} and \cite{Xu-Str-Add}.
Fix an interval $I_0 \in \I_\R$. For every $q\in \R$ we choose an endomorphism $\rho_q$ of $\A_{\Uone}$,
localized in $I_0$ and such that $[\rho_q]=(q)$.
Now let $\B$ be a diffeomorphism covariant graded local extension of $\A_{c}$.
Following \cite{LR} we shall denote by $\pi^0$ the vacuum representation of $\B$, by $\pi_0$ the vacuum representation of $\A_{c_n}$ and by $\pi =(\pi^{0})^{rest}$ the restriction of $\pi^0$ to $\A_{c_n}$. Now, having the inclusions $\A_{\Uone} \subset \A_{c} \subset \B$ we can consider
the $\alpha$-inductions (say $\alpha^+$) $\alpha^{\A_{c}}_{\rho_q}$ and  $\alpha^{\B}_{\rho_q}$ of $\rho_q$
to $\A_c$ and $\B$ resp.~. Note that the restriction to $\A_c$ of $\pi^0 \circ \alpha^\B_{\rho_q}$ is
$\pi \circ \alpha^{\A_c}_{\rho_q}$. Note also that by Theorem \ref{th:spfl1} $\alpha^{\A_{c}}_{\rho_{\frac{c}{3}t}}$ is
unitarily equivalent to our operator algebraic version $\bar{\eta}_t$ of the $N=2$ spectral flow. Accordingly
$\alpha^{\B}_{\rho_{\frac{c}{3}t}}$ is a natural candidate to represent the unitary equivalence class of a possible extension of the \emph{spectral flow} on $\B$.

Now let $\H_\B$ be the vacuum Hilbert space of $\B$ and $\H_{\A_c}$ be the vacuum Hilbert space of $\A_c$. Since
$\rme^{\rmi \pi J_0}$ is the grading unitary on $\H_{\A_c}$, we have $\rme^{ i 2\pi J_0}=1$ on $\H_{\A_c}$ and accordingly
the spectrum of $J_0$ on $\H_{\A_c}$ is contained in $\Z$. In fact it is not hard to see that this spectrum is exactly 
$\Z$. However the spectrum of $J_0$ on $\H_\B$ can be in general larger than $\Z$ even when $\B$ is an irreducible extension of $\A_c$. This is however not the case  if e.g. $\rme^{\rmi  \pi J_0}$ is still the grading unitary on $\B$, a condition which may be seen as a regularity condition on the extension $\B$.

\begin{theorem}\label{th:BMTinduction} 
Assume that the spectrum of $J_0$ on $\H_\B$ is $\Z$. Then, for any $t\in \Z$,
$\pi^0 \circ \alpha^{\B}_{\rho_{\frac{c}{3}t}}$ is a NS representation of $\B$. In particular it restricts to a DHR representation of the even subnet $\B^\gamma$ of $\B$.
\end{theorem}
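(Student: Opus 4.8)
The plan is to exhibit $\pi^0\circ\alpha^{\B}_{\rho_{\frac c3 t}}$ as a $\PSL$-covariant general soliton of $\B$ whose rotation generator is the spectral-flow-shifted Hamiltonian, and then to read off that for $t\in\Z$ the monodromy $\rme^{\rmi 2\pi L_0}$ implements the grading. Since $\rho_q$ (with $q=\tfrac c3 t$) is a covariant DHR automorphism of the subnet $\A_{\Uone}\subset\B$, its $\alpha$-induction $\alpha^{\B}_{\rho_q}$ is covariant through the associated covariance cocycle $z(\rho_q,g)$, so $\pi^0\circ\alpha^{\B}_{\rho_q}$ is a $\PSL$-covariant soliton of $\B$ on $\H_\B$. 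I would then study the selfadjoint generator $L_0^\alpha$ of the rotation one-parameter subgroup of its covariance representation.

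The heart of the argument is the identity
\[
L_0^\alpha = L_0 + t J_0 + \frac{c}{6}t^2\,\unit \quad\text{on } \H_\B,
\]
where $L_0$ is the vacuum conformal Hamiltonian of $\B$ and $J_0$ the zero mode of the current. To obtain it I would use the compatibility already recorded in the text: the restriction to $\A_c$ of $\pi^0\circ\alpha^{\B}_{\rho_q}$ is $\pi\circ\alpha^{\A_c}_{\rho_q}$ with $\pi=(\pi^{0})^{rest}$, and because the diffeomorphism (hence rotation) representations of $\B$ and of its subnet $\A_c$ coincide, the rotation generators of the two $\alpha$-induced families agree as operators on $\H_\B$. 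By Theorem \ref{th:spfl1} the soliton $\alpha^{\A_c}_{\rho_q}$ is unitarily equivalent to $\bar{\eta}_t$, which acts on the super-Virasoro generators through the algebra spectral flow $\eta_t$ (Lemma \ref{lem:spfl3}); in particular its rotation generator is $\eta_t(L_0)=L_0+tJ_0+\tfrac c6 t^2$, evaluated in the representation $\pi=\pi^0|_{\A_c}$, for which $L_0^\pi=L_0$ and $J_0^\pi=J_0$ on $\H_\B$. This yields the displayed formula.

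With the formula in hand, and using $[L_0,J_0]=0$ (from the relations of Definition \ref{def:SVir2-alg}), I would exponentiate:
\[
\rme^{\rmi 2\pi L_0^\alpha}=\rme^{\rmi 2\pi L_0}\,\rme^{\rmi 2\pi t J_0}\,\rme^{\rmi\frac{\pi c}{3}t^2}.
\]
Now $\rme^{\rmi 2\pi L_0}=\Gamma$ by the vacuum spin-statistics property $(8)$ of $\B$, while for $t\in\Z$ the hypothesis $\spec(J_0)=\Z$ on $\H_\B$ gives $\rme^{\rmi 2\pi t J_0}=\unit$; hence $\rme^{\rmi 2\pi L_0^\alpha}=\rme^{\rmi\frac{\pi c}{3}t^2}\Gamma$ is a scalar multiple of the grading unitary and so implements the grading $\gamma$ (the scalar drops out under conjugation, and $\gamma$ commutes with $\alpha^{\B}_{\rho_q}$ since $\rho_q$ is localized in the even subnet $\A_{\Uone}$). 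By Lemma \ref{lem:CFT-DHR} the restriction of $\pi^0\circ\alpha^{\B}_{\rho_q}$ to $\B^\gamma$ is then a DHR representation, so $\pi^0\circ\alpha^{\B}_{\rho_q}$ is a general soliton; since moreover $\rme^{\rmi 4\pi L_0^\alpha}=\rme^{\rmi\frac{2\pi c}{3}t^2}\unit$ is a scalar and $\rme^{\rmi 2\pi L_0^\alpha}$ implements the grading, Theorem \ref{th:CFT-R-NS} (case $(NS)$, applied to each irreducible subrepresentation) shows that it is a Neveu--Schwarz representation of $\B$, whose restriction to $\B^\gamma$ is DHR as claimed.

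The main obstacle I anticipate is the rigorous justification of the identity for $L_0^\alpha$ on all of $\H_\B$: one must verify that the covariance cocycles of $\rho_q$ relative to $\A_c$ and relative to $\B$ are genuinely compatible, so that no additional contribution to the rotation generator beyond $tJ_0+\tfrac c6 t^2$ appears on the larger space $\H_\B$. Once this is secured, the remaining steps are the short exponentiation above, controlled entirely by the spectral hypothesis on $J_0$ and the vacuum spin-statistics theorem.
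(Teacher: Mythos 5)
Your overall strategy coincides with the paper's: compute the univalence of the induced soliton $\lambda_t=\pi^0\circ\alpha^{\B}_{\rho_{\frac c3 t}}$, show that for $t\in\Z$ it equals the scalar $\rme^{\rmi 2\pi\frac c6 t^2}$ times the grading unitary $\Gamma$ (your $\rme^{\rmi\frac{\pi c}{3}t^2}\Gamma$ is the same expression), and conclude via Lemma \ref{lem:CFT-DHR}. The end game is therefore fine, and your claimed operator identity $L_0^{\lambda_t}=L_0+tJ_0+\frac c6 t^2\unit$ is indeed true. The problem is that your derivation of it has a genuine gap, which you yourself flag as "the main obstacle" without resolving.

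Concretely: Theorem \ref{th:spfl1} and Lemma \ref{lem:spfl3} are statements about the \emph{vacuum} representation of $\A_c$ on its own Hilbert space $\H_{\A_c}$; the unitary equivalence $\bar{\eta}_t\simeq\alpha^{\A_c,\pm}_{\rho_q}$ and the identification of the rotation generator with $\pi_0(\eta_t(L_0))$ are established there and only there. Composing with $\pi=\pi^0|_{\A_c}$ — a highly reducible, locally normal representation on the larger space $\H_\B$ — does not let you "evaluate the same formula in $\pi$": the covariance generator of $\pi\circ\alpha^{\A_c}_{\rho_q}$ is $L_0$ plus the derivative at the identity of the rotated cocycle $\pi^0\bigl(z(\rho_q,r_s)\bigr)$, and identifying this derivative with $tJ_0+\frac c6 t^2\unit$ (indeed, even making sense of $J_0$ and of $[L_0,J_0]=0$ as operators on $\H_\B$, and controlling domains) requires knowing how $\pi^0$ acts on the subnet $\A_{\Uone}$ \emph{globally}, which the vacuum-representation statement does not provide. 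This is precisely where the paper's proof takes a different and rigorous route: it introduces the coset subnet $\D(I)=\A_{\Uone}(\S)'\cap\B(I)$, observes that $\A_{\Uone}\otimes\D\subset\B$ contains the Virasoro subnet of $\B$ (so restrictions to it determine conformal Hamiltonians), and uses the hypothesis $\spec(J_0)=\Z$ to decompose $\pi^0|_{\A_{\Uone}\otimes\D}=\bigoplus_{q\in\Z}\rho_q\otimes\sigma_q$, hence $L_0^{\lambda_t}=\bigoplus_{q\in\Z}\bigl(L_0^{\rho_{q+\frac c3 t}}\otimes 1+1\otimes L_0^{\sigma_q}\bigr)$. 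The univalence computation then reduces to the known lowest energies $\frac{3}{2c}q^2$ of the $\Uone$ sectors, with $\rme^{\rmi 2\pi qt}=1$ for $t\in\Z$. To repair your argument you would have to prove your generator identity by essentially this decomposition (or by a careful cocycle-derivative argument carried out in $\pi^0$), and at that point the spectral hypothesis on $J_0$ enters in exactly the same place it does in the paper.
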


\begin{proof} Let $\Gamma$ be the grading unitary on $\H_\B$. We have the spin statistic relation
$\Gamma = \rme^{ i 2\pi L_0}$. Moreover $\Gamma$ commutes with the $\alpha$-induction, namely,
for any $I \in \I_\R$,
$$\Gamma\pi^0 \circ \alpha^{\B}_{\rho_{\frac{c}{3}t}}(b)\Gamma= \pi^0 \circ \alpha^{\B}_{\rho_{\frac{c}{3}t}}
(\Gamma b \Gamma)$$
for all $b \in \B(I)$. To see this let us recall that if $\tilde{I} \in \I_\R$ is sufficiently large then there is
a unitary $u \in \A_{\Uone}(\tilde{I})$ such that $ubu^* = \alpha^{\B}_{\rho_{\frac{c}{3}t}}(b)$, for all
$b\in \B(I)$. and the claim follows from the fact that $u$ commutes with $\Gamma$.
Now let $\D$ be the covariant subnet of $\B$ defined by
$$\D(I)=\A_{\Uone}(\S)'\cap \B(I), \quad I\in \I.$$
Note that $\D$ is trivial iff the central charge of the net $\B$ is $1$.
Then  we have the inclusion $\A_{\Uone} \otimes \D \subset \B$. Moreover the subnet $\A_{\Uone} \otimes \D$
contains the Virasoro subnet of $\B$. The fact that the spectrum of $J_0$ on $\H_\B$ is $\Z$ implies that the restriction
of $\pi^0$ to $\A_{\Uone}$ is unitarily equivalent to a direct sum of representations $\rho_q$ with
$q \in \Z$. Hence the restriction of $\pi^0$ to $\A_{\Uone} \otimes \D$ can be written as a direct sum
$$\bigoplus_{q\in \Z} \rho_q\otimes \sigma_q$$
where $\sigma_q$ is the representation of $\D$ on the (possibly zero) multiplicity space of $\rho_q$.
Accordingly the conformal vacuum Hamiltonian $L_0$ has the following decomposition
$$L_0  = \bigoplus_{q\in \Z} \left( L^{\rho_q}_0\otimes 1 + 1\otimes L^{\sigma_q}_0\right).$$
Now let us denote $\pi^0 \circ \alpha^{\B}_{\rho_{\frac{c}{3}t}}$ by $\lambda_t$.
Since ${\rho_{\frac{c}{3}t}}$ is M\"{o}bius covariant, $\lambda_t$ is a  M\"{o}bius covariant
soliton of $\B$. Moreover the restriction of $\lambda_t$ to $\A_{\Uone} \otimes \D$ is
$$\bigoplus_{q\in \Z} \rho_{q + \frac{c}{3}t}\otimes \sigma_q$$ and we have
$$L^{\lambda_t}_0  = \bigoplus_{q\in \Z} \left( L^{\rho_{q + \frac{c}{3}t}}_0\otimes 1 + 1\otimes L^{\sigma_q}_0\right).$$
We want to compute the 
univalence operator $\rme^{\rmi 2\pi L^{\lambda_t}_0}$ . Recall that the lowest energy in the representation space of $\rho_q$ is given by 
$\frac{3}{c}\frac{q^2}{2}$, see e.g. \cite{BMT} (the factor $\frac{3}{c}$ is due to the factor
$\frac{c}{3}$ in the commutation relations in \eqref{eq:Jn}). It follows that
\begin{eqnarray*}
\rme^{\rmi 2\pi L^{\lambda_t}_0}  &=& \bigoplus_{q\in \Z}  \rme^{\rmi 2\pi \frac{3}{2c}(q + \frac{c}{3}t)^2}\otimes \rme^{\rmi 2\pi L^{\sigma_q}_0} = \bigoplus_{q\in \Z}  \rme^{\rmi 2\pi \frac{3}{2c}(q + \frac{c}{3}t)^2}\otimes \rme^{\rmi 2\pi L^{\sigma_q}_0} \\
&=&
\rme^{\rmi 2\pi \frac{c}{6}t^2}\left(\bigoplus_{q\in \Z}  \rme^{\rmi 2\pi \frac{3}{2c}q^2} \rme^{\rmi 2\pi qt}\otimes \rme^{\rmi 2\pi L^{\sigma_q}_0} \right) \\
&=&
\rme^{\rmi 2\pi \frac{c}{6}t^2}\left(\bigoplus_{q\in \Z}  \rme^{\rmi 2\pi L^{\rho_q}_0} \rme^{\rmi 2\pi qt}\otimes \rme^{\rmi 2\pi L^{\sigma_q}_0} \right).
\end{eqnarray*}
If $t\in \Z$ then $\rme^{\rmi 2\pi qt}=1$ for all $q\in \Z$. Hence
\begin{eqnarray*}
\rme^{\rmi 2\pi L^{\lambda_t}_0} &=&
\rme^{\rmi 2\pi \frac{c}{6}t^2}\left(\bigoplus_{q\in \Z}  \rme^{\rmi 2\pi L^{\rho_q}_0} \otimes \rme^{\rmi 2\pi L^{\sigma_q}_0} \right) \\
&=& \rme^{\rmi 2\pi \frac{c}{6}t^2}\rme^{\rmi 2\pi L_0} = \rme^{\rmi 2\pi \frac{c}{6}t^2}\Gamma
\end{eqnarray*}
for all $t\in \Z$. It follows that, for any $t \in \Z$, any $I\in \I_\R$ and $b\in \B(I)$, we have
$$ \rme^{\rmi 2\pi L^{\lambda_t}_0}\lambda_t(b) \rme^{-i2\pi L^{\lambda_t}_0}= \lambda_t (\Gamma b \Gamma) $$
and the conclusion follows from Lemma \ref{lem:CFT-DHR}
\end{proof}

As pointed out before the spectrum of $J_0$ on $\H_\B$ is in general larger than $\Z$ for an arbitrary extension
$\B$ of $\A_c$. Hence, in particular, the unitary $\rme^{\rmi \pi J_0}$ does not in general implement the grading of $\B$.
However as a consequence of the following proposition it always implements a gauge automorphism of $\B$.

\begin{proposition}\label{prop:coset-gauge}
For any $t\in \R$ the operator $\rme^{\rmi  t J_0}$ is a gauge unitary of $\B$ namely
$\rme^{\rmi  t J_0}\Omega =\Omega$ and $\rme^{\rmi  t J_0}\B(I)\rme^{-i t J_0}= \B(I)$ for all $I\in \I$.
\end{proposition}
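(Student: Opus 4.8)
The plan is to prove the two assertions separately: invariance of the vacuum is immediate, whereas the invariance of the local algebras rests on a localisation of the charge operator $J_0$. For the first assertion, note that $\Omega$ is the common vacuum of the inclusion $\A_c\subset\B$, lying in the lowest energy subspace of the NS vacuum representation with $h=q=0$; hence $J_0\Omega=0$ and $\rme^{\rmi tJ_0}\Omega=\Omega$ for all $t\in\R$.

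For the second assertion, fix $I\in\I$ and $t\in\R$. The key observation is that $J_0=J(\chi)$, where $\chi$ is the constant function $1$ on $\S$ (only its zeroth Fourier coefficient survives, so $J(\chi)=J_0$). Given an arbitrary interval $\hat I\in\I$ with $\bar I\subset\hat I$, I would choose $g\in\Cci(\S,\R)$ with $g\equiv 1$ on a neighbourhood of $\bar I$ and $\supp g\subset\hat I$, and set $h:=\chi-g$; then $h$ is supported in a closed sub-arc of $I'$. Writing $J_0=J(g)+J(h)$ and using that $[J(g),J(h)]$ is a scalar multiple of the identity (the current satisfies the Weyl relations, the commutator being $\tfrac{c}{3}\sum_m m\,g_m h_{-m}$), one gets $\rme^{\rmi tJ_0}=(\text{phase})\,\rme^{\rmi tJ(g)}\rme^{\rmi tJ(h)}$. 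Now $\rme^{\rmi tJ(h)}\in\A_{\Uone}(K)\subset\B(I')$ for a suitable interval $K\subset I'$, and it is \emph{even}, since $J$ commutes with the grading $\Gamma=\rme^{\rmi 2\pi L_0}$ (indeed $[L_0,J_n]=-nJ_n$ with $n\in\Z$, so $\rme^{\rmi 2\pi L_0}J_n\rme^{-\rmi 2\pi L_0}=J_n$). By graded locality (graded Haag duality) even elements of $\B(I')$ commute with $\B(I)$, so the tail factor drops out of the adjoint action and
\[
\Ad(\rme^{\rmi tJ_0})(b)=\Ad(\rme^{\rmi tJ(g)})(b),\quad b\in\B(I).
\]
Since $\rme^{\rmi tJ(g)}\in\A_{\Uone}(\hat I)\subset\B(\hat I)$, this yields $\Ad(\rme^{\rmi tJ_0})(\B(I))\subset\B(\hat I)$.

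As $\hat I\supset\bar I$ was arbitrary, outer regularity $\B(I)=\bigcap_{\hat I\supset\bar I}\B(\hat I)$ gives $\Ad(\rme^{\rmi tJ_0})(\B(I))\subset\B(I)$; replacing $t$ by $-t$ provides the reverse inclusion, whence equality. The main obstacle is precisely this localisation step: one must separate the genuinely non-local part of $J_0$ (carried by $J(g)$ and localised in $\hat I$) from the part localised in $I'$, and verify that the latter is an even operator so that graded locality forces it to commute with $\B(I)$. The fact that $[J(g),J(h)]$ is central is what makes the factorisation of the unitary $\rme^{\rmi tJ_0}$ exact up to an irrelevant phase, and it is this feature of the free $\Uone$-current that makes the whole argument work without any assumption on the spectrum of $J_0$ on $\H_\B$.
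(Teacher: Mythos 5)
Your proof is correct and follows essentially the same route as the paper's: you split $J_0=J(g)+J(h)$ with $g$ supported in a slightly larger interval $\hat I$ and $h$ supported in $I'$, use the Weyl relations and (graded) locality to reduce the adjoint action of $\rme^{\rmi tJ_0}$ on $\B(I)$ to that of $\rme^{\rmi tJ(g)}$, and conclude by outer regularity. The only difference is that you spell out details the paper leaves implicit (the evenness of the current, the scalar commutator, and the $t\mapsto -t$ step for the reverse inclusion), which is fine.
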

\begin{proof} Obviously we have $\rme^{\rmi  t J_0}\Omega =\Omega$. Now let $I \in \I$ be fixed and let
$\tilde{I} \in \I$ be such that the closure of $I$ is contained in $\tilde{I}$. Choose two real smooth functions
$f_1, f_2 \in \Cci(\S,\R)$ 
such that $\supp f_1 \subset \tilde{I}$, $\supp f_2 \subset I'$ and $f_1 +f_2=1$.
Then $J_0=J(f_1) + J(f_2)$ on a common core. Hence by locality and the Weyl relations we find
$$\rme^{\rmi  t J_0}\B(I)\rme^{-i t J_0} = \rme^{\rmi  t J(f_1)}\B(I)\rme^{-i t J(f_1)} \subset \B(\tilde{I})$$
and, since $\tilde{I}$ was an arbitrary interval containing the closure of $I$, we can infer that
$$\rme^{\rmi  t J_0}\B(I)\rme^{-i t J_0} \subset \B(I), \quad t \in \R,$$
and the conclusion follows.
\end{proof}

Now recall from Sect. \ref{sec:spfl} that for every $t\in \R$ there is a M\"{o}bius covariant general soliton $\bar{\eta}_t$ 
of $\A_{c}$ corresponding, in the sense of Theorem \ref{th:spfl1} (cf. also Theorem \ref{th:SVir-pi-exp-pi}), to the representation of $\SVir^{N=2, t}$ obtained on the vacuum Hilbert space $\H_{\A_c}$ by the composition of the vacuum representation of the Neveu-Schwarz $N=2$ super-Virasoro algebra $\SVir^{N=2}$ generating the net $\A_{c}$ with spectral flow $\eta_t$.
For $t\in \Z$,  $\SVir^{N=2, t}$ coincides with $\SVir^{N=2}$ and, by Theorem \ref{th:spfl1}, $\bar{\eta}_t$
is a NS representation of $\A_c$ on $\H_{\A_c}$.

\begin{proposition}\label{prop:eta1}
The representation $\bar{\eta}_1$ of $\A_c$ corresponds to the unitary irreducible representation of $\SVir^{N=2}$ with central charge $c$ and $(h,q)= (\frac{c}{6},\frac{c}{3})$.
\end{proposition}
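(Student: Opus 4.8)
The plan is to identify $\bar{\eta}_1$ with the concrete $\SVir^{N=2}$-representation $\pi_0\circ\eta_1$ on $\H_{\A_c}$ and to locate its lowest-weight vector. By Theorem \ref{th:spfl1} and the discussion preceding the statement, $\bar{\eta}_1$ corresponds, via Theorem \ref{th:SVir-pi-exp-pi}, to $\pi_0\circ\eta_1$, which is an irreducible unitary Neveu-Schwarz representation of $\SVir^{N=2}$: $\pi_0$ is irreducible, $\eta_1$ is a Lie superalgebra isomorphism (Proposition \ref{prop:spfl-alg-iso}) fixing $\hat c$, and $1\in\Z$. Hence its central charge is again $c$, and it remains only to compute the pair $(h,q)$.

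Writing $L_n^1,J_n^1,G_r^{\pm,1}$ for the generators of $\pi_0\circ\eta_1$, Definition \ref{def:spfl-alg} gives, on $\Cci(L_0)$,
\[
L_0^1=L_0+J_0+\tfrac{c}{6}\unit,\qquad J_0^1=J_0+\tfrac{c}{3}\unit,
\]
together with $L_n^1=L_n+J_n$, $J_n^1=J_n$ for $n\neq 0$, and $G_r^{+,1}=G_{r+1}^{+}$, $G_r^{-,1}=G_{r-1}^{-}$. The heart of the proof is to show that the vacuum vector $\Omega$ is a lowest-weight vector for this flowed algebra, i.e.\ that it is annihilated by $L_n^1,J_n^1$ ($n>0$) and by $G_r^{\pm,1}$ ($r>0$, $r\in\frac12+\Z$). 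The even conditions, as well as $G_r^{+,1}\Omega=G_{r+1}^+\Omega=0$, are immediate from $L_n\Omega=J_n\Omega=0$ ($n>0$) and $G_s^+\Omega=0$ ($s\geq\frac12$). The only nontrivial requirement is the lowest fermionic mode $G_{1/2}^{-,1}\Omega=G_{-1/2}^-\Omega=0$.

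I would establish this vanishing by a norm computation in the vacuum module: using the adjoint relation $(G_{-1/2}^-)^*=G_{1/2}^+$, the anticommutator $\{G_{1/2}^+,G_{-1/2}^-\}=2L_0+J_0$ coming from Definition \ref{def:SVir2-alg}, and $G_{1/2}^+\Omega=0$, one gets $\|G_{-1/2}^-\Omega\|^2=(\Omega,(2L_0+J_0)\Omega)=0$ since $L_0\Omega=J_0\Omega=0$, whence $G_{-1/2}^-\Omega=0$. (The same argument with $\{G_{-1/2}^+,G_{1/2}^-\}=2L_0-J_0$ gives $G_{-1/2}^+\Omega=0$, which is what lets the flow stabilise the ground state in either direction.) This is the point where the special null structure of the vacuum module is essential: in a generic module $G_{-1/2}^-$ would act as a genuine lowering operator for $L_0^1$ and the ground state would move, so this step is the main obstacle.

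Once $\Omega$ is known to be lowest weight, I read off from the displayed zero modes $L_0^1\Omega=\frac{c}{6}\Omega$ and $J_0^1\Omega=\frac{c}{3}\Omega$. Since $\pi_0\circ\eta_1$ is irreducible and of positive energy (Theorem \ref{th:spfl1}), its lowest-weight space is one-dimensional, the whole module is generated from $\Omega$ by the negative (energy-raising) modes of the flowed algebra, and the lowest-weight eigenvalues are precisely the defining data. Therefore $\bar{\eta}_1$ corresponds to the irreducible unitary representation of $\SVir^{N=2}$ with central charge $c$ and $(h,q)=(\frac{c}{6},\frac{c}{3})$, as claimed; everything apart from the vanishing $G_{-1/2}^-\Omega=0$ is bookkeeping with Definition \ref{def:spfl-alg}.
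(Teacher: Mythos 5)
Your proof is correct and follows essentially the same route as the paper's: identify $\bar{\eta}_1$ with $\pi_0\circ\eta_1$ (irreducible since $\eta_1$ is an isomorphism fixing $\hat c$), check that the vacuum vector $\Omega$ remains a lowest-weight vector for the flowed algebra with the only nontrivial point being $G^{-}_{-1/2}\Omega=0$, and read off $(h,q)=\left(\frac{c}{6},\frac{c}{3}\right)$ from the flowed zero modes. The only difference is that you supply the unitarity argument $\|G^{-}_{-1/2}\Omega\|^2=\left(\Omega,(2L_0+J_0)\Omega\right)=0$ justifying that key vanishing, which the paper simply quotes as a known fact about the vacuum module.
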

\begin{proof} It is enough to show that the composition of the vacuum representation with central charge $c$ of
$\SVir^{N=2}$ with $\eta_1$ is the irreducible representation of $\SVir^{N=2}$ with central charge $c$ and
$(h,q)= (\frac{c}{6},\frac{c}{3})$ on $\H_{\A_c}$.  First of all note that the irreducibility of this representation follows
from that of the vacuum representation and the invertibility of the spectral flow. Now let $\Omega \in \H_{\A_{c}}$
be the vacuum vector. Then
\begin{eqnarray*}
\eta_1(L_m)\Omega & = & L_m\Omega +\frac12 J_m\Omega =0 \\
\eta_1(J_m)\Omega & = & J_m\Omega=0,
\end{eqnarray*}
for every positive integer $m$.
Moreover,
\begin{eqnarray*}
\eta_1 (G^+_r)\Omega & = & G^+_{r+1}\Omega =0 \\
\eta_1 (G^-_r)\Omega & = & G^-_{r-1}\Omega =0,
\end{eqnarray*}
for every positive $r \in \frac12 +\Z$, where in the second equation we used the fact that
$G^-_{-\frac12}\Omega=0$. It follows that $\Omega$ is a lowest energy vector also for the representation
defined by $\eta_1$ and consequently
\begin{equation*}
h \Omega  =  \eta_1(L_0)\Omega= \frac{c}{6}\Omega, \quad
q\Omega  =  \eta_1(J_0)\Omega = \frac{c}{3}\Omega.
\end{equation*}
\end{proof}

We now come back to the inclusion $\A_{c_n} \subset \mC_n$.
\begin{lemma}\label{lemma:IntegerCharge} $\rme^{\rmi \pi J_0} = \rme^{\rmi 2\pi L_0} =\Gamma$ on the vacuum Hilbert space $\H_{\mC_n}$ of $\mC_n$.
\end{lemma}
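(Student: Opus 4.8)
The plan is to split the two asserted identities and to prove the easy one from the general theory and the substantive one from the explicit coset realization. The equality $\rme^{\rmi 2\pi L_0}=\Gamma$ on $\H_{\mC_n}$ is simply the vacuum spin--statistics theorem (property $(8)$ of Section \ref{sec:prelim}) applied to the graded-local conformal net $\mC_n$, so it needs no further argument; the whole task reduces to showing $\rme^{\rmi\pi J_0}=\Gamma$. The orienting observation is that on the $N=2$ super-Virasoro vacuum module generated by $\Omega$ this relation is automatic: there $J_0$ has integer spectrum and, by the commutation relations of Definition \ref{def:SVir2-alg}, $\rme^{\rmi\pi J_0}$ acts as $+1$ on the even generators $L_n,J_n$ and as $-1$ on the odd $G^\pm_r$, hence implements the grading $\gamma$. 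The difficulty is to upgrade this from the vacuum module of $\A_{c_n}$ to all of $\H_{\mC_n}$.

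To do this I would use the description of $\mC_n$ as a subnet of $\A_{\SUtwo_n}\otimes\A_{\CAR^{\otimes 2}}$ coming from the construction of \cite{DVYZ}. Since the $\A_{\SUtwo_n}$ factor is bosonic, the grading of $\mC_n$ is inherited from the $\CAR^{\otimes 2}$ factor, and for a complex free Fermi field the parity operator is precisely $(-1)^F=\rme^{\rmi\pi j_0}$, where $j$ is the $\Uone$-current of $\CAR^{\otimes 2}$ (the current generating the $\Uone_4$ subnet appearing in $\mC_n^\gamma$), normalized so that its zero mode $j_0$ is the integer-valued fermion-number operator. Thus $\Gamma=\rme^{\rmi\pi j_0}$ on $\H_{\mC_n}$.

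It then remains to identify $J_0$ with $j_0$ on the coset vacuum space. Writing $\hat J=\alpha H+\beta j$ for the $\Uone_{2n+4}$ current being cosetted out (with $H$ the $\SUtwo_n$ Cartan current) and $J=aH+bj$ for the $N=2$ current, the key point is that $\H_{\mC_n}$ is by construction the multiplicity space of the vacuum of $\A_{\Uone_{2n+4}}$ inside $\H_{\SUtwo_n,0}\otimes\H_{\CAR^{\otimes 2},NS}$, on which $\hat J_0$ acts as $0$. The relation $\alpha H_0+\beta j_0=0$ valid on $\H_{\mC_n}$ then lets me eliminate $H_0$ and conclude $J_0=\kappa\,j_0$ there for a single scalar $\kappa$. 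To pin down $\kappa=1$ I would test on a charged vector such as $G^-_{-1/2}\Omega$: its $N=2$ charge is $-1$ since $[J_0,G^-_r]=-G^-_r$, while its fermion number is likewise $-1$ because $G^\pm$ is built from the charge-$\pm 1$ component $\psi^\pm$ of the Fermi field. Hence $\kappa=1$, i.e. $J_0=j_0$ on $\H_{\mC_n}$, and therefore $\rme^{\rmi\pi J_0}=\rme^{\rmi\pi j_0}=\Gamma$, which finishes the proof.

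I expect the main obstacle to be exactly this last identification, namely verifying $J_0=j_0$ on the \emph{whole} of $\H_{\mC_n}$ rather than only on the vacuum module. Because the coset identification $\A_{c_n}=\mC_n$ is not yet available at this stage, one cannot invoke cyclicity of $\Omega$ for $\A_{c_n}$ on $\H_{\mC_n}$ to reduce to the super-Virasoro algebra; the argument must therefore genuinely rest on the free-field realization and on the explicit form of the $N=2$ current in the coset of \cite{DVYZ}, with enough bookkeeping of the normalizations $\alpha,\beta,a,b$ (or, equivalently, the charge computation on $G^-_{-1/2}\Omega$) to confirm $\kappa=1$.
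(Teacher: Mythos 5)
Your strategy is essentially correct but takes a genuinely different route from the paper. The paper's proof is purely structural: by Lemma \ref{lemma:irred-restr} there is a NS representation $\tilde\pi$ of $\mC_n$ whose restriction to $\A_{c_n}$ is irreducible; since $\Ad(\rme^{\rmi\pi J_0})$ and $\Ad(\rme^{\rmi 2\pi L_0})$ both implement the grading automorphism on the subnet $\A_{c_n}$ (the first by the $N=2$ commutation relations, the second by spin--statistics), the unitary $\rme^{\rmi\pi\tilde J_0}\rme^{-\rmi 2\pi L_0^{\tilde\pi}}$ commutes with $\tilde\pi(\A_{c_n}(I))$ for all $I$ and is therefore a scalar; pushing this back through the faithful normal maps $\tilde\pi_I$ shows that $\Ad(\rme^{\rmi\pi J_0}\rme^{-\rmi 2\pi L_0})$ is trivial on every $\mC_n(I)$, so $\rme^{\rmi\pi J_0}\rme^{-\rmi 2\pi L_0}$ is a scalar on $\H_{\mC_n}$ by irreducibility of the vacuum representation, and equals $\unit$ because it fixes $\Omega$. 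That argument needs no explicit free-field formulas, only the already-established Lemma \ref{lemma:irred-restr} and the local implementation of the gauge and rotation automorphisms (as in Proposition \ref{prop:coset-gauge}). Your proof instead exploits the concrete DVYZ realization: $\Gamma=(-1)^F=\rme^{\rmi\pi j_0}$ inherited from the $\CAR^{\otimes 2}$ factor, $\hat J_0=0$ on the coset vacuum space, hence $J_0=\kappa\, j_0$ there. This is more computational and requires importing the explicit form of the currents from \cite{DVYZ}, but it bypasses Lemma \ref{lemma:irred-restr} entirely; both routes are legitimate, and your diagnosis that one cannot invoke $\A_{c_n}=\mC_n$ at this stage is exactly right.

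There is, however, one concrete step that fails as written: your test vector $G^-_{-1/2}\Omega$ is zero. In the NS vacuum representation $h=q=0$, so $\|G^\pm_{-1/2}\Omega\|^2=\langle\Omega,[G^\mp_{1/2},G^\pm_{-1/2}]\Omega\rangle=\langle\Omega,(2L_0\mp J_0)\Omega\rangle=0$; indeed the vacuum is simultaneously chiral and antichiral in the sense of Definition \ref{def:ChR-ring}, being killed by both $G^\pm_{-1/2}$. So the charge comparison on that vector is vacuous and cannot fix $\kappa$. The repair is immediate: test instead on $G^-_{-3/2}\Omega$, which is nonzero (its norm squared equals $2c/3$ by the anticommutation relations), has $J_0$-eigenvalue $-1$, and carries odd fermion charge. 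Better still, note that you never need $\kappa=1$ on the nose: since $j_0$ has integer spectrum on the NS Fock space, $\rme^{\rmi\pi\kappa j_0}=\rme^{\rmi\pi j_0}$ for any odd integer $\kappa$, and oddness of $\kappa$ follows merely from the fact that the supercurrents $G^\pm$ carry fermion charge $\pm1$ (or $\mp1$, depending on the sign conventions of \cite{DVYZ}, which therefore need not be pinned down). With this adjustment your argument is complete.
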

\begin{proof} By Lemma \ref{lemma:irred-restr} there is a NS representation $\tilde{\pi}$  whose restriction to
$\A_{c_n}$ is irreducible. Let $\tilde{J}(z)= \sum_{k\in \Z} \tilde{J}_k z^{-k-1}$ be the corresponding current
on $\H_{\tilde{\pi}}$.  Then
$$\rme^{\rmi \pi\tilde{J}_0}\rme^{-i2\pi L^{\tilde{\pi}}_0} \tilde{\pi}_I(x)\rme^{\rmi 2\pi L^{\tilde{\pi}}_0}  \rme^{-i\pi\tilde{J}_0}=
\tilde{\pi}_I \left( \rme^{\rmi \pi J_0}\rme^{-i2\pi L_0}  x \rme^{-i2\pi L_0} \rme^{-i\pi J_0}\right)$$
for all $I\in \I$ and all $x\in \mC_n(I)$. In particular,
$$ \rme^{\rmi \pi\tilde{J}_0}\rme^{-i2\pi L^{\tilde{\pi}}_0} \tilde{\pi}_I(x)\rme^{\rmi 2\pi L^{\tilde{\pi}}_0}  \rme^{-i\pi\tilde{J}_0} = \tilde{\pi}_I (x),$$
for all $I\in \I$ and all $x\in \A_{c_n}(I)$ and hence, by irreducibility,  $\rme^{\rmi \pi\tilde{J}_0}\rme^{-i2\pi L^{\tilde{\pi}}_0}$
must be a multiple of the identity. It follows that
$$\tilde{\pi}_I \left( \rme^{\rmi \pi J_0} \rme^{-i2\pi L_0}x \rme^{\rmi 2\pi L_0} \rme^{-i\pi J_0}\right)= \tilde{\pi}_I (x),$$
for all $I\in \I$ and all $x\in \mC_n(I)$. Accordingly $\rme^{\rmi \pi J_0}\rme^{-i2\pi L_0}$ is also a multiple of the identity and the conclusion
follows because $\rme^{\rmi \pi J_0}\rme^{-i2\pi L_0}\Omega =\Omega$.
\end{proof}

It follows from Lemma \ref{lemma:IntegerCharge} that we can apply Theorem \ref{th:BMTinduction} to the inclusion
$\A_{c_n} \subset \mC_n$ for any positive integer $n$. In particular we can conclude that the $\alpha$-induction
$\alpha^{\mC_n}_{\rho_{\frac{c_n}{3}}}$ of the $\Uone$ automorphism $\rho_{\frac{c_n}{3}}$ is a NS representation of
$\mC_n$.
\begin{lemma}\label{lemma:(n,-n)} $[\alpha^{\mC_n}_{\rho_{\frac{c_n}{3}}}] =(n,-n)$ for every positive integer $n$.
\end{lemma}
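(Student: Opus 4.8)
The plan is to identify $\lambda := \pi^0\circ\alpha^{\mC_n}_{\rho_{c_n/3}}$ by first showing it is an irreducible Neveu-Schwarz sector of $\mC_n$ of statistical dimension one, and then pinning down its label through its lowest conformal energy and its $\Uone$-charge. First I would record that, by Lemma \ref{lemma:IntegerCharge}, Theorem \ref{th:BMTinduction} applies to the inclusion $\A_{c_n}\subset\mC_n$ with $t=1$, so $\lambda$ is a Neveu-Schwarz representation, hence (by Theorem \ref{th:CFT-R-NS}) a DHR sector of $\mC_n$. Since $\alpha$-induction preserves the statistical dimension \cite{BE,LR} and $\rho_{c_n/3}$ is a $\Uone$-current automorphism with $d(\rho_{c_n/3})=1$, the sector $\lambda$ is itself an automorphism, in particular irreducible. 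Thus $[\lambda]$ is one of the irreducible NS sectors $(l,m)$; by the fusion rules \eqref{eq:FermiFusions} the dimension-one ones have $l\in\{0,n\}$, and choosing the representative with $|m|\le l$ and $l+m\in 2\Z$ we may write $[\lambda]=(l,m)$.

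Next I would compute the lowest conformal energy of $\lambda$. Restricting to $\A_{c_n}$ gives $\lambda|_{\A_{c_n}}=\pi\circ\alpha^{\A_{c_n}}_{\rho_{c_n/3}}$ with $\pi=(\pi^0)^{rest}$; by Theorem \ref{th:spfl1} one has $\alpha^{\A_{c_n}}_{\rho_{c_n/3}}\simeq\bar{\eta}_1$, and since $\pi$ contains the vacuum representation $\pi_0$ of $\A_{c_n}$ we get $\bar{\eta}_1=\pi_0\circ\bar{\eta}_1\prec\lambda|_{\A_{c_n}}$. By Proposition \ref{prop:eta1}, $\bar{\eta}_1$ is the irreducible $\SVir^{N=2}$-representation with $(h,q)=(\tfrac{c_n}{6},\tfrac{c_n}{3})$, so $\H_\lambda$ contains a vector $w$ with $L_0^\lambda w=\tfrac{c_n}{6}w$ and $J_0^\lambda w=\tfrac{c_n}{3}w$. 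On the other hand, the univalence identity established in the proof of Theorem \ref{th:BMTinduction} reads, for $t=1$, $\rme^{\rmi 2\pi L_0^\lambda}=\rme^{\rmi 2\pi c_n/6}\,\Gamma$. Since $\Gamma=\pm 1$ on the even/odd subspaces and $c_n=3n/(n+2)<3$ forces $c_n/6<\tfrac12$, the $L_0^\lambda$-spectrum lies in $(\tfrac{c_n}{6}+\Z)\cup(\tfrac{c_n}{6}+\tfrac12+\Z)$, whose smallest non-negative element is $c_n/6$; together with the vector $w$ this shows that the lowest energy of $\lambda$ equals $c_n/6$.

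Finally I would read off the label. Because $[\lambda]=(l,m)$ is one of the sectors of Proposition \ref{prop:charcoset}, its lowest energy $h_{l,m}=\tfrac{l(l+2)-m^2}{4(n+2)}$ is attained with multiplicity one and the lowest-energy vector carries charge $q_{n,m}=-\tfrac{m}{n+2}$. Comparing with $w$ yields $h_{l,m}=\tfrac{c_n}{6}=\tfrac{n}{2(n+2)}$ and $q_{n,m}=\tfrac{c_n}{3}=\tfrac{n}{n+2}$; the charge equation gives $m=-n$, and then $n=|m|\le l\le n$ forces $l=n$ (equivalently $l(l+2)=n(n+2)$). Hence $[\lambda]=(n,-n)$.

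The hard part is the second step: ensuring that the spectral-flow subrepresentation $\bar{\eta}_1$ actually sits at the bottom of $\lambda$ rather than appearing at some higher level, which would spoil the charge determination. This is exactly where the rationality input $c_n<3$ (so that $c_n/6<\tfrac12$) combines with the univalence identity from Theorem \ref{th:BMTinduction} to confine the lowest energy to the single admissible value $c_n/6$; without this quantization one could not exclude a lower-lying submodule carrying a different $\Uone$-charge.
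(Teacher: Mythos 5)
Your proof is correct, and its endgame is genuinely different from the paper's. The two arguments coincide up to the point where $\lambda=\pi^0\circ\alpha^{\mC_n}_{\rho_{c_n/3}}$ is known to be an irreducible NS automorphism of $\mC_n$ whose restriction to $\A_{c_n}$ contains $\bar\eta_1$, hence a vector $w$ with $(h,q)=(\tfrac{c_n}{6},\tfrac{c_n}{3})$. From there the paper uses only the soft fact that the $J_0$-spectrum of an irreducible sector lies in a single coset of $\Z$: this gives $-\tfrac{m}{n+2}\in\tfrac{n}{n+2}+\Z$, hence $m\in\{2,-n\}$, and $m=2$ is then excluded by a three-way case analysis (parity for $n$ odd; comparison of $\tfrac{n}{2(n+2)}$ with $h_{n,2}$ via the character of Proposition \ref{prop:charcoset} for even $n>2$; and a separate multiplicity-one argument for $n=2$, where $h_{2,2}=h_{2,-2}=\tfrac14$ and the energy comparison degenerates). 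You instead upgrade the univalence computation inside the proof of Theorem \ref{th:BMTinduction} to the exact operator identity $\rme^{\rmi 2\pi L_0^\lambda}=\rme^{\rmi 2\pi c_n/6}\,\Gamma$, which together with positivity of $L_0^\lambda$ and $c_n/6<\tfrac12$ pins the lowest energy of $\lambda$ at exactly $c_n/6$; then the multiplicity-one statement of Proposition \ref{prop:charcoset} forces $w$ to be the lowest-energy vector, and matching its charge gives $m=-n$ in one stroke, with no case distinction. What you lose is economy of inputs (the paper needs only the conclusion of Theorem \ref{th:BMTinduction}, not its internal spectral identity); what you gain is uniformity — in particular the case $n=2$, which the paper must treat separately, is handled identically to all others, and your argument makes transparent why $c_n<3$ is precisely what quantizes the bottom of the spectrum. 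One small point you should make explicit: positivity of $L_0^\lambda$, which you invoke when selecting the ``smallest non-negative'' lattice element, does not follow from the univalence identity alone; it follows from the explicit decomposition $L_0^{\lambda}=\bigoplus_{q\in\Z}\bigl(L_0^{\rho_{q+c_n/3}}\otimes 1+1\otimes L_0^{\sigma_q}\bigr)$ appearing in the proof of Theorem \ref{th:BMTinduction}, or alternatively from the fact that $\lambda$ is a DHR representation of the completely rational net $\mC_n$ and hence a finite direct sum of positive-energy sectors.
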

\begin{proof}
$\alpha^{\mC_n}_{\rho_{\frac{c_n}{3}}}$ is a NS automorphism of the net $\mC_n$ and hence
$[\alpha^{\mC_n}_{\rho_{\frac{c_n}{3}}}] =(n,m)$ for some $m\in \Z$ such that $|m|\leq n$ and
$n+m \in 2\Z$.
The restriction of $\alpha^{\mC_n}_{\rho_{\frac{c_n}{3}}}$ to $\A_{c_n}$ is a NS representation of the latter
net containing the localized automorphism $\alpha^{\A_{c_n}}_{\rho_{\frac{c_n}{3}}}$ as a subrepresentation.
But the latter is equivalent to $\eta_1$ which by Proposition \ref{prop:eta1} corresponds to the representation
of $\SVir^{N=2}$ with $(h,q) = (\frac{n}{2(n+2)},\frac{n}{n+2})$. It follows that $-\frac{m}{n+2} \in \frac{n}{n+2} +\Z$ and hence that $\frac{2-m}{n+2} \in \Z$. Hence, recalling that $|m|\leq n$, we see that either $m=-n$ or $m=2$.
If $n$ is odd, $m=2$ is forbidden. Now let $n$ be even and greater than $2$.  It follows from Proposition
\ref{prop:charcoset} that the character in the representation $(n,2)$ satisfies
$\chi_{(n,m)}(t) = t^{\frac{n(n+2)-4}{4(n+2)}} +o(t^{\frac{n(n+2)-4}{4(n+2)}})$ and
the equality $[\alpha^{\mC_n}_{\rho_{\frac{c_n}{3}}}] =(n,2)$ would be in contradiction with the fact that
in the representation space of $\alpha^{\mC_n}$ there must be a nonzero vector with conformal energy
$\frac{n}{2(n+2)} < \frac{n(n+2)-4}{4(n+2)}$. Finally, if $n=2$, the equality
$[\alpha^{\mC_n}_{\rho_{\frac{c_n}{3}}}] =(n,2)$ would imply that the representation of $\SVir^{N=2}$
corresponding to the restriction to $\A_{c_n}$ of  $\alpha^{\mC_n}_{\rho_{\frac{c_n}{3}}}$ contains the
irreducible subrepresentations with $(h,q)=(\frac{1}{4},-\frac{1}{2})$ and with
$(h,q)=(\frac{1}{4}, \frac{1}{2})$, in contradiction with $\chi_{(2,2)}(t) = t^{\frac14} +o(t^{\frac14})$.
\end{proof}

\begin{lemma}\label{lemma:etak} For any positive integer $k$ the following hold
\begin{itemize}
\item[-] If $n=4k$ then $[\alpha^{\mC_n}_{\rho_{(2k+1)\frac{c_n}{3}}}]=(n,0)$.
\item[-] If $n=4k-2$ then $[\alpha^{\mC_n}_{\rho_{2k\frac{c_n}{3}}}]=(n,0)$.
\item[-] If $n=4k-1$ then $[\alpha^{\mC_n}_{\rho_{2k\frac{c_n}{3}}}]=(n,-1)$.
\item[-] If $n=4k-3$ then $[\alpha^{\mC_n}_{\rho_{2k\frac{c_n}{3}}}]=(n,1)$.
\end{itemize}
\end{lemma}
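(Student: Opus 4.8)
The plan is to reduce the statement to Lemma~\ref{lemma:(n,-n)} together with the multiplicativity of $\alpha$-induction, and then to finish with a short computation inside the fermionic fusion ring of $\mC_n$. The key structural input is that the $\Uone$-current sectors compose additively in the charge, $(q_1)(q_2)=(q_1+q_2)$, so that $\rho_{\frac{c_n}{3}t}$ is unitarily equivalent to the $t$-fold composition of $\rho_{\frac{c_n}{3}}$. Since $\alpha$-induction is multiplicative, $\alpha^{\mC_n}_{\sigma\circ\tau}=\alpha^{\mC_n}_\sigma\circ\alpha^{\mC_n}_\tau$, this yields $[\alpha^{\mC_n}_{\rho_{\frac{c_n}{3}t}}]=[\alpha^{\mC_n}_{\rho_{\frac{c_n}{3}}}]^{t}$ for every $t\in\Z$. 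For the integer values occurring in the four cases ($t=2k+1$ when $n=4k$, and $t=2k$ otherwise), Lemma~\ref{lemma:IntegerCharge} and Theorem~\ref{th:BMTinduction} guarantee that each $\alpha^{\mC_n}_{\rho_{\frac{c_n}{3}t}}$ is a Neveu--Schwarz, hence genuine DHR, sector of $\mC_n$; in particular it carries one of the labels $(l,m)$, so the computation may legitimately be carried out in the DHR fusion ring.

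Next I would compute the relevant power. By Lemma~\ref{lemma:(n,-n)} we have $[\alpha^{\mC_n}_{\rho_{\frac{c_n}{3}}}]=(n,-n)$, and applying the identification $(l,m)=(n-l,m+n+2)$ turns this into the $l=0$ representative $(0,2)$. The fermionic fusion rules \eqref{eq:FermiFusions} show that the $l=0$ sectors are closed under fusion, with $(0,m_1)(0,m_2)=(0,m_1+m_2)$ (the second label being read modulo $2n+4$); consequently
\[
[\alpha^{\mC_n}_{\rho_{\frac{c_n}{3}t}}]=(0,2)^{t}=(0,2t).
\]

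It then remains to rewrite $(0,2t)$ in the claimed form. Using the identification in the form $(0,m)=(n,m+n+2)$ gives $[\alpha^{\mC_n}_{\rho_{\frac{c_n}{3}t}}]=(n,2t+n+2)$, and I would conclude by reducing $2t+n+2$ modulo $2n+4$ in each case: for $n=4k$, $t=2k+1$ yields $2t+n+2=2n+4\equiv 0$; for $n=4k-2$, $t=2k$ yields $2t+n+2=2n+4\equiv 0$; for $n=4k-1$, $t=2k$ yields $2t+n+2=8k+1\equiv -1$; and for $n=4k-3$, $t=2k$ yields $2t+n+2=8k-1\equiv 1$, producing $(n,0),(n,0),(n,-1),(n,1)$ respectively.

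There is no serious analytic obstacle here: the whole content sits in three ingredients already available, namely the identity of Lemma~\ref{lemma:(n,-n)}, the multiplicativity of $\alpha$-induction, and the Neveu--Schwarz/DHR property of Theorem~\ref{th:BMTinduction}. The only point that genuinely needs care is the bookkeeping. One must ensure that the relevant $\alpha$-inductions really are DHR sectors, so that the closed, group-like fusion of the $l\in\{0,n\}$ automorphism sectors applies and the powers stay inside this group, and one must keep the parity constraint $l-m\in 2\Z$ and the order-two identification $(l,m)=(n-l,m+n+2)$ consistent throughout the modular arithmetic.
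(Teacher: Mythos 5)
Your proof is correct and takes essentially the same route as the paper's: both rest on Lemma~\ref{lemma:(n,-n)}, the multiplicativity of $\alpha$-induction applied to the additive $\Uone$ charges, the fusion rules \eqref{eq:FermiFusions}, and modular arithmetic with the identification $(l,m)=(n-l,m+n+2)$. The only differences are cosmetic: you first rewrite $(n,-n)$ as $(0,2)$ and compute powers inside the $l=0$ subgroup before converting back, whereas the paper raises $(n,-n)$ to powers directly, and you make explicit (via Lemma~\ref{lemma:IntegerCharge} and Theorem~\ref{th:BMTinduction}) the Neveu--Schwarz/DHR justification that the paper leaves implicit from the surrounding discussion.
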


\begin{proof}
We use Lemma \ref{lemma:(n,-n)} and the fusion rules in Eq. (\ref{eq:FermiFusions}).
 If $n=4k$ then
\begin{eqnarray*}
[ \alpha^{\mC_n}_{\rho_{(2k+1)\frac{c_n}{3}}} ] &=&  (n,-n)^{2k+1} = (n, -(2k+1)n) = (n, -k(2n+4) + 4k -n) \\
 &=& (n, 0).
\end{eqnarray*}
 If $n=4k-2$ then
\begin{eqnarray*}
[ \alpha^{\mC_n}_{\rho_{2k\frac{c_n}{3}}} ] &=&  (n,-n)^{2k} = (0, -2kn) = (0, -k(2n+4) + n+2)  \\
 &=& (n, 2n + 4) =  (n, 0).
\end{eqnarray*}
 If $n=4k-1$ then
\begin{eqnarray*}
[ \alpha^{\mC_n}_{\rho_{2k\frac{c_n}{3}}} ] &=&  (n,-n)^{2k} = (0, -2kn) = (0, -k(2n+4) + n+1)  \\
 &=& (n, 2n + 3) =  (n, -1).
\end{eqnarray*}
 If $n=4k-3$ then
\begin{eqnarray*}
[ \alpha^{\mC_n}_{\rho_{2k\frac{c_n}{3}}} ] &=&  (n,-n)^{2k} = (0, -2kn) = (0, -k(2n+4) + n+3)  \\
 &=& (n, 2n + 5) =  (n, 1).
\end{eqnarray*}
\end{proof}

We are now ready to prove the main result of this section:
\begin{theorem}\label{th:CosetId} $\A_{c_n} =\mC_n$ for every positive integer $n$.
\end{theorem}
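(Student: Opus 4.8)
The plan is to reduce the equality of nets to a single irreducibility statement and then to deduce that statement from Lemma~\ref{lemma:irred-restr} by transporting irreducibility along the spectral flow. Since $\A_{c_n}\subseteq\mC_n$ is already known to be a covariant irreducible subnet acting on the common vacuum Hilbert space $\H_{\mC_n}=\H_{(0,0)}$, with the same vacuum vector $\Omega$ and the same representation of $\Diff^{(\infty)}$, it suffices to prove that the vacuum representation $\pi^0=\pi_{(0,0)}$ of $\mC_n$ restricts irreducibly to $\A_{c_n}$. Indeed, an irreducible representation of $\A_{c_n}$ containing a vector of lowest conformal energy $0$ must be the vacuum representation, so irreducibility of $\pi^0|_{\A_{c_n}}$ forces $\Omega$ to be cyclic for $\bigvee_{I}\A_{c_n}(I)$ on $\H_{\mC_n}$; by Reeh--Schlieder $\Omega$ is then cyclic and separating for each $\A_{c_n}(I)$, and a standard Tomita--Takesaki argument (shared cyclic and separating vector together with identical modular operators, the latter coming from Bisognano--Wichmann and the common $U$) yields $\A_{c_n}(I)=\mC_n(I)$ for every $I$.

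To prove that $\pi^0|_{\A_{c_n}}$ is irreducible I would use the spectral flow as a bridge between the vacuum sector $(0,0)$ and the automorphism sectors $(n,0)$ (for $n$ even) and $(n,\pm1)$ (for $n$ odd), which are exactly the sectors Lemma~\ref{lemma:irred-restr} shows restrict irreducibly. By Lemma~\ref{lemma:etak} each such sector is realized as $[\alpha^{\mC_n}_{\rho_{k c_n/3}}]$ for a suitable \emph{integer} spectral-flow amount $k$, so $\pi_{(n,\ast)}$ is unitarily equivalent, as a representation of $\mC_n$, to $\lambda:=\pi^0\circ\alpha^{\mC_n}_{\rho_{k c_n/3}}$; the intertwiner, being one for $\mC_n$, is a fortiori one for the subnet, whence $\lambda|_{\A_{c_n}}\simeq\pi_{(n,\ast)}|_{\A_{c_n}}$ is irreducible. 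On the other hand, by the compatibility of $\alpha$-induction with the subnet noted before Theorem~\ref{th:BMTinduction} one has $\lambda|_{\A_{c_n}}=\pi\circ\alpha^{\A_{c_n}}_{\rho_{k c_n/3}}$ with $\pi=\pi^0|_{\A_{c_n}}$, and by Theorem~\ref{th:spfl1} the localized automorphism $\alpha^{\A_{c_n}}_{\rho_{k c_n/3}}$ lies, for integer $k$, in the Neveu--Schwarz (hence DHR) class of the spectral flow $\bar{\eta}_k$. Being a genuine DHR automorphism over $\S$ it maps each $\A_{c_n}(I)$ onto $\A_{c_n}(\tilde I)$ (Lemma~\ref{lem:spfl3}) and therefore preserves the global von Neumann algebra $\bigvee_I\A_{c_n}(I)$ on $\H_{\mC_n}$.

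Consequently $\lambda|_{\A_{c_n}}$ and $\pi^0|_{\A_{c_n}}$ generate the same von Neumann algebra on $\H_{\mC_n}$, so they have the same commutant; since $\lambda|_{\A_{c_n}}$ is irreducible, so is $\pi^0|_{\A_{c_n}}$, as required. I expect the main obstacle to be precisely this transfer step: one must check carefully that the $\alpha$-induced automorphism genuinely restricts to the integer spectral-flow automorphism of $\A_{c_n}$ and that, for integer flow, $\bar{\eta}_k$ is a true DHR automorphism over $\S$ rather than merely a soliton over $\R$, so that the global algebra is preserved and the two commutants really coincide. The role of the spectral flow is essential here: irreducibility of the restriction can be established directly only for the extremal sectors $(n,\ast)$, through the maximality of their lowest conformal energy exploited in Lemma~\ref{lemma:irred-restr}, and the invertible spectral flow is exactly what carries this information back to the vacuum sector, where no such direct argument is available.
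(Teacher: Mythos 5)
Your proposal is correct and follows essentially the same route as the paper: both reduce the net equality to irreducibility of $\pi^0|_{\A_{c_n}}$, obtain from Lemma \ref{lemma:irred-restr} and Lemma \ref{lemma:etak} an integer $j$ with $\pi^0\circ\alpha^{\mC_n}_{\rho_{j c_n/3}}$ restricting irreducibly to $\A_{c_n}$, identify this restriction with $\pi\circ\alpha^{\A_{c_n}}_{\rho_{j c_n/3}}$, and use invertibility of the induced (spectral-flow) automorphism to transfer irreducibility back to $\pi$. The only difference is that you spell out the steps the paper leaves implicit (the Reeh--Schlieder/Takesaki reduction and the commutant-preservation argument), which is a fair filling-in rather than a new approach.
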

\begin{proof} If $n$ is a positive integer then, by Lemmas \ref{lemma:irred-restr} and \ref{lemma:etak},
there is a $j\in \Z$ such that the restriction to $\A_{c_n}$ of the representation
$\alpha^{\mC_n}_{\rho_{j\frac{c_n}{3}}}$ of $\mC_n$  is irreducible. Now let $\pi^0$ and $\pi_0$ be the vacuum representations of $\mC_n$ and of $\A_{c_n}$, resp., and let $\pi$ be the restriction of $\pi^0$ to $\A_{c_n}$.
The restriction of $\alpha^{\mC_n}_{\rho_{j\frac{c_n}{3}}}$ to $\A_{c_n}$ is
$\pi \circ \alpha^{\A_{c_n}}_{\rho_{j\frac{c_n}{3}}}$ and since $\alpha^{\A_{c_n}}_{\rho_{j\frac{c_n}{3}}}$ is an automorphism we can conclude that $\pi$ is irreducible. Hence $\pi=\pi_0$ and the conclusion follows.
\end{proof}

\section{Classification of $N=2$ Superconformal Nets with $c<3$}\label{sec:classification}

We first look at the bosonic part $\A_c^\gamma$ of the $N=2$ super-Virasoro
net $\A_c$, with $c=3n/(n+2)$ and fixed $n\in\N$, realized as the  coset of ${\mathrm {SU}}(2)_n
\otimes {\mathrm U}(1)_2/{\mathrm U}(1)_{n+2}$ according to Sect. \ref{sec:coset}. We use the notation, labelling, and classification and dimension results for the coset net from Theorem \ref{th:bosonic-coset}.

The fermionic extension arises from the irreducible DHR sector
$(n,n+2,0)$, which has dimension 1, order 2 and 
univalence (statistics phase)
$-1$.

The modular invariants have been classified by Gannon in
\cite[Theorem 4]{Gannon-N2}.  We are interested in the local
extensions of the bosonic part, since its fermionic extension gives
an extension of the original fermionic net, so we need to consider only
the so-called type I modular invariants.  Many of the modular
invariants of Gannon's list are not of type I, so we do not
have to consider them here.  Also, if $(Z_{\lambda\mu})$ is
a type I modular invariant, what we need to check
whether $\bigoplus_{\lambda} Z_{\lambda\mu}\lambda$ is realized
as a dual canonical endomorphism, and if yes, then whether
the realization is unique or not.  (See \cite[Section 4]{KL1}.)

First we deal with the exceptional cases related to the
Dynkin diagrams $E_6$ and $E_8$. Consider the case related to $E_6$,  From Gannon's list of
modular invariants, we see that we consider the following three
cases for $n=10$.

(1) The endomorphism $(0,0,0)\oplus(6,0,0)$.  This is a
dual canonical endomorphism because it arises from
a conformal embedding  ${\mathrm {SU}}(2)_{10}\subset
{\mathrm{SO}}(5)_1$
as in \cite[Section 4]{KL1}, which is a special case of a mirror
extension studied in \cite{X-m}.  For the same reason as in
\cite[Section 4]{KL1} (based on \cite{KL2}), this realization as
a dual canonical endomorphism is unique.  (That is, the $Q$-system
is unique up to unitary equivalence.)

(2) The endomorphism $(0,0,0)\oplus(0,12,0)$.  This is a
dual canonical endomorphism because it arises from
a conformal embedding ${\mathrm U}(1)_{12} \subset {\mathrm U}(1)_3$.
The irreducible DHR sector $(0,12,0)$ has dimension 1 and 
statistics phase
1, so it is realized as a crossed product by
${\mathbb Z}/2{\mathbb Z}$, and hence unique.

(3) The endomorphism
$(0,0,0)\oplus(6,0,0)\oplus(0,12,0)\oplus(6,12,0)$.
This is a combination of the above two extensions.  That is,
we first consider an extension in (1) and make another extension
as a crossed product by ${\mathbb Z}/2{\mathbb Z}$.  For the two
above reasons in (1), (2), we conclude that this realization of
a dual canonical endomorphism is again unique.

The next exceptional case we have to deal with is the case
related to the Dynkin diagram $E_8$, so we now have $n=28$.

We have only one modular invariant here and it gives
the following.

(4) The endomorphism
$(0,0,0)\oplus(10,0,0)\oplus(18,0,0)\oplus(28,0,0)$.  This
arises from a conformal embedding
${\mathrm{SU}}(2)_{28}\subset ({\mathrm G}_2)_1$, and the
realization is unique as in (1).

We now deal with the remaining cases of the modular
invariants.  From  the list of Gannon \cite[Theorem 4]{Gannon-N2},
we see that when we consider the endomorphisms
$\bigoplus_{\lambda} Z_{\lambda\mu}\lambda$ arising from modular
invariants $(Z_{\lambda\mu})$, all the endomorphisms $\lambda$
appearing in this sum have dimensions 1.  If all the irreducible
DHR sectors of a dual canonical endomorphism have dimension 1, then
the extension is a crossed product by a (finite abelian) group
and all these irreducible DHR sectors have 
statistics phase 1.
Hence it is enough to consider only irreducible DHR sectors of
statistics phase 1 and check whether we can construct an extension using them as a
crossed product or not.  We divide the cases depending on $n$.

We start a general consideration.
We first look at the irreducible DHR sectors with dimension $1$.
This condition is equivalent to $l=0,n$.  Then $m$ can be arbitrary
in $\{0,1,\dots, 2n+3\}$, and $s\in\{0,1\}$ is uniquely determined
by the parity of $l+m$, so there are $(4n+8)$ such irreducible DHR sectors.
They give an abelian group of order $4n+8$, and we have a simple
current extension for each subgroup of this group consisting of
irreducible DHR sectors with statistics phase 1, so we need to identify such a subgroup.  This subgroup is clearly a subset of
all the irreducible DHR sectors with statistics phase 1, but this
subset is not a group in general.

If $n\equiv 2$ mod $4$, then the irreducible DHR sectors with dimension $1$
give the group
$({\mathbb Z}/(2n+4){\mathbb Z}) \times ({\mathbb Z}/2{\mathbb Z})$, which
is generated by $\sigma=(0,1,1)$ of order $2n+4$ and
$\tau=(n,0,0)$ of order $2$.

If $n=0$ mod $4$, then the irreducible DHR sectors with dimension $1$
give the group
$({\mathbb Z}/(2n+4){\mathbb Z}) \times ({\mathbb Z}/2{\mathbb Z})$, which is
generated by $\sigma=(0,1,1)$ of order $2n+4$ and
$\tau=(0,n+2,0)$ of order $2$.

If $n$ is odd, the irreducible DHR sectors with dimension $1$
give the group ${\mathbb Z}/(4n+8){\mathbb Z}$, which is
is generated by $\sigma=(0,1,1)$ of order $4n+8$.

The subgroup used for a simple current extension must be a subgroup
of these groups.  If $n$ is odd, it is clearly a cyclic group.
If $n$ is even, it must be a cyclic group or a cyclic group times the
cyclic group of order 2.  If the latter happens, the subgroup must be
of the form $$G\times ({\mathbb Z}/2{\mathbb Z})\subset
({\mathbb Z}/(2n+4){\mathbb Z}) \times ({\mathbb Z}/2{\mathbb Z}),$$
where $G$ is a subgroup of ${\mathbb Z}/(2n+4){\mathbb Z}$.  This
means that the generator for the second component
${\mathbb Z}/2{\mathbb Z}$ must have statistics phase equal to $1$, but
we see that this is not the case for
$n=0,2$ mod $4$.  (In both cases, the statistics phase of the generator for the second component ${\mathbb Z}/2{\mathbb Z}$,
which is $\tau$ in the above notation, is $-1$.)
So also for the case of even $n$, the
subgroup used for a simple current extension must be cyclic.

We next note that if the statistics phase of $(l,m,s)$ is 1 with $s=1$, then $n$ must be a
multiple of $16$ for the following reason.  Suppose
the irreducible DHR sector $(l,m,1)$ has dimension 1 and 
statistics phase 1.
We have $l=0, n$, and first suppose $l=0$.  Then we have
$-2m^2+(n+2)\in 8(n+2){\mathbb Z}$ and $m$ is odd.  Then we first see
$n$ is even, so we set $n=2a$.  Then we have
$-m^2+a+1\in 8(a+1){\mathbb Z}$.  Since $m$ is odd, we know
$-m^2+1=0$ mod $8$.  This implies $a$ is a multiple of $8$, hence
$n$ is a multiple of $16$.  Similarly, we now consider the case
$l=n$.  Then we have
$2n(n+2)-2m^2+(n+2)\in 8(n+2){\mathbb Z}$ and $m+n$ is odd.
This first gives $n$ is even, so we again set $n=2a$.  Then we
have $4a(a+1)-m^2+a+1\in 8(a+1){\mathbb Z}$ and $m$ is odd.
We again have $-m^2+1=0$ mod $8$, so we have that $a$ is a multiple
of $8$.   That is, if $n$ is not a multiple of 16,
we need to consider only the irreducible DHR sectors $(l,m,0)$.

We use the above notations $\sigma, \tau$ for the irreducible
DHR sectors of dimensions 1, and find the maximal cyclic subgroup
which gives a simple current extension.  In general,
its any (cyclic) subgroup also works.

We now consider the following four cases one by one.

[A] Case $n\neq0$ mod $2$.

We need to consider only the irreducible DHR sectors
$(l,m,0)$ with $l+m=0$ mod $2$.  This shows that we need
to consider only the even powers of $\sigma=(0,1,1)$.
Note that $\sigma^2=(n,n+4,0)$ has the statistics phase $\exp(2\pi n i/(2n+4))$.  Then the statistics phase
of $\sigma^{2a}$ is $\exp(2\pi a^2 n i/(2n+4))$.
Consider the set $G$ consisting of $\sigma^{2a}$ with statistics phase 1.  We show that this
set $G$ is a group.  Suppose $\sigma^{2a}$ and $\sigma^{2b}$
are $G$,  Then we have
$a^2n/(2n+4)$ and $b^2n/(2n+4)$ are integers, and we
need to show $(a+b)^2n/(2n+4)$ is also an integer.
It is enough to show that $2abn/(2n+4)$ is an integer.
Let $j_1, j_2, j_3, j_4$ be the numbers of the prime factor 2
in $a,b,n, (n+2)$, resp.~.  We then have
$2j_1+j_3\ge1+j_4$, $2j_2+j_3\ge1+j_4$ and these imply
$j_1+j_2+j_3\ge 1+j_4$.  For an add prime factor $p$, we apply
a similar argument, and we conclude that $2abn/(2n+4)$
is an integer.
Since this set $G$ is a subset of a finite group, this also
shows that $G$ is closed under the inverse operation,
so it is a subgroup of the cyclic group generated by
$\sigma^2$.

We summarize these arguments as follows.
We look for the smallest positive integer $k$ with the statistics phase of $\sigma^k$ equal to $1$.  (Such $k$ is automatically even.)
Then the maximal cyclic subgroup giving a simple current extension
is $\{1,\sigma^k,\sigma^{2k},\dots,\sigma^{4n+8-k}\}$.

[B] Case $n=2$ mod $4$.

We need to consider only $(l,m,0)$ with $l=0,n$ and
$m=0,2,4,\dots,2m+2$.
If $l=0$, then the irreducible DHR sector $(l,m,0)$ has
a statistics phase $1$ if and only if
$m^2/4(n+2)$ is an integer.
If $l=n$, then the irreducible DHR sector $(l,m,0)$ has
a statistics phase $1$ if and only if
$m^2/4(n+2)+1/2$ is an integer.
Consider the set $G$ consisting of
$(0,2m,0)$ with statistics phase $1$.  As in the argument in case [A],
this $G$ is a subgroup of the cyclic group generated by
$(0,2,0)$.  If $(n,m,0)$ has a statistics phase $1$, then
$(0,m,0)$ has a statistics phase $-1$ and thus
$(0,2m,0)$ has a statistics phase $1$, so this is in $G$.
It means now that we need to consider only the odd powers
of the irreducible DHR sector $(n,k/2,0)$, where
$k$ be the smallest even integer with
$(0,k,0)$ having the statistics phase $1$.  The statistics phase of $(0,k/2,0)$ must be one of $-1,\rmi,-\rmi$, since
the statistics phase of $(0,k,0)$ is 1.  If it is $\pm \rmi$,
then all the odd powers of $(0,k/2,0)$ have statistics phase $\pm \rmi$.  If it is $-1$, all the odd powers of $(0,k/2,0)$ have
statistics phase $-1$.   Since the statistics phase of $(n,0,0)$ is
$-1$, in the former case, the maximal cyclic group giving
a simple current extension is
$\{(0,0,0),(0,k,0),(0,2k,0),\dots,(0,2n+4-k,0)\}$, and
in the latter case, the  maximal cyclic group giving
a simple current extension is
$$\{(0,0,0), (n,k/2,0), (0,k,0),(n,3k/2,0),\dots,(n,2n+4-k/2,0)\}.$$

[C] Case $n=4,8,12$ mod $16$.

First note that both irreducible DHR sectors
$(0,0,0)$ and $(n,0,0)$ have statistics phase $1$.

As in [A], the set $G$ of the irreducible DHR
sectors $(0,m,0)$ having a statistics phase $1$ is
a subgroup of the cyclic group ${\mathbb Z}/(2n+4){\mathbb Z}$.
Let $k$ be the smallest positive integer such that
$k^2/(4n+8)$ is an integer.  Then the group $G$ is
given by
$\{(0,0,0),(0,k,0),(0,2k,0),\dots,\\ (0,2n+4-k,0)\}$.
Then the maximal group giving a simple current extension
$$\{(0,0,0),(n,0,0), (0,k,0),(n,k,0),\dots,(0,2n+4-k,0),
(n,2n+4-k,0)\}.$$  Note that this group is isomorphic
to ${\mathbb Z}/((2n+4)/k){\mathbb Z}\times
{\mathbb Z}/2{\mathbb Z}$, but by a general remark above,
this also must be a cyclic group.  This shows that $(2n+4)/k$ is
always odd.

[D] Case $n=0$ mod $16$.

First note that the statistics phase of $\sigma=(0,1,1)$ is
$\exp(\pi ni/(4n+8)$.  The set $G$ consisting of powers of
$\sigma$ with statistics phase $1$ is again a group as in [A].
Let $k$ be the smallest positive integer such that $\sigma^k$
has a statistics phase $1$.  Then $G$ is
$\{1,\sigma^k,\sigma^{2k},\dots,\sigma^{2n+4-k}\}$, where
$1$ stands for the identity sector $(0,0,0)$.
We first show that this $k$ is odd.
Indeed, the condition that the statistics phase is $1$ implies
that $nk^2/(8(n+2))$ is an integer.  Now $n/16$ is an
integer, and $n$ and $n/2+1$ are relatively prime, so
$k^2/(n/2+1)$ is also an integer.  Since $n/2+1$ is odd
and $k$ is the smallest such positive integer, we know
that $k$ is odd.

If $\sigma^t \tau$ has a statistics phase $1$, then $2t$ must
be in the set $\{0,k,2k,\dots,2n+4-k\}$.  Since $k$ is odd,
we have $t\in\{0,k,2k,\dots,2n+4-k\}$.  The statistics phase 
of $\sigma^k \tau$ is
$$\exp\left(2\pi i\left(\frac{-(k+n+2)^2}{4(n+2)}+
\frac{k^2}{8}\right)\right)=1,$$
since $k$ is odd, $n=0$ mod $16$ and
$nk^2/(8n+16)\in{\mathbb Z}$.  All its powers also have
a statistics phase $1$.
We also compute that the statistics phase
of $\sigma^{2ak}\tau$ is equal to
$$\exp\left(2\pi i\left(\frac{-(2ak+n+2)^2}{4(n+2)}+
\frac{4a^2k^2}{8}\right)\right)=-1,$$
since $n=0$ mod $16$ and
$nk^2/(8n+16)\in{\mathbb Z}$.  All these together show that
the set of the irreducible DHR
sectors having statistics phase $1$ is
$$\{1,\sigma^k,\sigma^{2k},\dots,\sigma^{2n+4-k}\}
\cup \{\sigma^k\tau,\sigma^3\tau,\dots,\sigma^{2n+4-k}\tau\}.$$
Note that this set is not a group.
The maximal subgroup giving a simple current extension
is $\{1,\sigma^k,\sigma^{2k},\dots,\sigma^{2n+4-k}\}$ or
$\{1,\sigma^k\tau,\sigma^{2k},\sigma^{3k}\tau,
\dots,\sigma^{2n+4-k}\tau\}$.

\bigskip
We have a few remarks concerning some of the above cases.
First we note that
if $n$ is odd and the statistics phase for $(l,m,s)$ is 1, we have
$l=s=0$.  This shows that all extensions
are mirror extensions arising from extensions
of ${\mathrm U}(1)_{n+2}$.

Second, $n=4,8,12$ mod 16, then all the extensions come from
the cosets of the index 2 extensions of
${\mathrm {SU}}(2)_n$, the mirror
extensions arising from extensions of ${\mathrm U}(1)_{n+2}$ and
the combinations of the two.

Finally, if $n=2$ mod 4, then all the extensions come from the
mirror extensions arising from extensions of ${\mathrm U}(1)_{n+2}$ and
their combinations with $(n,0,0)$ which has statistics phase $-1$.
For example, if $n=6$, the irreducible DHR sector $(6,4,0)$
has dimension 1 and statistics phase 1, and it generates
the cyclic group of order 4.  These four irreducible DHR sectors
are all that have statistics phase 1.  This is the second case
of [B].  Note that the irreducible DHR sectors $(6,0,0)$ and
$(0,4,0)$ both have statistics phase $-1$, so they do not give
a coset construction or a mirror extension, but their combination
$(6,4,0)$ gives a statistics phase $1$.
The case $n=10$ gives the first case of [B].

As an example, consider the case $n=32$.
The smallest $k$ with statistics phase of $\sigma^k$
equal to $1$ is 17.  The statistics phase
of $\sigma^{17}\tau$ is also 1,
so we have two maximal subgroups of order 4,
$\{1,\sigma^{17},\sigma^{34}, \sigma^{51}\}$ and
$\{1,\sigma^{17}\tau,\sigma^{34}, \sigma^{51}\tau\}$.
The union of the two subgroups give a subset of six elements, and
this set gives all the irreducible DHR sectors of dimension 1 and 
statistics phase 1.

\begin{theorem}
The complete list of $N=2$ superconformal nets with $c<3$ in the discrete
series is given as follows:
\begin{itemize}
\item A simple current extension arising from a subgroup of the maximal
cyclic subgroup appearing in the above [A], [B], [C] and [D].
\item The exceptionals related to $E_6$ and $E_8$ as in the above
(1), (2), (3) and (4).
\end{itemize}
\end{theorem}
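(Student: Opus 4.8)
The plan is to reduce the classification of the graded-local extensions $\A \supset \A_{c_n}$ to the local problem of classifying the finite-index local extensions of the even subnet $\A_{c_n}^\gamma$. By the coset identification (Theorem \ref{th:CosetId}) together with Theorem \ref{th:bosonic-coset}, the even subnet equals the completely rational coset net $\mC_n^\gamma$, whose sectors, fusion rules, univalences and statistical dimensions are fully described by the labels $(l,m,s)$. Since $\A_{c_n}$ is itself the fermionic extension of $\mC_n^\gamma$ by the order-two, dimension-one, univalence $-1$ sector $(n,n+2,0)$, and since by Proposition \ref{prop:CKL22} the even part of any graded-local extension of $\A_{c_n}$ is a local extension of $\mC_n^\gamma$ compatible with this grading, the $N=2$ superconformal nets in the discrete series are in bijection with the type I local extensions of $\mC_n^\gamma$, each then equipped with its canonical fermionic extension.

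I would next invoke the standard correspondence between irreducible finite-index local extensions and Q-systems (dual canonical endomorphisms), so that the admissible extensions are governed by the type I modular invariants of $\mC_n^\gamma$. Feeding in Gannon's classification \cite{Gannon-N2} and discarding the non-type-I invariants, I would test, for each remaining invariant $(Z_{\lambda\mu})$, whether $\bigoplus_\lambda Z_{\lambda\mu}\lambda$ is realized as a dual canonical endomorphism and, if so, whether the Q-system is unique up to unitary equivalence. The two exceptional families, at $n=10$ (type $E_6$) and $n=28$ (type $E_8$), are then treated by recognizing the relevant endomorphisms as arising from the conformal embeddings $\SUtwo_{10}\subset\operatorname{SO}(5)_1$, $\Uone_{12}\subset\Uone_3$ and $\SUtwo_{28}\subset(\mathrm{G}_2)_1$ and from the mirror-extension construction of \cite{X-m}, uniqueness following as in \cite{KL1} via \cite{KL2}.

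For the remaining generic invariants every sector in the dual canonical endomorphism has statistical dimension one, so the extension must be a crossed product by a finite abelian group of such sectors, each necessarily of univalence $+1$. The core of the argument is a combinatorial reduction showing this group is always cyclic: the full group of dimension-one sectors is $\Z/(4n+8)\Z$ for $n$ odd and $\Z/(2n+4)\Z \times \Z/2\Z$ for $n$ even, and in the even case the $\Z/2\Z$ generator $\tau$ has univalence $-1$, ruling out any non-cyclic admissible subgroup. Splitting into the residue classes [A] $n$ odd, [B] $n\equiv 2 \pmod 4$, [C] $n\equiv 4,8,12 \pmod{16}$, [D] $n\equiv 0 \pmod{16}$, I would determine which powers of the fundamental simple current $\sigma=(0,1,1)$ (and which products $\sigma^t\tau$) carry trivial univalence and isolate the maximal cyclic subgroup giving a simple current extension; every subgroup of it then yields such an extension.

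The step I expect to be the main obstacle is precisely this univalence bookkeeping. In each residue class one must pin down the smallest power $\sigma^k$ of trivial univalence and verify closure under multiplication; the delicate point is the cross term $2abn/(2n+4)$, settled by comparing the $2$-adic and odd-prime valuations of $a$, $b$, $n$ and $n+2$. In the even cases one must additionally exhibit a univalence $-1$ generator to exclude the spurious non-cyclic candidates, and in case [D] the set of trivial-univalence sectors fails to be a group, so that two distinct maximal cyclic subgroups arise and must both be recorded. A further technical point is establishing, for the exceptionals, both realizability and uniqueness of the Q-system, which is not automatic once higher-dimensional sectors enter and rests on the rigidity of the underlying conformal inclusions. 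Granting these, the exceptional and simple-current lists assemble into the stated classification.
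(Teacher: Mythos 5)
Your proposal is correct and follows essentially the same route as the paper's proof: reduction to local extensions of the even coset net via the coset identification (Theorem \ref{th:CosetId}) and the sector data of Theorem \ref{th:bosonic-coset}, Gannon's type I modular invariants with the $E_6$/$E_8$ exceptionals handled through the conformal embeddings $\SUtwo_{10}\subset\operatorname{SO}(5)_1$, $\Uone_{12}\subset\Uone_3$, $\SUtwo_{28}\subset(\mathrm{G}_2)_1$ and mirror extensions with uniqueness via \cite{KL1,KL2}, and the same residue-class analysis [A]--[D] of univalences (cyclicity forced by the statistics phase $-1$ of $\tau$, the $2abn/(2n+4)$ integrality argument, and the two maximal cyclic subgroups in case [D]). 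No genuinely different ideas or gaps to report.
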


We finally remark that a cyclic group of an arbitrary order
appears in the above classification.

Suppose an arbitrary positive integer $j$ is given.  We show
that the cyclic group of order $j$ appears in the above [A],
[B], [C] and [D].  (This group is not necessarily maximal.)

We may assume $j\neq1$.
Set $n=j^2-2$.  We are in Case [A] or [B], if $j$ is odd or even,
resp.~.
The irreducible DHR sector $(0,2j,0)$ has a statistics phase $1$,
and the cyclic group
$\{(0,0,0), (0,2j,0), (0,4j,0),\dots (0,2n+4-2j)\}$ gives
a simple current extension.  The order of this group is
$(2n+4)/2j=j$.

\section{Nets of Spectral Triples}\label{sec:STs}

Next we study the supersymmetry properties of the $N=2$  super-Virasoro net $\A_c$ for any of the allowed values of the central charge $c$. Since we would like to construct spectral triples, we may treat this point in a similar way as in \cite[Sect. 4]{CHKL} for the $N=1$ super-Virasoro algebra.

Let $\pi$ be an irreducible general soliton of $\A_c$ with $\rme^{\rmi 4\pi L^\pi_0}$ a scalar. It follows from Theorem \ref{th:SVir-pi-exp-pi} 
(\cf also \cite[Prop. 2.14]{CHL}) that a supercharge, \ie an odd selfadjoint square-root $Q_\pi$ of $L_0^\pi-\const$, exists iff $\pi$ is a Ramond representation though it need not be unique. Then
\[
 Q_{\pi,s} :=\cos(s) G_0^{1,\pi} + \sin(s) G_0^{2,\pi},\quad s\in\R,
\]
are, in fact, possible choices satisfying $Q_{\pi,s}^2 = L_0^\pi-c/24$, as can be checked easily by means of Definition \ref{def:SVir2-alg}.  Moreover, $J_0^\pi$ acts by rotating this supercharge:
 \begin{equation}\label{eq:Q-rot}
  \rme^{\rmi s J_0^\pi} Q_{\pi,0}\rme^{-\rmi s J_0^\pi} = Q_{\pi,s}, \; s\in \R .
 \end{equation}
We let therefore $\pi$ be an irreducible Ramond representation, which is automatically graded according to the discussion after Theorem \ref{th:SVir2-reps}. 
Moreover, we fix $s=0$ and $Q_\pi:=Q_{\pi,0} = G_0^{1,\pi}$. 

Associated to $Q_\pi$, we have in a natural manner a superderivation $(\delta_\pi,\dom(\delta_\pi))$ on $B(\H_\pi)$ as in 
\cite[Sect. 2]{CHKL}, namely
\[
\dom(\delta_\pi) := \{ x\in B(\H_\pi):\; (\exists y\in B(\H_\pi)) \; \gamma(x) Q \subset Q x - y \},
\]
in which case we set $\delta_\pi(x)=y$.
In general, it is difficult to decide whether the local domains $\dom(\delta_\pi)\cap \pi_I(\A_c(I))$ are nontrivial. In \cite[Sect. 4]{CHKL} we discussed this point for the $N=1$ super-Virasoro net, and we shall perform a similar procedure here for the $N=2$ super-Virasoro net $\A_c$.

\begin{theorem}\label{th:SVir2-domain}
Let $\pi$ be an irreducible Ramond representation of $\A_c$. Then for every $I\in\I_\R$, the $*$-subalgebra $\pi_I^{-1}(\dom(\delta_\pi)) \subset \A_c (I)$ is weakly dense. It contains in particular the elements
 \[
 (L(f) +\lambda)^{-1},\;  J(f)(L(f) +\lambda)^{-1},\; G^i(f) (L(f) +\lambda)^{-1},
 \]
if $f\in\Cci_c(I, \R)$ and $I_0\in\I_\R$ are such that $\supp f= \bar{I}_0$, $f(z)>0$ for $z\in I_0$ and $f'(z)\not= 0$ for $z\in I_0$ close to the boundary of $I_0$, and $\lambda\in\C$ with $|\Im \lambda|$ sufficiently large.
\end{theorem}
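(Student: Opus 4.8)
The plan is to prove weak density by exhibiting the three listed families of elements inside $\pi_I^{-1}(\dom(\delta_\pi))$ and then showing that they already generate $\A_c(I)$ as a von Neumann algebra. Note first that $\dom(\delta_\pi)$ is a $\gamma$-invariant unital $*$-subalgebra of $B(\H_\pi)$ on which $\delta_\pi$ acts as a graded derivation (as in \cite[Sect.2]{CHKL}), and that $\pi_I$ is a faithful normal $*$-homomorphism; hence $\pi_I^{-1}(\dom(\delta_\pi))$ is a unital $*$-subalgebra of $\A_c(I)$ and it suffices to show it is weakly dense. Throughout I write $Q:=Q_\pi=G_0^{1,\pi}$, so that $\dom(Q)=\dom((L_0^\pi)^{1/2})$ and $\delta_\pi(x)=Qx-\gamma(x)Q$ on $\dom(Q)$. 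By Theorem \ref{th:SVir-pi-exp-pi} one has $\pi_I((L(f)+\lambda)^{-1})=(L^\pi(f)+\lambda)^{-1}$, and likewise for the other two families, so every membership can be checked directly in the representation $\pi$.

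The first step is a bracket computation on the common core $\Cci(L_0^\pi)$ using the relations of Definition \ref{def:SVir2-alg}. A direct calculation gives, for $f\in\Cci_c(I,\R)$,
\[
[Q,J^\pi(f)]=-\rmi\, G^{2,\pi}(f),\qquad \{Q,G^{1,\pi}(f)\}=2L^\pi(f)-\tfrac{c}{12}f_0,
\]
\[
[Q,L^\pi(f)]=\tfrac{\rmi}{2}\,G^{1,\pi}(f'),\qquad \{Q,G^{2,\pi}(f)\}=-\rmi\, J^\pi(g),
\]
where $f'$ and $g$ arise from $f$ by differentiation and are again smooth with support in $\supp f=\bar I_0$. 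Writing $R:=(L^\pi(f)+\lambda)^{-1}$ and using the graded Leibniz rule together with $[Q,R]=-R\,[Q,L^\pi(f)]\,R$, the images under $\delta_\pi$ of $R$, of $J^\pi(f)R$ and of $G^{i,\pi}(f)R$ all become finite sums of products of resolvents $R$ with smeared fields $X^\pi(\cdot)$, every smearing function being supported in $\bar I_0$; for instance $\delta_\pi(R)=\tfrac12 R\,G^{1,\pi}(f')\,R$, and for $G^{1,\pi}(f)R$ the leading term is $(2L^\pi(f)-\tfrac{c}{12}f_0)R=2-2\lambda R-\tfrac{c}{12}f_0R$.

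The technical heart, and the \textbf{main obstacle}, is to see that all these expressions are bounded. This requires \emph{local} energy bounds: for the special class of $f$ in the statement --- positive on $I_0$ with first-order zeros at $\partial I_0$ --- the smeared fields $X^\pi(g)$ with $\supp g\subset\bar I_0$ are relatively bounded by $L^\pi(f)$, namely $\|X^\pi(g)\psi\|\le C\|(|L^\pi(f)|+1)^{1/2}\psi\|$ for $X=J,G^1,G^2$ and $\|L^\pi(g)\psi\|\le C\|(|L^\pi(f)|+1)\psi\|$. These are the $N=2$ analogues of the local energy bounds of \cite{BSM} exploited in \cite[Sect.4]{CHKL}, and it is precisely here that the hypotheses on $f$ enter, forcing $L^\pi(f)$ to dominate all fields localized in $\bar I_0$; establishing them for the $N=2$ fields is the real work. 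Granting them, $(|L^\pi(f)|+1)^{1/2}R$ is bounded by the spectral calculus of the self-adjoint operator $L^\pi(f)$ once $|\Im\lambda|$ is large enough, so every term above is a product of bounded factors. The \emph{global} bounds \eqref{eq:LEB1}, which make $L^\pi(f)$ relatively $L_0^\pi$-bounded, show in addition that $R$, $J^\pi(f)R$ and $G^{i,\pi}(f)R$ leave $\dom(Q)=\dom((L_0^\pi)^{1/2})$ invariant, so that the bracket identities hold on all of $\dom(Q)$ and the three elements genuinely lie in $\dom(\delta_\pi)$.

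Finally, for weak density let $M\subset\A_c(I)$ be the von Neumann algebra generated by $\pi_I^{-1}(\dom(\delta_\pi))$. For a fixed special $f$ the inclusions $R,\,J(f)R,\,G^{i}(f)R\in M$ show, by a routine commutant argument, that $L(f),J(f),G^{i}(f)$ are all affiliated to $M$: any unitary in $M'$ commutes with $R$, hence with $L(f)$ and with the products $J(f)R,\,G^i(f)R$, hence with $J(f)$ and $G^i(f)$. Since any $g\in\Cci_c(I,\R)$ can be written as a difference $g=h-(h-g)$ of two functions of the special type (take $h$ a sufficiently large positive bump with first-order zeros whose support contains $\supp g$ in its interior, so that $h-g$ is again special), additivity of the fields yields that $L(g),J(g),G^{i}(g)$ are affiliated to $M$ for \emph{all} $g\in\Cci_c(I,\R)$. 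Hence $\rme^{\rmi X(g)}\in M$ for $X=J,G^1,G^2,L$ and all such $g$, and since these unitaries generate $\A_c(I)$ by \eqref{def:SVir2-net} we conclude $M=\A_c(I)$, completing the proof.
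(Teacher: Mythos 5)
Your proposal follows essentially the same route as the paper's proof: the same three families of resolvent-type elements, the same graded Leibniz/bracket computations (your brackets agree, up to immaterial constants, with the paper's formulas for $\delta$ of $(L^\pi(f)+\lambda)^{-1}$, $G^{i,\pi}(f)(L^\pi(f)+\lambda)^{-1}$ and $J^\pi(f)(L^\pi(f)+\lambda)^{-1}$), the same use of energy bounds to obtain boundedness of the resulting terms, and the same concluding von Neumann density argument in which an arbitrary $g\in\Cci_c(I,\R)$ is written as a difference of two functions of the special type so that the commutant of $\pi_I^{-1}(\dom(\delta_\pi))$ is seen to lie in $\A_c(I)'$.

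One claim, however, is false as stated and should be repaired. You assert that for the special $f$ \emph{every} smeared field $X^\pi(g)$ with $\supp g\subset\bar I_0$ is relatively bounded by $L^\pi(f)$. This cannot hold for arbitrary such $g$: since $f$ is smooth with $\supp f=\bar I_0$ it vanishes to infinite order at $\partial I_0$, so one can choose $g$ supported in $\bar I_0$ decaying more slowly there (say $g\sim\rme^{-1/(2x)}$ against $f\sim\rme^{-2/x}$ in a boundary coordinate), making $g^2/f$ unbounded; then no estimate of the form $J^\pi(g)^2\le C(\unit+L^\pi(f))$ can be extracted from \eqref{eq:LEB2}. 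What is true --- and is all your argument actually uses --- is the bound for $g=f$ and $g=f'$, precisely because $f^2\le\|f\|_\infty f$ and $f'^2\le Cf$ (the paper's Lemma \ref{lem:supportcond}). This is exactly how the paper organizes the estimates: the $G$-field terms are handled by the global bounds \eqref{eq:LEB1} together with $f'^2\le Cf$, quoting \cite{CHKL}, while only the current terms $J^\pi(f)(L^\pi(f)+\lambda)^{-1}$ and $J^\pi(f')(L^\pi(f)+\lambda)^{-1}$ require the genuine local energy bounds \eqref{eq:LEB2} of \cite{CW}, combined with the lower bound of \cite{FH} for $L^\pi(Cf-f'^2)$ with $Cf-f'^2\ge 0$. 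A second, smaller point: in your last step, passing from affiliation of $X(h)$ and $X(h-g)$ to affiliation of $X(g)$ ``by additivity'' of self-adjoint operators needs care with domains; the paper sidesteps this by a matrix-element computation on the core $\Cci(L_0)$, showing directly that any $a$ in the commutant commutes with $X(g)$ there and hence with $\rme^{\rmi X(g)}$.
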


The proof is quite lengthy and will be subdivided into several lemmata. However, since it borrows many ideas from \cite[Sect. 4]{CHKL}, we can shorten it a bit. We will use the smeared fields associated to the Ramond representation $\pi$ given by Theorem \ref{th:SVir-pi-exp-pi}. We write ``$Q$" instead of ``$Q_\pi$" and ``$\delta$" instead of ``$\delta_\pi$".

\begin{lemma}\label{lm:SVir-core-pres}
For every
$f\in C^\infty(\S , \R)$ and for $\lambda \in \C$ with $|\Im \lambda|$ sufficiently large, we have
\[
(L^\pi(f)+\lambda)^{-1}\Cci(L_0^\pi) \subset \dom((L_0^\pi)^2).
\]
\end{lemma}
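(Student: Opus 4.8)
The plan is to exploit the fact that, by the energy bounds \eqref{eq:LEB1}, the selfadjoint field $L^\pi(f)$ is \emph{smooth relative to} the operator $N:=\unit+L_0^\pi$, and then to realise the resolvent as a Laplace transform of the associated one-parameter unitary group. Since $f$ is real, $L^\pi(f)$ is selfadjoint, so for $\Im\lambda\neq 0$ the resolvent $R:=(L^\pi(f)+\lambda)^{-1}$ is bounded with $\|R\|\leq|\Im\lambda|^{-1}$; moreover $\dom(N^2)=\dom((L_0^\pi)^2)$, so it suffices to prove $R\,\Cci(L_0^\pi)\subset\dom(N^2)$. Summing the series $\sum_n f_n L_n^\pi$ and using the first estimate in \eqref{eq:LEB1} together with the rapid decay of the Fourier coefficients $f_n$, one first records that $L^\pi(f)$ is $N$-bounded, $\|L^\pi(f)\psi\|\leq a_f\|N\psi\|$ on $\dom(N)$. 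Using $[L_0^\pi,L_n^\pi]=-nL_n^\pi$, one checks on the invariant core $\Cci(L_0^\pi)$ that
\[
\ad(L_0^\pi)^j\big(L^\pi(f)\big)=L^\pi(f^{(j)}),\qquad j=1,2,
\]
where $f^{(j)}\in\Cci(\S)$ has Fourier coefficients $(-n)^j f_n$ (so $f^{(1)}=\rmi\,\rmd f/\rmd\theta$ and $f^{(2)}=-\rmd^2 f/\rmd\theta^2$); by the same estimate each $L^\pi(f^{(j)})$ is again $N$-bounded.

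Next I would pass to the one-parameter unitary group $U_f(s):=\rme^{\rmi s L^\pi(f)}$ and write, for $\Im\lambda$ of one sign (the other half-line being symmetric),
\[
R=\rmi\int_0^\infty \rme^{-\rmi\lambda s}\,U_f(-s)\,\rmd s,
\]
a strongly convergent Bochner integral since $\|U_f(-s)\|=1$. The heart of the matter is the quantitative domain estimate
\[
\|N^2 U_f(s)\psi\|\leq \rme^{c|s|}\,\|N^2\psi\|,\qquad \psi\in\dom(N^2),
\]
with $c$ depending only on $f$. This is precisely where the $N$-boundedness of $L^\pi(f^{(1)})$ and $L^\pi(f^{(2)})$ enters. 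Setting $\Phi(s):=N^2 U_f(s)\psi$ one finds
\[
\Phi'(s)=\rmi L^\pi(f)\Phi(s)+\rmi\,[N^2,L^\pi(f)]\,U_f(s)\psi,\qquad [N^2,L^\pi(f)]=2N L^\pi(f^{(1)})-L^\pi(f^{(2)}),
\]
and since $L^\pi(f)$ is selfadjoint the unbounded term contributes nothing to $\tfrac{\rmd}{\rmd s}\|\Phi(s)\|^2=2\Re\langle\Phi(s),\Phi'(s)\rangle$. Commuting the remaining $N$ past $L^\pi(f^{(1)})$ (which costs only another $L^\pi(f^{(2)})$) one bounds the inhomogeneous term by $c\|\Phi(s)\|$, and Gronwall gives the stated exponential estimate. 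Making this ODE argument rigorous — differentiability of $s\mapsto U_f(s)\psi$ in the graph norm of $N^2$ and $\dom(N^2)$-invariance of $U_f(s)$, which is what breaks the apparent circularity above — is the main obstacle; it is handled exactly as the corresponding domain-preservation results for exponentials of smeared fields in \cite{Tol,CLTW} and \cite[Sect.4]{CHKL}, now carried out to two powers of $N$ rather than one.

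Finally I would choose $|\Im\lambda|$ sufficiently large that $\rme^{(\Im\lambda)s}\rme^{c|s|}$ is integrable on $(0,\infty)$, i.e. $|\Im\lambda|>c$ with the correct sign. Then the integrand $\rme^{-\rmi\lambda s}N^2 U_f(-s)\psi$ is Bochner integrable, so the integral defining $R\psi$ converges in the graph norm of $N^2$; as $N^2$ is closed this yields $R\psi\in\dom(N^2)=\dom((L_0^\pi)^2)$ for every $\psi\in\dom(N^2)$, in particular for $\psi\in\Cci(L_0^\pi)$. This is exactly the role of the hypothesis on $|\Im\lambda|$: it beats the exponential growth of the group in the graph norm. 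An alternative, equivalent route is to work directly with the formal identity $[N,R]=-R\,L^\pi(f^{(1)})\,R$ and its second iterate; this is immediate formally but still requires a Yosida-type regularisation of $N$ to overcome the fact that $R$ is not a priori known to preserve $\dom(N)$ (note that $\dom(L^\pi(f))$ strictly contains $\dom(N)$ in general), which is why I would favour the group-theoretic argument above.
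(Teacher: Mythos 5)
Your strategy is correct, but note that it is necessarily a different route from the paper's, because the paper contains no argument at all for Lemma \ref{lm:SVir-core-pres}: its entire proof is the single sentence ``This is a special case of \cite[Prop.4.3]{CHKL}''. So the comparison is between outsourcing and self-containment. The paper leans on the general result of \cite{CHKL}, proved there once and for all for fields satisfying the linear energy bounds, which keeps the present paper short and automatically covers every representation $\pi$ with the bounds \eqref{eq:LEB1}. You instead reconstruct a proof: the analytic input is the same (relative $N$-boundedness of $L^\pi(f)$ and of its first two commutators with $L_0^\pi$, i.e. of $L^\pi(f^{(1)})$ and $L^\pi(f^{(2)})$), but you convert the resolvent into a Laplace transform of the unitary group $U_f(s)$ and obtain the domain statement from the Gronwall bound $\|N^2U_f(s)\psi\|\le \rme^{c|s|}\|N^2\psi\|$ together with closedness of $N^2$ under Bochner integration. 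What your route buys is transparency about the hypothesis: $|\Im\lambda|$ must exceed the exponential growth rate $c$ of the group in the $N^2$-graph norm, which is exactly the ``sufficiently large'' in the statement; resolvent--commutator proofs of the \cite{CHKL} type get the same condition from a Neumann-series/perturbation requirement instead.

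The one place where your argument is not self-contained is the step you yourself flag: the $\dom(N^2)$-invariance of $U_f(s)$ and the graph-norm differentiability needed to run the ODE (indeed, even the identity $\Re\langle\Phi(s),\rmi L^\pi(f)\Phi(s)\rangle=0$ presupposes $\Phi(s)\in\dom(L^\pi(f))$, i.e. information one level further up the scale, so a regularization is unavoidable). The results you cite --- \cite[Prop.2.1, Cor.2.2]{Tol}, \cite[Lemma 4.6]{CLTW} --- prove statements of exactly this kind but for one power of $N$; the two-power version requires iterating their regularization scheme. Since you have written out the commutator identity $[N^2,L^\pi(f)]=2NL^\pi(f^{(1)})-L^\pi(f^{(2)})$ and the relative bounds that drive the iteration, this is a routine extension rather than a gap --- and it is in any case the same kind of deferral the paper itself makes by citing \cite[Prop.4.3]{CHKL} outright.
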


This is a special case of \cite[Prop. 4.3]{CHKL}.

\begin{lemma}\label{lem:supportcond}
Let $f\in\Cci(\S, \R)$ such that $\supp f= \bar{I}$, $f(z)>0$ for $z\in I$, and $f'(z)\not= 0$ for $z\in I$ close to the boundary of 
$I$, for some $I \in \I$. Then there is $C>0$ such that $f'^2\le C f$.
\end{lemma}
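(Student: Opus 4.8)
The plan is to reduce the statement to the classical Glaeser-type inequality, which asserts that a nonnegative function with bounded second derivative has its squared first derivative controlled by the function itself. First I would observe that the hypotheses force $f\ge 0$ everywhere on $\S$: indeed $f$ vanishes off $\bar I$ and is strictly positive on $I$, so by continuity $f(z)\ge 0$ for all $z\in\S$. Working in a real chart (via $\theta\mapsto\rme^{\rmi\theta}$, or through the Cayley transform identifying a neighbourhood of $\bar I$ with a bounded interval of $\R$), I may regard $f$ as a smooth, nonnegative, compactly supported function on $\R$ and read $f'$, $f''$ as ordinary derivatives in this coordinate. Since $f\in\Cci$ has compact support $\bar I$, the quantity $M:=\sup_{x}|f''(x)|$ is finite.

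The core step is the pointwise bound $f'(x)^2\le 2Mf(x)$ for every $x$, which I would obtain from Taylor's theorem with Lagrange remainder: for each $h\in\R$ there is $\xi$ between $x$ and $x+h$ with
\[
0\le f(x+h)=f(x)+f'(x)h+\tfrac12 f''(\xi)h^2\le f(x)+f'(x)h+\tfrac12 M h^2,
\]
using $f\ge 0$ on the left and $f''(\xi)\le M$ on the right. Hence the quadratic $h\mapsto \tfrac12 M h^2+f'(x)h+f(x)$ is nonnegative for all $h$; if $M>0$ its discriminant must be nonpositive, which is exactly $f'(x)^2\le 2Mf(x)$. (If $M=0$ then $f$ is affine and compactly supported, hence identically zero, and the claim is trivial.) Taking $C:=2M=2\|f''\|_\infty$ then yields $f'^2\le Cf$ on all of $\S$.

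The only delicate point is the behaviour near $\partial I$, where both $f$ and $f'$ tend to $0$ and one must rule out $f'^2/f$ blowing up; I expect this to be the main obstacle if one attempts a direct local analysis at the endpoints. The Taylor argument above bypasses it entirely, since the discriminant inequality is uniform in $x$ and automatically encodes the fact that at a boundary zero $a\in\partial I$ one has $f'(a)=0$ (a nonnegative differentiable function has vanishing derivative at a zero), so that $f$ vanishes there to at least second order. I note, finally, that the extra hypotheses that $\supp f=\bar I$ and that $f'\ne 0$ just inside the boundary are not actually needed for this estimate; they are recorded because the same $f$ is reused in the subsequent lemmata, whereas only nonnegativity and smoothness enter here.
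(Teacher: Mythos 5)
Your argument is correct, but it is not the argument the paper relies on. The paper in fact gives no inline proof: it delegates to \cite[Lemma 4.7]{CHKL}, and the proof there is a local analysis at the endpoints of $I$: the function $f'^2/f$ is continuous on $I$, and it tends to $0$ at $\partial I$ by l'H\^{o}pital's rule, since all derivatives of $f$ vanish at $\partial I$ (because $f$ is smooth with $\supp f=\bar{I}$) and since the hypothesis $f'\neq 0$ near the boundary is exactly what legitimizes the l'H\^{o}pital cancellation $(f'^2)'/f' = 2f'f''/f'=2f''\to 0$; boundedness of $f'^2/f$ on $I$ then gives the constant, both sides vanishing off $\bar{I}$. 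You instead prove the global Glaeser inequality $f'^2\le 2\|f''\|_\infty\, f$ by Taylor's theorem plus the discriminant argument, using only that $f$ is smooth on the compact circle (so $f''$ is bounded) and nonnegative \emph{everywhere} --- the latter being the crucial global input that lets $h$ range over all of $\R$. The two routes are genuinely different, and each buys something: yours yields an explicit constant $C=2\|f''\|_\infty$, involves no case analysis at the boundary, and shows --- as you correctly observe --- that the support condition and the nonvanishing of $f'$ near $\partial I$ are superfluous for this particular estimate (they are packaged into the statement because the subsequent Lemmata \ref{lm:LGJ-basic}--\ref{lem:J-domain} and the decomposition $f=f_1-f_2$ in the proof of Theorem \ref{th:SVir2-domain} invoke ``the condition in Lemma \ref{lem:supportcond}'' as a single set of hypotheses); the paper's route buys economy of exposition, reusing verbatim a lemma already established in the $N=1$ setting, at the price of an implicit constant and of hypotheses that your argument reveals to be unnecessary for the inequality itself.
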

The proof is given in \cite[Lemma 4.7]{CHKL}.

\begin{lemma}
\label{lm:LGJ-basic}
Let $f\in\Cci(\S,\R)$. Then, for $i=1,2$ and $\lambda\in\C$ with $|\Im\lambda|$ sufficiently large, we have
\[
G^{i,\pi}(f)(L^\pi(f)+\lambda)^{-1} \in B(\H_\pi).
\]
Furthermore, if $f$ satisfies the condition in the preceding Lemma \ref{lem:supportcond}, then
\[
G^{i,\pi}(f')(L^\pi(f)+\lambda)^{-1} \in B(\H_\pi).
\]
\end{lemma}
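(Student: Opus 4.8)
The plan is to reduce both statements to a single quadratic identity coming from the super-Virasoro relations together with a form estimate for $L^\pi$. First I would smear the anticommutator $\{G^i_r,G^i_s\}=2L_{r+s}+\frac{c}{3}(r^2-\frac14)\delta_{r+s,0}$ (for both $i=1,2$ the $J$-terms cancel, exactly as in the $N=1$ computation of \cite[Sect.4]{CHKL}) to obtain, on the common core $\Cci(L_0^\pi)$,
\[
G^{i,\pi}(g)^2 = L^\pi(g^2) + b_g\unit, \qquad b_g := \tfrac{c}{6}\sum_r |g_r|^2\Big(r^2-\tfrac14\Big),
\]
for every real $g\in\Cci(\S,\R)$, with $b_g$ finite by the rapid decay of the Fourier coefficients. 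Since $(L^\pi(f)+\lambda)^{-1}$ maps $\Cci(L_0^\pi)$ into $\dom((L_0^\pi)^2)$ by Lemma \ref{lm:SVir-core-pres}, putting $\phi:=(L^\pi(f)+\lambda)^{-1}\psi$ for $\psi\in\Cci(L_0^\pi)$ turns the desired boundedness into the uniform estimate
\[
\|G^{i,\pi}(g)\phi\|^2 = \langle\phi,L^\pi(g^2)\phi\rangle + b_g\|\phi\|^2 \leq C\,\|\psi\|^2,
\]
with $g=f$ for the first claim and $g=f'$ for the second; by density the corresponding operator is then bounded. Thus the whole problem is to control the quadratic form of $L^\pi(g^2)$ on the range of the resolvent of $L^\pi(f)$.

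For the first statement I would establish that $L^\pi(f^2)$ is relatively form-bounded by $L^\pi(f)^2$, i.e.
\[
\pm L^\pi(f^2) \leq a\,L^\pi(f)^2 + b\,\unit
\]
with finite constants $a,b$. This is the natural inequality because $L^\pi(f^2)$ is of first order in $L_0^\pi$ while $L^\pi(f)^2$ is of second order, and it is proved from the energy bounds \eqref{eq:LEB1}, the commutator $[L_0^\pi,L^\pi(f)]=\rmi\,L^\pi(f')$, and the localized resolvent/domain estimates underlying Lemma \ref{lm:SVir-core-pres}, in complete analogy with \cite[Sect.4]{CHKL}. Granting it, and using that $L^\pi(f)$ is selfadjoint so that $\|L^\pi(f)(L^\pi(f)+\lambda)^{-1}\|\leq 1$ and $\|(L^\pi(f)+\lambda)^{-1}\|\leq|\Im\lambda|^{-1}$, one gets $\|L^\pi(f)\phi\|\leq(1+|\lambda|\,|\Im\lambda|^{-1})\|\psi\|$ and hence $\langle\phi,L^\pi(f^2)\phi\rangle\leq C\|\psi\|^2$; the identity above then yields the first claim.

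For the second statement the new input is the hypothesis of Lemma \ref{lem:supportcond}, namely $f'^2\leq Cf$ with $f\geq0$. Applying the identity to $g=f'$ I must bound $\langle\phi,L^\pi(f'^2)\phi\rangle$, and here the pointwise inequality replaces the order-two control by an order-one one: since $Cf-f'^2\geq0$ and $L^\pi$ is bounded below on nonnegative admissible functions — itself a consequence of the square identity, after writing such functions as sums of squares of functions regular enough for the smeared fields, the requisite regularity of $\sqrt{\,\cdot\,}$ being guaranteed precisely by the Glaeser-type condition $f'^2\leq Cf$ — one obtains the form domination
\[
L^\pi(f'^2) \leq C\,L^\pi(f) + b'\,\unit .
\]
Because $L^\pi(f)$ is selfadjoint and $\lambda=\rmi\mu$ is purely imaginary with $|\mu|$ large, $\langle\phi,L^\pi(f)\phi\rangle=\Re\langle\phi,\psi\rangle\leq|\mu|^{-1}\|\psi\|^2$, so that $\langle\phi,L^\pi(f'^2)\phi\rangle\leq (C|\mu|^{-1}+b'|\mu|^{-2})\|\psi\|^2$ and the identity again gives $G^{i,\pi}(f')(L^\pi(f)+\lambda)^{-1}\in B(\H_\pi)$.

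The main obstacle is the form comparison. In the first statement this is the relative boundedness of the lower-order field $L^\pi(f^2)$ by $L^\pi(f)^2$, which is the genuinely technical point and must be extracted from the localized commutator and resolvent estimates of \cite[Sect.4]{CHKL} rather than from the crude energy bounds alone: indeed the naive route through the half energy bound $\|G^{i,\pi}(f)\phi\|\leq a_f\|(\unit+L_0^\pi)^{1/2}\phi\|$ fails, since $(\unit+L_0^\pi)^{1/2}(L^\pi(f)+\lambda)^{-1}$ is in general not bounded. In the second statement the analogous heart is the lower-boundedness of $L^\pi$ on nonnegative functions, and it is exactly to make the square-root argument behind it work that the support hypothesis $f'^2\leq Cf$ is needed.
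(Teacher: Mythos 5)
Your reduction is the right one, and it is exactly what lies behind the paper's proof (which simply cites \cite[Prop.\ 4.6 \& Lemma 4.7]{CHKL}): the $J$-terms indeed cancel in $\{G^i_r,G^i_s\}$, so that $G^{i,\pi}(g)^2=L^\pi(g^2)+b_g\unit$ on $\Cci(L_0^\pi)$, and boundedness of $G^{i,\pi}(g)(L^\pi(f)+\lambda)^{-1}$ becomes the form estimate $\langle\phi,L^\pi(g^2)\phi\rangle\le C\|\psi\|^2$ for $\phi=(L^\pi(f)+\lambda)^{-1}\psi$. Your argument for the \emph{second} statement then follows the same route as the cited result: from $f'^2\le Cf$ one gets $L^\pi(f'^2)\le C\,L^\pi(f)+b'\unit$ because $L^\pi(Cf-f'^2)$ is bounded below, and $\langle\phi,L^\pi(f)\phi\rangle=\Re\langle\phi,\psi\rangle-\Re\lambda\,\|\phi\|^2\le C'\|\psi\|^2$. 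One caveat there: the lower bound for $L^\pi$ smeared with a nonnegative function should be quoted as the quantum energy inequality \cite[Thm.\ 4.1]{FH}, as the paper does in the proof of Lemma \ref{lem:G-domain}, and \emph{not} derived by writing $Cf-f'^2$ as a square of an admissible function; the square root of a nonnegative smooth function need not even be $C^1$, and the Glaeser-type condition on $f$ gives no regularity whatsoever for $\sqrt{Cf-f'^2}$.

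The genuine gap is in your \emph{first} statement. Its entire weight rests on the second-order form bound $\pm L^\pi(f^2)\le a\,L^\pi(f)^2+b\,\unit$ for arbitrary real smooth $f$, which you assert follows ``from the energy bounds, the commutator $[L_0^\pi,L^\pi(f)]=\rmi\,L^\pi(f')$ and the localized estimates of \cite[Sect.\ 4]{CHKL}, in complete analogy.'' No such inequality appears in \cite{CHKL}, and the listed ingredients cannot produce it: the energy bounds \eqref{eq:LEB1} compare smeared fields with $\unit+L_0^\pi$, which is precisely what is \emph{not} controlled on the range of $(L^\pi(f)+\lambda)^{-1}$ (as you yourself observe when rejecting the naive route), while the commutator identity brings in $L^\pi(f')$, whose control again requires $H^1$-type information. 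It is not even clear that the inequality is true for sign-changing $f$: classically, for a positive energy density $T$, the analogue $\int f^2T\le a\left(\int fT\right)^2+b$ fails (concentrate energy at two points where $f=+1$ and $f=-1$), so a quantum proof would have to rest on fluctuation lower bounds, a genuinely nontrivial matter. What the paper's citation actually delivers is: if $f\ge0$ and $g^2\le Cf$ pointwise, then $G^{i,\pi}(g)(L^\pi(f)+\lambda)^{-1}$ is bounded; the first claim is then the case $g=f$, $C=\|f\|_\infty$, i.e.\ it is proved by the very same first-order Fewster--Hollands argument you used for the second claim, and it requires $f\ge0$. Since every application of the lemma in the paper has $f$ satisfying the hypotheses of Lemma \ref{lem:supportcond} (in particular $f\ge0$), this is all that is ever needed; your attempt to cover literally arbitrary real $f$ through a new, unproved (and possibly false) operator inequality replaces a one-line adaptation of your own second-claim argument with a step that does not stand.
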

This is an immediate consequence of \cite[Prop. 4.6 \& Lemma 4.7]{CHKL}.

 \begin{lemma}\label{lem:L-domain}
Let $f\in\Cci(\S,\R)$ satisfy the conditions in Lemma \ref{lem:supportcond} and assume $\lambda\in\C$ with $|\Im\lambda|$ sufficiently large. Then
 $(L^\pi(f) + \lambda)^{-1}\in\dom(\delta)$, and
 \begin{align*}
 \delta\big((L^\pi(f)+\lambda)^{-1} \big)= &
 -\frac{\rmi}2 (L^\pi(f)+\lambda)^{-1}G^{1,\pi}(f')(L^\pi(f)+\lambda)^{-1},
  \end{align*}
  \end{lemma}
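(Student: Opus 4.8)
The plan is to identify $\delta\bigl((L^\pi(f)+\lambda)^{-1}\bigr)$ with the graded commutator $[Q,\,\cdot\,]$ and to reduce everything to a single mode identity. Write $R:=(L^\pi(f)+\lambda)^{-1}$. Since $f$ is real, $L^\pi(f)$ is selfadjoint, so $R\in B(\H_\pi)$ whenever $|\Im\lambda|$ is large (in fact whenever $\Im\lambda\neq0$); and since $L$ is a Bose field, $L^\pi(f)$ commutes with $\Gamma_\pi$, hence $R$ is even and $\gamma(R)=R$. Thus the definition of $\delta$ asks precisely for a bounded $y$ with $RQ\subset QR-y$, and on the formal level $[Q,R]=-R[Q,L^\pi(f)]R$. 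The key computation is $[Q,L^\pi(f)]=\tfrac{\rmi}{2}G^{1,\pi}(f')$: using $Q=G^{1,\pi}_0$ and the relation $[L_m,G^\pm_r]=(\tfrac m2-r)G^\pm_{m+r}$ from Definition~\ref{def:SVir2-alg} one gets $[L_n,G^1_0]=\tfrac n2 G^1_n$, while the Fourier coefficients obey $(f')_n=\rmi n f_n$; summing against $f_n$ yields the stated constant, and this already predicts $\delta(R)=-\tfrac{\rmi}{2}RG^{1,\pi}(f')R$.

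To make this rigorous I would first record the operator identity
\[
Q(L^\pi(f)+\lambda)\psi-(L^\pi(f)+\lambda)Q\psi=\tfrac{\rmi}{2}G^{1,\pi}(f')\psi,\qquad \psi\in\Cci(L_0^\pi),
\]
which is legitimate because $\Cci(L_0^\pi)$ is a common invariant core for all the modes. I would then extend it to $\psi\in\dom((L_0^\pi)^2)$: all three terms extend continuously in the graph norm of $(L_0^\pi)^2$ by the energy bounds \eqref{eq:LEB1} (so that $L^\pi(f)$ and $G^{1,\pi}(f')$ are relatively bounded with respect to suitable powers of $L_0^\pi$) together with $[L_0^\pi,Q]=0$, which shows that $Q$ preserves each $\dom((L_0^\pi)^k)$. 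Now fix $\phi\in\Cci(L_0^\pi)$ and put $\psi:=R\phi$; by Lemma~\ref{lm:SVir-core-pres} we have $\psi\in\dom((L_0^\pi)^2)$, so the extended identity applies. Since $(L^\pi(f)+\lambda)R\phi=\phi$ and $QR\phi=Q\psi\in\dom(L^\pi(f))$, applying $R$ on the left gives
\[
RQ\phi=QR\phi-y\phi,\qquad y:=-\tfrac{\rmi}{2}RG^{1,\pi}(f')R,\qquad \phi\in\Cci(L_0^\pi),
\]
where the boundedness of $y$ is exactly the content of Lemma~\ref{lm:LGJ-basic} (this is where the support hypothesis on $f$, via Lemma~\ref{lem:supportcond}, enters, to control $G^{1,\pi}(f')R$).

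Finally I would upgrade this from the core $\Cci(L_0^\pi)$ to all of $\dom(Q)$. Given $\xi\in\dom(Q)$, choose $\xi_k\in\Cci(L_0^\pi)$ with $\xi_k\to\xi$ and $Q\xi_k\to Q\xi$; then $RQ\xi_k\to RQ\xi$ and $y\xi_k\to y\xi$ as $R$ and $y$ are bounded, so $QR\xi_k=RQ\xi_k+y\xi_k$ converges. As $R\xi_k\to R\xi$ and $Q$ is closed, this forces $R\xi\in\dom(Q)$ and $QR\xi=RQ\xi+y\xi$, i.e. $RQ\subset QR-y$. Since $\gamma(R)=R$, this is precisely $\gamma(R)Q\subset QR-y$, so $R\in\dom(\delta)$ and $\delta(R)=y=-\tfrac{\rmi}{2}(L^\pi(f)+\lambda)^{-1}G^{1,\pi}(f')(L^\pi(f)+\lambda)^{-1}$, as claimed. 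The only genuine obstacle is the domain bookkeeping in the middle step — checking that the mode identity survives the passage to $\dom((L_0^\pi)^2)$ and that the resolvent may be moved past $Q$ — but this is routine given the energy bounds and the relation $[L_0^\pi,Q]=0$; the one nontrivial input, boundedness of $y$, is supplied by the support condition through Lemma~\ref{lm:LGJ-basic}.
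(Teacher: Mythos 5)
Your proposal is correct and takes essentially the same route as the paper, which simply defers to \cite[Thm.\ 4.8]{CHKL}: that argument is precisely your scheme of establishing the commutator identity $[Q,L^\pi(f)]=\tfrac{\rmi}{2}G^{1,\pi}(f')$ on the core, transporting it through the resolvent using the core-preservation statement of Lemma \ref{lm:SVir-core-pres} and the boundedness of $G^{1,\pi}(f')(L^\pi(f)+\lambda)^{-1}$ from Lemma \ref{lm:LGJ-basic} (where the support condition enters), and then extending to all of $\dom(Q)$ by closedness of $Q$. The domain bookkeeping you label as routine is exactly what the cited reference carries out via its energy-bound estimates, so there is no substantive difference.
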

The proof goes as in \cite[Thm. 4.8]{CHKL}.

\begin{lemma}\label{lem:G-domain}
Let $f\in\Cci(\S, \R)$ satisfy the condition in Lemma \ref{lem:supportcond}, and $\lambda\in\C$ with imaginary part sufficiently large. Then
 \[
  G^{i,\pi}(f)(L^\pi(f) + \lambda)^{-1} ,\; i=1,2,
 \]
 are in $\dom(\delta)$, and
 \begin{align*}
 \delta\big(G^{1,\pi}(f)(L^\pi(f)+\lambda)^{-1} \big) =&
 \left( 2 L^\pi(f) -\frac{c}{24\pi}\int_{S^1}f  \right)(L^\pi(f)+\lambda)^{-1} \\
 &+ \frac{\rmi}2G^{1,\pi}(f)(L^\pi(f)+\lambda)^{-1}G^{1,\pi}(f')(L^\pi(f)+\lambda)^{-1},\\
 \delta\big(G^{2,\pi}(f)(L^\pi(f)+\lambda)^{-1} \big) =&
 -J^\pi(f') (L^\pi(f)+\lambda)^{-1} \\
 &+ \frac{\rmi}2 G^{2,\pi}(f)(L^\pi(f)+\lambda)^{-1}G^{1,\pi}(f')(L^\pi(f)+\lambda)^{-1}.
 \end{align*}
  \end{lemma}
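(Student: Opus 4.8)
The plan is to write $a_i := G^{i,\pi}(f)(L^\pi(f)+\lambda)^{-1}$ as a product $A_iB$ with $A_i:=G^{i,\pi}(f)$ odd and $B:=(L^\pi(f)+\lambda)^{-1}$ even, and to obtain $\delta(a_i)$ via the graded Leibniz rule $\delta(xy)=\delta(x)y+\gamma(x)\delta(y)$, which for odd $A_i$ reads $\delta(A_iB)=\delta(A_i)B-A_i\delta(B)$. The factor $\delta(B)$ is already provided by Lemma~\ref{lem:L-domain}, so the computation reduces to the graded commutator of the supercharge $Q=G_0^{1,\pi}$ with the smeared fields $G^{i,\pi}(f)$, plus an argument that the a priori unbounded outcome becomes bounded after multiplication by the resolvent $B$. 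Since $A_i$ is unbounded it does not itself lie in $\dom(\delta)\subset B(\H_\pi)$, so the Leibniz rule can only serve as a guide; membership $a_i\in\dom(\delta)$ will be verified directly from the defining relation $\gamma(a_i)Q\subset Qa_i-\delta(a_i)$.

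First I would record the term-by-term relations. From Definition~\ref{def:SVir2-alg} and the basis $G^1_r=(G^+_r+G^-_r)/\sqrt2$, $G^2_r=-\rmi(G^+_r-G^-_r)/\sqrt2$ one finds, in the Ramond representation ($r\in\Z$), the anticommutators $\{G_0^{1},G_r^{1}\}=2L_r-\tfrac{c}{12}\delta_{r,0}$ and $\{G_0^{1},G_r^{2}\}=-\rmi r J_r$. Smearing against $f$ and using the energy bounds \eqref{eq:LEB1} to justify convergence on the common core $\Cci(L_0^\pi)$, and recalling $f_0=\tfrac{1}{2\pi}\int_{\S}f$ and $(f')_r=\rmi r f_r$, these give the operator identities on $\Cci(L_0^\pi)$
\[
\{Q,G^{1,\pi}(f)\}=2L^\pi(f)-\frac{c}{24\pi}\int_{\S}f,\qquad \{Q,G^{2,\pi}(f)\}=-J^\pi(f'),
\]
which are exactly the leading terms of the asserted formulas. (The sign conventions can be cross-checked against $[Q,L^\pi(f)]=\tfrac{\rmi}{2}G^{1,\pi}(f')$, the identity underlying Lemma~\ref{lem:L-domain}.)

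Next I would assemble the candidate $y_i:=\{Q,G^{i,\pi}(f)\}B-A_i\delta(B)$ and check boundedness. For $i=1$ the first term is $\big(2L^\pi(f)-\tfrac{c}{24\pi}\int_\S f\big)B$, bounded because $L^\pi(f)B=\unit-\lambda B$; for $i=2$ it is $-J^\pi(f')B$, bounded by the analogue for the current $J^\pi$ of the second part of Lemma~\ref{lm:LGJ-basic} (the support condition of Lemma~\ref{lem:supportcond} on $f$ enters here, and $J^\pi$ obeys the same linear energy bound in \eqref{eq:LEB1} as $G^{i,\pi}$). The second term is $-A_i\delta(B)=\tfrac{\rmi}{2}G^{i,\pi}(f)B\,G^{1,\pi}(f')B$, a product of the two bounded operators $G^{i,\pi}(f)B$ and $G^{1,\pi}(f')B$ furnished by Lemma~\ref{lm:LGJ-basic}. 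Hence $y_i$ is bounded and coincides termwise with the right-hand side in the statement.

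Finally, the heart of the matter is upgrading the formal identity to the domain condition $Qa_i\xi+a_iQ\xi=y_i\xi$ on $\dom(Q)$. I would first show $a_i$ maps the core into $\dom(Q)$: for $\xi\in\Cci(L_0^\pi)$, Lemma~\ref{lm:SVir-core-pres} gives $B\xi\in\dom((L_0^\pi)^{2})$, and the higher energy bounds for $G^{i,\pi}(f)$ (iterated commutators with $L_0^\pi$) then place $G^{i,\pi}(f)B\xi$ in $\dom((L_0^\pi)^{1/2})\subset\dom(Q)$, the last inclusion being the energy bound for $Q=G_0^{1,\pi}$. With this, the term-by-term relations of the second step yield the literal identity $Qa_i\xi+a_iQ\xi=y_i\xi$ for all core vectors $\xi$. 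Since $\Cci(L_0^\pi)$ is a core for the selfadjoint $Q$ and $a_i,y_i$ are bounded, approximating $\xi\in\dom(Q)$ in the graph norm and using closedness of $Q$ extends the identity to all of $\dom(Q)$, giving $\gamma(a_i)Q\subset Qa_i-y_i$, i.e.\ $a_i\in\dom(\delta)$ with $\delta(a_i)=y_i$. I expect the main obstacle to be precisely this domain bookkeeping---ensuring $a_i\xi\in\dom(Q)$ and controlling the unboundedness of $A_i$ inside the Leibniz rule---which I would carry out along the lines of \cite[Thm.~4.8]{CHKL} for the analogous $N=1$ statement.
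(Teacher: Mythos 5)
Your overall architecture coincides with the paper's: you compute the graded anticommutators $\{Q,G^{1,\pi}(f)\}=2L^\pi(f)-\tfrac{c}{24\pi}\int_\S f$ and $\{Q,G^{2,\pi}(f)\}=-J^\pi(f')$ on the core $\Cci(L_0^\pi)$ (your algebra here is correct), combine them with $\delta\big((L^\pi(f)+\lambda)^{-1}\big)$ from Lemma \ref{lem:L-domain}, use Lemma \ref{lm:SVir-core-pres} and the energy bounds for the domain bookkeeping, and then verify the defining relation for $\delta$ directly; this is exactly how the paper argues for $i=2$ (for $i=1$ it simply invokes \cite[Thm. 4.11]{CHKL}, which is what your computation reproduces).

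There is, however, one genuine gap, and it sits precisely at the new analytic ingredient of this lemma: the boundedness of $J^\pi(f')(L^\pi(f)+\lambda)^{-1}$. You assert this as ``the analogue for the current of the second part of Lemma \ref{lm:LGJ-basic}'', justified by the observation that $J^\pi$ obeys the same linear energy bound \eqref{eq:LEB1} as $G^{i,\pi}$. That justification cannot work. The linear bounds control $J^\pi(f')$ relative to $(\unit+L_0^\pi)^{1/2}$, but the resolvent of a \emph{locally} smeared field does not dominate the \emph{global} conformal Hamiltonian: $(\unit+L_0^\pi)^{1/2}(L^\pi(f)+\lambda)^{-1}$ is not bounded, so \eqref{eq:LEB1} gives nothing here. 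Nor is Lemma \ref{lm:LGJ-basic} itself a consequence of \eqref{eq:LEB1}: in \cite{CHKL} it rests on the quadratic identity $\{G(g),G(g)\}=2L(g^2)+\mathrm{const}$ supplied by the supersymmetry relations, and $J$ has no such SUSY-derived counterpart. What the paper actually uses is the \emph{local (quadratic) energy bound} \eqref{eq:LEB2} for currents, $J^\pi(g)^2\le C_1\unit+C_2L^\pi(g^2)$ --- a separate nontrivial input from \cite{CW}, introduced in the text expressly for this proof --- combined with the support condition $f'^2\le Cf$ of Lemma \ref{lem:supportcond} and the lower semiboundedness of $L^\pi(Cf-f'^2)$ from \cite[Thm. 4.1]{FH}. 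These give $J^\pi(f')^2\le C_1'\unit+C_2'L^\pi(f)$, whence boundedness follows because $L^\pi(f)(L^\pi(f)+\lambda)^{-1}=\unit-\lambda(L^\pi(f)+\lambda)^{-1}$ is bounded. Without this ingredient your treatment of the $i=2$ case (and hence the membership $G^{2,\pi}(f)(L^\pi(f)+\lambda)^{-1}\in\dom(\delta)$) is incomplete.
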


For the proof we shall need \emph{local energy bounds} \cite{CW}: instead of bounding $J^\pi(f)$, for $f\in\Cci(\S, \R)$, by a multiple of $\unit+L_0^\pi$ as in \eqref{eq:LEB1}, it may be bounded by smeared fields, namely there are scalars $C_1,C_2>0$ depending on $f$ such that
\begin{equation}\label{eq:LEB2}
J^\pi(f)^2 \le C_1\unit + C_2 L^\pi(f^2).
\end{equation}

\begin{proof}
The fact that $(L^\pi(f) + \lambda)^{-1}$ preserves $\dom((L_0^\pi)^2)$ has been obtained in Lemma \ref{lm:SVir-core-pres}, so $ G^{i,\pi}(f)(L^\pi(f) + \lambda)^{-1}$ and $J^\pi(f')(L^\pi(f) + \lambda)^{-1}$ map $\Cci(L_0^\pi)$ into $\dom(L_0^\pi)$ by a standard application of the linear energy bounds \eqref{eq:LEB1}.

The case $i=1$ corresponds to \cite[Thm. 4.11]{CHKL} because of our choice $Q=G_0^{1,\pi}$ here.
Concerning the case $i=2$, the commutation relations in Definition \ref{def:SVir2-alg} imply, for $\psi\in\Cci(L_0^\pi)$:
\begin{align*}
 Q G^{2,\pi}(f)(L^\pi(f)+\lambda)^{-1}\psi 
 =& - G^{2,\pi}(f) Q (L^\pi(f)+\lambda)^{-1}\psi
- J^\pi(f') (L^\pi(f)+\lambda)^{-1}\psi\\
=& - G^{2,\pi}(f)(L^\pi(f)+\lambda)^{-1}Q\psi \\
 &+ \frac{\rmi}2
 G^{2,\pi}(f)(L^\pi(f)+\lambda)^{-1}G^{1,\pi}(f')(L^\pi(f)+\lambda)^{-1}\psi
 \\
 &- J^\pi(f') (L^\pi(f)+\lambda)^{-1}\psi.
\end{align*}

We know from the assumptions of this lemma and Lemma \ref{lm:LGJ-basic} that both $G^{i,\pi}(f)(L^\pi(f)+\lambda)^{-1}$ and $G^{i,\pi}(f')(L^\pi(f)+\lambda)^{-1}$ are bounded. This shows that the operator in the second term on the right-hand side extends to a bounded operator on $\H_\pi$. 

By making use of local energy bounds \eqref{eq:LEB2} for currents, we obtain $C_1,C_2>0$ such that
\[
J^\pi(f')^2\le C_1\unit + C_2 L^\pi(f'^2) \le C_1'\unit + C_2' L^\pi(f).
\]
The second inequality is obtained from the fact that $f'^2\le Cf$ for some $C>0$ by assumption, whence $L^\pi(Cf-f'^2)$ is bounded from below \cite[Thm. 4.1]{FH}, say by $-C'\unit$. Then we may choose $C_2':=C_2 C$ and $C_1':=C_1+C_2 C'$. Consequently, for every $\psi\in\Cci(L_0^\pi)$, we have
\begin{align*}
\|J^\pi(f')&(L^\pi(f)+\lambda)^{-1}\psi\|^2 = \langle \psi, (L^\pi(f)+\bar{\lambda})^{-1} J^\pi(f')^2(L^\pi(f)+\lambda)^{-1}\psi \rangle\\
\le& C_1' \|(L^\pi(f)+\lambda)^{-1}\psi\|^2 + C_2'  \|L^\pi(f) (L^\pi(f)+\lambda)^{-1}\psi\| \cdot \|(L^\pi(f)+\lambda)^{-1}\psi\|,
\end{align*}
so $J^\pi(f')(L^\pi(f)+\lambda)^{-1}$ is bounded.
\end{proof}

\begin{lemma}\label{lem:J-domain}
Let $f\in\Cci(\S,\R)$ satisfy the condition in Lemma \ref{lem:supportcond}, and let $\lambda\in\C$ with imaginary part sufficiently large. Then
 $ J^\pi(f)(L^\pi(f) + \lambda)^{-1} \in \dom(\delta)$, and
\begin{align*}
\delta\big(J^\pi(f)(L^\pi(f)+\lambda)^{-1} \big)= &
- \rmi G^{2,\pi}(f)(L^\pi(f)+\lambda)^{-1} \\
&+ J^\pi(f)(L^\pi(f)+\lambda)^{-1}\frac{\rmi}{2}G^{1,\pi}(f')(L^\pi(f)+\lambda)^{-1}.
\end{align*}
\end{lemma}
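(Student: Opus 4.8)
The plan is to follow the strategy of Lemma \ref{lem:G-domain}: compute the graded commutator of the supercharge $Q=G^{1,\pi}_0$ with $J^\pi(f)(L^\pi(f)+\lambda)^{-1}$ on the common core $\Cci(L_0^\pi)$, and then show that the resulting expression extends to a bounded operator, which is what identifies it as $\delta\big(J^\pi(f)(L^\pi(f)+\lambda)^{-1}\big)$ and simultaneously places $J^\pi(f)(L^\pi(f)+\lambda)^{-1}$ in $\dom(\delta)$. Write $R:=(L^\pi(f)+\lambda)^{-1}$. Since $J$ and $L$ are \emph{even} fields (both $J^\pi(f)$ and $L^\pi(f)$ commute with the grading unitary $\Gamma_\pi$, as $J_0^\pi$ commutes with every $J_n^\pi$ and $L_n^\pi$), the element $J^\pi(f)R$ is even, so $\gamma(J^\pi(f)R)=J^\pi(f)R$ and $\delta$ acts on it as the ordinary commutator $[Q,\,\cdot\,]$. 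By Lemma \ref{lm:SVir-core-pres} the resolvent $R$ maps $\Cci(L_0^\pi)$ into $\dom((L_0^\pi)^2)$, so all operators appearing below send the core into $\dom(L_0^\pi)$ and the linear energy bounds \eqref{eq:LEB1} apply freely; this legitimizes treating the manipulations as identities of operators defined on $\Cci(L_0^\pi)$.

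For the algebraic part I would first record the elementary commutator $[Q,J^\pi(f)]=-\rmi\,G^{2,\pi}(f)$, obtained by smearing $[G^{1,\pi}_0,J^\pi_n]=-\rmi\,G^{2,\pi}_n$; this last relation is immediate from $[G^\pm_r,J_n]=\mp G^\pm_{r+n}$ in Definition \ref{def:SVir2-alg} together with the definitions of $G^1,G^2$. This yields the first summand $-\rmi\,G^{2,\pi}(f)R$, matching the first term in the statement. The contribution of the resolvent is governed by Lemma \ref{lem:L-domain}, namely $\delta(R)=-\tfrac{\rmi}{2}R\,G^{1,\pi}(f')R$. Assembling the two contributions along the same lines as in the proof of Lemma \ref{lem:G-domain} (where the analogous computation for the odd element $G^{i,\pi}(f)R$ was carried out, tracking the grading of each factor) produces exactly the right-hand side of the statement, namely $-\rmi\,G^{2,\pi}(f)R+J^\pi(f)R\,\tfrac{\rmi}{2}G^{1,\pi}(f')R$.

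The main obstacle is analytic rather than algebraic: one must verify that \emph{each} term on the right-hand side is a bounded operator, so that the graded commutator computed on the core genuinely extends to an element of $B(\H_\pi)$. The factor $G^{2,\pi}(f)R$ is bounded by Lemma \ref{lm:LGJ-basic}, and $G^{1,\pi}(f')R$ is bounded by the second (primed) assertion of the same lemma, using the support hypothesis of Lemma \ref{lem:supportcond}. The genuinely new point, not covered by the earlier lemmata, is the boundedness of $J^\pi(f)R$ itself. For this I would invoke the local energy bound for the current \eqref{eq:LEB2}, $J^\pi(f)^2\le C_1\unit+C_2 L^\pi(f^2)$, combined with the pointwise estimate $f^2\le \|f\|_\infty f$ (valid since $f\ge 0$ under the hypotheses) and the fact that $L^\pi(\|f\|_\infty f-f^2)$ is bounded below \cite[Thm.4.1]{FH}; this gives $J^\pi(f)^2\le C_1'\unit+C_2' L^\pi(f)$. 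Then, exactly as in the boundedness estimate for $J^\pi(f')R$ in the proof of Lemma \ref{lem:G-domain}, one controls $\|J^\pi(f)R\psi\|^2$ by $C_1'\|R\psi\|^2+C_2'\|L^\pi(f)R\psi\|\,\|R\psi\|$ and uses that $L^\pi(f)R=\unit-\lambda R$ is bounded to conclude that $J^\pi(f)R\in B(\H_\pi)$. Consequently $J^\pi(f)R\,\tfrac{\rmi}{2}G^{1,\pi}(f')R$ is a product of bounded operators, the whole right-hand side lies in $B(\H_\pi)$, and the stated membership and formula follow.
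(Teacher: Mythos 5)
Your proposal is correct and follows essentially the same route as the paper's own proof: core preservation via Lemma \ref{lm:SVir-core-pres}, boundedness of $J^\pi(f)(L^\pi(f)+\lambda)^{-1}$ from the local energy bound \eqref{eq:LEB2} together with $f^2\le Cf$, the mode computation $[Q,J^\pi(f)]=-\rmi\, G^{2,\pi}(f)$ on the core $\Cci(L_0^\pi)$, Lemma \ref{lem:L-domain} for the resolvent contribution, and Lemma \ref{lm:LGJ-basic} for the boundedness of the remaining factors. The only caveat, which you share with the paper's own computation, concerns the sign of the second term: since $J^\pi(f)(L^\pi(f)+\lambda)^{-1}$ is even, the Leibniz rule combined with $\delta\bigl((L^\pi(f)+\lambda)^{-1}\bigr)=-\tfrac{\rmi}{2}(L^\pi(f)+\lambda)^{-1}G^{1,\pi}(f')(L^\pi(f)+\lambda)^{-1}$ from Lemma \ref{lem:L-domain} yields $-\tfrac{\rmi}{2}\,J^\pi(f)(L^\pi(f)+\lambda)^{-1}G^{1,\pi}(f')(L^\pi(f)+\lambda)^{-1}$ rather than the $+\tfrac{\rmi}{2}$ appearing in the statement (and in the paper's proof, which moreover misprints $G^{2,\pi}(f')$ for $G^{2,\pi}(f)$ in its first line), so this is an inconsistency inherited from the source rather than a gap in your argument.
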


\begin{proof}
The fact that $ J^\pi(f)(L^\pi(f) + \lambda)^{-1}$ maps $\Cci(L_0^\pi)$ into $\dom(L_0^\pi)$ for $|\Im \lambda|$ sufficiently large has been obtained in Lemma \ref{lm:SVir-core-pres}. Let us show the boundedness.

Notice first that there is $C$ such that $f^2\le Cf$. Using then the local energy bounds in \eqref{eq:LEB2} as in the proof of Lemma \ref{lem:G-domain}, we find constants $C_1',C_2'>0$ such that
\[
J^\pi(f^2) \le C_1' \unit + C_2' L^\pi(f),
\]
and analogously, we obtain, for $\psi\in\Cci(L_0^\pi)$:
\begin{align*}
\|J^\pi(f)& (L^\pi(f)+\lambda)^{-1}\psi\|^2\\  
\le& C_1' \|(L^\pi(f)+\lambda)^{-1}\psi\|^2 + C_2'  \|L^\pi(f) (L^\pi(f)+\lambda)^{-1}\psi\| \cdot \|(L^\pi(f)+\bar{\lambda})^{-1}\psi\|,
\end{align*}
hence the boundedness of $J^\pi(f)(L^\pi(f)+\lambda)^{-1}$.

We already know from Lemma \ref{lm:LGJ-basic} that under the present assumptions on $f$, the operators $G^{i,\pi}(f')(L^\pi(f)+\lambda)^{-1}$ are bounded and map $\Cci(L_0^\pi)$ into $\dom(L_0^\pi)$. Thus, using Definition \ref{def:SVir2-alg}, we compute:
\begin{align*}
Q J^\pi(f)(L^\pi(f)+\lambda)^{-1}\psi =& J^\pi(f) Q (L^\pi(f)+\lambda)^{-1} \psi
- \rmi G^{2,\pi}(f') (L^\pi(f)+\lambda)^{-1}\psi\\
=& J^\pi(f) (L^\pi(f)+\lambda)^{-1} Q \psi
- \rmi G^{2,\pi}(f') (L^\pi(f)+\lambda)^{-1}\psi \\
&+ J^\pi(f)(L^\pi(f)+\lambda)^{-1}\frac{\rmi}{2}G^{1,\pi}(f')(L^\pi(f)+\lambda)^{-1} \psi.
\end{align*}
This completes the proof using Lemma \ref{lm:LGJ-basic}.
\end{proof}

\begin{proofof}[Theorem \ref{th:SVir2-domain}] 
Thanks to Theorem \ref{th:SVir-pi-exp-pi} and the local normality of $\pi$, the proof goes now almost precisely as in \cite[Lemma 4.12]{CHKL}, but for the sake of completeness and because of its importance we present it here again. Let $I\in \I_\R$. Then 
$\pi_I^{-1}(\dom(\delta))$ is a unital $*$-subalgebra of $\A_c(I)$ wherefore, by the von Neumann density
 theorem, it suffices to show that
\[
 \pi_I^{-1}(\dom(\delta))' \subset \A_c(I)'.
 \]
To this end let
 $f$ be an arbitrary real smooth function  with support in $I$.
 Recalling that $I$ must be open it is easy to see that there is an
 interval $I_0 \in \I$ such that  $\overline{I_0} \subset I$ and
 $\supp f \subset I_0$  and a smooth function
 $g$ on $\S$ such that $\supp g\subset \overline{I_0}$,
 $g(z)>0$ for all $z \in I_0$, $g'(z)\neq 0$ for all $z\in I_0$
 sufficiently close to the boundary and $g(z)=1$ for all $z \in \supp f$.  Accordingly, there is a real number $s>0$ such that
 $f(z)+sg(z) > 0$ for all $z \in I_0$. Now let $f_1=f+sg$ and
 $f_2=sg$. Then $f=f_1-f_2$. Hence it follows from the above lemmata and the definition of $\A_c(I)$ that,
 for $|\Im\lambda|$ sufficiently large, all the operators
\[
(L^\pi(f_j)+\lambda)^{-1},\; J^\pi(f_j)(L^\pi(f_j)+\lambda)^{-1},\;
  G^{i,\pi}(f_j)(L^\pi(f_j)+\lambda)^{-1},\quad j=1,2,
\]
belong to $\dom(\delta)$. Thus according to Theorem \ref{th:SVir-pi-exp-pi}, all the operators
\[
(L(f_j)+\lambda)^{-1},\; J(f_j)(L(f_j)+\lambda)^{-1},\;
  G^i(f_j)(L(f_j)+\lambda)^{-1},\quad j=1,2,
\]
belong to $\pi_I^{-1}(\dom(\delta))$. 
 So if $a \in \pi_I^{-1}(\dom(\delta))' $, then $a$ commutes with $L(f_j)$, $J(f_j)$, and $G^i(f_j)$, $j=1,2$. Therefore,
 if $\psi_1, \psi_2 \in C^\infty(L_0)$ then,
 \begin{align*}
 (a\psi_1,L(f)\psi_2) 
 &= (a\psi_1,L(f_1)\psi_2)-(a\psi_1,L(f_2)\psi_2) \\
 &= (aL(f_1)\psi_1,\psi_2)-(aL(f_2)\psi_1,\psi_2) \\
 &= (aL(f)\psi_1,\psi_2)
 \end{align*}
 and, since $C^\infty(L_0)$ is a core for $L(f)$, it follows that
 $a$ commutes with $L(f)$ and hence with $\rme^{\rmi L(f)}$. Similarly
 $a$ commutes with $\rme^{\rmi G^i(f)}$ and $\rme^{\rmi J(f)}$. Hence $a \in \A_c(I)'$ and the
 statement follows. 
 \end{proofof}

To conclude this section then, recall from \cite[Def. 3.9]{CHKL}:

\begin{definition}
A \emph{net of graded spectral triples} $(\AA(I) ,(\pi_I,\H), Q_\pi)_{I\in \I_\R}$ over 
$\S\setminus\{-1\}\simeq \R$ 
consists of a graded Hilbert space $\H$, a selfadjoint operator $Q_\pi$, and
a net $\AA$ of unital $*$-algebras on $\I_\R$ acting on $\H$ via the graded general soliton $\pi$,
\ie a map from $\I_\R$ into the family of unital $*$-algebras represented on
$B(\H)$ which satisfies the isotony property
$$\AA(I_1)\subset \AA(I_2)\quad {\rm if}\; I_1\subset I_2,$$
and such that $(\AA(I) , (\pi_I,\H), Q_\pi)$ is a graded spectral triple for all $I\in \I_\R$.
\end{definition}

\begin{corollary}\label{cor:SVir2-STs}
Let $\pi$ be an irreducible Ramond representation of $\A_c$. Then, setting $\AA(I):= \pi_I^{-1}(\dom(\delta^\pi))$, the family 
\[
 (\AA(I),(\pi,\H_\pi),Q_\pi)_{I\in\I_\R}
\] 
forms a nontrivial net of even $\theta$-summable graded spectral triples over $\S\setminus\{-1\}$.
\end{corollary}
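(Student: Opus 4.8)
The plan is to verify, for each $I\in\I_\R$, the defining properties of a graded spectral triple in the sense of \cite[Def.3.9]{CHKL}, since the substantial analytic work is already contained in Theorem \ref{th:SVir2-domain}. First I would equip $\H_\pi$ with the $\Z_2$-grading given by the grading unitary $\Gamma_\pi$ of the (automatically graded) irreducible Ramond representation $\pi$, and take the supercharge to be $Q_\pi=G_0^{1,\pi}$. Recall from the opening of this section that $Q_\pi$ is selfadjoint and satisfies $Q_\pi^2=L_0^\pi-c/24$. What remains to be checked about $Q_\pi$ is its oddness, which I would read off Eq.~\eqref{eq:Q-rot} at $s=\pi$: since $\rme^{\rmi\pi J_0^\pi}Q_{\pi,0}\rme^{-\rmi\pi J_0^\pi}=Q_{\pi,\pi}=-G_0^{1,\pi}$ and the scalar phase $\rme^{-\rmi\pi q_\pi}$ appearing in $\Gamma_\pi=\rme^{-\rmi\pi q_\pi}\rme^{\rmi\pi J_0^\pi}$ is central, one obtains $\Gamma_\pi Q_\pi\Gamma_\pi=-Q_\pi$. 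This makes the triple an \emph{even} one.

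Next I would check the algebraic axioms and isotony. By construction $\AA(I)=\pi_I^{-1}(\dom(\delta_\pi))$, and it was already shown in the course of proving Theorem \ref{th:SVir2-domain} that this is a unital $*$-subalgebra of $\A_c(I)$. Isotony $\AA(I_1)\subset\AA(I_2)$ for $I_1\subset I_2$ is then immediate, because $\pi$ is compatible with isotony (so $\pi_{I_2}|_{\A_c(I_1)}=\pi_{I_1}$) while $\dom(\delta_\pi)$ is a fixed subspace of $B(\H_\pi)$. For $a\in\AA(I)$ the inclusion $\gamma(\pi_I(a))Q_\pi\subset Q_\pi\pi_I(a)-\delta_\pi(\pi_I(a))$ in the definition of $\dom(\delta_\pi)$ says precisely that $\pi_I(a)$ maps $\dom(Q_\pi)$ into itself and that the graded commutator of $Q_\pi$ with $\pi_I(a)$ extends to the bounded operator $\delta_\pi(\pi_I(a))$; these are exactly the domain-preservation and bounded-commutator requirements of a spectral triple. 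Nontriviality of each local triple is the content of Theorem \ref{th:SVir2-domain}, which gives weak density of $\AA(I)$ in $\A_c(I)$ together with the explicit resolvent elements listed there.

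Finally I would establish $\theta$-summability and assemble everything. Since $Q_\pi^2=L_0^\pi-c/24$, we have $\rme^{-\beta Q_\pi^2}=\rme^{\beta c/24}\rme^{-\beta L_0^\pi}$, and $\rme^{-\beta L_0^\pi}$ is trace class for every $\beta>0$ by the estimate of \cite{BFK} recalled in Section \ref{sec:SVir-net}; hence $\rme^{-\beta Q_\pi^2}$ is trace class for all $\beta>0$, which is the $\theta$-summability condition and in particular forces $Q_\pi$ to have compact resolvent. Combining this with the grading $\Gamma_\pi$, the oddness and selfadjointness of $Q_\pi$, and the algebraic verifications of the previous paragraph shows that $(\AA(I),(\pi_I,\H_\pi),Q_\pi)$ is an even $\theta$-summable graded spectral triple for each $I\in\I_\R$, and isotony upgrades the family to a net in the sense of \cite[Def.3.9]{CHKL}. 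I do not expect a serious obstacle: essentially all the analysis sits in Theorem \ref{th:SVir2-domain}, and the only genuinely new points are the oddness of $Q_\pi$ and the passage from trace-class of $\rme^{-\beta L_0^\pi}$ to $\theta$-summability. The one thing to watch is that the trace-class estimate of \cite{BFK} must be used for the given Ramond representation $\pi$ rather than only for the vacuum, which is precisely the generality in which it was stated in Section \ref{sec:SVir-net}.
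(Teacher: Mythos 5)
Your proposal is correct and follows essentially the same route as the paper, which states this corollary without a separate proof precisely because all the ingredients you assemble are the ones intended: the unital $*$-algebra structure and weak density from Theorem \ref{th:SVir2-domain}, the grading $\Gamma_\pi$ and the supercharge $Q_\pi=G_0^{1,\pi}$ with $Q_\pi^2=L_0^\pi-c/24$ from the opening of Section \ref{sec:STs}, and $\theta$-summability from the trace-class property of $\rme^{-\beta L_0^\pi}$ recalled (for arbitrary irreducible representations, as you correctly note) in Section \ref{sec:SVir-net}. Your explicit check of oddness via Eq. \eqref{eq:Q-rot} at $s=\pi$ is a fine way to justify what the paper simply asserts when it calls $Q_{\pi,s}$ a supercharge.
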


\section{JLO Cocycles, Index Pairings, and Ramond Sectors for the $N=2$ Super-Virasoro nets}\label{sec:JLO}

In the preceding section, in particular in Corollary \ref{cor:SVir2-STs}, we constructed in a canonical manner a nontrivial net of spectral triples for a given Ramond representation of any $N=2$ super-Virasoro net. These spectral triples give rise to local JLO cocycles and thus define an index pairing as constructed in \cite{CHL}. In this section, in order to capture the global aspects of superselection theory, we will define for any $N=2$ super-Virasoro net $\A_c$ an appropriate global locally convex algebra $\AA_c$ such that every irreducible Ramond representation of the net $\A_c$ gives rise to a spectral triple (K-cycle) for $\AA_c$. The nets of spectral triples in the previous section arise by restriction to the local subalgebras of $\AA_c$. Moreover we will show that for inequivalent Ramond representations the corresponding JLO cocycles belong to different entire cyclic cohomolgy classes. 
Although there are some important differences, the strategy adopted here is similar to the one recently used in \cite{CHL}. In particular, as in \cite[Sect. 5]{CHL}, we will use the ``characteristic projections'' in order to distinguish different cohomology classes through the pairing with K-theory.  For unexplained notions of noncommutative geometry we refer to Connes' book \cite{Co1}. All notions of noncommutative geometry needed in this section can also be found in the brief overview given in \cite[Sect. 3]{CHL}.

Let $\A_c$ be the $N=2$ super-Virasoro net with central charge $c$ and let $W^*(\A_c^\gamma)$ be the universal von Neumann algebra from Definition \ref{def:CFT-univC}. Then every representation $\pi$ of $\A_c^\gamma$ can be identified with a normal representation of $W^*(\A_c^\gamma)$ which will also be denoted by $\pi$ and vice versa. In particular, every general $\PSL$-covariant soliton $\pi$ of the graded-local conformal net $\A_c$ gives rise by restriction to a representation of $\A_c^\gamma$ and hence to a normal representation of $W^*(\A_c^\gamma)$ which,  when no confusion can arise, we will denote again by $\pi$. 

\begin{definition}\label{def:JLO-AA}
Let $\Delta^c_R$ be a maximal family of mutually inequivalent irreducible Ramond representations of $\A_c$. 
The \emph{differentiable global algebra} associated with the local conformal net $\A_c^\gamma$ is the unital $*$-algebra defined as
\[
 \AA_c := \{a\in W^*(\A_c^\gamma):\; (\forall \pi\in\Delta^c_R)\; \pi(a)\in \dom(\delta_\pi)\}.
\]
The corresponding local subalgebras are defined by $\AA_c (I) := \AA_{c} \cap \A^\gamma(I)$. Endowed with the family of norms
$$\| \cdot \|_\pi := \| \cdot \|_{W^*(\A_c^\gamma)} + \| \delta_\pi(\pi(\cdot))\|_{B(\H_\pi)}, \quad \pi \in \Delta^c_R,$$
 $\AA_c$ becomes a locally convex algebra. Here, as in Sect. \ref{sec:STs} $\delta_\pi$ denotes the superderivation induced by 
 the supercharge operator $Q_\pi := G_0^{1,\pi}$.  
\end{definition}

\begin{remark}\label{remarkAAc} It is straightforward to see that neither the algebra nor the corresponding family of norms (and hence the corresponding locally convex topology) depend on the choice of $\Delta^c_R$. In fact each norm $\| \cdot \|_\pi$ only depends on the unitary equivalence class $[\pi]$ of $\pi$. As a consequence we have 
$\pi(\AA_c) \subset \dom(\delta_\pi)$ for every irreducible Ramond representation of $\A_c$. Moreover, $\AA_c$ is nontrivial and in fact by 
Theorem \ref {th:SVir2-domain} (and its proof) $\AA_c(I)$ is weakly dense in $\A_c^\gamma(I)$ for all $I\in \I$.
If $c<3$ then, $\Delta^c_R$ is finite and the locally convex topology on $\A_c$ can be induced by the norm 
$\|\cdot \|_R := \sum_{\pi \in \Delta^c_R} \|\cdot\|_\pi .$

\end{remark}

Now, recall that every irreducible Ramond representation $\pi$ of $\A_c$ is graded by $\Gamma_\pi =\rme^{-i \pi q_\pi}  \rme^{i\pi J_0}$, 
that $Q_\pi^2 = L_0^\pi - {c}/{24}$,that $\rme^{-\beta L_0^\pi}$ is a trace class operator for every $\beta >0$ 
and that we can consider $\pi$ as a representation of $W^*(\A_c^\gamma)$ and hence, by restriction, 
as a representation of $\AA_c$. Moreover, by definition of the locally convex algebra $\AA_c$, the map $\pi: \AA_c \to \dom{\delta_\pi}$ is continuous when the latter is endowed with the Banach algebra norm $\| \cdot \|_1 := \| \cdot\|_{B(\H_\pi)} + \| \delta_\pi(\cdot)\|_{B(\H_\pi)}$.  Moreover if $\pi_1$ is unitarily equivalent to $\pi$ and the unitary equivalence is realized through a unitary intertwiner 
$u: \H_\pi \to \H_{\pi_1}$ then $u\Gamma_\pi u^* = \Gamma_{\pi_1}$ and $u Q_\pi u^* = Q_{\pi_1}$.
As a consequence we have the following (cf. \cite[Theorem 4.10]{CHL}):  

\begin{proposition}\label{prop:JLO-ST}
For every irreducible Ramond representation  $\pi$ of the net $\A_c$, the data $(\AA_c,(\pi,\H_\pi,\Gamma_\pi),Q_\pi)$ is a nontrivial $\theta$-summable even spectral triple such that $\pi: \AA_c \to \dom(\delta_\pi)$ is continuous. Accordingly the associated JLO cocycle $\tau_\pi$ is a well-defined even entire cyclic cocycle of the locally convex algebra $\AA_c$. If $\pi_1 \simeq \pi$ then $\tau_{\pi_1} = \tau_\pi$.
\end{proposition}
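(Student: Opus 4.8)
The plan is to verify directly that the data $(\AA_c,(\pi,\H_\pi,\Gamma_\pi),Q_\pi)$ satisfy the axioms of a $\theta$-summable even spectral triple, and then to feed this into the standard JLO construction. First I would record the grading data: $\Gamma_\pi = \rme^{-\rmi\pi q_\pi}\rme^{\rmi\pi J_0}$ is a selfadjoint unitary gradin $\H_\pi$ (Theorem \ref{th:SVir-pi-exp-pi} and the discussion after Theorem \ref{th:SVir2-reps}), while $Q_\pi = G_0^{1,\pi}$ is selfadjoint and odd. Oddness is the short computation using $[G_r^\pm,J_n]=\mp G_{r+n}^\pm$, which gives $\rme^{\rmi\pi J_0}G_0^\pm \rme^{-\rmi\pi J_0} = -G_0^\pm$ and hence $\Gamma_\pi Q_\pi \Gamma_\pi = -Q_\pi$. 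The boundedness of the graded commutators $\delta_\pi(\pi(a))$ for $a\in\AA_c$ is built into the definition of $\AA_c$ via $\pi(\AA_c)\subset\dom(\delta_\pi)$ (Definition \ref{def:JLO-AA} and Remark \ref{remarkAAc}), so the commutator axiom is automatic.

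Next I would dispose of $\theta$-summability, nontriviality, and continuity. For $\theta$-summability I use the identity $Q_\pi^2 = L_0^\pi - c/24$ from Section \ref{sec:STs}, so that $\rme^{-\beta Q_\pi^2} = \rme^{\beta c/24}\rme^{-\beta L_0^\pi}$ is trace class for every $\beta>0$, the trace-class property of $\rme^{-\beta L_0^\pi}$ having been recorded earlier (following \cite{BFK}). Nontriviality is immediate from Theorem \ref{th:SVir2-domain} and Remark \ref{remarkAAc}, since $\AA_c(I)$ is weakly dense in $\A_c^\gamma(I)$. Continuity of $\pi:\AA_c\to\dom(\delta_\pi)$, where the target carries the Banach norm $\|\cdot\|_1 = \|\cdot\|_{B(\H_\pi)}+\|\delta_\pi(\cdot)\|_{B(\H_\pi)}$, follows by a direct norm comparison: since $\pi$ is normal, $\|\pi(a)\|_{B(\H_\pi)}\le\|a\|_{W^*(\A_c^\gamma)}$, whence $\|\pi(a)\|_1\le\|a\|_\pi$. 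Thus $\pi$ is already bounded with respect to the single defining seminorm $\|\cdot\|_\pi$ of the locally convex topology on $\AA_c$, which suffices.

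With these axioms in hand, the associated JLO cocycle $\tau_\pi=(\tau_\pi^{2n})_{n\ge 0}$ is defined by the usual integrals over the standard simplices and is a well-defined even entire cyclic cocycle of $\AA_c$ by the general theory; the only inputs are precisely $\theta$-summability and continuity of $\pi$ into $\dom(\delta_\pi)$, both verified above (see \cite{Co1} and \cite[Sect.3]{CHL}). For the final invariance statement I would take a unitary intertwiner $u:\H_\pi\to\H_{\pi_1}$ realizing $\pi_1\simeq\pi$, so that $u\pi(a)u^* = \pi_1(a)$ for $a\in\AA_c$ (both being restrictions of normal representations of $W^*(\A_c^\gamma)$) and, as recorded just before the proposition, $u\Gamma_\pi u^* = \Gamma_{\pi_1}$ and $uQ_\pi u^* = Q_{\pi_1}$. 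Substituting these into each simplex integral, using $u\rme^{-sQ_\pi^2}u^* = \rme^{-sQ_{\pi_1}^2}$ together with the conjugation invariance of the trace, every component $\tau_{\pi_1}^{2n}$ collapses to $\tau_\pi^{2n}$ on $\AA_c$, so $\tau_{\pi_1}=\tau_\pi$.

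There is no genuine analytic obstacle here: all the hard estimates (the linear and local energy bounds, the domain computations yielding $\pi(\AA_c)\subset\dom(\delta_\pi)$, and the trace-class property of $\rme^{-\beta L_0^\pi}$) were carried out earlier, and the JLO machinery is standard once a $\theta$-summable spectral triple is available. The only point requiring a little care is the bookkeeping of the grading, namely confirming that $Q_\pi$ is genuinely odd for $\Gamma_\pi$ so that the \emph{even} JLO cocycle is the relevant object; this is exactly the short computation with $\rme^{\rmi\pi J_0}$ above.
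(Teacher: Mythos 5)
Your proof is correct and takes essentially the same route as the paper, whose own ``proof'' is just the discussion immediately preceding the proposition (the identity $Q_\pi^2 = L_0^\pi - \tfrac{c}{24}$, the trace-class property of $\rme^{-\beta L_0^\pi}$, the domain and continuity properties built into Definition \ref{def:JLO-AA}, and the intertwiner relations $u\Gamma_\pi u^* = \Gamma_{\pi_1}$, $uQ_\pi u^* = Q_{\pi_1}$) combined with the reference to \cite[Thm. 4.10]{CHL} for the JLO machinery. The only axiom you leave implicit is that $\pi(\AA_c)$ consists of \emph{even} operators, but this is immediate since $\AA_c \subset W^*(\A_c^\gamma)$ and $\Gamma_\pi$ implements the grading $\gamma$, hence commutes with the image of the even subnet and therefore with all of $\pi(W^*(\A_c^\gamma))$.
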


For every irreducible Ramond representation $\pi$ of $\A_c$ we have the direct sum decomposition 
$\H_\pi= \H_{\pi, +} \oplus \H_{\pi_ ,-}$, where $ \H_{\pi, \pm}$ is the eigenspace of $\Gamma_\pi$ corresponding to the eigenvalue 
$\pm 1$ and the corresponding decomposition of $\pi |_{\A_c^\gamma}$ into a direct sum of irreducible representations $\pi_+\oplus \pi_-$,
cf. Proposition  \ref {prop:CKL22}. Since $Q_\pi$ is an odd operator, its domain $\dom (Q_\pi)$ is preserved by the action of $\Gamma_\pi$ and hence it decomposes as a direct sum  $\dom (Q_\pi) = \dom (Q_\pi)_+ \oplus \dom (Q_\pi)_-$, with 
$\dom (Q_\pi)_\pm$ dense in  $\H_{\pi, \pm}$ and therefore there are operators $Q_{\pi, \pm}$ from $\H_{\pi, \pm}$ into 
$\H_{\pi, \mp}$ with dense domains $\dom(Q_\pi)_\pm \subset\H_{\pi, \pm}$ satisfying 
$Q_\pi \left( \psi_+ \oplus \psi_- \right) = Q_{\pi, -}\psi_- \oplus Q_{\pi, +}\psi_+$ for $\psi_\pm \in  \dom(Q_\pi)_\pm$, and 
 $Q_{\pi , \pm}^* = Q_{\pi , \mp}$.  
Now, recall from \cite{Co1} (see also \cite[Sect. 3]{CHL}) that for every projection $p\in \AA_c$ the operator 
$\pi_-(p)Q_{\pi, +} \pi_+(p) :  \pi_+(p)\H_{\pi, +} \to \pi_-(p) \H_{\pi, -}$ is a Fredholm operator and that the integer
$\tau_\pi(p):= \operatorname{ind}\left(\pi_-(p)Q_{\pi, +} \pi_+(p) \right)$ ({\it index pairing}) only depends on the entire cyclic cohomology class of $\tau_\pi$ and on the class of $p$ in the 
K-theory group $K_0(\AA_c)$.  We want to use this fact to show that the JLO cocycles $\tau_\pi$ give rise to distinct entire cyclic cohomology
classes for inequivalent irreducible Ramond representations $\pi$. To this end we need to consider appropriate projections in $\AA_c$ for which the index computations are easy enough and give the desired result. A similar strategy has been used in \cite[Sect. 5]{CHL}. 

We first note that if $\pi_-(p)=0$ then we have $\tau_\pi (p)= \mathrm{dim}(\pi_+(p)\H_{\pi, +})$. Now, given a representation $\pi$ of 
$C^*(\A_c^\gamma)$ which is quasi-equivalent to a subrepresentation of the universal representation $\tilde{\pi}_u$, denote by 
$s(\pi)\in \mathcal{Z}(W^*(\A_c^\gamma))$ the central support of the projection onto this subrepresentation so that, in 
particular $\pi(s(\pi)) = \unit$. If $\pi$ is an irreducible Ramond representation of $\A_c$ we will write $s(\pi)$ instead of 
$s(\pi |_{\A_c^\gamma})$. With this convention we have $s(\pi)= s(\pi_+) + s(\pi_-)$.  
 Note that the unique normal extension of $\pi$ to $W^*(\A_c^\gamma)$ gives rise to an isomorphism 
of  $W^*(\A_c^\gamma)s(\pi)$ onto $\pi(C^*(\A_c^\gamma))''$. 

\begin{definition}\label{def:CharProj}
Given a graded irreducible Ramond representation $\pi$ of $\A_c$, let $s(\pi_+)\in \mathcal{Z}(W^*(\A_c^\gamma))$ be the central projection corresponding to the irreducible subrepresentation $\pi_+$ of $\pi |_{\A_c^\gamma}$ on the subrepresentation $\pi$. Moreover, write $p_{h_\pi, +} \in B(\H_\pi)$ for the projection onto the (finite-dimensional) lowest energy subspace of $\H_{\pi, +}$. Then there is a unique projection $p_\pi \in W^*(\A_c^\gamma)s(\pi)$ such that $\pi(p_\pi)=p_{h_\pi, +}$ and we call it the \emph{characteristic projection} of $\pi$.
\end{definition}

\begin{lemma}\label{lem:CharProj}
 Let $\pi_1$ and $\pi_2$ be two irreducible Ramond representations of $\A_c$. Then 
$s(\pi_1)=s(\pi_2)$ and $p_{\pi_1} = p_{\pi_2}$ if $\pi_1$ and $\pi_2$ are unitarily equivalent, while $s(\pi_1)s(\pi_2)=0$ and 
$p_{\pi_1} p_{\pi_2} = 0$ otherwise. 
\end{lemma}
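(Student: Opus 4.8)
The plan is to lean on the fact recalled just before Definition \ref{def:CharProj}: for an irreducible Ramond representation $\pi$ the normal extension of $\pi$ to $W^*(\A_c^\gamma)$ restricts to a $*$-isomorphism of the central summand $W^*(\A_c^\gamma)s(\pi)$ onto $\pi(C^*(\A_c^\gamma))''$. In particular $\pi$ is \emph{faithful} on $W^*(\A_c^\gamma)s(\pi)$, the characteristic projection $p_\pi$ lies in this summand, and since $s(\pi)$ is the unit of $W^*(\A_c^\gamma)s(\pi)$ every projection there is dominated by $s(\pi)$, so $p_\pi \le s(\pi)$. These two observations (faithfulness and domination) are what I would use throughout.

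First I would treat the equivalent case. If $\pi_1 \simeq \pi_2$ via a unitary intertwiner $u : \H_{\pi_1} \to \H_{\pi_2}$, then the induced normal representations of $W^*(\A_c^\gamma)$ satisfy $\pi_2 = \mathrm{Ad}(u)\circ \pi_1$ (by uniqueness of the normal extension), so they have the same kernel and hence the same central support, giving $s(\pi_1) = s(\pi_2)$. Recall from the discussion preceding Proposition \ref{prop:JLO-ST} that $u\Gamma_{\pi_1}u^* = \Gamma_{\pi_2}$ and $uQ_{\pi_1}u^* = Q_{\pi_2}$; since $Q_\pi^2 = L_0^\pi - c/24$ this forces $uL_0^{\pi_1}u^* = L_0^{\pi_2}$, so $u$ carries the lowest-energy subspace of $\H_{\pi_1}$ onto that of $\H_{\pi_2}$ and the $+1$-eigenspace of $\Gamma_{\pi_1}$ onto that of $\Gamma_{\pi_2}$. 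Consequently $u\,p_{h_{\pi_1},+}\,u^* = p_{h_{\pi_2},+}$, whence $\pi_2(p_{\pi_1}) = u\pi_1(p_{\pi_1})u^* = u\,p_{h_{\pi_1},+}\,u^* = p_{h_{\pi_2},+} = \pi_2(p_{\pi_2})$. As $p_{\pi_1},p_{\pi_2} \in W^*(\A_c^\gamma)s(\pi_1) = W^*(\A_c^\gamma)s(\pi_2)$ and $\pi_2$ is faithful there, I conclude $p_{\pi_1} = p_{\pi_2}$.

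For the inequivalent case the decisive input is the last statement of Proposition \ref{prop:CKL22}: $\pi_1 \simeq \pi_2$ if and only if $\pi_{1,+}$ is unitarily equivalent to $\pi_{2,+}$ or to $\pi_{2,-}$. Using this together with $\pi_{i,-} \simeq \pi_{i,+}\circ\hat{\gamma}$ and the invertibility of the automorphism $\hat{\gamma}$, I would show that $\pi_1 \not\simeq \pi_2$ forces the four irreducible DHR representations $\pi_{1,+},\pi_{1,-},\pi_{2,+},\pi_{2,-}$ of $\A_c^\gamma$ to be pairwise inequivalent. The two directions $\pi_{1,+}\not\simeq\pi_{2,\pm}$ are immediate from the criterion; the remaining cross-equivalences $\pi_{1,-}\not\simeq\pi_{2,+}$ and $\pi_{1,-}\not\simeq\pi_{2,-}$ follow by applying the criterion with the roles of $\pi_1,\pi_2$ interchanged and by cancelling $\hat{\gamma}$. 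Since the central supports of mutually inequivalent irreducible representations are orthogonal and $s(\pi_i) = s(\pi_{i,+}) + s(\pi_{i,-})$, this yields $s(\pi_1)s(\pi_2) = 0$. Finally, writing $p_{\pi_i} = s(\pi_i)p_{\pi_i}$ with the $s(\pi_i)$ central and orthogonal gives $p_{\pi_1}p_{\pi_2} = 0$.

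The routine ingredients — that $u$ conjugates the normal extensions, that inequivalent irreducibles have orthogonal central supports, and the transport of the lowest-energy/grading data by $u$ — are standard. The one step warranting genuine care, and the main potential obstacle, is the equivalence-class bookkeeping in the inequivalent case: Proposition \ref{prop:CKL22} as stated only rules out $\pi_{1,+}\simeq\pi_{2,\pm}$, and one must invoke it \emph{symmetrically} in $\pi_1$ and $\pi_2$ (and exploit $\pi_{i,-}\simeq\pi_{i,+}\circ\hat{\gamma}$) to eliminate all cross-equivalences among the four subrepresentations before the orthogonality of central supports can be asserted.
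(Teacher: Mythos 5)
Your proof is correct and follows essentially the same route as the paper's: in the equivalent case you transport the grading and lowest-energy data by the intertwiner $u$ to get $\pi_2(p_{\pi_1})=p_{h_{\pi_2},+}$ and conclude by faithfulness of $\pi_2$ on $W^*(\A_c^\gamma)s(\pi_2)$, and in the inequivalent case you use Proposition \ref{prop:CKL22} to get disjointness of $\pi_{1,+}\oplus\pi_{1,-}$ and $\pi_{2,+}\oplus\pi_{2,-}$, hence $s(\pi_1)s(\pi_2)=0$ and $p_{\pi_1}p_{\pi_2}=p_{\pi_1}s(\pi_1)s(\pi_2)p_{\pi_2}=0$. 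Your extra bookkeeping (deriving $uL_0^{\pi_1}u^*=L_0^{\pi_2}$ from $uQ_{\pi_1}u^*=Q_{\pi_2}$, and spelling out the pairwise inequivalence of the four subrepresentations via the symmetric use of the criterion and cancellation of $\hat{\gamma}$) only makes explicit what the paper leaves implicit.
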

\begin{proof} If $\pi_1$ and $\pi_2$ are unitarily equivalent we clearly have $s(\pi_1)=s(\pi_2)$. If the unitary equivalence is realized by a unitary intertwiner $u: \H_{\pi_1} \to \H_{\pi_2}$ then $u\Gamma_{\pi_1}u^* = \Gamma_{\pi_2}$ and hence 
$u: \H_{\pi_1, +} \to \H_{\pi_2, +}$ so that $\pi_{1, +}$ and $\pi_{2, +}$ are unitarily equivalent and therefore the central supports $s(\pi_{1, +})$ and $s(\pi_{2, +})$ are equal. Moreover, $uL_0^{\pi_1}u^*=L_0^{\pi_2}$ and hence $up_{h_{\pi_1}, +}u^* = p_{h_{\pi_2}, +}$. Accordingly $\pi_2(p_{\pi_1}) = u\pi_1(p_{\pi_1})u^* = up_{h_{\pi_1}, +}u^* = p_{h_{\pi_2}, +}$ and hence $p_{\pi_1} = p_{\pi_2}$. 
On the other hand, if $\pi_1$ and $\pi_2$ are inequivalent, then, by Proposition  \ref{prop:CKL22}, $\pi_{1,+} \oplus \pi_{1,-}$ is disjoint from 
$\pi_{2,+} \oplus \pi_{2,-}$. Accordingly $s(\pi_1)s(\pi_2)=0$ and $p_{\pi_1} p_{\pi_2} = p_{\pi_1} s(\pi_1)s(\pi_2) p_{\pi_2} = 0$.
\end{proof}
\begin{proposition}\label{prop:CharProj} Let $\pi$ be an irreducible Ramond representation of $\A_c$. Then, for every irreducible Ramond 
representation $\tilde{\pi}$ of $\A_c$, we have 
\[
\tilde{\pi}(p_\pi) =
\left\lbrace \begin{array}{l@{\quad \mathrm{if} \quad }l}
p_{h_{\tilde{\pi}}, +} & \tilde{\pi} \simeq\pi \\
0 & \tilde{\pi} \not\simeq \pi
\end{array} \right. 
\]
and 
\[
\tilde{\pi}(s(\pi)) =
\left\lbrace \begin{array}{l@{\quad \mathrm{if} \quad }l}
\unit & \tilde{\pi} \simeq\pi \\
0 & \tilde{\pi} \not\simeq \pi.
\end{array} \right. 
\]
Moreover, $p_\pi$ and $s(\pi)$ belong to $\AA_c$. 
\end{proposition}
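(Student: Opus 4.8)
The plan is to reduce the two displayed identities to \textbf{Lemma \ref{lem:CharProj}} together with the defining relation $\tilde{\pi}(s(\tilde{\pi}))=\unit$ of a central support, and to treat the membership $p_\pi,s(\pi)\in\AA_c$ as the one genuinely new point. First I would handle the identity for $s(\pi)$. Since $s(\pi)$ is central in $W^*(\A_c^\gamma)$ and $\tilde{\pi}(s(\tilde{\pi}))=\unit$ by the very definition of the central support recalled before Definition \ref{def:CharProj}, I would write $\tilde{\pi}(s(\pi))=\tilde{\pi}(s(\pi))\,\tilde{\pi}(s(\tilde{\pi}))=\tilde{\pi}\big(s(\pi)s(\tilde{\pi})\big)$. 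In the case $\tilde{\pi}\not\simeq\pi$, Lemma \ref{lem:CharProj} gives $s(\pi)s(\tilde{\pi})=0$, so the right-hand side vanishes; in the case $\tilde{\pi}\simeq\pi$ the same lemma gives $s(\pi)=s(\tilde{\pi})$, whence $\tilde{\pi}(s(\pi))=\tilde{\pi}(s(\tilde{\pi}))=\unit$.

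For the identity involving $p_\pi$ I would use that $p_\pi\in W^*(\A_c^\gamma)s(\pi)$, i.e. $p_\pi=p_\pi s(\pi)$. When $\tilde{\pi}\not\simeq\pi$ this gives $\tilde{\pi}(p_\pi)=\tilde{\pi}(p_\pi)\tilde{\pi}(s(\pi))=0$ by the statement just proved for $s(\pi)$. When $\tilde{\pi}\simeq\pi$, Lemma \ref{lem:CharProj} yields $p_\pi=p_{\tilde{\pi}}$, and then Definition \ref{def:CharProj} applied to $\tilde{\pi}$ gives directly $\tilde{\pi}(p_\pi)=\tilde{\pi}(p_{\tilde{\pi}})=p_{h_{\tilde{\pi}},+}$. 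Thus both identities are essentially bookkeeping with central supports once the preceding lemma is in hand.

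The main work is the final assertion that $p_\pi$ and $s(\pi)$ lie in $\AA_c$, i.e. that $\rho(p_\pi),\rho(s(\pi))\in\dom(\delta_\rho)$ for every $\rho\in\Delta^c_R$. By the two identities just obtained, $\rho(s(\pi))\in\{0,\unit\}$ and $\rho(p_\pi)\in\{0,p_{h_\rho,+}\}$. The operators $0$ and $\unit$ are trivially in $\dom(\delta_\rho)$ with vanishing superderivative (for $\unit$ one has $\gamma(\unit)Q_\rho=Q_\rho=Q_\rho\unit-0$), so everything reduces to the single claim that the lowest-energy projection $p:=p_{h_\rho,+}$ lies in $\dom(\delta_\rho)$. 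Here I would use that $p$ is a finite-rank projection whose range, the lowest-energy subspace of $\H_{\rho,+}$, is contained in $\dom(Q_\rho)$ by the energy bounds \eqref{eq:LEB1}, and that $p$ is even since it commutes with $\Gamma_\rho$. Then $Q_\rho p$ is bounded, indeed finite-rank (its range is $Q_\rho$ applied to a finite-dimensional space), and since $Q_\rho$ is selfadjoint one has $pQ_\rho\subset(Q_\rho p)^*$, which is bounded as the adjoint of a bounded operator. Consequently $\gamma(p)Q_\rho=pQ_\rho\subset Q_\rho p-(Q_\rho p-pQ_\rho)$ with bounded correction term, which is precisely the condition $p\in\dom(\delta_\rho)$, with $\delta_\rho(p)=[Q_\rho,p]$.

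The hard part is this last paragraph, and within it the key observation is simply that $Q_\rho$ passes cleanly through a finite-rank projection onto its domain because $Q_\rho$ is selfadjoint; no estimate beyond membership of the lowest-energy vectors in $\dom(Q_\rho)$ is actually required. Everything else is formal manipulation of central supports and characteristic projections supplied by Lemma \ref{lem:CharProj} and Definition \ref{def:CharProj}.
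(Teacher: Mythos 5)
Your proof is correct and follows essentially the same route as the paper: both reduce the two displayed identities to Lemma \ref{lem:CharProj} together with the relations $p_\pi = s(\pi)p_\pi$ and $\tilde{\pi}(s(\tilde{\pi}))=\unit$, and both establish $p_{h_{\rho},+}\in\dom(\delta_\rho)$ by observing that $Q_\rho p_{h_\rho,+}$ is everywhere defined, bounded and finite rank (the paper justifies this via $Q_\rho^2=L_0^\rho-\tfrac{c}{24}\unit$ and the finite-dimensional $L_0^\rho$-eigenspace, you via the energy bounds, which is equivalent), so that $p_{h_\rho,+}Q_\rho$ is bounded on $\dom(Q_\rho)$ by taking adjoints.
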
 
\begin{proof}
If $\tilde{\pi} \simeq \pi$ then, by Lemma \ref{lem:CharProj} we have $s(\tilde{\pi})=s(\pi)$ and $p_{\tilde{\pi}} = p_{\pi}$ and hence
$\tilde{\pi}(s(\pi))=\tilde{\pi}(s(\tilde{\pi})) =\unit$ and $\tilde{\pi}(p_\pi) =    \tilde{\pi}(p_{\tilde{\pi}}) =p_{h_{\tilde{\pi}}, +}$. 
If $\tilde{\pi} \not\simeq \pi$ then, again by Lemma \ref{lem:CharProj},  $s(\tilde{\pi})s(\pi)=0$ and hence $\tilde{\pi}(s(\pi))=0$ and 
$\tilde{\pi}(p_\pi) = \tilde{\pi}(s(\pi)p_\pi) =0$. Now, $\unit$ and $0$ clearly belong to the domain of the superderivation $\delta_{\tilde{\pi}}$. 
Accordingly $\tilde{\pi}(s(\pi))  \in \dom(\delta_{\tilde{\pi}})$, for every irreducible Ramond representation $\tilde{\pi}$ of $\A_c$, and hence
$s(\pi) \in \AA_c$. 
Moreover, since $Q_{\tilde{\pi}}^2=L_0^{\tilde{\pi}} - \frac{c}{24}\unit $ and since $p_{h_{\tilde{\pi}}, +}$ commutes with $L_0^{\tilde{\pi}}$ and has finite-dimensional range, we see that $Q_{\tilde{\pi}}p_{h_{\tilde{\pi}}, +}$ is everywhere defined and bounded and hence 
$p_{h_{\tilde{\pi}}, +}Q_{\tilde{\pi}}$ is bounded on the domain of $Q_{\tilde{\pi}}$. It follows that 
$p_{h_{\tilde{\pi}}, +} \in \dom(\delta_{\tilde{\pi}})$. Therefore, for every irreducible Ramond representation $\tilde{\pi}$ of $\AA_c$, we have $\tilde{\pi}(p_\pi) \in \dom(\delta_{\tilde{\pi}})$, so $p_\pi \in \AA_c$ and we are done. 
\end{proof}

With all these ingredients at hand, one can now easily prove the main result of this section, cf. also \cite{CHL}.

\begin{theorem}\label{th:SVir2-index} 
Let $\pi_1$ , $\pi_2$ be irreducible Ramond representations of $\A_c$. Then the index pairing between JLO cocycle $\tau_{\pi_1}$
and the projection $p_{\pi_2} \in \AA_c$ gives 

\[
\tau_{\pi_1} (p_{\pi_2}) = \left\lbrace \begin{array}{l@{\quad \mathrm{if} \quad }l}
                                             1 & \pi_1 \simeq \pi_2 \\
					     0 & \pi_1 \not\simeq \pi_2 .
                                             \end{array}
\right.
\]
Hence, the map $\pi \to [\tau_\pi]$, where $[\tau_\pi]$ denotes the entire cyclic cohomology class of the JLO cocycle $\tau_\pi$ associated to the irreducible Ramond representation $\pi$ of $\A_c$, gives a complete noncommutative geometric invariant for the class of irreducible Ramond representations of the $N=2$ super-Virasoro net $\A_c$, namely $[\tau_{\pi_1}] = [\tau_{\pi_2}] $ if and only if 
$\pi_1 \simeq \pi_2$.
\end{theorem}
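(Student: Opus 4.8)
The plan is to evaluate the Fredholm index defining $\tau_{\pi_1}(p_{\pi_2})$ by feeding into it the explicit value of $\pi_1(p_{\pi_2})$ supplied by Proposition \ref{prop:CharProj}, and then to derive the completeness statement from the K-theoretic invariance of the index pairing. I would begin with the inequivalent case $\pi_1\not\simeq\pi_2$: there Proposition \ref{prop:CharProj} gives $\pi_1(p_{\pi_2})=0$, so both $\pi_{1,+}(p_{\pi_2})$ and $\pi_{1,-}(p_{\pi_2})$ vanish, the operator $\pi_{1,-}(p_{\pi_2})Q_{\pi_1,+}\pi_{1,+}(p_{\pi_2})$ is the zero map from $\{0\}$ to $\{0\}$, and its index is $0$. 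Hence $\tau_{\pi_1}(p_{\pi_2})=0$.

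For the equivalent case $\pi_1\simeq\pi_2$, Proposition \ref{prop:CharProj} gives $\pi_1(p_{\pi_2})=p_{h_{\pi_1},+}$, the projection onto the lowest energy subspace of the even part $\H_{\pi_1,+}$. Since this range is contained in $\H_{\pi_1,+}$, we have $\pi_{1,-}(p_{\pi_2})=0$, so the remark preceding Definition \ref{def:CharProj} applies and yields $\tau_{\pi_1}(p_{\pi_2})=\dim\bigl(p_{h_{\pi_1},+}\H_{\pi_1,+}\bigr)$. The whole argument then reduces to showing that this dimension is exactly $1$, i.e.\ that the even part of an irreducible Ramond representation has a one-dimensional lowest energy eigenspace.

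This last point is the crux, and I would settle it through the zero-mode analysis recalled after Theorem \ref{th:SVir2-reps} combined with the explicit grading $\Gamma_{\pi_1}=\rme^{-i\pi q_{\pi_1}}\rme^{i\pi J_0}$. If $h_{\pi_1}=c/24$ the full lowest energy subspace is one-dimensional and, by the standard convention, its generator $\Omega_{c,h,q}$ is even (indeed $\Gamma_{\pi_1}\Omega_{c,h,q}=\rme^{-i\pi q}\rme^{i\pi q}\Omega_{c,h,q}=\Omega_{c,h,q}$), so the even lowest energy subspace is one-dimensional. If $h_{\pi_1}>c/24$ the lowest energy subspace is two-dimensional, spanned by $\Omega^-_{c,h,q}$ and $\Omega^+_{c,h,q-1}$ with $J_0$-eigenvalues $q$ and $q-1$; applying $\Gamma_{\pi_1}$ gives $\Gamma_{\pi_1}\Omega^-_{c,h,q}=\Omega^-_{c,h,q}$ and $\Gamma_{\pi_1}\Omega^+_{c,h,q-1}=-\Omega^+_{c,h,q-1}$, so precisely one of the two lowest energy vectors is even and the even lowest energy subspace is again one-dimensional. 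In both cases $\dim\bigl(p_{h_{\pi_1},+}\H_{\pi_1,+}\bigr)=1$, completing the proof of the index formula.

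It remains to deduce that $\pi\mapsto[\tau_\pi]$ is a complete invariant. One implication is immediate: if $\pi_1\simeq\pi_2$ then $\tau_{\pi_1}=\tau_{\pi_2}$ by Proposition \ref{prop:JLO-ST}, so $[\tau_{\pi_1}]=[\tau_{\pi_2}]$. For the converse I would use that the index pairing $\tau_\pi(p)$ depends only on the cohomology class $[\tau_\pi]$ and on $[p]\in K_0(\AA_c)$: assuming $[\tau_{\pi_1}]=[\tau_{\pi_2}]$ and pairing both cocycles with the projection $p_{\pi_1}\in\AA_c$ yields $1=\tau_{\pi_1}(p_{\pi_1})=\tau_{\pi_2}(p_{\pi_1})$, and by the index formula just established this forces $\pi_2\simeq\pi_1$. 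The only genuinely nontrivial step is the one-dimensionality of the even lowest energy subspace, which rests on the interplay between the grading operator and the $J_0$-spectrum on the lowest energy vectors.
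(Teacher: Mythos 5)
Your proof is correct and follows exactly the route the paper intends: the theorem is stated there without a written-out proof, precisely because all the ingredients have just been assembled (Proposition \ref{prop:CharProj}, the remark that $\pi_-(p)=0$ forces $\tau_\pi(p)=\dim(\pi_+(p)\H_{\pi,+})$, Proposition \ref{prop:JLO-ST}, and the homotopy/K-theory invariance of the index pairing), and your write-up is that assembly. In particular your verification of the crux --- that the even lowest energy subspace is one-dimensional, using the grading convention $\Gamma_\pi=\rme^{-\rmi\pi q_\pi}\rme^{\rmi\pi J_0^\pi}$ on the zero-mode structure in both the $h_\pi=c/24$ and $h_\pi>c/24$ cases --- is exactly the point the paper relies on (and reuses later in Lemma \ref{lemmaRvac}).
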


\section{Ramond Vacuum Representations and Chiral Rings}\label{sec:chiral-fusion}

Let $\pi$ be an irreducible Ramond representation of the $N=2$ super-Virasoro net $\A_c$. Then we say that $\pi$ is a 
{\it Ramond vacuum representation}
if the corresponding irreducible representation of the $N=2$ Ramond super-Virasoro algebra is a Ramond vacuum representation as defined in Section \ref{sec:SVir-net}, namely if the corresponding lowest energy $h_\pi$ satisfies $h_\pi=c/24$. We denote by 
$\Delta^c_{R\mathrm{vac}}$ the set of equivalence classes of Ramond vacuum representations of $\A_c$. For $c<3$ the set 
$\Delta^c_{R\mathrm{vac}}$ is finite. The following lemma shows that the Ramond vacuum representations can be characterized in terms of the associated JLO cocycles and the corresponding index pairing. 

\begin{lemma}\label{lemmaRvac} 
Let $\pi$ be an irreducible Ramond representation of the $N=2$ super-Virasoro net $\A_c$. Then we have $\tau_\pi(s(\pi))=1$ 
if $\pi$ is a Ramond vacuum representation, and  $\tau_\pi(s(\pi))=0$ otherwise. 
\end{lemma}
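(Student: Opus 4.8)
The plan is to compute the index pairing $\tau_\pi(s(\pi))$ directly as a Fredholm index and read off the two cases from the spectrum of $Q_\pi$. Since $s(\pi)$ is the central support of $\pi|_{\A_c^\gamma}$ we have $\pi(s(\pi))=\unit$, and writing $\H_\pi=\H_{\pi,+}\oplus\H_{\pi,-}$ for the eigenspace decomposition of $\Gamma_\pi$, this gives $\pi_+(s(\pi))=\unit$ on $\H_{\pi,+}$ and $\pi_-(s(\pi))=\unit$ on $\H_{\pi,-}$. Hence, by the very definition of the index pairing recalled before Definition \ref{def:CharProj},
\[
\tau_\pi(s(\pi)) = \ind\big(\pi_-(s(\pi))\,Q_{\pi,+}\,\pi_+(s(\pi))\big) = \ind(Q_{\pi,+}),
\]
where $Q_{\pi,+}\colon\H_{\pi,+}\to\H_{\pi,-}$ is the off-diagonal component of the odd supercharge $Q_\pi=G_0^{1,\pi}$. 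So everything reduces to computing $\ind(Q_{\pi,+})$.

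First I would record that $Q_{\pi,+}$ is Fredholm. Indeed $Q_\pi^2=L_0^\pi-\tfrac{c}{24}\unit$, and $\rme^{-\beta L_0^\pi}$ is trace class for every $\beta>0$, so $L_0^\pi$ has compact resolvent; consequently $(\unit+Q_\pi^2)^{-1}$ is compact, $Q_\pi$ has compact resolvent, and the off-diagonal block $Q_{\pi,+}$ of the self-adjoint odd operator $Q_\pi$ is Fredholm with $Q_{\pi,+}^*=Q_{\pi,-}$. The index is then computed through kernels,
\[
\ind(Q_{\pi,+}) = \dim\ker Q_{\pi,+} - \dim\ker Q_{\pi,-},
\]
and since $Q_\pi$ is block off-diagonal one has $\ker Q_\pi=\ker Q_{\pi,+}\oplus\ker Q_{\pi,-}$, while from $Q_\pi^2=L_0^\pi-\tfrac{c}{24}\unit$ one gets $\ker Q_\pi=\ker Q_\pi^2=\ker(L_0^\pi-\tfrac{c}{24}\unit)$.

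The remaining step is to identify this common kernel and its $\Gamma_\pi$-parity by splitting into the two cases. If $h_\pi>c/24$ then $L_0^\pi\geq h_\pi\unit>\tfrac{c}{24}\unit$, so $L_0^\pi-\tfrac{c}{24}\unit$ is strictly positive, $\ker Q_\pi=0$, and $\ind(Q_{\pi,+})=0$. If $h_\pi=c/24$, that is $\pi$ is a Ramond vacuum representation, then by the analysis of the zero modes recalled after Theorem \ref{th:SVir2-reps} the lowest energy subspace $\ker(L_0^\pi-\tfrac{c}{24}\unit)$ is one-dimensional, spanned by a normalized vector $\Omega_{c,h,q}$ with $G_0^{+,\pi}\Omega_{c,h,q}=G_0^{-,\pi}\Omega_{c,h,q}=0$ and hence $Q_\pi\Omega_{c,h,q}=0$. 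With the standard grading convention $\Gamma_\pi=\rme^{-\rmi\pi q_\pi}\rme^{\rmi\pi J_0^\pi}$ the lowest energy vector is even, so it lies in $\H_{\pi,+}$; therefore $\dim\ker Q_{\pi,+}=1$ and $\dim\ker Q_{\pi,-}=0$, giving $\ind(Q_{\pi,+})=1$. Combining the two cases yields the claimed dichotomy.

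I expect the only (mild) obstacle to be the bookkeeping in the last step: verifying that the whole of $\ker Q_\pi$ sits in $\H_{\pi,+}$ in the vacuum case. This rests on the one-dimensionality of the lowest energy subspace for $h_\pi=c/24$ (Theorem \ref{th:SVir2-reps} and the accompanying discussion of Ramond zero modes) together with the chosen normalization of the grading making the lowest energy vector even. The Fredholm property and the compact-resolvent argument are routine consequences of $\rme^{-\beta L_0^\pi}$ being trace class.
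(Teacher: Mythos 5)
Your proof is correct and follows essentially the same route as the paper: reduce $\tau_\pi(s(\pi))$ to $\ind(Q_{\pi,+})$ via $\pi(s(\pi))=\unit$, identify $\ker Q_{\pi,\pm}$ with the kernels of $(L_0^\pi-\tfrac{c}{24}\unit)|_{\H_{\pi,\pm}}$, and conclude by the one-dimensionality and evenness of the lowest energy space when $h_\pi=c/24$ (and its triviality otherwise). Your explicit compact-resolvent verification of the Fredholm property is a detail the paper leaves implicit, but it does not change the argument.
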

\begin{proof} Recall that for every irreducible Ramond representation of $\A_c$ we have $\pi(s(\pi)) =\unit$ and hence 
$\tau_\pi(s(\pi))= \tau_\pi(1)$. It follows that  
\begin{eqnarray*}
\tau_\pi(s(\pi)) &=& \operatorname{ind}\left(Q_{\pi, +}\right)  = \mathrm{dim}(\ker (Q_{\pi, +})) -  \mathrm{dim}(\ker (Q_{\pi, -})) \\
&=& \mathrm{dim}(\ker (Q_{\pi, -}Q_{\pi, +})) -  \mathrm{dim}(\ker (Q_{\pi, +}Q_{\pi, -})) \\ 
&=& \mathrm{dim}\left( \ker \left( (L_0^\pi - \frac{c}{24} \unit)|_{\H_{\pi, +}} \right) \right) -  
\mathrm{dim}\left( \ker \left( (L_0^\pi - \frac{c}{24}\unit)|_{\H_{\pi, -}} \right) \right).
\end{eqnarray*}
Now, if $\pi$ is not a Ramond vacuum representation, we have 
$\mathrm{dim}\left( \ker \left(L_0^\pi - \frac{c}{24} \unit \right) \right) =0$ and hence $\tau_\pi(s(\pi))=0$. On the other hand, 
if $\pi$ is a Ramond vacuum representation, we see from the discussion before Theorem \ref{th:SVir2-reps}
that $\mathrm{dim}\left( \ker \left( (L_0^\pi - \frac{c}{24} \unit)|_{\H_{\pi, +}} \right) \right)=1$ and 
$\mathrm{dim}\left( \ker \left( (L_0^\pi - \frac{c}{24}\unit)|_{\H_{\pi, -}} \right) \right)=0$, so $\tau_\pi(s(\pi))=1$.
\end{proof}

It follows from the previous lemma that, in order to separate the entire cyclic cohomology classes of the JLO cocycles associated to Ramond vacuum representations, we can consider the central support projections $s(\pi)$ instead of the characteristic projections $p_\pi$ used in the previous section. Note that these generate a $*$-subalgebra $\AA_{c, {\Delta^c_{R\mathrm{vac}}}}$ of the center $\mathcal{Z}(\AA_c) =\AA_c \cap \mathcal{Z}(W^*(\AA_c))$. If $c <3$ then $\Delta^c_{R\mathrm{vac}}$ is finite and the above commutative subalgebra is finite-dimensional and isomorphic to $\C^{\Delta^c_{R\mathrm{vac}}}$. We record these facts in the following:

\begin{proposition}\label{prop:JLO-centre}
If $c<3$ the central support projections associated to the Ramond vacuum representations of $\A_c$ generate a finite-dimensional $*$-subalgebra  $\AA_{c, {\Delta^c_{R\mathrm{vac}}}}$ of $ \mathcal{Z}(\AA_c)$ isomorphic to the abelian $*$-algebra 
$\C^{\Delta^c_{R\mathrm{vac}}}$. The restriction of the JLO cocycle $\tau_\pi$ associated to the Ramond vacuum representation $\pi$ of 
$\A_c$ to this finite-dimensional subalgebra gives rise to an entire cyclic cocycle whose entire cyclic cohomology class is 
a complete noncommutative geometric invariant for the class of Ramond vacuum representations of the $N=2$ super-Virasoro net $\A_c$.
\end{proposition}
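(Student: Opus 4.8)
The plan is to establish the two assertions of Proposition \ref{prop:JLO-centre} separately, in each case reducing matters to facts already proved for the characteristic projections and the index pairing.

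For the algebraic statement I would first observe that each $s(\pi)$, $\pi\in\Delta^c_{R\mathrm{vac}}$, is by construction a central projection of $W^*(\A_c^\gamma)$ which, by Proposition \ref{prop:CharProj}, lies in $\AA_c$; hence $s(\pi)\in\mathcal{Z}(\AA_c)=\AA_c\cap\mathcal{Z}(W^*(\A_c^\gamma))$. By Lemma \ref{lem:CharProj} the central supports of inequivalent representations are mutually orthogonal, $s(\pi_1)s(\pi_2)=0$ for $\pi_1\not\simeq\pi_2$, and they are nonzero. Since distinct elements of $\Delta^c_{R\mathrm{vac}}$ are by definition inequivalent, the family $\{s(\pi):\pi\in\Delta^c_{R\mathrm{vac}}\}$ consists of mutually orthogonal nonzero self-adjoint projections and is therefore linearly independent. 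As $c<3$ forces $\Delta^c_{R\mathrm{vac}}$ to be finite, the $*$-algebra $\AA_{c,\Delta^c_{R\mathrm{vac}}}$ they generate is spanned by these finitely many orthogonal minimal idempotents and is thus $*$-isomorphic to $\C^{\Delta^c_{R\mathrm{vac}}}$.

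For the cohomological statement I would compute the full index pairing matrix $\big(\tau_{\pi_1}(s(\pi_2))\big)_{\pi_1,\pi_2\in\Delta^c_{R\mathrm{vac}}}$. The diagonal entries are handled by Lemma \ref{lemmaRvac}: since each $\pi\in\Delta^c_{R\mathrm{vac}}$ is a Ramond vacuum representation, $\tau_\pi(s(\pi))=1$. For the off-diagonal entries, if $\pi_1\not\simeq\pi_2$ then Proposition \ref{prop:CharProj} gives $\pi_1(s(\pi_2))=0$; consequently $\pi_{1,\pm}(s(\pi_2))=0$, so the Fredholm operator $\pi_{1,-}(s(\pi_2))Q_{\pi_1,+}\pi_{1,+}(s(\pi_2))$ is the zero operator on the zero space and $\tau_{\pi_1}(s(\pi_2))=0$. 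Hence the pairing matrix is the identity.

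Finally I would deduce the complete-invariant claim. By naturality of the $K_0$--entire cyclic cohomology pairing under the inclusion $\AA_{c,\Delta^c_{R\mathrm{vac}}}\hookrightarrow\AA_c$, the pairing of the restricted cocycle $\tau_\pi|_{\AA_{c,\Delta^c_{R\mathrm{vac}}}}$ with $[s(\pi')]\in K_0(\AA_{c,\Delta^c_{R\mathrm{vac}}})$ equals $\tau_\pi(s(\pi'))$ and depends only on the entire cyclic cohomology class of the restriction. If $\pi_1\simeq\pi_2$ then $\tau_{\pi_1}=\tau_{\pi_2}$ by Proposition \ref{prop:JLO-ST}, so the restricted classes coincide; conversely, if $\pi_1\not\simeq\pi_2$, the identity pairing matrix shows that pairing with $[s(\pi_1)]$ gives $1$ for the class of $\tau_{\pi_1}$ but $0$ for that of $\tau_{\pi_2}$, whence the two restricted classes are distinct. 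The only point requiring genuine care -- and the mildest obstacle in the argument -- is checking that the index pairing computed in $\AA_c$ restricts correctly to the finite-dimensional subalgebra, which follows from the functoriality of this pairing; everything else is a direct assembly of Lemma \ref{lem:CharProj}, Proposition \ref{prop:CharProj}, Lemma \ref{lemmaRvac} and Proposition \ref{prop:JLO-ST}.
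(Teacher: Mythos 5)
Your proposal is correct and follows essentially the same route as the paper, which records this proposition as a direct consequence of the preceding discussion: orthogonality and membership of the $s(\pi)$ in $\mathcal{Z}(\AA_c)$ via Lemma \ref{lem:CharProj} and Proposition \ref{prop:CharProj}, finiteness of $\Delta^c_{R\mathrm{vac}}$ for $c<3$, and separation of cohomology classes via the pairing values $\tau_{\pi_1}(s(\pi_2))=\delta_{[\pi_1],[\pi_2]}$ coming from Lemma \ref{lemmaRvac} (diagonal) and Proposition \ref{prop:CharProj} (off-diagonal). Your explicit computation of the identity pairing matrix and the appeal to naturality of the $K_0$--entire cyclic cohomology pairing under the inclusion merely spell out what the paper leaves implicit.
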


In the rational case $c<3$, this lets the Ramond vacuum representation theory of the $N=2$ super-Virasoro net $\A_c$ appear in still another form of noncommutative geometry, namely in terms of the entire cyclic cohomology of the finite-dimensional algebra 
$\AA_{c, {\Delta^c_{R\mathrm{vac}}}} \simeq \C^{\Delta^c_{R\mathrm{vac}}}$. This fact should also have interesting relations with the K-theoretical analysis of the representation theory of completely rational conformal nets given in \cite{CCH,CCHW}.

In the standard approach to $N=2$ superconformal field theory, the Ramond vacuum representations are related trough the spectral flow to the so called chiral (resp. antichiral) primary fields and the corresponding chiral ring, cf. \cite{LVW} and \cite[Sect. 5.3]{BP}. We now want to give a (partial) decription of these concepts in our operator algebraic framework. The idea is that the so called primary fields correspond to representations of $N=2$ superconformal nets and hence we have to define the concept of chiral (resp. antichiral) representation. As before we will restrict to the case of $N=2$ super-Virasoro nets.
\bigskip

\begin{definition}\label{def:ChR-ring}
We say that an irreducible Neveu-Schwarz representation $\pi$ of the $N=2$ super-Virasoro net $\A_c$ is a {\it chiral (resp. antichiral) 
representation} if for the unique (up to a phase) unit lowest energy vector $\Omega_\pi \in \H_\pi$ we have
\[
G^{+,\pi}_{-1/2}\Omega_\pi =0 \quad (\mathrm{resp.} \; G^{-,\pi}_{-1/2}\Omega_\pi =0).
\]
We denote the set of equivalence classes of chiral representations by $\Delta^c_{\mathrm{chir}}$.
\end{definition}

It easily follows from the $N=2$ NS super-Virasoro algebra (anti-) commutation relations  that a Neveu-Schwarz representation 
$\pi$ is chiral (antichiral) iff $h_\pi=q_\pi/2$ ($h_\pi=-q_\pi/2$, resp.), cf. e.g. \cite[Sect. 5.3]{BP}. In any case, the terminology is not to be confused with the usual notion of \emph{chiral symmetry} for fields where holomorphic and anti-holomorphic components decouple.

We now restrict ourselves to the rational case $c<3$. In this case the equivalence classes of chiral representations of $\A_c$ form an additive subgroup $\Z \Delta^c_{\mathrm{chir}} $ of the fusion ring generated by the equivalence classes of irreducible NS representations of $\A_c$ together with the fusion product in \eqref{eq:FermiFusions}. It is in general not invariant under the products of NS representations, but truncating,  i.e. putting equal to zero all the non chiral irreducible equivalence classes arising from the fusion product, the actual fermionic fusion rules in  \eqref{eq:FermiFusions}, which are represented by the fusion coefficients $N_{ij}^k\in\NN$, with $i,j,k$ running in the set of equivalence classes of irreducible Neveu-Schwarz representations of $\A_c$  adjusts this point. 
The corresponding truncated fusion coefficients are given by $\hat{N}_{ij}^k = N_{ij}^k $,  $ i, j ,k \in \Delta^c_{\mathrm{chir}}$. 
We denote by ``$*$'' the corresponding product operation on $\Z \Delta^c_{\mathrm{chir}}$.

\begin{proposition}\label{prop:ChR-ring}
Let $c<3$ be an allowed value for the central charge  in the $N=2$ discrete series representations. The product $*$ defined by the $\Delta^c_{\mathrm{chir}}$-truncated fusion rules on $\Z \Delta^c_{\mathrm{chir}}$ is commutative and associative. We call the corresponding ring the \emph{chiral ring} of the $N=2$ super-Virasoro net $\A_c$.
\end{proposition}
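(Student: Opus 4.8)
The plan is to reduce the truncated product on $\Z\Delta^c_{\mathrm{chir}}$ to a manifestly associative model, namely a truncated polynomial ring, and to deduce both commutativity and associativity from the explicit form of the product on the generators. First I would pin down $\Delta^c_{\mathrm{chir}}$ explicitly. By the remark following Definition \ref{def:ChR-ring}, an irreducible NS representation with discrete-series data $c=c_n=3n/(n+2)$ is chiral precisely when $h_\pi=q_\pi/2$. Inserting the NS3 values $h=(l(l+2)-m^2)/(4(n+2))$ and $q=-m/(n+2)$ from Theorem \ref{th:SVir2-reps}, the condition $h=q/2$ simplifies to $(l+1)^2=(m-1)^2$; together with $|m|\le l$ and $l+m\in 2\Z$ this forces $m=-l$. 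Hence
\[
\Delta^c_{\mathrm{chir}} = \{\, (l,-l) : 0\le l\le n \,\},
\]
a set of $n+1$ classes, the one labelled by $l$ having charge $q=l/(n+2)$ and lowest energy $h=l/(2(n+2))$. Commutativity of $*$ is then immediate, since the right-hand side of the fermionic fusion rule \eqref{eq:FermiFusions} is visibly symmetric under interchanging the two factors, so $\hat N_{ij}^k=N_{ij}^k=N_{ji}^k=\hat N_{ji}^k$.

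The heart of the argument is the explicit evaluation of $*$ on generators. Applying \eqref{eq:FermiFusions} to two chiral classes gives
\[
(l_1,-l_1)(l_2,-l_2)=\bigoplus_{\substack{|l_1-l_2|\le l\le \min\{l_1+l_2,\,2n-l_1-l_2\}\\ l+l_1+l_2\in 2\Z}} (l,-(l_1+l_2)).
\]
Since every summand carries the \emph{same} second label $-(l_1+l_2)$, I would check that a summand $(l,-(l_1+l_2))$ is chiral only for $l=l_1+l_2$: the alternative, obtained through the identification $(l,m)=(n-l,m+n+2)$, would require $l=2n+2-l_1-l_2$, which exceeds the upper fusion bound $2n-l_1-l_2$ and so never occurs. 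This value $l=l_1+l_2$ satisfies the parity constraint automatically and lies in the fusion range if and only if $l_1+l_2\le n$, in which case it appears with multiplicity one because the ${\mathrm{SU}}(2)_n$ fusion rules are multiplicity-free. Therefore, after truncation,
\[
(l_1,-l_1)*(l_2,-l_2)=
\begin{cases}
(l_1+l_2,-(l_1+l_2)) & l_1+l_2\le n,\\
0 & l_1+l_2> n.
\end{cases}
\]

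Finally I would read off associativity from a ring isomorphism. The assignment $(l,-l)\mapsto x^l$ extends $\Z$-linearly to a bijection $\Z\Delta^c_{\mathrm{chir}}\to\Z[x]/(x^{n+1})$, and the displayed product formula says exactly that this bijection is multiplicative. Since $\Z[x]/(x^{n+1})$ is a commutative associative ring, so is $(\Z\Delta^c_{\mathrm{chir}},*)$, which proves the proposition and in addition identifies the chiral ring with $\Z[x]/(x^{n+1})$, as expected from the standard CFT picture.

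The step I expect to be the genuine obstacle is the last one, because truncation of a fusion product is \emph{not} associative in general; what makes it work here is the compatibility of the truncation with the charge grading. The surviving chiral sector of a product of chiral sectors is the unique class saturating $h=q/2$ at the additive total charge, and the vanishing condition $l_1+l_2>n$ is monotone under taking further products, since $l_1+l_2+l_3>n$ follows. Thus no intermediate truncation can annihilate a sector that a later product would otherwise preserve, and one never gets a spurious cancellation; verifying this ``no premature vanishing'' by hand amounts to the symmetric case distinction $l_1+l_2+l_3\le n$ versus $l_1+l_2+l_3>n$, which the polynomial model encodes automatically.
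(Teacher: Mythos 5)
Your proof is correct, and its core coincides with the paper's: both identify $\Delta^c_{\mathrm{chir}}$ with the classes $(l,-l)$, $0\le l\le n$, via the condition $h=q/2$ and Theorem \ref{th:SVir2-reps}(NS3), and both derive the same truncated product formula, namely $(l_1,-l_1)*(l_2,-l_2)=(l_1+l_2,-(l_1+l_2))$ if $l_1+l_2\le n$ and $0$ otherwise. Where you diverge is in how associativity is concluded: the paper does a direct case computation on generators, $\bigl((l_1,-l_1)*(l_2,-l_2)\bigr)*(l_3,-l_3)=(l_1,-l_1)*\bigl((l_2,-l_2)*(l_3,-l_3)\bigr)$, while you transport the product through the $\Z$-linear bijection $(l,-l)\mapsto x^l$ onto $\Z[x]/(x^{n+1})$ and invoke associativity there. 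The two arguments have the same mathematical content, but yours buys, at no extra cost, the explicit identification of the chiral ring as a truncated polynomial ring, which matches the standard CFT picture. You are also more careful than the paper on two points it leaves implicit: ruling out the second solution $m=l+2$ of $(l+1)^2=(m-1)^2$ using $|m|\le l$, and checking that the label identification $(l,m)=(n-l,m+n+2)$ produces no additional chiral summands in the fusion product, since the candidate $l=2n+2-l_1-l_2$ lies strictly above the upper fusion bound $2n-l_1-l_2$. Both checks are worth making explicit, as the paper's claim that ``at most one term remains in the sum'' silently relies on the second one.
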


\begin{proof}
The equivalence classes of irreducible Neveu-Schwarz representations of $\A_c$ are labelled by $(l,m)$ like the corresponding irreducible unitary representations with central charge $c$ of the $N=2$ Neveu-Schwarz super-Virasoro algebra, cf. Section \ref{sec:coset}. The equivalence classes of chiral representations are then selected by the condition $2 h(l,m)= q(l,m)$. In light of Theorem \ref{th:SVir2-reps}(NS3), this means $m=-l$. Truncating then the fusion rules \eqref{eq:FermiFusions} according to this condition, at most one term remains in the sum \eqref{eq:FermiFusions}, namely
\[
 (l_1,-l_1)*(l_2,-l_2)=\left\lbrace
\begin{array}{l@{\; :\;}l}
(l_1+l_2,-l_1-l_2) & l_1+l_2\le n\\
0 & \textrm{otherwise}.
\end{array}
\right\rbrace = (l_2,-l_2)*(l_1,-l_1), 
\]
for all $l_1,l_2= 0,...,n$. In particular, this proves commutativity of $*$. Similarly, we check associativity:
\begin{align*}
 \Big((l_1,-l_1)*(l_2,-l_2)\Big) *(l_3,-l_3) =& \left\lbrace
\begin{array}{l@{\; :\;}l}
(l_1+l_2,-l_1-l_2)*(l_3,-l_3) & l_1+l_2\le n\\
0 & \textrm{otherwise}.
\end{array}\right.\\
=&\left\lbrace
\begin{array}{l@{\; :\;}l}
(l_1+l_2+l_3,-l_1-l_2-l_3) & l_1+l_2+l_3\le n\\
0 & \textrm{otherwise}.
\end{array}
\right.\\
=& (l_1,-l_1)*\Big( (l_2,-l_2)*(l_3,-l_3)\Big).
\end{align*}
\end{proof}

\begin{remark}\label{rem:ChR-original}
As mentioned in part above, in the literature the ``chiral ring" stands for the ring defined by the OPE at coinciding points of the so-called chiral primary fields, cf. \cite{LVW} and \cite[Sect. 5.3]{BP}. For the $N=2$ minimal model considered here this gives the same result that we obtained by means of the chiral representations and their truncated fusion rules, see the example at the end of Section 5.5 in 
\cite{BP}. This is of course not surprising if one recalls that in the formulation of the approaches to CFT based on pointlike localized fields the fusion rules are defined in terms of the OPE of primary fields. 

One can define in a similar way the rings corresponding to antichiral representations or antichiral primary fields. Note that the models we are considering are generated by fields depending on one light-ray coordinate only. In the general 2D case one has a richer structure of chiral/antichiral rings: $(c,c)$, $(a,a)$, $(c,a)$, $(a,c)$, \cf \cite[pp. 433, 437]{LVW}. 

Note also that from the OPE point of view the associativity of the ring product has a natural explanation, while it is not {\it a priori} evident from the point of view of truncated fusion rules. 
\end{remark}

We now come to the relation between the chiral representations and the Ramond vacuum representations of the $N=2$ super-Virasoro nets $\A_c$ with $c<3$.
Recall from Proposition  \ref{spflowrep} that composition of an irreducible Neveu-Schwarz representation $\pi$ of $\A_c$ with the spectral flow automorphism $\bar{\eta}_{-1/2}$ gives an irreducible Ramond representation  $\pi\circ \bar{\eta}_{-1/2}$. 
If $\pi$ denotes the irreducible representation of the $N=2$ Neveu-Schwarz super-Virasoro algebra corresponding to the
Neveu-Schwarz representation $\pi$ of $\A_c$ then the representation of the $N=2$ Ramond  super-Virasoro algebra corresponding to 
$\pi\circ \eta_{-1/2}$ is $\pi\circ \bar{\eta}_{-1/2}$.  The following argument is standard, see e.g. \cite[Sect. 5.4]{BP}. Let $\Omega_\pi$ be the unique (up to a phase) lowest energy unit vector for the irreducible Neveu-Schwarz representation $\pi$ of $\A_c$. Then, using the explicit expressions for the Ramond super-Virasoro algebra generators in the representation $\pi\circ \eta_{-1/2}$ in terms of the Neveu-Schwarz super-Virasoro algebra generators in the representation $\pi$, it is straightforward to see that $\Omega_\pi$ is a lowest energy vector also for the representation $\pi\circ \eta_{-1/2}$ which furthermore satisfies $G^{-,\pi \circ \eta_{-1/2}}_{0}\Omega_\pi = 0$. Moreover we have 
$G^{+,\pi \circ \eta_{-1/2}}_{0}\Omega_\pi = G^{+,\pi}_{-1/2}\Omega_\pi$ and it follows that $\pi$ is a chiral representation of $\A_c$ if and only if $\pi\circ \bar{\eta}_{-1/2}$ is a Ramond vacuum representation. Now, if $\pi$ is an irreducible Ramond representation of $\A_c$ then, again by Proposition  \ref{spflowrep},  $\pi\circ \bar{\eta}_{1/2}$ is an irreducible Neveu-Schwarz representation of $\A_c$. Hence, 
$\pi\circ \bar{\eta}_{1/2}$ is a chiral representation if and only if $\pi = \pi\circ \bar{\eta}_{1/2} \circ \bar{\eta}_{-1/2}$  is a Ramond vacuum representation of $\A_c$. Accordingly we have the following: 

\begin{proposition}\label{prop:ChR-spfl}
The spectral flow automorphism $\bar{\eta}_{1/2}$ of $\A_c$ gives rise to a one-to-one correspondence between 
$\Delta^c_{R\mathrm{vac}}$ and $\Delta^c_{\mathrm{chir}}$. As a consequence the noncommutative geometric invariants for 
$\Delta^c_{R\mathrm{vac}}$ in Proposition  \ref{prop:JLO-centre} give rise to noncommutative geometric invariants for the chiral ring 
$\Z \Delta^c_{\mathrm{chir}}$.
\end{proposition}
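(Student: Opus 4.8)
The plan is to observe that Proposition \ref{prop:ChR-spfl} is an immediate consequence of Proposition \ref{spflowrep} together with the lowest-weight computation carried out in the paragraph preceding the statement, so the task reduces to assembling those facts into a bijection and then transporting the invariant. First I would recall that, by Proposition \ref{spflowrep}, composition with the spectral flow soliton $\bar{\eta}_{1/2}$ sends each irreducible Ramond representation $\pi$ of $\A_c$ to an irreducible Neveu-Schwarz representation $\pi\circ\bar{\eta}_{1/2}$, and that this assignment is a one-to-one correspondence between the irreducible Ramond and the irreducible Neveu-Schwarz sectors, with inverse given by composition with $\bar{\eta}_{-1/2}$. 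The point requiring a word of care is that this correspondence is compatible with unitary equivalence: if a unitary $u$ intertwines $\pi_1$ and $\pi_2$, then the same $u$ intertwines $\pi_1\circ\bar{\eta}_{1/2}$ and $\pi_2\circ\bar{\eta}_{1/2}$, because $\bar{\eta}_{1/2}$ is a fixed family of local automorphisms applied before $\pi$; hence the maps descend to the relevant sets of equivalence classes.

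Next I would restrict this bijection to the distinguished subsets. The discussion preceding the statement already shows, by computing the action of the flow on the unique lowest energy vector, that for an irreducible Ramond representation $\pi$ one has the equivalence: $\pi$ is a Ramond vacuum representation if and only if $\pi\circ\bar{\eta}_{1/2}$ is a chiral representation. Combined with the previous step this says precisely that $\pi\mapsto\pi\circ\bar{\eta}_{1/2}$ carries $\Delta^c_{R\mathrm{vac}}$ into $\Delta^c_{\mathrm{chir}}$, while its inverse $\sigma\mapsto\sigma\circ\bar{\eta}_{-1/2}$ carries $\Delta^c_{\mathrm{chir}}$ into $\Delta^c_{R\mathrm{vac}}$. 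That these two maps are mutually inverse follows from the one-parameter group property of the spectral flow established in Theorem \ref{th:spfl1}, namely $\bar{\eta}_{1/2,I}\circ\bar{\eta}_{-1/2,I}=\bar{\eta}_{0,I}=\operatorname{id}$ for every $I\in\I_\R$, whence $(\pi\circ\bar{\eta}_{1/2})\circ\bar{\eta}_{-1/2}=\pi$ and symmetrically. This establishes the claimed one-to-one correspondence between $\Delta^c_{R\mathrm{vac}}$ and $\Delta^c_{\mathrm{chir}}$.

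Finally, for the ``as a consequence'' part I would transport the complete invariant of Proposition \ref{prop:JLO-centre} along this bijection. By that proposition the map sending a Ramond vacuum representation $\pi$ to the entire cyclic cohomology class of the restricted JLO cocycle $\tau_\pi$ is a complete noncommutative geometric invariant for $\Delta^c_{R\mathrm{vac}}$; precomposing with the inverse bijection then assigns to each generator $\sigma\in\Delta^c_{\mathrm{chir}}$ of the chiral ring $\Z\Delta^c_{\mathrm{chir}}$ the class $[\tau_{\sigma\circ\bar{\eta}_{-1/2}}]$, and this remains a separating invariant on the generating set exactly because the correspondence is bijective. I do not expect a genuine obstacle here: everything rests on already-proven statements (Proposition \ref{spflowrep}, Theorem \ref{th:spfl1}, and the lowest-energy computation preceding the proposition), and the only delicate points are the well-definedness on unitary equivalence classes and the verification that the flow respects the Ramond-vacuum/chiral split, both of which are routine.
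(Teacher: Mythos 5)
Your proposal is correct and follows essentially the same route as the paper: the paper's own proof is exactly the assembly of Proposition \ref{spflowrep}, the lowest-energy-vector computation showing that a Neveu-Schwarz representation $\pi$ is chiral if and only if $\pi\circ\bar{\eta}_{-1/2}$ is a Ramond vacuum representation, and the identity $\pi=\pi\circ\bar{\eta}_{1/2}\circ\bar{\eta}_{-1/2}$ coming from the one-parameter group property of the flow. Your extra remarks on well-definedness at the level of unitary equivalence classes only make explicit what the paper leaves implicit.
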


\vspace{2\baselineskip}
{\small

\noindent
{\bf Acknowledgements.} 
We would like to thank T.~Arakawa, V.~Dobrev, B.~Gato-Rivera, O.~Gray, A.~Jaffe, and V.~Kac for useful discussions and comments. Moreover, Y.K. and F.X. would like to thank the CMTP in Rome, R.H.,  R.L., and Y.K. the MIT Cambridge, and S.C. and R.L. Harvard University, Cambridge, for hospitable research stays during which parts of this work were accomplished. Most of this work was done while R.H. was at the Department of Mathematics of the University of Rome  ``Tor Vergata'' and subsequently at the School of Mathematics of Cardiff University as an INdAM-Cofund Marie Curie Fellow.
\vspace{2\baselineskip}

}

\end{document}